\documentclass[eqno,fleqn]{article}
\usepackage[T1]{fontenc}
\usepackage{amsmath}
\usepackage{amssymb}
\usepackage[shortlabels]{enumitem}
\usepackage[onehalfspacing]{setspace}
\usepackage{graphicx}
\usepackage{xcolor}
\usepackage[lowtilde]{url}
\usepackage{hyperref}

\newcommand{\mbm}[1]{\mbox{\boldmath{$#1$}}}
\newcommand{\mbmss}[1]{\mbox{\scriptsize\boldmath{$#1$}}}
\newcommand{\oo}{$\!$\hspace{0.02cm}\'{}$\!\!$\'{}$\!\!\!\!$\hspace{0.02cm}o}
\newcommand{\bbbn}{\mathbb{N}} 
\newcommand{\bbbr}{\mathbb{R}} 
\newcommand{\qed}{\hfill{$\Box$}} 
\newtheorem {overview}{Overview} 
\newtheorem{satz}{Satz}[section] 
\newtheorem{definition}[satz]{Definition}

\newtheorem{proposition}[satz]{Proposition}
\newtheorem{corollary}[satz]{Corollary}
\newtheorem{theorem}[satz]{Theorem}
\newtheorem{remark}[satz]{Remark}

\newtheorem{example}[satz]{Example}
\newtheorem{observ}[satz]{Observation}
\newtheorem{question}[satz]{Question}
\newtheorem{questions}[satz]{Questions}
\newtheorem{agree}[satz]{Agreement}
\newtheorem{propert}[satz]{Properties}
\newtheorem{consequ}[satz]{Consequences}

\begin{document}
\pagestyle{headings} 
\thispagestyle{empty}

\title{Abstract computation over first-order structures. Extras: From Programs to Decision Trees I}
\author{Christine Ga\ss ner}

\begin{center}
{\Large Abstract Computation over First-Order Structures \vspace{0.2cm}}\\{\Large Extras: From Programs to Decision Trees I}
\vspace{0.7cm}\\
{\bf Christine Ga\ss ner}\vspace{0.1cm}\\University of Greifswald, Germany, 2026\\ gassnerc@uni-greifswald.de \end{center} 

\vspace{0.6cm}
\begin{abstract} 
Decisions and their consequences can be described and analyzed by means of decision trees. The decisions themselves depend on questions that, whenever possible, should be answered with {\sf yes} or {\sf no}. The original BSS machines over the real numbers are graphs with computation nodes and branching nodes for decisions. The evaluation of BSS machines and  algebraic decision trees in computer-aided geometry which have been introduced for various types of numbers generally involves the evaluation of systems of literals of first-order logic. Flowchart-like representations and decision trees are also helpful for analyzing decisions made by BSS RAMs over first-order structures. All algorithms defined by machine-oriented programs of BSS RAMs can be illustrated using flowcharts, walks, and paths in program trees. The basic structures of these tools are graphs and their visual representations can help to characterize the behavior of individual BSS RAMs and other first-order machines. We offer a theoretical framework for linking various models. Here, we introduce program paths  and transition systems for transforming partial configurations which are defined syntactically and  can be  extended and refined later. Finally, we define standard orders for program paths and present algorithms for enumerating finite program paths.

\vspace{0.3cm}
\noindent {\bf Keywords}: Abstract computation, Computability over first-order structures, BSS RAM, Deterministic BSS RAM, Non-deterministic BSS RAM, Configuration, Transition system, Computation path, Flowchart, Walk, Program path, M-configuration,  P-path, Partial (P,\,kappa)-configuration, (M,\,i)-path of configurations, (P,\,kappa,\,n)-path of partial configurations, Standard order for paths, Enumeration, Backtracking.
\end{abstract}

\newpage
\tableofcontents

\vspace{0.5cm}
{\sf Extras: From Programs to Decision Trees II} \hfill (\textcolor{gray}{abbreviated:} {\sf Extras II})

\noindent
deals with graph-theoretical paths and trees and homomorphisms between such structures. We consider graphs representing program paths where  integers serve as nodes, trees derived from walks for representing finite program paths, and trees for representing all program paths that can be  identified using strings.

\vspace{0.1cm}
{\sf Extras: From Programs to Decision Trees III}  \hfill (\textcolor{gray}{abbreviated:} {\sf Extras III})
\noindent
deals with the evaluation of decision trees for (non-)deterministic machines. We start with notions of first-order logic with identity and discuss the evaluation of program paths and trees, the description of configurations using terms, and the description of trees by formulas.

\vspace{0.1cm}
{\sf Extras: From Programs to Decision Trees IV}  \hfill (\textcolor{gray}{abbreviated:} {\sf Extras IV})

\noindent
provides applications.

\newpage
\section{Introduction and repetition} 
In \cite{GASS25A}, deterministic and non-deterministic BSS RAMs over first-order structures of any signature $\sigma$ (which we will also call {\sf $\sigma$-structures}) were introduced. The characterization of the relationships between the resulting classes of decision problems~-- deterministically semi-decidable or decidable or non-determin\-istically semi-decidable~-- in \cite{GASS25B,GASS25C} also includes a first analysis of so-called {\sf computation paths} and {\sf program paths}. The {\sf program paths of a $\sigma$-program ${\cal P}$} are {\sf label paths} in ${\cal L}_{\cal P}^\infty\cup{\cal L}_{\cal P}^\omega$~-- if ${\cal L}_{\cal P}$ is the set of all labels in ${\cal P}$~--  and  each of these paths determines a sequence of instructions that can be, could be, or might be executed one after the other. Each computation path is a program path. Any {\sf program path} can be illustrated using a {\sf walk} \footnote{ In \cite{GASS25C} (cf.\,\,p.\,\,26), the notion {\sf path} was less formally used for paths and walks. Because we want to discuss more details, we will distinguish now between both notions as usual in graph theory. Then, {\sf walks} are sequences of nodes and edges. The repetition of labels and edges is allowed. Each program path is derived from a walk and the repetition of labels is allowed. For illustrating program paths, we will also use special graphs called {\sf path graphs} or {\sf paths} whose nodes, however, must be different from each other. We will introduce them in {\sf Extras II.}} in a flowchart. 

The evaluation of program and computation paths plays a crucial role in numerous proofs across different areas of computability theory and computer-aided geometry. Since these paths are important for various models of computation, we would like to discuss them in more detail. We will distinguish between {\sf computation paths} determined by sequences of configurations that can be generated by transition systems of BSS RAMs ${\cal M}$ for inputs and the {\sf program paths} which are uniquely determined solely by the programs ${\cal P}_{\cal M}$ themselves. We will investigate computation paths for inputs of a fixed or arbitrary length, define {\sf standard orders} on sets of program paths of finite, fixed, or arbitrary length, and provide algorithms for the {\sf enumeration} of finite program paths, which enable the systematic evaluation of program paths as well as the systematic analysis of the behavior of machines. Every algorithm given by a $\sigma$-program and executable over a $\sigma$-structure can be illustrated using a flowchart and graph-theoretical trees. For each $\sigma$-program ${\cal P}$, such a {\sf flowchart} is a directed graph whose {\sf nodes} are the labels of ${\cal P}$ in ${\cal L}_{\cal P}$ (cf.\,\,Fig.\,\ref{BerWurzeln1}). It can be extended to include, for example, functions that assign the labeled instructions themselves or questions~-- based on the conditions specified by branching instructions~-- to the nodes or that can be used to label edges by the strings `${\rm yes}$' and `${\rm no}$' (cf.\,\,Fig.\,\ref{BerWurzeln2}). Each directed walk in such a flowchart is determined by a sequence of labels  in ${\cal L}_{\cal P}$, and the longest walks starting from label $1$ provides sequences of labels which we call {\sf program paths}. 
In {\sf Extras II}, we will see that these program paths can also be illustrated by new graphs whose nodes belong to the set $\bbbn_+$ of positive integers and which are extended by a function such as $label :\, \subseteq\!\bbbn_+ \to \{1,\ldots,\ell_{\cal P}\}$ to label the nodes. To  recognize whether a program path is a computation path traversed by ${\cal M}$, we need suitable inputs for which ${\cal M}$ traverses this program path, or a {\sf suitable theory} that allows a theoretical proof of the existence of inputs that traverse this path. In the latter case, we could examine certain initial segments of the program paths, describe them syntactically using formulas, and evaluate the descriptions using first-order structures.
Each program path $B^*$ of a $\sigma$-program ${\cal P}$ and the inputs of a fixed length $n$ traversing this path during the execution of the instructions labeled by the members of this path can be described by a system ${\sf sy}_{{\cal P},n}^{\sf decision}(B^*) $ of {\sf conditions} expressed by {\sf atomic $\sigma$-formulas} and {\sf $\sigma$-literals}. These formulas can be evaluated using $\sigma$-structures, which serve to interpret the symbols and assign individuals to the variables. For more details and the description of program paths using {\sf conjunctions of literals}, see {\sf Extras III}. 

\begin{overview}[Program paths and computation paths of machines]\label{Paths}\hfill

\nopagebreak

\noindent \fbox{\parbox{11.8cm}{
\hfill {\small Let ${\cal P}$ be any $\sigma$-program, ${\cal A}$ be a $\sigma$-structure},

\hfill {\small and ${\cal M}\in {\sf M}_{\cal A}\cup {\sf M}_{\cal A}^{\rm NDB} \cup {\sf M}_{\cal A}^{\rm DND} \cup {\sf M}_{\cal A}^{\rm ND}$.}

\vspace{0.1cm}
\em 
\begin{tabular}{c}
\hline\vspace{0.1cm}
\underline{\bf \,A program path $B^*$ of ${\cal P}$\,}\\ 

\vspace{0.1cm}
\begin{tabular}{l}

$\bullet$ is a sequence of labels,\\
$\bullet$ is determined by ${\cal P}$ and derived from ${\cal P}$,\\
$\bullet$ is syntactically defined, \hspace{2.3cm} {\small \em(for details, see Definition \ref{DefProgrPath})}\\
$\bullet$ is a theoretically conceivable computation path,\\ 
$\bullet$ can be described by systems ${\sf sy}_{{\cal P},n}^{\sf decision}(B^*) $ of $\sigma$-literals,\\
$\bullet$ is a computation path if ${\sf sy}_{{\cal P},n}^{\sf decision}(B^*) $ is satisfiable for some $n\geq 1$.\\
\end{tabular} 

\\\hline\vspace{0.1cm}
\underline{\bf \,A computation path $B^*$ of ${\cal M}$\,}\\ 

\vspace{0.1cm}
\begin{tabular}{l}
$\bullet$ is semantically defined,\\
$\bullet$ is really traversed by ${\cal M}$,\\
$\bullet$ is determined by a sequence of configurations of ${\cal M}$,\\ 
$\bullet$ belongs to the set of program paths determined by ${\cal P}_{\cal M}$,\\
$\bullet$ implies that ${\sf sy}_{{{\cal P}_{\cal M}},n}^{\sf decision}(B^*)$ is satisfiable over ${\cal A}$ for some $n\geq 1$.
\end{tabular} \hspace*{0.9cm}\\
\hline\vspace{0.1cm} \underline{\bf \quad Program paths~-- Computation paths \quad}\\ 
Program paths of ${\cal P}_{\cal M}$ are evaluated to recognize whether\\
{\sf a considered program path is a computation path of ${\cal M}$}\\
traversed by ${\cal M}$ for all or some given inputs (and for guesses).\\
\end{tabular}

}}
\end{overview}

Examples for the investigation of various ${\cal A}$-machines and the evaluation of program paths can be found in several proofs in \cite{GASS97, GASS01, GASS08A, GASS08C, GASS09A, GASS09B, GASS10, GASS13, GASS17}. The term {\sf computation path} was often used also in the {\sf sense} of {\sf program paths}. Pascal Koiran, Klaus Meer, Felipe Cucker, Lenore Blum, Michael Shub,\linebreak Steve Smale, and others considered BSS machines over the reals and evaluated their behavior (see, e.g., references \cite{Meer92}, \cite{CSS94}, \cite {BCSS98}). The investigations in \cite{GASS25B, GASS25C} show that certain BSS RAMs over $\sigma$-structures ${\cal A}$ are able to deterministically recognize whether a finite program path of a $\sigma$-program can be traversed by a given BSS RAM over ${\cal A}$ or not whenever a suitable start configuration is given.

Let us repeat the definitions of {\sf program}, {\sf configuration}, {\sf transition system}, and {\sf computation path}. In general, we can consider any signature given by $( N_1; (m_i)_{i\in N_2}; (k_i)_{i\in N_3})$. For programs and for their execution, finite signatures are sufficient. Therefore, in the following, let $\sigma =(n_1; m_1, \ldots,m_{n_2}; k_1,\ldots,k_{n_3})$.

\vspace{0.2cm}
{\bf The set ${\sf P}_{\sigma}$ of $\sigma$-programs.} Any $\sigma$-program ${\cal P}\in {\sf P}_{\sigma}$ has the form

\vspace{0.1cm}
\qquad $1: {\sf instruction}_{1}; \quad \ldots;\quad\ell _{\cal P}-1:\, {\sf instruction}_{\ell _{\cal P}-1};\quad \,\,\,\ell _{\cal P}:\, {\sf stop}.\hfill \textcolor{blue}{(*)}\,\,$

\vspace{0.1cm}
\noindent Let ${\cal L}_{\cal P}=\{1,\ldots, \ell _{\cal P}\}$. For $\ell_{\cal P}> 1$, let ${\cal L}_{\cal P}^\circ=\{1,\ldots, \ell _{\cal P}-1\}$. For each label $\ell \in {\cal L}_{\cal P}^\circ$, ${\sf instruction}_{\ell}$ stands here for an instruction of one of the {\bf types from (1) to (7)}\big[, {\bf or (11)}\big]. ${\sf instruction}_{{\ell}_{\cal P}}$ is the ${\rm S}$-instruction ${\sf stop}$. 

\begin{overview}[$\sigma$-instructions and other instructions]\label{Sigma_Instructions}

\hfill

\nopagebreak 

\noindent \fbox{\parbox{11.8cm}{

\hfill {\small Let $\ell \in {\cal L}_{\cal P}^\circ$ and let $\ell_1$ and $\ell_2$ be labels in ${\cal L}_{\cal P}$.}

\vspace{0.1cm}

{\bf Computation instructions} (with $i\leq n_2$ and $i\leq n_1$, respectively)

$\,\,\,(1)$ \qquad $\ell : \, Z_j:= f_i^{m_i}(Z_{j_1},\ldots, Z_{j_{m_i}}) $ \hfill{\em(${\rm F}$-instructions)}

$\,\,\,(2)$ \qquad $\ell : \,Z_j:=c_i^0$ \hfill{\em(${\rm F}_0$-instructions)}

\vspace{0.1cm}
{\bf Copy instructions} (with $j,k\leq k_{\cal P}$)

$\,\,\,(3)$ \qquad $\ell : \,Z_{I_j}:=Z_{I_k}$ \hfill{\em(${\rm C}$-instructions)}

\vspace{0.1cm}
{\bf Branching instructions} (with $i\leq n_3$) 

$\,\,\,(4)$ \qquad {\sf $\ell : \,$ if $r_i^{k_i}(Z_{j_1},\ldots, Z_{j_{k_i}})$ then goto $\ell _1$ else goto $\ell _2$}\hspace*{0.5cm} \hfill{\em(${\rm T}$-instructions)}

\vspace{0.1cm}
{\bf Index instructions} (with $j,k\leq k_{\cal P}$) 

$\,\,\,(5)$ \qquad {\sf $\ell : \,$ if $I_{j}=I_k$ then goto $\ell _1$ else goto $\ell _2$} \hfill{\em(${\rm H}_{\rm T}$-instructions)}

$\,\,\,(6)$ \qquad $\ell : \,I_j:=1$ \hfill{\em(${\rm H}_1$-instructions)}

$\,\,\,(7)$ \qquad $\ell : \,I_j:=I_j+1$ \hfill{\em(${\rm H}_{+1}$-instructions)}

\vspace{0.1cm}
{\bf Stop instruction}

$\,\,\,(8)$ \qquad$\ell_{\cal P} : $ {\sf stop}\hfill{\em(${\rm S}$-instruction)}

\vspace{0.1cm}
{\bf Instructions for guessing labels} 

$\,\,\,(11)$ \quad\, {\sf $\ell :\,$ goto $\ell_1$ or goto $\ell_2$}\hfill{\em(${\rm B}$-instructions)}

\vspace{0.1cm}}}
\end{overview}

\begin{agree}[The symbols \,$\sigma$, \,${\cal A}$, \,${\cal M}$, \,${\cal P}_{\cal M}$, \,and \,${\cal P}$] \hfill Unless other\-wise specified, we consider, in the following, any finite signature $\sigma$ as defined above and, for such a signature, any $\sigma$-structure ${\cal A}$ or, for an arbitrary first-order structure ${\cal A}$ of an arbitrary signature, a reduct of ${\cal A}$ whose signature is $\sigma$. ${\cal M}$ is any infinite 1-tape ${\cal A}$-machine or a BSS RAM in ${\sf M}_{\cal A}\cup {\sf M}_{\cal A}^{\rm ND}\cup {\sf M}_{\cal A}^{\rm DND}\cup {\sf M}_{\cal A}^{\rm NDB}$. Its program ${\cal P}_{\cal M}$ and ${\cal P}$ are $\sigma$-programs of the form $(*)$. In the case of ${\cal M}\in {\sf M}_{\cal A}^{\rm DND}$, we assume that ${\cal A}\in {\sf Struc}_{c_1,c_s}$ holds.
\end{agree}

{\bf The set ${\cal L}_{\cal P}$.} For any $\sigma$-program ${\cal P}$ \big[with instructions of type (11) for machines in ${\sf M}_{\cal A}^{\rm NDB}$\big], let ${\cal L}_ {\cal P}$ be the union ${\cal L}_{{\cal P},{\rm F}}\cup {\cal L}_{{\cal P},{\rm F}_0}\cup {\cal L}_{{\cal P},{\rm C}}\cup {\cal L}_{{\cal P},{\rm T}} \cup {\cal L}_{{\cal P},{\rm H}_{\rm T}}\cup {\cal L}_{{\cal P},{\rm H}_1}\cup {\cal L}_{{\cal P},{\rm H}_{+1}}\cup {\cal L}_{{\cal P},{\rm S}}\,\big[\!\cup {\cal L}_{{\cal P},{\rm B}}\big]$ of the pairwise disjoint sets ${\cal L}_{{\cal P},{\rm F}}, {\cal L}_{{\cal P},{\rm F}_0}, \ldots $ consisting of the labels of all ${\rm F}$-instructions in ${\cal P}$, the labels of all ${\rm F}_0$-instructions in ${\cal P}$, and so forth, and let ${\cal L}_{{\cal P},{\rm S}}=\{\ell_{\cal P}\}$. In the following, the set ${\cal L}_{\cal P}$ is {\sf always considered to be a subset of the universe $\bbbn_+$ of ${\cal A}_{\bbbn}$ ordered by $\leq$}. 

\vspace{0.2cm}
{\bf The set ${\cal L}_{\cal M}$.} The processing of inputs by an ${\cal A}$-machine ${\cal M}$ is determined by its program ${\cal P}_{\cal M}$. For any ${\cal A}$-machine ${\cal M}$, let ${\cal L}_ {\cal M}$ be the union ${\cal L}_{{\cal M},{\rm F}}\cup {\cal L}_{{\cal M},{\rm F}_0}\cup {\cal L}_{{\cal M},{\rm C}}\cup {\cal L}_{{\cal M},{\rm T}} \cup {\cal L}_{{\cal M},{\rm H}_{\rm T}}\cup {\cal L}_{{\cal M},{\rm H}_1}\cup {\cal L}_{{\cal M},{\rm H}_{+1}}\cup {\cal L}_{{\cal M},{\rm S}}\,\big[\!\cup {\cal L}_{{\cal M},{\rm B}}\big]$ of the pairwise disjoint sets ${\cal L}_{{\cal M},{\rm F}}, {\cal L}_{{\cal M},{\rm F}_0}, \ldots $ consisting of the labels of all ${\rm F}$-instructions in ${\cal P}_{\cal M}$, the labels of all ${\rm F}_0$-instructions in ${\cal P}_{\cal M}$, and so forth. In particular, we have ${\cal L}_{{\cal M},{\rm S}}=\{\ell_{{\cal P}_{\cal M}}\}$. 
Thus,  the execution of instructions by ${\cal M}$  can be uniquely described by means of sequences of ${\cal M}$-configurations.

\vspace{0.3cm}
{\bf ${\cal M}$-configurations in ${\sf S}_{\cal M}$ and partial ${\cal M}$-configurations.} For any infinite 1-tape ${\cal A}$-machine ${\cal M}$, its overall state at a given point of time is determined by an ${\cal M}$-{\sf configuration} $(\ell, \nu_1,\ldots,\nu_{k_{\cal M}}, u_1,u_2,\ldots)$ also denoted by $(\ell\,.\,\vec \nu\,.\,\bar u)$ where $\ell \in {\cal L}_{\cal M}$, $\vec \nu=( \nu_1,\ldots,\nu_{k_{\cal M}})\in \mathbb{N}_+^{k_{\cal M}}$, and $\bar u=( u_1,u_2,\ldots)\in U_{\cal A}^\omega$ hold. ${\sf S}_{\cal M}$ is the set of all ${\cal M}$-configurations. Moreover, we will consider {\em partial ${\cal M}$-configurations $(\ell\,.\,\vec \nu)$}. Each label $\ell$ can be assigned to a register $B$ of ${\cal M}$ and the indices $\nu_1,\ldots,\nu_{k_{\cal M}}$ can be assigned to the index registers of ${\cal M}$. ${\sf S}_{\cal M}^{\rm ind}$ is the set of all {\em partial ${\cal M}$-configurations} given by ${\sf S}_{\cal M}^{\rm ind}=\{(\ell\,.\,\vec \nu) \mid \ell \in {\cal L}_{\cal M} \,\,\,\&\,\,\, \vec \nu \in \bbbn_+^{k_{\cal M}}\}$.

\vspace{0.3cm}
{\bf Transition systems ${\cal S}_{\cal M}$ and ${\cal S}_{{\cal P},\kappa}^{\rm ind}$.} The transition system $({\sf S}_{\cal M},\to_{\cal M})$ belongs to a computational system that we here define once more for ${\cal A}$-machines ${\cal M}$ (for ${\cal M}\in {\sf IM}_{\cal A}$, see also \cite[p.\,584]{GASS20}). For transforming one configuration $con_t\in {\sf S}_{\cal M}$ into the next configuration $con_{t+1}$, we use the transition function $\to _{\cal M }$ (for a summary, see Overview \ref{TransfConf}). The single-valued or multi-valued function $\to _{\cal M }$ enables the change of a finite number of components of $con_t$ by applying functions from ${\cal F}_{\cal M}$ defined in \cite{GASS20,GASS25A} (cf.\,\,\cite[Overviews 20 and 21]{GASS25A}). In Section \ref{SectTransSyst}, we consider the transition systems $({\sf S}_{\cal P}^{\rm ind},\to_{\cal P}^{\rm ind})$ and $({\sf S}_{{\cal P},\kappa}^{\rm ind},\to_{{\cal P},\kappa}^{\rm ind})$ determined by $\sigma$-programs ${\cal P}$ for transforming partial $({\cal P},\kappa)$-configurations $(\ell\,.\,\vec \nu)$ with $\vec \nu \in\bbbn_+^\kappa$. Based on this, we will describe full configurations of ${\cal A}$-machines formally by using first-order $\sigma$-terms independently of any $\sigma$-structure ${\cal A}$ and introduce the transition systems $({\sf S}_{{\cal P},\kappa},\to_{{\cal P},\kappa})$ for transforming {\sf $({\cal P},\kappa)$-term-configurations} in {\sf Extras $\!$III} (cf.\,\,Overviews \ref{OutlookTermConf1} and \ref{OutlookTermConf2}). 

\vspace{0.3cm}
{\bf Computational systems ${\cal R}_ {\cal M}$.} For any ${\cal A}$-machine ${\cal M}$ that should be able to execute instructions of the types (1) to (8), and (11), let the {\em computational system} ${\cal R}_ {\cal M}=( {\cal S}_{\cal M}, {\rm Input}_{\cal M}, {\rm Output}_{\cal M}, {\rm Stop}_{\cal M})$ be given by
\begin{itemize}\parskip -0.2mm
\item the {\em transition system}
${\cal S}_{\cal M}=( {\sf S}_{\cal M}, \to_{\cal M})$ defined by \footnote{ Since any function is by definition a relation, we have $\to_{\cal M}\subset {\sf S}_{\cal M}^2$ for each single-valued function $\to_{\cal M}:{\sf S}_{\cal M}\to {\sf S}_{\cal M}$ and for each multi-valued function $\to_{\cal M}:{\sf S}_{\cal M}\genfrac{}{}{0pt}{2}{\longrightarrow} {\longrightarrow} {\sf S}_{\cal M}$. We generally use the infix notation $con \to_{\cal M} con'$ instead of $(con,con') \in \,\to_{\cal M}$. Moreover, we allow the notation $con \genfrac{}{}{0pt}{2}{\longrightarrow} {\longrightarrow}_{\cal M}\, con'$ instead of $con \to_{\cal M} con'$ if we want to consider a multi-valued function $\to_{\cal M}$. If $\to_{\cal M}$ is a single-valued function, then we can use the prefix notation $\to_{\cal M}\!\!(con)=con'$.}

\vspace{0.2cm}
$\!\!\!\!\!\!\begin{array}{ll}
\, \,\textcolor{red}{\to_{\cal M}} \,=\hspace{0.26cm}\{((\ell\,.\,\vec \nu\,.\, \bar u), (\ell +1\,.\,\vec \nu\,.\, \textcolor{blue}{F_{\ell }(\bar u)}))&\!\!\in {\sf S}_{\cal M}^2\mid \ell \in{\cal L}_{{\cal M}, { \rm F} } 
\cup \,{\cal L}_{{\cal M}, { \rm F} _0} \}\vspace{0.05cm}\\
\hspace{1.3cm}\cup \,\{((\ell\,.\,\vec \nu\,.\, \bar u), (\ell +1\,.\,\vec \nu\,.\, \textcolor{blue}{C_{\ell }(\vec \nu,\bar u)}))\!\!&\!\!\in {\sf S}_{\cal M}^2\mid \ell \in{\cal L}_{{\cal M}, { \rm C} } \}\vspace{0.05cm}\\
\hspace{1.3cm}\cup \,\{((\ell\,.\,\vec \nu\,.\, \bar u), (\textcolor{blue}{T_{\ell }(\bar u)}\,.\,\vec \nu\,.\,\bar u))&\!\!\in {\sf S}_{\cal M}^2\mid \ell\in {\cal L}_{{\cal M},{\rm T}}\}\vspace{0.05cm}\\
\hspace{1.3cm}\cup \,\{((\ell\,.\,\vec \nu\,.\, \bar u), (\textcolor{blue}{T_{\ell }(\vec \nu)}\,.\,\vec \nu\,.\,\bar u))&\!\!\in {\sf S}_{\cal M}^2\mid \ell\in {\cal L}_{{\cal M},{\rm H}_{\rm T}} \}\vspace{0.05cm}\\
\hspace{1.3cm}\cup \,\{((\ell\,.\,\vec \nu\,.\, \bar u), (\textcolor{blue}{\ell'}\,.\,\vec \nu\,.\,\bar u))&\!\!\in {\sf S}_{\cal M}^2\mid \ell\in {\cal L}_{{\cal M},{\rm B}} \,\,\&\,\, \ell'\in G_{\ell }\}\vspace{0.05cm}\\
 \hspace{1.3cm}\cup \,\{((\ell\,.\,\vec \nu\,.\, \bar u), (\ell +1\,.\,\textcolor{blue}{H_{\ell }(\vec \nu)}\,.\, \bar u))\!&\!\!\in {\sf S}_{\cal M}^2\mid \ell\in {\cal L}_{{\cal M},{\rm H}_1}\cup \, {\cal L}_{{\cal M},{\rm H}_{+1}} \}\vspace{0.05cm}\\
 \hspace{1.3cm}\cup \, \{((\ell\,.\,\vec \nu\,.\, \bar u), (\ell\,.\,\vec \nu\,.\, \bar u))&\!\!\in {\sf S}_{\cal M}^2 \mid \ell=\ell_{{\cal P}_{\cal M}}\},\vspace{0.05cm}\\\end{array}$

\vspace{0.2cm}
\item the {\em input procedure} ${\rm Input}_{\cal M}: {\sf I}_{\cal M}\to {\sf S}_{\cal M}$ defined by 

\vspace{0.2cm}
\qquad ${\rm Input}_{\cal M}= \{({\sf i}, (1\,.\,\vec \nu \,.\,\bar u)) \mid {\sf i}\in {\sf I}_{\cal M} \,\,\&\,\,({\sf i},(\vec \nu \,.\,\bar u)) \in { \rm In}_{\cal M} \} $,

\vspace{0.2cm}
\item the {\em output procedure} ${\rm Output}_{\cal M}\!: {\sf S}_{\cal M}\to {\sf O}_{\cal M}$ defined, for 
$(\ell\,.\,\vec \nu\,.\, \bar u)\in\! {\sf S}_{\cal M}$, by 

\vspace{0.2cm}
\qquad ${\rm Output}_{\cal M}((\ell\,.\,\vec \nu\,.\, \bar u))={\rm Out}_{\cal M}((\vec \nu\,.\, \bar u))$,

\vspace{0.2cm}
\item the {\em stop criterion} ${\rm Stop}_{\cal M} (\ell\,.\,\vec \nu\,.\, \bar u ) =0$ satisfied if $\ell=\ell_{{\cal P}_{\cal M}}$.
\end{itemize}
For other types of instructions, the transition function $\to _{\cal M}$ can be defined in a similar way. Instead of ${\rm Output}_{\cal M}((\ell\,.\,\vec \nu\,.\, \bar u))$, we also write ${\rm Output}_{\cal M}(\ell\,.\,\vec \nu\,.\, \bar u)$ and the like. For each BSS RAM ${\cal M}$ over ${\cal A}$, we have ${\sf I}_{\cal M}={\sf O}_{\cal M}=U_{\cal A}^\infty$ and the output function ${\rm Out}_{\cal M}$ is defined by ${\rm Out}_{\cal M}(\vec \nu\,.\, \bar u) = (u_1,\ldots, u_{\nu_1})$. For ${\cal M}\in {\sf M}_{\cal A}$, we have ${\rm In}_{\cal M}(\vec x)=(\vec \nu\,.\, \vec x\,.\,(x_n,x_n, \ldots ))$ with $\vec \nu=(n, 1,\ldots, 1)$ if $\vec x\in U_{\cal A}^n$ holds. For ${\cal M}\in {\sf M}_{\cal A}^{\rm ND}$, we allow $(\vec x, (\vec \nu\,.\, \vec x\,.\,(y_1,\ldots,y_m,x_n,x_n, \ldots ))) \in {\rm In}_{\cal M}$ for $\vec \nu=(n, 1,\ldots, 1)$ and any $(y_1,\ldots,y_m)\in U_{\cal A}^\infty $ if $\vec x\in U_{\cal A}^n$ holds. For ${\cal A}\in {\sf Struc}_{c_1,c_s}$, the restriction $(y_1,\ldots,y_m)\in \{c_1,c_2\}^\infty $ leads to ${\cal M}\in {\sf M}_{\cal A}^{\rm DND}$. 

\vspace{0.3cm}
{\bf $({\cal M},{\sf i})$-paths in ${\sf Con}_{{\cal M},{\sf i}}$ and halting sets $H_{\cal M}$.} For any ${\cal A}$-machine ${\cal M}$ and any input ${\sf i}\in {\sf I}_{\cal M}$, an {\sf $({\cal M},{\sf i})$-path} is a sequence $(con_t)_{t\geq 1} $ that belongs to ${\sf S}_{\cal M}^\omega$ and can be \big[non-\big]deterministically generated by applying ${\rm Input}_{\cal M}$ and the transition system $({\sf S}_{\cal M},\to_{\cal M})$. According to the types of the machines considered here, these $({\cal M},{\sf i})$-paths are determined by $ (trans\,1)$, $(trans\,2)$, or $(trans\,3)$.

\vspace{0.2cm}\noindent \,\,\,
\begin{tabular}{rcllcllr} 
$con_1$&$\!\!\! \!\!\!\!\textcolor{blue}{=} \!\!\!\!$&$\!\!\!{\rm Input}_{\cal M} ({\sf i})$&\,\,and\quad &$con_t$&$\!\!\! \!\!\!\!\textcolor{red}{ \to_{\cal M} } \!\!\!\!$&$\!\!\!con_{t+1}$\, for $t\geq 1$& {\small $(trans\,1)$} \vspace{0.05cm}\\
$ ({\sf i},con_1)$&$\!\!\! \!\!\!\!\textcolor{blue}{\in} \!\!\!\!$&$\!\!\!{\rm Input}_{\cal M}$&\,\,and\quad &$con_t$&$\!\!\! \!\!\!\!\textcolor{red}{\to_{\cal M}} \!\!\!\!$&$\!\!\!con_{t+1}$\, for $t\geq 1$& {\small $(trans\,2)$} \vspace{0.05cm}\\
$con_1$&$\!\!\! \!\!\!\!\textcolor{blue}{=} \!\!\!\!$&$\!\!\!{\rm Input}_{\cal M} ({\sf i})$&\,\,and\quad &$con_t$&$\!\!\! \!\!\!\!\textcolor{red}{\genfrac{}{}{0pt}{2}{\longrightarrow} {\longrightarrow}_ {\cal M}} \!\!\!\!$&$\!\!\!con_{t+1}$\, for $t\geq 1$&\,\,\,  {\small $(trans\,3)$} \vspace{0.15cm}\\
\end{tabular}

\vspace{0.2cm}\noindent Let ${\sf Con}_{{\cal M},{\sf i}}$ be the set of all $({\cal M},{\sf i})$-paths. A path $(con_t)_{t\geq 1} $ in ${\sf Con}_{{\cal M},{\sf i}}$ given by $con_t=(\ell_t\,.\,\vec \nu^{\,(t)}\,.\, \bar u^{(t)})$ for all $t\geq 1$ determines an {\sf infinite} {\em computation path} $(\ell_t)_{t\geq 1} $ if $\ell_t\not=\ell_{{\cal P}_{\cal M}}$ holds for all $t\geq 1$. An initial segment of $(con_t)_{t\geq 1} $ determines a {\sf finite} {\em computation path} $(\ell_t)_{t=1..s} $ of ${\cal M}$ if $\ell_s=\ell_{{\cal P}_{\cal M}}$ for some $s\geq 1$ and $\ell_t\not=\ell_s$ for all $t< s$ hold. If an $({\cal M},{\sf i})$-path determines a finite computation path, then $\sf {i}$ belongs to the halting set $H_{\cal M}$ of ${\cal M}$. For $(con_{\vec x,t})_{t\geq 1} \in {\sf Con}_{{\cal M},\vec x}$ and $(con_{\vec z,t})_{t\geq 1} \in {\sf Con}_{{\cal M},\vec z}$ given by $con_{\vec x,t}=(\ell_{\vec x, t}\,.\,\vec \nu^{\,(\vec x,t)}\,.\, \bar u^{(\vec x,t)})$ and $con_{\vec z,t}=(\ell_{\vec z, t}\,.\,\vec \nu^{\,(\vec z,t)}\,.\, \bar u^{(\vec z,t)})$, both sequences $(\ell_{\vec x,t})_{t\geq 1} $ and $(\ell_{\vec z,t})_{t\geq 1} $ can match. If $(\ell_{\vec x,t})_{t\geq 1} $ is not an infinite computation path and $(\ell_{\vec x,t})_{t\geq 1} $ and $(\ell_{\vec z,t})_{t\geq 1} $ match, then there is some $s\geq 1$ such that $(\ell_{\vec x,t})_{t=1..s} $ and $(\ell_{\vec z,t})_{t=1..s} $ are the same computation path of {\em length $s$}. For deterministic machines ${\cal M}$ and ${\sf i}\in {\sf I}_{\cal M}$, ${\sf Con}_{{\cal M},{\sf i}}$ contains only one sequence $((\ell_t\,.\,\vec \nu^{\,(t)}\,.\, \bar u^{(t)}))_{t\geq 1}$ that can be uniquely generated by ${\rm Input}_{\cal M} ({\sf i})$ and $\to_{\cal M}$ as described by $(trans\,1)$ and in accordance with the sequence $({\sf instruction}_{\ell_t})_{t\geq 1}$ of instructions given by ${\cal P}_{\cal M}$. In the following, we also write $(\ell_t\,.\,\vec \nu^{\,(t)}\,.\, \bar u^{(t)})_{t\geq 1}$ for $((\ell_t\,.\,\vec \nu^{\,(t)}\,.\, \bar u^{(t)}))_{t\geq 1}$, ${\rm Stop}_{\cal M}(con_t)$ for ${\rm Stop}_{\cal M}(\ell_t\,.\,\vec \nu^{(t)}\,.\, \bar u^{(t)})$, and the like. 

\vspace{0.2cm}
{\bf Sequences of configurations deterministically generated.} 
Let the sequence $(con_t)_{t\geq 1}$ belong to $ {\sf Con}_{{\cal M},{\sf i}}$. If $\to_{\cal M}$ is a single-valued mapping, we also use the prefix notation and write $(\to_{\cal M})(con_t)=con_{t+1}$ and $(\to_{\cal M})^1(con_t)=con_{t+1}$ instead of $con_t\to_{\cal M}\!con_{t+1}$. The composition $(\to_{\cal M})^r$ is recursively defined. Let $(\to_{\cal M})^{0}(con)=con$ and $(\to_{\cal M})^{r}(con)=(\to_{\cal M}) ((\to_{\cal M})^{r-1}(con) )$ for all $con\in {\sf S}_{\cal M}$ and $r\geq 1$. The partial function $(\to_{\cal M})_{{\rm Stop}_{\cal M}} :\,\,\subseteq\! {\sf S}_{\cal M} \to {\sf S}_{\cal M}$ can be introduced as follows. Let $(\to_{\cal M})_{{\rm Stop}_{\cal M}}(con) = (\to_{\cal M})^{st_0}(con)$ hold if the set ${St}_{con}$ given by ${St}_{con} = \{st\in \bbbn\mid {\rm Stop}_{\cal M}((\to_{\cal M})^{st}(con))=0\}$ is not empty~-- where the stop criterion ${\rm Stop}_{\cal M}(\ell\,.\,\vec \nu\,.\, \bar u)=0$ is here satisfied only for $\ell=\ell_{{\cal P}_{\cal M}}$~-- and $st_0= \min {St}_{con}$ holds. Otherwise, $(\to_{\cal M})_{{\rm Stop}_{\cal M}}(con)$ is not defined. Consequently, for every finite computation path $ (\ell_t)_{t=1..s} $, each initial part $ (\ell_t\,.\,\vec \nu^{\,(t)}\,.\, \bar u^{(t)})_{t=1..s} $ of an $({\cal M},{\sf i})$-path contains a so-called {\sf stop configuration} $(\ell_s\,.\,\vec \nu^{\,(s)}\,.\, \bar u^{(s)})$ that is given by 

\vspace{0.2cm}
\quad $(\to_{\cal M})_{{\rm Stop}_{\cal M}}(\ell_1\,.\,\vec \nu^{\,(1)}\,.\, \bar u^{(1)}) = (\to_{\cal M})^{s-1}(\ell_1\,.\,\vec \nu^{\,(1)}\,.\, \bar u^{(1)})= (\ell_s\,.\,\vec \nu^{\,(s)}\,.\, \bar u^{(s)})$ 

\vspace{0.2cm}
\noindent with $\ell_s=\ell_{{\cal P}_{\cal M}}$. 

\vspace{0.2cm}
{\bf Traversal of computation paths.} Each $({\cal M},{\sf i})$-path determines a computation path of ${\cal M}$. This path belongs to ${\cal L}_{\cal M}^\infty \cup{\cal L}_{\cal M}^\omega$ and can be denoted by $(\ell_1,\ell_2,\ldots\big[,\ell_s\big])$, $\vec \ell$, $\bar \ell$, or $B',B^*, B_0^*, \ldots, B_1,B_2,\ldots, B^{(1)}, B^{(2)},\ldots, B^{\rm string}(\vec x), $ $\ldots$.\,\,\footnote{ \label{EmptyList} We use ${\cal L}^{\infty}=\bigcup_{s\geq1}{\cal L}^ s$, ${\cal L}^{\omega}=\{(\ell_1,\ell_2,\ldots) \mid (\forall t\geq 1)(\ell_t\in {\cal L}) \}$, $\bar \ell=(\ell_1,\ell_2,\ldots)=(\ell_t)_{t\geq1}$, $\vec \ell=(\ell_1,\ldots \ell_s)=(\ell_t)_{t=1..s}$, etc. $\ell_2,\ldots,\ell_1$ can here stand for the empty list. The sequence $(\ell_1,\ell_2,\ldots\big[,\ell_1\big])$ is either infinite or the tuple $(\ell_1)$, also denoted by $\ell_1$, of length 1. The symbols $ B',B^*,\ldots $ are derived from the word {\sf Berechnungspfad}. $^*$ is here an extension that belongs to the symbol $B^*$. Program paths will also be denoted by $B^*, B_0^*, \ldots$ (cf.\,\,Part IIb). Note, that each label can be stored in a register (an {\sf instruction counter}) denoted by $B$ (for {\sf Befehlsz\"ahler}). A placeholder $\ell$ can also be called an {\em instruction counter}. A label itself is not a counter.  If $^*$ is added to a set $N$, then $^*$ is a unary operation and $N^*$ is the set of all strings over $N$. } We say that ${\cal M}$ {\em goes through $B^*$ for ${\sf i}$} or ${\cal M}$ {\em traverses $B^*$ for ${\sf i}$} or {\em ${\sf i}$ traverses the computation path $B^*$}. All computation paths of ${\cal M}$ are {\sf program paths}.

\vspace{0.2cm}
\noindent \quad For more details, see the introductions in \cite{GASS20, GASS25A} and \cite{GASS25B,GASS25C}. Our metatheory is the Zermelo-Fraenkel set theory with the axiom of choice (ZFC). If we use inequalities such as $t\leq s$, $t\geq 4$, or $t>k$ without further explanation, then let $t,s,k\in\! \bbbn_+$, etc.  Any $\vec x$ has a length $n\geq 1$ which means $\vec x=(x_1,\ldots,x_n)$, etc. 

\section{Flowcharts and program paths}\label{SectionFlowch} 
As shown in Fig.\,\,\ref{BerWurzeln1}, we can use flowcharts for describing the execution of an algorithm given by a $\sigma$-program ${\cal P}$. These flowcharts can be directed graphs whose nodes~-- apart from two end nodes~-- represent the instructions of the program. The directed edges indicate the possible directions of executing the program. Each computation path of a machine ${\cal M}$ executing ${\cal P}$ can be represented by a walk from the input node to the output node. The traversal of the flowchart along this walk and the generation of a sequence of configurations of ${\cal M}$ are closely related. For more details, see Definition \ref{DefFlowLabel}.

\begin{figure}[t] \centering
{\small

\begin{tabular}{l}
{\em The program ${\cal P}$:}\\
\qquad 1: {\sf if $I_1=I_2$ then goto $2$ else goto $1$};\\
\qquad 2: {\sf $Z_2:= c_2^0$};\\
\qquad 3: {\sf $Z_3:=c_1^0$};\\
\qquad 4: {\sf $Z_1:= f_2^2(Z_1, Z_1)$};\\
\qquad 5: {\sf $Z_2:=f_1^2(Z_2,Z_3)$};\\
\qquad 6: {\sf if $r_1^2(Z_1,Z_2)$ then goto $7$ else goto $5$};\\
\qquad 7: {\sf $Z_1:= c_1^0$};\\
\qquad 8: {\sf stop.}\\
{\em Alternatively:}\\
\qquad 1: {\sf if $I_1=I_2$ then goto $2$ else goto $1$};\\
\qquad 2: {\sf $Z_2:= -1$};\\
\qquad 3: {\sf $Z_3:=1$};\\
\qquad 4: {\sf $Z_1:= Z_1 * Z_1$};\\
\qquad 5: {\sf $Z_2:=Z_2+ Z_3$};\\
\qquad 6: {\sf if $Z_1=Z_2$ then goto $7$ else goto $5$};\\
\qquad 7: {\sf $Z_1:= 1$};\\
\qquad 8: {\sf stop.}\\
{\em Symbols:}\\
\qquad $+$ and $*$ stand for $f_1^2$ and $f_2^2$.\\
\qquad $=$ stands for $r_1^2$.\\
\qquad $1$ and $-1$ stand for $c_1^0$ and $c_2^0$.\\
{\em Infix notation:}\\\qquad$Z_2+Z_3$ stands for $f_1^2(Z_2,Z_3)$.\\
\qquad$Z_1*Z_1$\, stands for $f_2^2(Z_1,Z_1)$.\\
\qquad$Z_1=Z_2$ stands for $r_1^2(Z_1,Z_2)$.\\
\end{tabular}} 
\hspace{0.6cm} \begin{tabular}{c} \includegraphics[width=32mm]{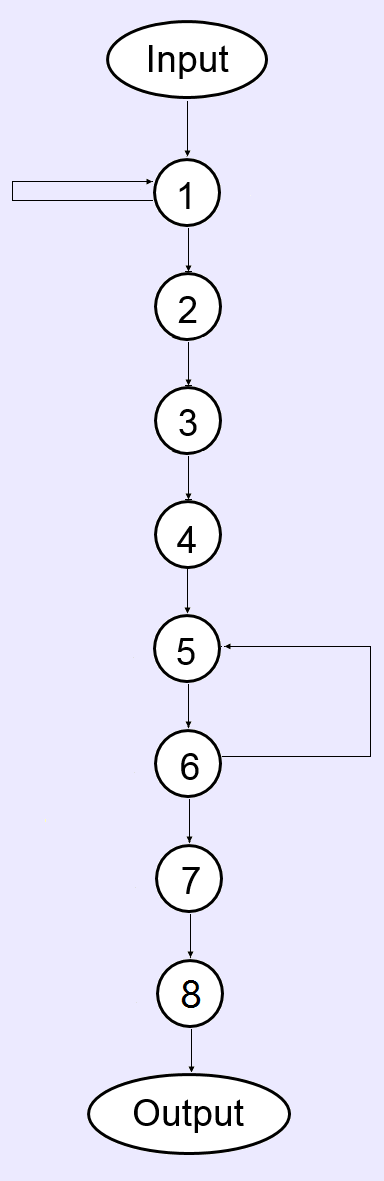} \end{tabular} 
\caption{The flowchart $\mathfrak{Fl\,}_{\cal P}^{\rm label}$ for a program ${\cal P}\in {\sf P}_{(2;2,2;2)}$}\label{BerWurzeln1}
\end{figure}

\subsection{Flowcharts and walks} 
We want to use finite graphs to visualize all possible sequences of processing steps during the execution of a program ${\cal P}\in{\sf P}_{\sigma}$ for an arbitrary input space. The {\em simplest form} of a flowchart could be a graph that belongs to $\mathfrak{Fl\,}_{\cal P}^{\sf label}$. Each of the nodes in $\mathfrak{Fl\,}_{\cal P}^{\sf label}$~-- apart from the input and the output node~-- is the label of an instruction in ${\cal P}$ and each of the edges is a pair of nodes labeling an instruction and a further instruction that can be executed immediately afterward.

In Overview \ref{LabelBetas}, the components and some essential features of flowcharts are summarized. For details, see Remark \ref{RelatStruc}, Definition \ref{DefFlowLabel} where we also use the subsets of ${\cal L}_{\cal P}$ given in Overview \ref{LabelSets}. and Agreement \ref{AgreeFlowNodes}. 

\begin{overview}[Some subsets of ${\cal L}_{\cal P}$] \label{LabelSets}
\hfill

\nopagebreak 
\noindent \fbox{\parbox{11.8cm}{\hfill{\small Let ${\cal P}\in {\sf P}_{\sigma}$.
}

\begin{tabular}{lcl}
${\cal L}_{\cal P}^{\rm NB}\!$&$\!\!\!=$&$\!\!\!\! {\cal L}_{{\cal P},{\rm F}}\cup {\cal L}_{{\cal P},{{\rm F}_0}}\cup{\cal L}_{{\cal P},{\rm C}}\cup {\cal L}_{{\cal P},{{\rm H}_1}}\cup{\cal L}_{{\cal P},{{\rm H}_{+1}}}$,\vspace{0.02cm}\\ 
${\cal L}_{\cal P}^{\rm B}$ &$=$&$\!\!\!\! {\cal L}_{{\cal P},{\rm T}}\cup {\cal L}_{{\cal P},{{\rm H}_{\rm T}}}\cup{\cal L}_{{\cal P},{\rm B}}$, \vspace{0.02cm}\\ 
${\cal L}_{\cal P}$&$\!\!\!=$&$\!\!\!\! {\cal L}_{\cal P}^{\rm NB} \cup {\cal L}_{\cal P}^{\rm B} \cup {\cal L}_{{\cal P},{\rm S}}$,\vspace{0.02cm}\\ 
${\cal L}_{\cal P}^\circ$&$\!\!\!=$&$\!\!\!\! {\cal L}_{\cal P}^{\rm B}\cup {\cal L}_{\cal P}^{\rm NB} ={\cal L}_{\cal P}\setminus {\cal L}_{{\cal P},{\rm S}} $.\\\end{tabular}

\hfill {\small ${\rm B}$ stands for {\sf branching}. 

\hfill ${\rm NB}$ is derived from the word {\sf non-branching}.} 
}}
\end{overview}

\begin{overview}[The structure $\mathfrak{Fl\,}_{\cal P}^{\sf label}$ and its partial $\beta_{\cal P}^\circ$-functions] \label{LabelBetas}
\hfill

\nopagebreak 
\noindent \fbox{\parbox{11.8cm}{\hfill{\small Let ${\cal P}\in{\sf P}_\sigma$.}

\vspace{0.1cm}
\em 
$\mathfrak{Fl\,}_{\cal P}^{\sf label}$ is the structure $((V,E),\beta_{\cal P}, \beta_{\cal P}^+,\beta_{\cal P}^-)$ with 

\vspace{0.15cm}
$\,\,\bullet$\,\, the partial $\beta_{\cal P}^\circ$-functions satisfying

\vspace{0.1cm}
\quad $\begin{array}{rcl}
\beta_{\cal P},\beta_{\cal P}^+,\beta_{\cal P}^- &:&\subseteq {\cal L}_{\cal P}^\circ\to {\cal L}_{\cal P}, \hspace{5.49cm} \mbox{\scriptsize \em \textcolor{gray}{cf.\,\,{\sf (FL3)}}}\,\vspace{0.15cm}\\
\beta_{\cal P}^+&:&{\cal L}_{\cal P}^{\rm B}\to {\cal L}_{\cal P},\\
\beta_{\cal P}^- &:&{\cal L}_{\cal P}^{\rm B}\to {\cal L}_{\cal P},\\
\beta_{\cal P}&:&{\cal L}_{\cal P}^{\rm NB}\to {\cal L}_{\cal P}, \end{array}$

\vspace{0.1cm}
\quad$\begin{array}{lll}
\beta_{\cal P}(\ell)=\ell+1 & \mbox{ for $\ell \in {\cal L}_{\cal P}^{\rm NB}$,} \\
\beta_{\cal P}^+(\ell)= \ell_1& \mbox{ for $\ell \in {\cal L}_{\cal P}^{\rm B}$} \,\textcolor{gray}{\mbox{ \small \em and }} \\
\beta_{\cal P}^- (\ell)=\ell_2  &\mbox{ for $\ell \in {\cal L}_{\cal P}^{\rm B}$}\hspace{5cm}\mbox{\scriptsize \em \textcolor{gray}{cf.\,\,{\sf (FL4)}, {\sf (FL5)}}}\\
&\textcolor{gray}{\mbox{ \small \em  if ${\sf instruction}_\ell$ is of ${\rm B}$-type $({\sf cond}_\ell,\ell_1,\ell_2)$ or $(\ell_1,\ell_2)$}},\\
\end{array}$

\vspace{0.25cm}
$\,\,\bullet$\,\, a graph $(V,E)$ satisfying (a) or (b).

\vspace{0.1cm}
$\,\,\,\begin{array}{rcl}
\mbox{(a)}\,\,\, V\!&=\!&{\cal L}_{\cal P}\cup \{{\sf Input},{\sf Output}\}, \hspace{4.85cm} \mbox{\scriptsize \em \textcolor{gray}{cf.\,\,{\sf (FL1)}}}\,\\
E&=\!&E_{\cal P}\, \cup\{({\sf Input},1), (\ell_{\cal P},{\sf Output})\}, \hfill \mbox{\scriptsize \em \textcolor{gray}{cf.\,\,{\sf (FL2)}}}\,\vspace{0.05cm}\\
\,\,\,E_{\cal P}\!&=\!&\{ (\ell, \ell') \mid \ell \in {\cal L}_{\cal P}^\circ  \,\,\&\,\, \ell' \in Succ_{\cal P}(\ell)\}
\hfill \mbox{\scriptsize \em \textcolor{gray}{cf.\,\,{\sf (FL6)}}}\,\\
Succ_{\cal P}&=&\{(\ell, \{\beta_{\cal P}(\ell)\})\mid \ell \in {\cal L}_{\cal P}^{\rm NB}\} \cup \{(\ell, \{\beta_{\cal P}^+(\ell),\beta_{\cal P}^-(\ell)\})\mid \ell \in {\cal L}_{\cal P}^{\rm B}\}. 
\vspace{0.1cm}\\
\mbox{(b)}\,\,\, V\!&=\!&{\cal L}_{\cal P},\\ E&=\!&E_{\cal P}\subset {\cal L}_{\cal P}^2.\\
\end{array}$

\hfill {\small \em Note, that $\subset$ means both $\subseteq$ and $\not =$. }
}} \end{overview} 

\vspace{0.2cm}
\begin{remark}[Relational structures and partial functions]\label{RelatStruc}\hfill According to \linebreak a definition in \cite[p.\,41]{TuschikWolter}, we can also say that any binary relation $r_0$ on a non-empty set $V$ defines a relational structure $(V,r_0)$ that we call a {\em graph}. More precisely, such a structure is a {\em directed graph} that can be represented by using an arrow for each pair in $r_0\subseteq V\times V$ and vertices for the elements in $V$. As usual in graph theory, it is possible to identify the graphs and their representations largely. $\mathfrak{Fl\,}_{\cal P}^{\sf label}$ could also be given by $(V;;\beta_{\cal P}, \beta_{\cal P}^+,\beta_{\cal P}^-;E)$. Whereas $(V;;;E)$ is a $(0;;2)$-structure that belongs to ${\sf Struc}$, $(V;;\beta_{\cal P}, \beta_{\cal P}^+,\beta_{\cal P}^-;E)$ does not belong to ${\sf Struc}$. The latter structure is not a first-order structure as considered in standard first-order logic since the $\beta_{\cal P}^\circ$-functions are partial. 
\end{remark}

\begin{definition}[Flowcharts $\mathfrak{Fl\,}_{\cal P}^{\sf label}$]\label{DefFlowLabel}
Let ${\cal P}$ be any $\sigma$-program of the form $(*)$. The structure $((V,E),\beta_{\cal P}, \beta_{\cal P}^+,\beta_{\cal P}^-)$ is a {\em flowchart} denoted by $\mathfrak{Fl\,}_{\cal P}^{\sf label}$ if $(V,E)$ is a connected or disconnected directed graph and $ \beta_{\cal P}$, $ \beta_{\cal P}^+$, and $\beta_{\cal P}^-$ are partial functions such that the conditions {\sf (FL1)} to {\sf (FL6)} are satisfied.

\begin{enumerate}[label={\sf (FL\arabic*)}]
\labelwidth0.7cm \leftmargin0cm \itemsep2pt plus1pt \topsep1pt plus1pt minus1pt \labelsep3pt \parsep0.5pt plus0.1pt minus0.1pt

\item{\bf The set of nodes.} 

Let $V$ be the set given by $V={\cal L}_{\cal P}\cup\{{ \sf Input}, { \sf Output} \}$. Its elements are called {\em nodes}. Each node in $V$ can be represented by a dot or illustrated using a small circle or a small ellipse (or another planar figure) in a 2D plane. The {\em input node} ${ \sf Input}$ represents the {\sf source of information} and the {\em output node} ${ \sf Output}$ represents a {\sf data sink}. Each of the other nodes is a label in ${\cal L}_{\cal P}=\{1, \ldots, \ell_{\cal P}\}$ (where $1\leq \ell_{\cal P}$).

\item{\bf The set of edges.} 

Let $E$ be a subset of $V\times V$. The ordered pairs of nodes in $E$ are called {\em edges}. Each of these edges can be represented by an arrow from a node to a node. This corresponds to the fact that each edge $(v_1,v_2)\in E$ is directed and connects the first component $v_1\in V$ with the second component $v_2\in V$. If $(v_1,v_2)\in E$ holds, then $v_2$ is one of the successors of $v_1$. Let $e\in E$. There are three types of edges in $E$.
\begin{itemize} \parskip -0.3mm
\item $e$ can be the edge $({\sf Input},1)$ {\sf from} the input node {\sf to} the node $1$. 
\item$e$ can belong to $ {\cal L}_{\cal P}^\circ \times {\cal L}_{\cal P}$.
\item $e$ can be the edge $(\ell_{\cal P},{\sf Output})$ {\sf from} the node $\ell_{\cal P}$ {\sf to} the output node.
\end{itemize} 
$E$ is a usual set and no multiset, there are not multiple edges in $E$.
On the other hand, {\em loops $(\ell,\ell)$} from a node $\ell\in {\cal L}_{\cal P}^\circ$ to $\ell$ are allowed. 

\item {\bf The $\beta_{\cal P}^\circ$-functions for determining the successors in ${\cal L}_{\cal P}$.} 

For any node in ${\cal L}_{\cal P}^\circ$, all its successors are determined by one of the (partial) functions $\beta_{\cal P}, \beta_{\cal P}^+,\beta_{\cal P}^-$ from ${\cal L}_{\cal P}^\circ$ to ${\cal L}_{\cal P}$ \footnote{ The notations $\beta_{\cal P},\beta_{\cal P}^+$, and $\beta_{\cal P}^-$ go back to \cite[p.\,11]{BSS89} where $\beta,\beta^+,\beta^-$ are used.}  and  $Succ_{\cal P}: {\cal L}_{\cal P}^\circ \to {\mathfrak P}({\cal L}_{\cal P})$.

\item {\bf The successors $\beta_{\cal P}^+ (\ell)$ and $\beta_{\cal P}^-(\ell)$  of branching nodes $\ell$  in $Succ_{\cal P}(\ell)$.} 

Each node $\ell$ that is a label of a branching instruction is a {\em branching node} and belongs to ${\cal L}_{\cal P}^{\rm B}$ defined by ${\cal L}_{\cal P}^{\rm B} = {\cal L}_{{\cal P},{\rm T}}\cup {\cal L}_{{\cal P},{{\rm H}_{\rm T}}}\cup{\cal L}_{{\cal P},{\rm B}}$. It has at most two successors $\beta_{\cal P}^+(\ell)$ and $\beta_{\cal P}^-(\ell)$ where $\beta_{\cal P}^+: {\cal L}_{\cal P}^{\rm B}\to {\cal L}_{\cal P}$ and $\beta_{\cal P}^- : {\cal L}_{\cal P}^{\rm B}\to {\cal L}_{\cal P}$ are defined as follows. Let $\beta_{\cal P}^+(\ell)$ and $\beta_{\cal P}^-(\ell)$ be given by $\beta_{\cal P}^+(\ell)=\ell_1$ and $\beta_{\cal P}^- (\ell)=\ell_2$ and  let $Succ_{\cal P}(\ell)=\{\ell_1,\ell_2\}$  if $\ell$ is the label of a branching instruction of one of the {\sf ${\rm B}$-types} 
 $({\sf cond}_\ell,\ell_1,\ell_2)$ or $(\ell_1,\ell_2)$~-- given by 

\vspace{0.1cm}
\qquad {\sf if ${\sf cond}_\ell$ then goto $\ell_1$ else goto $\ell_2$} \hfill $({\sf cond}_\ell,\ell_1,\ell_2)$,

\vspace{0.05cm}
\qquad {\sf goto $\ell_1$ or goto $\ell_2$} \hfill $(\ell_1,\ell_2)$

\vspace{0.1cm}\noindent~-- that belongs to ${\cal P}$. \footnote{ Here, we could also consider ${\rm O}$-instructions (and thus $\ell\in {\cal L}_{{\cal P},{\rm O}})$.}

We can assume that $\ell_1\not=\ell_2$. \footnote{ \label{BranchGeneral} Note: It is possible to consider only branching instructions of the ${\rm B}$-types $({\sf cond}_ \ell,\ell_1,\ell_2)$ and $(\ell_1,\ell_2)$ where \textcolor{blue}{$\ell_1\not =\ell_2$} holds because all instructions of the form {\sf $\ell : \,$ goto $\ell _1$ or goto $\ell _1$} \, or \, {\sf $\ell : \,$ if ${\sf cond}_ \ell$ then goto $\ell _1$ else goto $\ell _1$} in a program ${\cal P}$ can be replaced by a subprogram {\sf $\ell : $ if $I_1\!=\!I_1$ then goto $\ell _1$ else goto $\ell';$ $ \ell': \,$ goto $\ell _1$ or goto $\ell'$} with a new label $\ell'\not \in {\cal L}_{\cal P}$.}. Otherwise, $\ell$ should have only one successor and one outgoing edge.

\item {\bf The successors $\beta_{\cal P} (\ell)$ of non-branching nodes $\ell\in {\cal L}_{\cal P}^{\rm NB}$ in $Succ_{\cal P}(\ell)$.} 

Each node $\ell \in {\cal L}_{\cal P}^\circ$ that is not the label of a branching instruction belongs to ${\cal L}_{\cal P}^{\rm NB}$ defined by ${\cal L}_{\cal P}^{\rm NB}= {\cal L}_{{\cal P},{\rm F}}\cup {\cal L}_{{\cal P},{{\rm F}_0}}\cup{\cal L}_{{\cal P},{\rm C}}\cup {\cal L}_{{\cal P},{{\rm H}_1}}\cup{\cal L}_{{\cal P},{{\rm H}_{+1}}}$ and has only one successor $\beta_{\cal P}(\ell)$.  For $\ell \in {\cal L}_{\cal P}^{\rm NB}$, let $Succ_{\cal P}(\ell)=\{\ell+1\}$. 
$\beta_{\cal P}: {\cal L}_{\cal P}^{\rm NB}\to {\cal L}_{\cal P}$ is uniquely defined by $\beta_{\cal P}(\ell)=\ell+1$ for all $\ell \in {\cal L}_{\cal P}^{\rm NB}$. \footnote{ Here, we omit also the ${\rm N}$-instructions (and thus $\ell\in {\cal L}_{{\cal P},{\rm N}}$). We think that either $\beta_{\cal P}(\ell)=\ell+1$ or $\beta_{\cal P}^+(\ell)=\ell+1$ and $\beta_{\cal P}^-(\ell)=\ell$ would be possible (see \cite{GASS25A} and \cite{GASS25B}).}

\item {\bf The edges in $E_{\cal P}$.}

Apart from the edges $({\sf Input},1)$ and $ (\ell_{\cal P},{\sf Output})$, all other edges in $E$ are determined by $\beta_{\cal P}$, $\beta_{\cal P}^+$, and $\beta_{\cal P}^-$ such that we have

\vspace{0.2cm}
$E=E_{\cal P} \cup\{({\sf Input},1), (\ell_{\cal P},{\sf Output})\}$ 

\vspace{0.1cm}
and 

\vspace{0.1cm}$E_{\cal P}=\{ (\ell, \ell') \mid \ell \in {\cal L}_{\cal P}^\circ  \,\,\&\,\, \ell' \in Succ_{\cal P}(\ell)\}$. 

\vspace{0.2cm}
If, for a node $\ell$ in ${\cal L}_{\cal P}^\circ$, there is a label $\ell '\in {\cal L}_{\cal P}$ with $(\ell,\ell')\in E_{\cal P}$, then $(\ell,\ell')$ is an {\em outgoing edge} of $\ell$. If, for a node $\ell$ in ${\cal L}_{\cal P}$, there is a label $\ell '\in {\cal L}_{\cal P}^\circ$ with $(\ell',\ell)\in E_{\cal P}$, then $(\ell',\ell)$ is an {\em incoming edge} of $\ell$. $({\sf Input},1)$ is an {\em incoming edge} of $1$. $(\ell_{\cal P},{\sf Output})$ is an {\em outgoing edge} of $\ell_{\cal P}$.
\end{enumerate}
\end{definition} 
More precisely, we have $E\subset (V\setminus \{{\sf Output}\})\times (V\setminus \{{\sf Input}\})$ which means that the node ${\sf Input}$ has no incoming edges whereas the node $1$ can have several incoming edges. We can say with certainty that $E_{\cal P}\subset {\cal L}_{\cal P}^\circ\times {\cal L}_{\cal P}$ holds for $|{\cal L}_{\cal P}| =\ell_{\cal P}\geq 3$ and that $\ell_{\cal P}$ has at most one outgoing edge.
\begin{agree}[{\sf Input} and {\sf Output} are optional]\label{AgreeFlowNodes}
In addition to the labels, we allow also an input node and an output node, but $\,\mathfrak{Fl\,}_{\cal P}^{\sf label}$ can, in any case, be described by

\vspace{0.3cm}
$((\underbrace{{\cal L}_{\cal P}\big [\cup \{{\sf Input},{\sf Output}\} \big]}_{V},\underbrace{E_{\cal P} \big [\cup\{({\sf Input},1), (\ell_{\cal P},{\sf Output})\} \big]}_{E}),\beta_{\cal P}, \beta_{\cal P}^+,\beta_{\cal P}^-)$.

\vspace{0.3cm}
\noindent We can delete either all characters in the square brackets and the square brackets or only all square brackets. This means that the structure $((V,E),\beta_{\cal P}, \beta_{\cal P}^+,\beta_{\cal P}^-)$ can be $(({\cal L}_{\cal P}\cup \{{\sf Input},{\sf Output}\}, E_{\cal P}\cup\{({\sf Input},1), (\ell_{\cal P},{\sf Output})\}),\beta_{\cal P}, \beta_{\cal P}^+,\beta_{\cal P}^-)$~-- used particularly when we  draw it~-- or otherwise $(({\cal L}_{\cal P},E_{\cal P}),\beta_{\cal P}, \beta_{\cal P}^+,\beta_{\cal P}^-)$.
\end{agree}
{\bf Further kinds of flowcharts and their representation.} For any ${\cal P}\in {\sf P}_{\sigma}$ given by $(*)$, let the {\em flowchart $\mathfrak{Fl\,}_{\cal P}^{\sf instr}$} be the structure that results from $\mathfrak{Fl\,}_{\cal P}^{\sf label}$ by adding the function $instr$ that assigns ${\sf instruction}_\ell$ to $\ell$ for any node $\ell\in {\cal L}_{\cal P}$. The introduction of further functions allows us to represent the $\sigma$-programs by flowcharts in the usual way in a 2D plane.\,\,\footnote{ For more information, see https://en.wikipedia.org/wiki/Flowchart. We checked the information on the Wikipedia website on March 20, 2026.} We were already using such and similar representations, for example, on the last pages of \cite{GASS25C}. More precisely, let $\mathfrak{Fl\,}_{\cal P}^{\sf insque}$ be an expansion of $\mathfrak{Fl\,}_{\cal P}^{\sf label}$ that results from $\mathfrak{Fl\,}_{\cal P}^{\sf label}$ by adding new functions $insque$ and $denote$. The function $denote$ helps to mark certain edges and $insque$ can be used to replace some values of $instr$ by new values of $insque$ such that the new flowcharts have the following form (cf.\,\,Fig.\,\,\ref{BerWurzeln2}).
\begin{itemize}
\item Each node $\ell$ in ${\cal L}_{\cal P}^{\rm B}$ corresponding to an instruction of ${\rm B}$-type $({\sf cond}_\ell,\ell_1,\ell_2)$ can be represented by the string {\sf cond$_\ell$?} given by 

\vspace{0.1cm}
\qquad $insque(\ell)=$ \textcolor{blue}{{\sf cond}$_\ell${\sf ?}}.
 
\vspace{0.1cm}
\item The label $\ell$ of an instruction of ${\rm B}$-type $(\ell_1,\ell_2)$ can be represented by the string {\sf random} given by 

\vspace{0.1cm}
\qquad $ insque(\ell)=\textcolor{blue}{{\sf random}}$. 

\vspace{0.1cm}
\item In the other cases, let us use \vspace{0.1cm}

\vspace{0.1cm}
\qquad $insque(\ell)=\textcolor{blue}{instr(\ell)}$.

\vspace{0.1cm}
\item If $\ell$ in ${\cal L}_{\cal P}^{\rm B}$ is the label of an instruction ${\sf instruction}_{\ell}$ of ${\rm B}$-type $({\sf cond}_\ell,\ell_1,\ell_2)$ and there holds
$\ell_1\not= \ell_2$, then let 

\vspace{0.1cm}
\qquad $ denote((\ell,\ell_1))=\textcolor{blue}{{\rm yes}} \quad \mbox{ and } \quad denote((\ell,\ell_2))=\textcolor{blue}{{\rm no}}$. 

\vspace{0.1cm}
\noindent In such a case, the arrows representing the outgoing edges $(\ell,\ell_1)$ and $(\ell,\ell_2)$ from $\ell$ to its successors $\ell_1$ and $\ell_2$ can be marked with the strings `${\rm yes}$' and `${\rm no}$', respectively.
\end{itemize}
For more details, see Overview \ref{SeveralFlowch}.

\vspace{0.15cm}
\begin{agree}[Flowcharts without ${\sf Input}$ and ${\sf Output}$] \hfill In the following \linebreak definitions and propositions, let, for any ${\cal P}\in {\sf P}_{\sigma}$, $\mathfrak{Fl\,}_{\cal P}^{\sf label}=(({\cal L}_{\cal P},E),\beta_{\cal P}, \beta_{\cal P}^+,\beta_{\cal P}^-)$ with $E=E_{\cal P}\subseteq {\cal L}_{\cal P}^2$, i.e., we omit the nodes {\sf Input} and {\sf Output}. \end{agree}
\begin{overview} [Flowcharts: Graphical representations of programs]\label{SeveralFlowch}

\hfill

\nopagebreak 

\noindent \fbox{\parbox{11.8cm}{

{\small 
\hfill Let ${\cal P}\in {\sf P}_{\sigma}$.}

\em
$\,\,\left.\begin{array}{ll}
\mathfrak{Fl\,}_{\cal P}^{\sf label}&=(({\cal L}_{\cal P},E),\beta_{\cal P}, \beta_{\cal P}^+,\beta_{\cal P}^-)\vspace{0.1cm}\\
\mathfrak{Fl\,}_{\cal P}^{\sf instr}&=(({\cal L}_{\cal P},E),\beta_{\cal P}, \beta_{\cal P}^+,\beta_{\cal P}^-,instr)\vspace{0.1cm}\\
\mathfrak{Fl\,}_{\cal P}^{\sf insque}&=(({\cal L}_{\cal P},E),\beta_{\cal P}, \beta_{\cal P}^+,\beta_{\cal P}^-, insque,denote)
\end{array}\right\}{\scriptsize \begin{array}{ll}\\\mbox{\normalsize Flowcharts}\\\mbox{\it \,\,\qquad for representing ${\cal P}$}\end{array}}$

\vspace{0.15cm}
with 

\vspace{0.05cm}
\hspace{0.3cm}\begin{tabular}{lll}
$\beta_{\cal P}, \beta_{\cal P}^+,\beta_{\cal P}^-\!\!\!\!$ &$:\,\subseteq\! {\cal L}_{\cal P}^\circ\to {\cal L}_{\cal P}$,\\
$instr$&$\!\!\!:\hspace{0.28cm} {\cal L}_{\cal P} \to \{{\sf instruction}_\ell\mid \ell \in{\cal L}_{\cal P}\} $,\\
$insque$&$\!\!\!: \hspace{0.28cm}{\cal L}_{\cal P}\to \{{\sf instruction}_\ell\mid \ell \in{\cal L}_{\cal P}\setminus {\cal L}_{\cal P}^{\rm B}\} \cup \{{\sf random}\mid {\cal L}_{{\cal P},{\rm B}}\not =\emptyset\}$\\&\hspace{1.6cm}$\cup \hspace{0.1cm}\{{\sf cond}_\ell${\sf ?}\,$\mid \ell \in{\cal L}_{{\cal P},{\rm T}}\cup {\cal L}_{{\cal P},{\rm H}_{\rm T}} \} $,\\
$denote$&$\!\!\!:\hspace{0.19cm}\subseteq E\to \{{\rm yes},{\rm no} \}$\\
\end{tabular}

\vspace{0.1cm}
where

\vspace{0.05cm}
\hspace{0.3cm}\begin{tabular}{ll}
$instr$&$\!\!\!= \{(\ell,{\sf instruction}_\ell)\mid \ell \in{\cal L}_{\cal P}\}, $\vspace{0.1cm}\\
$insque$&$\!\!\!= \{(\ell,{\sf instruction}_\ell)\mid \ell \in{\cal L}_{\cal P}\setminus {\cal L}_{\cal P}^{\rm B}\} \cup \{(\ell, {\sf random})\mid \ell \in {\cal L}_{{\cal P},{\rm B}}\}$\\
&\hspace{0.4cm}$\cup \hspace{0.1cm}\{(\ell, {\sf cond}_\ell${\sf ?}$) \mid \ell \in{\cal L}_{{\cal P},{\rm T}}\cup {\cal L}_{{\cal P},{\rm H}_{\rm T}} \}$.
\end{tabular}
\vspace{0.05cm}}}
\end{overview}

\vspace{0.3cm}
As demonstrated in Fig.\,\,\ref{BerWurzeln2}, the nodes of a flowchart can be represented by using the function $insque$. Each node $\ell$ in ${\cal L}_{\cal P}$ can be represented by the value $insque(\ell)$ or by using other suitable functions. Generally, the representations of flowcharts are not planar (so that they cannot be drawn in a plane $\subseteq \bbbr^2$ without crossing edges). 

\begin{figure}[t] \centering
{\small
\begin{tabular}{l}
{\em The program ${\cal P}$ in Fig. \ref{BerWurzeln1} can be used to}\\
{\em compute $\bar \chi_P$ by a BSS RAM ${\cal M}$ over ${\cal A}$}\\{\em with ${\cal P}_{\cal M}={\cal P}$ if}\\
\quad $P=\{x_1\in \bbbr\mid (\exists y\in \bbbn) (x_1^2=y)\}$ {\em and}\\
 \quad ${\cal A}=(\bbbr;1,-1;+,\cdot\,;=)$.\vspace{0.1cm}\\
{\em This means that $\bar \chi_P:\,\subseteq\! \bbbr^\infty \to \{1\}$ and }\\
\quad $\bar \chi_P(\vec x)=1$ holds if and only if $\vec x\in \bbbr$ \\ 
\quad and $x_1$ is a square root $\pm\sqrt{n}$ of $n\in \bbbn$.\vspace{0.15cm}\\
{\em The input, ${\cal P}$, and the output:}\vspace{0.1cm}\\
\,\,\,{\sf Input $(x_1,\ldots, x_n)\in \bbbr^\infty$.}\\
\quad 1: {\sf if $I_1=I_2$ then goto $2$ else goto $1$};\\
\quad 2: {\sf $Z_2:= -1$};\\
\quad 3: {\sf $Z_3:=1$};\\
\quad 4: {\sf $Z_1:= Z_1 * Z_1$};\\
\quad 5: {\sf $Z_2:=Z_2+ Z_3$};\\
\quad 6: {\sf if $Z_1=Z_2$ then goto $7$ else goto $5$};\\
\quad 7: {\sf $Z_1:= 1$};\\
\quad 8: {\sf stop.}\\
\,\,\, {\sf Output $c(Z_{c(I_1))}$.}\vspace{0.15cm}\\
{\em The symbols:}\\\quad 
$+$ and $*$ stand for $f_1^2$ and $f_2^2$,\\
\quad $=$ for $r_1^2$,\\
 \quad $1$ and $-1$ for $c_1^0$ and $c_2^0$.\\
\end{tabular}}
\begin{tabular}{c}\includegraphics[width=53mm]{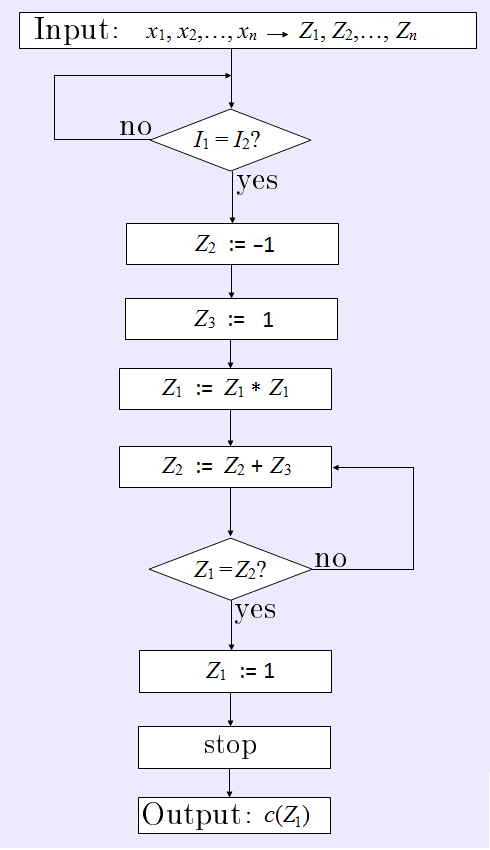}\end{tabular} 
\caption{The flowchart $\mathfrak{Fl\,}_{\cal P}^{\sf insque}$ for ${\cal P}\in {\sf P}_{(2;2,2;2)}$ given in Fig. \ref{BerWurzeln1}}\label{BerWurzeln2}
\end{figure}
\begin{definition}[Walks and simple paths]Let \,$(V,E)$ be a directed graph which means that $V$ is a non-empty set and $E\subseteq V\times V$ holds. A {\em walk} in a graph $(V,E)$ is a finite or an infinite sequence 

\vspace{0.3cm}
 \quad $(v_1)$ \qquad or \qquad $(v_1,e_1, v_2,e_2, \ldots\big[, e_{r-1},v_r\big])$ \hfill {\small (with $r=1$ or $\,r>1$)}

\vspace{0.3cm}
\noindent of nodes $v_i\in V$ and edges $e_i\in E$ such that any $e_i$ is an outgoing edge of $v_i$ and an incoming edge of $v_{i+1}$ for all $i$ with $1\leq i\big[\!< r\big]$. If the walk is finite, then it is a {\em walk from $v_1$ to $v_r$} and its {\em length} is $r-1$.\footnote{ For a definition of finite walks, see also \cite[p.\,9]{Diestel06} and \cite[p.\,98]{Guichard}. Each walk $(v_1,e_1, v_2,$ $ \ldots, e_{r-1},v_r)$ of length $r-1$ contains the nodes of a sequence $(v_1,\ldots,v_r)$ of length $r$.} A \big[finite\big] walk is a {\em simple path} if $1\leq i<j\big[\leq r\big]$ implies $v_i\not =v_j$. Each walk in $(V,E)$ is also a {\em walk in $\mathfrak{Fl\,}_{\cal P}^{\sf label}$} if $\mathfrak{Fl\,}_{\cal P}^{\sf label}=((V,E),\beta_{\cal P}, \beta_{\cal P}^+,\beta_{\cal P}^-)$ holds. The same holds for other flowcharts.
\end{definition}

\vspace{0.3cm}

\begin{proposition}[Walks in flowcharts]\label{PropertWalks}Let ${\cal P}$ be any $\sigma$-program.
\parskip -0.5mm
\begin{enumerate}[label=(\arabic*)] \parskip -0.5mm
\item All simple paths in $\mathfrak{Fl\,}_{\cal P}^{\sf label}$ are finite.
\item There can be an infinite walk in $\mathfrak{Fl\,}_{\cal P}^{\sf label}$ only if there is some $\ell\in {\cal L}_{\cal P}^\circ$ such that $(\ell,1)$ is an incoming edge of node $1$ in $\mathfrak{Fl\,}_{\cal P}^{\sf label}$ or there is a node in ${\cal L}_{\cal P}^\circ$ that has two incoming edges in ${\cal L}_{\cal P}^\circ\times {\cal L}_{\cal P}^\circ$. 
\item Each finite walk $(v_1,e_1,v_2,\ldots, e_{r-1},v_r)$ in $\mathfrak{Fl\,}_{\cal P}^{\sf label}$ that is not simple contains a cycle $ (v_i,\ldots,v_j)$ with $ v_i=v_j$ such that there is an infinite walk and an infinite number of finite walks in $\mathfrak{Fl\,}_{\cal P}^{\sf label}$.
\item All loops $(\ell,\ell)$ in $\mathfrak{Fl\,}_{\cal P}^{\sf label}$ are edges from a node $\ell$ in ${\cal L}_{\cal P}^{\rm B}$ to itself.
\item Any walk $(v_1,e_1, v_2,e_2, \ldots\big[, e_{r-1},v_r\big])$ in $\mathfrak{Fl\,}_{\cal P}^{\sf label}$ is a {\em walk} in $\mathfrak{Fl\,}_{\cal P}^{\sf label}$, $\mathfrak{Fl\,}_{\cal P}^{\sf instr}$, and $\mathfrak{Fl\,}_{\cal P}^{\sf insque}$. It uniquely determines a sequence of nodes, $(v_i)_{i\geq 1}\in({\cal L}_{\cal P}^\circ)^\omega$ and $(v_1, \ldots, v_r)\in ({\cal L}_{\cal P}^\circ)^ {r-1}\times ( {\cal L}_{\cal P}^\circ \cup {\cal L}_{\cal P})$ for some $r\geq 1$, respectively.
\end{enumerate} \end{proposition}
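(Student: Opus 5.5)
We go through the five items separately, using only Definition \ref{DefFlowLabel} and the agreement that $\mathfrak{Fl\,}_{\cal P}^{\sf label}=(({\cal L}_{\cal P},E_{\cal P}),\beta_{\cal P},\beta_{\cal P}^+,\beta_{\cal P}^-)$ has the finite node set ${\cal L}_{\cal P}=\{1,\ldots,\ell_{\cal P}\}$ and edges only of the form $(\ell,\ell')$ with $\ell\in{\cal L}_{\cal P}^\circ$ and $\ell'\in Succ_{\cal P}(\ell)$.

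For (1), we use a pigeonhole argument. A simple path has pairwise distinct nodes taken from the finite set ${\cal L}_{\cal P}$. Hence it has at most $\ell_{\cal P}$ nodes and cannot be infinite.

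For (4), a loop $(\ell,\ell)\in E_{\cal P}$ requires $\ell\in{\cal L}_{\cal P}^\circ$ and $\ell\in Succ_{\cal P}(\ell)$. If $\ell\in{\cal L}_{\cal P}^{\rm NB}$, then by (FL5) $Succ_{\cal P}(\ell)=\{\ell+1\}$, and $\ell+1\neq\ell$. Hence $\ell\in{\cal L}_{\cal P}^{\rm B}$.

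For (5), we argue as follows.
\begin{itemize}
\item The three flowcharts share the graph $({\cal L}_{\cal P},E)$ and differ only by additional functions. Walks are defined purely graph-theoretically, so every walk is a walk in each of them.
\item Since $\ell_{\cal P}$ has no outgoing edge in $E_{\cal P}$, every node with a successor in the walk lies in ${\cal L}_{\cal P}^\circ$.
\item Consequently an infinite walk lies in $({\cal L}_{\cal P}^\circ)^\omega$. A finite walk has $v_1,\ldots,v_{r-1}\in{\cal L}_{\cal P}^\circ$ and last node $v_r\in{\cal L}_{\cal P}$.
\item Uniqueness is immediate: each $e_i=(v_i,v_{i+1})$ is determined by the nodes, so the walk and its node sequence determine each other.
\end{itemize}

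For (3), suppose a finite walk is not simple. Then there are $i<j$ with $v_i=v_j$, and $(v_i,e_i,\ldots,v_j)$ is a closed walk (a cycle). Take the prefix up to $v_i$ and repeat the segment from $v_i$ to $v_j$. Repeating it $k$ times gives pairwise different finite walks of increasing length for every $k$, and repeating it forever gives an infinite walk.

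For (2), we prove the contrapositive. Assume no edge $(\ell,1)$ with $\ell\in{\cal L}_{\cal P}^\circ$ exists, and that every node has at most one incoming edge from ${\cal L}_{\cal P}^\circ\times{\cal L}_{\cal P}^\circ$. By (1) and finiteness, an infinite walk must repeat a node, so it contains a closed walk $C$ with $v_i=v_j$ and $i<j$. Choose the repetition with $j$ minimal, so that $v_1,\ldots,v_{j-1}$ are pairwise distinct.
\begin{itemize}
\item If $i=1$, then $v_j=v_1$ has an incoming edge $(v_{j-1},v_1)$ with $v_{j-1}\in{\cal L}_{\cal P}^\circ$. When $v_1=1$, this contradicts the first assumption.
\item If $i>1$, then $v_i$ has the two incoming edges $(v_{i-1},v_i)$ and $(v_{j-1},v_i)$, and $v_{i-1}\neq v_{j-1}$ by minimality of $j$. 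Both sources are in ${\cal L}_{\cal P}^\circ$ and so is $v_i$ (it has a successor), so $v_i$ has two incoming edges in ${\cal L}_{\cal P}^\circ\times{\cal L}_{\cal P}^\circ$. This contradicts the second assumption.
\end{itemize}

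The main obstacle is the case $i=1$ when $v_1\neq1$. The statement implicitly concerns walks starting at node $1$, so I would either restrict the claim to walks beginning at $1$, or note the gap: a walk starting elsewhere, for instance a self-loop at some $\ell\neq1$ whose only incoming edge is that loop, seems to give a counterexample. I would add the restriction to walks from $1$ explicitly.
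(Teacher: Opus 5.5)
Your proof is correct and follows the paper's argument. For (2) and (3) the paper likewise takes a first repetition $v_i=v_j$, obtains either an edge $(v_{j-1},1)$ or two distinct incoming edges $(v_{i-1},v_i)$ and $(v_{j-1},v_i)$ of $v_i$, and pumps the cycle; it minimizes $i$ where you minimize $j$, which makes no difference. The paper leaves (1), (4), (5) unproved, and your direct arguments for them are fine.

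Your caveat on (2) is justified. The paper's proof tacitly assumes $v_1=1$ when it passes from ``$v_i$ is not the node $1$'' to the existence of $(v_{i-1},v_i)$. A concrete counterexample to the unrestricted claim is the program $1$: {\sf if $I_1=I_1$ then goto $3$ else goto $4$}; $2$: {\sf if $I_1=I_1$ then goto $2$ else goto $4$}; $3$: $I_1:=1$; $4$: {\sf stop}. It has the infinite walk $(2,(2,2),2,\ldots)$, yet no edge enters node $1$ and no node of ${\cal L}_{\cal P}^\circ$ has two incoming edges. So restricting (2) to walks starting at node $1$ is indeed needed.
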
 

{\bf Proof.} Let $((V,E),\beta_{\cal P}, \beta_{\cal P}^+,\beta_{\cal P}^-)$ be the flowchart $\mathfrak{Fl\,}_{\cal P}^{\sf label}$, let us assume that a walk $(v_1,e_1, v_2,e_2, \ldots\big[, e_{r-1},v_r\big])$ in $\mathfrak{Fl\,}_{\cal P}^{\sf label}$ is not simple, and let $i$ be the smallest integer with $v_i=v_j$ for some $j>i$. If $v_i$ is the node 1, then we have $(\ell,1)\in E$ for $\ell=v_{j-1}$. If $v_i$ is not the node 1, then $(v_{i-1}, v_i)$ and $(v_{j-1}, v_i)$ are two incoming edges of $v_i$ since $v_{i-1}=v_{j-1}$ contradicts our assumption. Moreover, $v_i\not=\ell_{\cal P} $, $v_{i-1}\not=\ell_{\cal P} $, and $v_{j-1}\not=\ell_{\cal P} $ hold since $\ell_{\cal P}$ has no successor in ${\cal L}_{\cal P}$. Consequently, {\em (2)} is valid. If $(v_1,e_1, v_2,e_2, \ldots,e_{i-1},\textcolor{blue}{v_i},e_{i}, \ldots,e_{j-1},\textcolor{blue}{v_i})$ is an initial part of a walk, then any extension by $e_{i}, \ldots,e_{j-1},v_i$ leads to a longer walk. Thus, {\em (3)} is also valid. 
\qed 

\vspace{0.2cm}
For any ${\cal A}$-machine ${\cal M}$, $(\ell\,.\, \vec \nu\,.\,\bar u)\to _{\cal M}(\ell'\,.\, \vec \mu\,.\,\bar z)$ implies that $(\ell,\ell')$ is an edge in $\mathfrak{Fl\,}_{{\cal P}_{\cal M}}^{\sf label}$ by Definition \ref{DefFlowLabel}. Thus, each finite sequence of changes of ${\cal M}$-configurations starting with an initial configuration~-- by repeatedly applying the transition function $\to _{\cal M}$ of the system $({\sf S}_{\cal M},\to _{\cal M})$~-- until the first stop configuration is reached determines a computation path  of ${\cal M}$ that corresponds to a sequence $(\ell_1,\ldots, \ell_s)$ of nodes in $\mathfrak{Fl\,}_{{\cal P}_{\cal M}}^{\sf label}$ and to exactly one walk~-- if we include the input node and the output node~-- from the input node to the output node in $\mathfrak{Fl\,}_{{\cal P}_{\cal M}}^{\sf label}$. We can furthermore say that any arbitrary sequence of processing steps performed by a usual ${\cal A}$-machine ${\cal M}$ can be visualized and modeled by a walk in $\mathfrak{Fl\,}_{{\cal P}_{\cal M}}^{\sf label}$. 
\begin{proposition}[Computation paths leave a trace in $\mathfrak{Fl\,}_{{\cal P}_{\cal M}}^{\sf label}$]\label{ComputPathInFlow}\hfill Let ${\cal M}$ be \linebreak any ${\cal A}$-machine. For any finite computation path $(\ell_t)_{t=1.. s}$ of ${\cal M}$, there is a finite walk  $(\ell_1,(\ell_1,\ell_2),\ldots,\ell_{s-1},(\ell_{s-1},\ell_s), \ell_s)$ in $\mathfrak{Fl\,}_{{\cal P}_{\cal M}}^{\sf label}$. For any infinite computation path $(\ell_t)_{t\geq 1}$ of ${\cal M}$, there is an infinite walk $(\ell_1,(\ell_1,\ell_2),\ell_2,(\ell_2,\ell_3), \ldots )$ in $\mathfrak{Fl\,}_{{\cal P}_{\cal M}}^{\sf label}$.
\end{proposition}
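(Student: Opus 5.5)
The plan is to reduce the claim to a single local fact about the transition function and then assemble the walk from consecutive pairs. The local fact is: whenever $(\ell\,.\,\vec \nu\,.\,\bar u)\to_{\cal M}(\ell'\,.\,\vec \mu\,.\,\bar z)$ holds with $\ell\in{\cal L}_{{\cal M}}^\circ$, the pair $(\ell,\ell')$ belongs to $E_{{\cal P}_{\cal M}}$.

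To verify it, I go through the clauses of the definition of $\to_{\cal M}$ and compare each with {\sf (FL4)}--{\sf (FL6)} of Definition~\ref{DefFlowLabel}.

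\begin{itemize}
\item For $\ell\in{\cal L}_{{\cal M},{\rm F}}\cup{\cal L}_{{\cal M},{\rm F}_0}\cup{\cal L}_{{\cal M},{\rm C}}\cup{\cal L}_{{\cal M},{\rm H}_1}\cup{\cal L}_{{\cal M},{\rm H}_{+1}}$, that is, for $\ell\in{\cal L}_{{\cal P}_{\cal M}}^{\rm NB}$, the successor label is $\ell'=\ell+1=\beta_{{\cal P}_{\cal M}}(\ell)$. Hence $\ell'\in Succ_{{\cal P}_{\cal M}}(\ell)$.
\item For $\ell\in{\cal L}_{{\cal M},{\rm T}}\cup{\cal L}_{{\cal M},{\rm H}_{\rm T}}$, the value $T_\ell(\bar u)$ (respectively $T_\ell(\vec\nu)$) is by definition either $\ell_1$ or $\ell_2$, according to the truth of ${\sf cond}_\ell$. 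These are $\beta^+_{{\cal P}_{\cal M}}(\ell)$ and $\beta^-_{{\cal P}_{\cal M}}(\ell)$.
\item For $\ell\in{\cal L}_{{\cal M},{\rm B}}$, we have $\ell'\in G_\ell=\{\ell_1,\ell_2\}=Succ_{{\cal P}_{\cal M}}(\ell)$.
\end{itemize}

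In every case {\sf (FL6)} gives $(\ell,\ell')\in E_{{\cal P}_{\cal M}}$. This step needs only that the auxiliary maps $T_\ell$, $G_\ell$ from \cite{GASS25A} return exactly the goto targets of ${\sf instruction}_\ell$, which is their defining property.

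Next I treat a finite computation path $(\ell_t)_{t=1..s}$. It comes from an initial segment $(\ell_t\,.\,\vec\nu^{(t)}\,.\,\bar u^{(t)})_{t=1..s}$ of an $({\cal M},{\sf i})$-path satisfying $(trans\,1)$, $(trans\,2)$ or $(trans\,3)$. By the definition of a finite computation path, $\ell_t\neq\ell_{{\cal P}_{\cal M}}$ for all $t<s$. So each $\ell_t$ with $t<s$ lies in ${\cal L}^\circ_{{\cal P}_{\cal M}}$, and the stop clause of $\to_{\cal M}$ (the self-loop at $\ell_{{\cal P}_{\cal M}}$) is never used before time $s$. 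This point matters: that self-loop is not an edge of $\mathfrak{Fl\,}_{{\cal P}_{\cal M}}^{\sf label}$, since $E_{\cal P}$ only contains pairs with first component in ${\cal L}_{\cal P}^\circ$.

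Applying the local fact to each step $con_t\to_{\cal M}con_{t+1}$ with $t<s$ gives $e_t:=(\ell_t,\ell_{t+1})\in E$. Here $e_t$ is an outgoing edge of $\ell_t$ and an incoming edge of $\ell_{t+1}$. Therefore $(\ell_1,e_1,\ell_2,\ldots,e_{s-1},\ell_s)$ satisfies the definition of a walk. For $s=1$ it is the trivial walk $(\ell_1)$, whose existence only requires $\ell_1\in{\cal L}_{{\cal P}_{\cal M}}$.

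For an infinite computation path $(\ell_t)_{t\geq1}$, the same argument applies to every $t\geq1$, because then $\ell_t\neq\ell_{{\cal P}_{\cal M}}$ for all $t$. This yields the infinite walk $(\ell_1,(\ell_1,\ell_2),\ell_2,\ldots)$.

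The only real obstacle is the case analysis in the local fact. It requires recalling that the branching functions $T_\ell$ and the guess sets $G_\ell$ range over $\{\ell_1,\ell_2\}$, and keeping the stop self-loop out of the argument. Everything else is direct unfolding of definitions.
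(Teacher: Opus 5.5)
Your proof is correct and follows the paper's own justification. The paper notes just before the proposition that each step $con_t\to_{\cal M}con_{t+1}$ yields an edge $(\ell_t,\ell_{t+1})$ of $\mathfrak{Fl\,}_{{\cal P}_{\cal M}}^{\sf label}$ by Definition \ref{DefFlowLabel}, and these edges are then chained into the walk. Your explicit exclusion of the stop self-loop $(\ell_{{\cal P}_{\cal M}},\ell_{{\cal P}_{\cal M}})$, which is not in $E_{{\cal P}_{\cal M}}$, is a precision the paper's one-line remark glosses over.
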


For understanding the behavior of ${\cal M}$, we will investigate the walks in $\mathfrak{Fl\,}_{{\cal P}_{\cal M}}^{\sf label}$. If an ${\cal A}$-machine ${\cal M}$ traverses the sequence of nodes determined by a walk from 1 to $\ell_{{\cal P}_{\cal M}}$ as described in Proposition \ref{ComputPathInFlow} or the sequence of nodes determined by an infinite walk starting from label 1 in $\mathfrak{Fl\,}_{{\cal P}_{\cal M}}^{\sf label}$, then we can also say that ${\cal M}$ {\em traverses this walk} in $\mathfrak{Fl\,}_{{\cal P}_{\cal M}}^{\sf label}$, $\mathfrak{Fl\,}_{{\cal P}_{\cal M}}^{\sf instr}$, and $\mathfrak{Fl\,}_{{\cal P}_{\cal M}}^{\sf insque}$. For the evaluation of the behavior of ${\cal M}$, we want to consider all these walks and discuss properties that a walk must have in order to represent a computation path of ${\cal M}$. 

\begin{question}[Walks and computation paths]

\hfill

\noindent \fbox{\parbox{11.8cm}{\centering \vspace{0.15cm}
\textcolor{red}{\sf Can any walk from $1$ to $\ell_{\cal P}$ and any infinite walk starting from 1 in $\mathfrak{Fl\,}_{\cal P}^{\sf label}$ \\ represent a computation path or
which conditions have to be satisfied?
} \vspace{0.15cm}}}
\end{question}
 In accordance with Proposition \ref{ComputPathInFlow}, we can say that, for a large number of programs  ${\cal P}\in {\sf P}_{\sigma}$, most walks in $\mathfrak{Fl\,}_{\cal P}^{\sf instr}$ represent  parts of computation paths of machines ${\cal M}$ over arbitrary structures ${\cal A}$ with ${\cal P}_{\cal M}={\cal P}$ and that the set of all these walks includes the set of those walks that contain the theoretically conceivable computation paths. For this reason, we will investigate which walks in $\mathfrak{Fl\,}_{\cal P}^{\sf instr}$ can be really traversed by an ${\cal A}$-machine.

\subsection{Programs paths}
A {\em label path of ${\cal P}\!\in\! {\sf P}_\sigma$} is an arbitrary sequence of labels in ${\cal L}_{\cal P}$. We want to reduce the total number of label paths step by step in order to obtain a selection of paths that could possibly be traversed by a machine. Therefore, it is useful to consider label paths that are determined by walks in $\mathfrak{Fl\,}_{\cal P}^{\sf label}$. They are finite or infinite, belong to ${\cal L}_{\cal P}^\infty$ or ${\cal L}_{\cal P}^\omega$, and can be denoted by $\vec \ell$, $(\ell_t)_{t=1.. s}$, and $(\ell_1,\ldots, \ell_s)$ for some $s\geq 1$ or by $\bar\ell$, $(\ell_t)_{t\geq 1}$, and $(\ell_1,\ell_2,\ldots)$. A sequence $(\ell_1,\ell_2,\ldots\big[,\ell_s\big])$ can only be represented by a walk in $\mathfrak{Fl\,}_{\cal P}^{\sf label}$ if each successor of a label $\ell_t$ results from  the type of  the instruction whose label is ${\ell_t}$ and applying one of the $\beta_{\cal P}^\circ$-functions. Accordingly, we summarize important features of label paths in Overview \ref{PropProgrPaths} (for notations, see also Overview \ref{LabelSets}). A {\sf program path of ${\cal P}$} (also called a {\sf ${\cal P}$-path}) of length $s\geq 2$ is a sequence determined by {\sf (L1)} or {\sf (L2)}. 
\begin{overview}[Properties of ${\cal P}$-paths $(\ell_t)_{t=1.. s}$ and $(\ell_t)_{t\geq 1}$ and more]\label{PropProgrPaths}

\hfill

\nopagebreak 

\noindent \fbox{\parbox{11.8cm}{

\hfill{\small Let ${\cal P}\in {\sf P}_{\sigma}$, $\beta_{\cal P}$, $\beta_{\cal P}^+$, and $\beta_{\cal P}^-$ belong to $\mathfrak{Fl\,}_{\cal P}^{\sf label}$, and \textcolor{blue}{$s\geq 1$}.}

\vspace{0.05cm}
\em
\begin{tabular}{llll}
{\sf (L1)}&$(\ell_1,\ldots, \ell_s)\!\!\!\!\!$&$\!\!\! \in ({\cal L}_{\cal P}^\circ)^{s-1}\times {\cal L}_{{\cal P},{\rm S}}\!\!\!\!\!\!$& if $s>1$. \hspace{0.48cm} \textcolor{gray}{\small\em (only for finite paths)}\\
{\sf (L2)}&$(\ell_1,\ell_2,\ldots)\!\!\!\!\!$&$\!\!\!\in ({\cal L}_{\cal P}^\circ)^\omega$. & \hfill\textcolor{gray}{\small\em (describes infinite paths)}\\
{\sf (L3)}& $\ell_1$&$\!\!\!=1$.\\
{\sf (L4)}$\!\!$&$\ell_{t+1}$&$\!\!\!=\beta_{\cal P}(\ell_t)$ & if $1\leq t\big[< s\big]$ and $\ell_t \in {\cal L}_{\cal P}^{\rm NB}$,\\
&$\ell_{t+1}$&$\!\!\!\in \{\beta_{\cal P}^+(\ell_t),\beta_{\cal P}^- (\ell_t)\}$ & if $1\leq t\big[< s\big]$ and $\ell_t\in {\cal L}_{\cal P}^{\rm B}$.\\
{\sf (L5)}& $\ell_s$&$\!\!\!=\ell_{\cal P}$,\\
{\sf (L6)}& $\ell_{s}=\ell_{s+1}=\!\!\!\!\!$&$\ell_{s+2}=\ell_{s+3}= \cdots $& if {\sf (L5)} holds.
\end{tabular}

\hfill {\small \em {\sf L} stands for {\sf label}.}
}}
\end{overview}
\begin{definition}[Program paths and ${\cal P}$-paths]\label{DefProgrPath} Let ${\cal P}\in {\sf P}_{\sigma}$. For any $s\geq 1$,
a sequence $(\ell_1,\ldots, \ell_s)\in {\cal L}_{\cal P}^s$ is a {\em finite ${\cal P}$-path of length $s$} if there hold {\sf (L1)}, {\sf (L3)}, {\sf (L4)} for all $t<s$, and {\sf (L5)}. A sequence $(\ell_1,\ell_2,\ldots)\in {\cal L}_{\cal P}^\omega$ is an {\em infinite ${\cal P}$-path} if there hold {\sf (L2)}, {\sf (L3)}, and {\sf (L4)} for all $t\geq 1$. A sequence of labels is a {\em ${\cal P}$-path}, if it is a finite ${\cal P}$-path in ${\cal L}_{\cal P}^\infty$ or an infinite ${\cal P}$-path in ${\cal L}_{\cal P}^\omega$. The ${\cal P}$-paths form the set of all {\em program paths of ${\cal P}$}.
\end{definition} 
 Note that ${\sf (L4)}$ can also be expressed by $\ell_t\,\textcolor{red}{\to_{\cal P}^{\rm lab}}\,\,\ell_{t+1}$. For the introduction of a suitable transition system $({\sf S}_{\cal P}^{\rm lab},\to_{\cal P}^{\rm lab})$ that can be used for generating ${\cal P}$-paths, see Overview \ref{TransPPaths}. $\ell\to_{\cal P}^{\rm lab}\ell'$ holds if and only if $\ell \in {\cal L}_{\cal P}^{\rm NB}$ and $\ell'=\beta_{\cal P}(\ell)$ are satisfied or we have $\ell\in {\cal L}_{\cal P}^{\rm B}$ and $\ell'\in \{\beta_{\cal P}^+(\ell),\beta_{\cal P}^- (\ell)\}$.

\begin{propert}\label{PropertOnePath}If $\,\vec \ell\, $ is a ${\cal P}$-path of length $1$, then $\ell_1=\ell_{\cal P}$ holds. Consequently, the existence of a ${\cal P}$-path $\,\vec \ell\, $ of length $1$ implies that ${\cal P}$ is the program  {\sf \,$1\!:$ stop.}\hspace{0.1cm}and $\vec \ell $ is the only program path of ${\cal P}$. $s\!>\!1$ and ${\sf (L1)}$ together imply ${\sf (L5)}$. 
\end{propert}
\begin{proposition}[Prefix-freeness]\label{NoInitIsPath}
No program path is an initial segment of another program path.
\end{proposition}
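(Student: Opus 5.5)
The plan is to argue by contradiction, using only Definition \ref{DefProgrPath} together with the fact that the stop label $\ell_{\cal P}$ is the only label outside ${\cal L}_{\cal P}^\circ$. Suppose $\vec \ell=(\ell_1,\ldots,\ell_s)$ is a program path that is a proper initial segment of another program path $B^*$, which may be finite of length $s'>s$ or infinite. Infinite program paths have no proper initial segment that is itself an infinite sequence, so $\vec \ell$ must be finite. Likewise, an infinite path cannot be a proper initial segment of anything, because initial segments are prefixes of the same sequence. Hence only the case of a finite $\vec \ell$ needs to be treated.

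\emph{Step 1.} A finite ${\cal P}$-path ends with the stop label. If $s>1$, this holds by {\sf (L5)}, or equivalently by {\sf (L1)}, since $\ell_s\in{\cal L}_{{\cal P},{\rm S}}=\{\ell_{\cal P}\}$. If $s=1$, Properties \ref{PropertOnePath} gives $\ell_1=\ell_{\cal P}$, so ${\cal P}$ is the program {\sf 1: stop.} and $\vec \ell$ is its only program path. No longer path can then exist, and the claim is immediate. We may therefore assume $s>1$ and $\ell_s=\ell_{\cal P}$.

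\emph{Step 2.} The label $\ell_{\cal P}$ cannot occur at a non-final position of any program path. If $B^*=(\ell'_1,\ell'_2,\ldots)$ is infinite, then {\sf (L2)} gives $\ell'_t\in{\cal L}_{\cal P}^\circ={\cal L}_{\cal P}\setminus\{\ell_{\cal P}\}$ for every $t$. If $B^*=(\ell'_1,\ldots,\ell'_{s'})$ is finite with $s'>s\geq 2$, then {\sf (L1)} gives $\ell'_t\in{\cal L}_{\cal P}^\circ$ for all $t<s'$, and in particular for $t=s$. In both cases the prefix property gives $\ell'_s=\ell_s=\ell_{\cal P}\notin{\cal L}_{\cal P}^\circ$, which is a contradiction.

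One could also argue via {\sf (L4)}, since $\ell_{\cal P}$ lies in neither ${\cal L}_{\cal P}^{\rm NB}$ nor ${\cal L}_{\cal P}^{\rm B}$ and so has no successor. Using {\sf (L1)}/{\sf (L2)} is more direct. The only real obstacle is bookkeeping: one must make sure the degenerate length-$1$ case is covered by Properties \ref{PropertOnePath}, and that the infinite extensions dictated by {\sf (L6)} are not mistaken for program paths. They are not, because an infinite ${\cal P}$-path must satisfy {\sf (L2)}, which excludes $\ell_{\cal P}$.
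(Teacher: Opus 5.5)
Your argument is correct and is the reasoning the paper relies on. The paper states Proposition~\ref{NoInitIsPath} without a written proof, treating it as immediate from {\sf (L1)}, {\sf (L2)}, {\sf (L5)} and Properties~\ref{PropertOnePath}: every finite ${\cal P}$-path ends with $\ell_{\cal P}$, and $\ell_{\cal P}\notin{\cal L}_{\cal P}^\circ$ can occur at no non-final position of any ${\cal P}$-path, which is exactly your Steps 1 and 2.
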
 
Consequently, an order on the set of all ${\cal P}$-paths can be derived solely from the functions $\beta_{\cal P}^+$ and $\beta_{\cal P}^-$ in $\mathfrak{Fl\,}_{\cal P}^{\sf label}$ as described in Overview \ref{OrderAnyPaths}. 

Each computation path of a machine $\cal M$ belongs to ${\cal L}_{{\cal P}_{\cal M}}^\infty \cup {\cal L}_{{\cal P}_{\cal M}}^\omega$. By definition, the properties summarized in Overview \ref{PropProgrPaths} are sufficient to identify the ${\cal P}_{\cal M}$-paths and each computation path of ${\cal M}$ must also have these properties by Proposition \ref{ComputPathInFlow}.
\begin{proposition}[Computation paths are program paths]$\!$Any computation path of an ${\cal A}$-machine ${\cal M}$ is a ${\cal P}_{\cal M}$-path and thus no computation path is an initial segment of another computation path.
\end{proposition}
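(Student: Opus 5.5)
We check directly that a computation path of ${\cal M}$ satisfies the conditions of Definition \ref{DefProgrPath} for ${\cal P}={\cal P}_{\cal M}$. The prefix-freeness part then follows from Proposition \ref{NoInitIsPath}.

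Let ${\sf i}\in{\sf I}_{\cal M}$ and let $(con_t)_{t\geq 1}\in{\sf Con}_{{\cal M},{\sf i}}$ with $con_t=(\ell_t\,.\,\vec\nu^{\,(t)}\,.\,\bar u^{(t)})$ determine the computation path $B^*$.

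\textbf{Condition {\sf (L3)}.} In each of the cases $(trans\,1)$, $(trans\,2)$, $(trans\,3)$, the first configuration $con_1$ is a value of ${\rm Input}_{\cal M}$. Every such value has the form $(1\,.\,\vec\nu\,.\,\bar u)$, so $\ell_1=1$.

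\textbf{Condition {\sf (L4)}.} We go through the defining clauses of $\to_{\cal M}$ for $con_t\to_{\cal M}con_{t+1}$ with $\ell_t\neq\ell_{{\cal P}_{\cal M}}$.
\begin{itemize}
\item For $\ell_t\in{\cal L}_{{\cal M},{\rm F}}\cup{\cal L}_{{\cal M},{\rm F}_0}\cup{\cal L}_{{\cal M},{\rm C}}\cup{\cal L}_{{\cal M},{\rm H}_1}\cup{\cal L}_{{\cal M},{\rm H}_{+1}}={\cal L}_{\cal P}^{\rm NB}$, the next label is $\ell_t+1=\beta_{\cal P}(\ell_t)$.
\item For $\ell_t\in{\cal L}_{{\cal M},{\rm T}}\cup{\cal L}_{{\cal M},{\rm H}_{\rm T}}$, the next label is $T_{\ell_t}(\bar u)$ or $T_{\ell_t}(\vec\nu)$. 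These functions take values in $\{\ell_1,\ell_2\}=\{\beta^+_{\cal P}(\ell_t),\beta^-_{\cal P}(\ell_t)\}$, by the definition of the functions in ${\cal F}_{\cal M}$ in \cite{GASS25A}.
\item For $\ell_t\in{\cal L}_{{\cal M},{\rm B}}$, the next label lies in $G_{\ell_t}=\{\beta^+_{\cal P}(\ell_t),\beta^-_{\cal P}(\ell_t)\}$.
\end{itemize}
Equivalently, $\ell_t\to^{\rm lab}_{\cal P}\ell_{t+1}$ holds, as already noted before Proposition \ref{ComputPathInFlow}.

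\textbf{Finite paths, {\sf (L1)} and {\sf (L5)}.} If $B^*=(\ell_t)_{t=1..s}$ is finite, then by definition $\ell_s=\ell_{{\cal P}_{\cal M}}$ and $\ell_t\neq\ell_s$ for $t<s$. Since ${\cal L}_{\cal P}^\circ={\cal L}_{\cal P}\setminus\{\ell_{\cal P}\}$, this gives {\sf (L1)} for $s>1$ and {\sf (L5)}. Condition {\sf (L4)} is needed only for $t<s$, so it follows from the previous step. If $s=1$, then $\ell_1=1=\ell_{\cal P}$, which is consistent with Properties \ref{PropertOnePath}.

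\textbf{Infinite paths, {\sf (L2)}.} If $B^*$ is infinite, then $\ell_t\neq\ell_{{\cal P}_{\cal M}}$ for all $t$, which is exactly {\sf (L2)}. Condition {\sf (L4)} holds for all $t\geq 1$ by the step above.

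Hence $B^*$ is a ${\cal P}_{\cal M}$-path. Now suppose some computation path were an initial segment of another computation path. Both are ${\cal P}_{\cal M}$-paths, so this contradicts Proposition \ref{NoInitIsPath}.

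The only delicate step is {\sf (L4)} for T- and ${\rm H}_{\rm T}$-labels. Here we must make sure that the branching functions $T_\ell$ of \cite{GASS25A} return exactly the two target labels $\ell_1,\ell_2$ of the instruction. These are the labels used in (FL4) to define $\beta^\pm_{\cal P}$. We plan to cite this from the definition of ${\cal F}_{\cal M}$ rather than reprove it.
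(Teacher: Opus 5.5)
Your proof is correct and follows essentially the paper's route. The paper obtains {\sf (L1)}--{\sf (L5)} from the definitions together with Proposition \ref{ComputPathInFlow} (every step of $\to_{\cal M}$ is an edge of $\mathfrak{Fl\,}_{{\cal P}_{\cal M}}^{\sf label}$), which you instead verify clause by clause, and it derives prefix-freeness from Proposition \ref{NoInitIsPath} exactly as you do; the fact about $T_\ell$ that you planned to import from \cite{GASS25A} is already stated in this paper in Overview \ref{TransfConf} and footnote \ref{FootnTl}.
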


{\bf The walks $W_{B^*}$.} Let ${\cal P}\in {\sf P}_{\sigma}$. For 
 any ${\cal P}$-path $(\ell_t)_{t\geq 1}$, let $W_{(\ell_t)_{t\geq 1}}$ be the infinite walk $(\ell_1,(\ell_1,\ell_2),\ell_2,(\ell_2,\ell_3), \ldots )$ in $\mathfrak{Fl\,}_{\cal P}^{\sf label}$. For $s\geq 1$ and any finite ${\cal P}$-path $(\ell_t)_{t=1.. s} $, let $W_{(\ell_t)_{t=1.. s}}$ be the finite walk $ (\ell_1,(\ell_1,\ell_2),\ldots,\ell_{s-1},(\ell_{s-1},\ell_s), \ell_s)$. The statements of Proposition \ref{PropertWalks} can be made stronger.

\begin{proposition}[${\cal P}$-paths represented in flowcharts]\label{PropertPathsAsWalks} Let ${\cal P}\!\in\! {\sf P}_{\sigma}$ and $B^*$ be any ${\cal P}$-path. Then, $W_{B^*}$ is the walk determining $B^*$ and representing $B^*$ in $\mathfrak{Fl\,}_{\cal P}^{\sf label}$. If $W_{B^*}$ is a finite walk from $v_1$ to $v_s$ and not simple, then properties (1) to (3) are satisfied.
\parskip -0.4mm
\begin{enumerate}[label=(\arabic*)] \parskip -0.5mm
\item $v_1=1$ holds and $v_s=\ell_{\cal P}$ is satisfied.
\item $W_{B^*}$ contains a node $v_i$ with two outgoing edges and $v_i\not = v_s$.
\item There exists an infinite ${\cal P}$-path and the number of finite ${\cal P}$-paths is infinite.
\end{enumerate}
\end{proposition}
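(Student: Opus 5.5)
The first claim is essentially definitional. For a ${\cal P}$-path $B^*=(\ell_t)$, condition {\sf (L4)} says that $\ell_{t+1}\in Succ_{\cal P}(\ell_t)$ for every $t$ (with $t<s$ in the finite case). By {\sf (FL6)} this means $(\ell_t,\ell_{t+1})\in E_{\cal P}$. Hence $W_{B^*}$, whose edges are exactly these pairs, is a walk in $\mathfrak{Fl\,}_{\cal P}^{\sf label}$. It determines $B^*$ because its sequence of nodes is $B^*$ itself, as in Proposition \ref{PropertWalks}(5). Uniqueness holds because a walk is determined by its node sequence: each edge is the pair of consecutive nodes.

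Now let $W_{B^*}$ be finite from $v_1$ to $v_s$ and not simple.

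\emph{Property (1).} This is immediate from {\sf (L3)} and {\sf (L5)}, since $v_t=\ell_t$.

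\emph{Property (2).} Choose $i<j$ with $v_i=v_j$. Then $v_i\neq v_s$: by {\sf (L1)} we have $v_i\in{\cal L}_{\cal P}^\circ$, while $v_s=\ell_{\cal P}\notin{\cal L}_{\cal P}^\circ$. Indeed, every $v_t$ with $t\leq j\leq s$ and $t<s$ lies in ${\cal L}_{\cal P}^\circ$, and $v_j=v_i$ with $i<j$ forces $j<s$; otherwise $v_i=\ell_{\cal P}$ with $i<s$, contradicting {\sf (L1)}.

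Suppose, for contradiction, that no node among $v_i,\dots,v_{j-1}$ has two outgoing edges. Each $v_t$ with $t<s$ is in ${\cal L}_{\cal P}^\circ$, and a node of ${\cal L}_{\cal P}^{\rm NB}$ has exactly one successor by {\sf (FL5)}. By {\sf (FL4)} a branching node $\ell$ has two distinct successors, since $\ell_1\neq\ell_2$ is assumed. So every $v_t$ on the cycle has a unique successor. Then by induction $v_{t+(j-i)}=v_t$ for all $t\geq i$, i.e.\ the node sequence is periodic from $i$ on. The cycle $v_i,\dots,v_{j-1}$ consists of nodes of ${\cal L}_{\cal P}^\circ$, so the path could never reach $\ell_{\cal P}$. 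This contradicts {\sf (L5)}.

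So some $v_k$ with $i\leq k<j$ lies in ${\cal L}_{\cal P}^{\rm B}$ and has two outgoing edges. Since $k<s$, also $v_k\neq v_s$. This is the step I expect to need the most care: one must use the convention $\ell_1\neq\ell_2$ from {\sf (FL4)} and footnote \ref{BranchGeneral}, and the deterministic-successor argument to rule out a cycle made entirely of single-successor nodes.

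\emph{Property (3).} Write $C$ for the segment $v_i,\dots,v_j$ with $v_i=v_j$. For every $m\geq 0$, the label sequence
\[
(v_1,\dots,v_{i-1})\,C^m\,(v_i,\dots,v_s),
\]
where $C^m$ means inserting the block $v_i,\dots,v_{j-1}$ a further $m$ times, satisfies {\sf (L1)}, {\sf (L3)}, {\sf (L4)} and {\sf (L5)}. Consecutive pairs remain edges because $v_j=v_i$. All inserted labels lie in ${\cal L}_{\cal P}^\circ$, and the last label is still $\ell_{\cal P}$. These finite ${\cal P}$-paths have pairwise distinct lengths, so there are infinitely many finite ${\cal P}$-paths.

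Repeating the block forever gives $(v_1,\dots,v_{i-1},v_i,\dots,v_{j-1},v_i,\dots,v_{j-1},\dots)\in({\cal L}_{\cal P}^\circ)^\omega$. It satisfies {\sf (L2)}, {\sf (L3)} and {\sf (L4)}, so it is an infinite ${\cal P}$-path. This mirrors the argument for Proposition \ref{PropertWalks}(3).
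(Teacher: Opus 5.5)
Your proof is correct, and for the first claim, (1), and (3) it matches the paper. Both read (1) off (L3) and (L5), and both obtain (3) by pumping the cycle; the paper inserts $(e_i,v_{i+1},\ldots,v_j)$ after $v_j$. For (2) you take a different route.

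The paper makes an extremal choice. It takes $j$ maximal such that $v_j=v_i$ for some $i<j$. Then $j<s$, and by maximality the tail $v_{j+1},\ldots,v_s$ repeats no earlier node, so $v_{j+1}\neq v_{i+1}$. Hence $e_i=(v_j,v_{i+1})$ and $e_j=(v_j,v_{j+1})$ are two distinct outgoing edges of $v_j$, and both are traversed by $W_{B^*}$.

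You fix an arbitrary repetition instead and argue by contradiction. If every node of $v_i,\ldots,v_{j-1}$ had a single successor, the walk would stay in that segment and never reach $\ell_{\cal P}$.

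The two arguments buy different things. The paper's is direct, names the branching node (the last repeated one), and shows that both of its edges actually occur in the walk. Yours gives only a node with two outgoing edges in the flowchart, which is all the statement requires. In exchange, it locates such a node inside any chosen cycle segment, not just the one given by the maximal repetition.

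Two minor points:
\begin{itemize}
\item Phrase your periodicity induction as ``$v_t\in\{v_i,\ldots,v_{j-1}\}$ for all $t$ with $i\le t\le s$.'' The identity $v_{t+(j-i)}=v_t$ only makes sense while $t+(j-i)\le s$. Also, for $t\ge j$ you must first identify $v_t$ with a node of the segment before the unique-successor hypothesis applies.
\item The convention $\ell_1\neq\ell_2$ is not actually needed. Your contradiction uses only that every node of ${\cal L}_{\cal P}^\circ$ has at least one outgoing edge. A node with two outgoing edges is then automatically a branching node with distinct successors.
\end{itemize}
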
 

{\bf Proof.} Let $W_{B^*}$ be a finite walk $(v_1,e_1, v_2, \ldots, e_{s-1},v_s)$ that is not simple and $v_j$ be the last node with $v_i=v_j$ for some $i<j$. Then, $v_j\not =v_s$ and $e_j$ leads to the end $(v_{j+1},e_{j+1}, \ldots, v_s)$ of $W_{B^*}$ (that does not contain $v_j$) whereas $e_i$ is a second outgoing edge of $v_j$ that belongs to the walk $(v_i,e_i, \ldots, v_j)$. Further walks result from adding $(e_i,v_{i+1}, \ldots, v_j)$ after $v_j$ such that $\!${\em (2)} and $\!${\em (3)} hold.
\qed

\vspace{0.2cm}
The following discussion focuses on fundamental properties of several sets of program paths. The main purpose of this discussion is to highlight characteristics that remain significant even when considering further types of instructions and other ${\cal A}$-machines. 

\vspace{0.1cm}
{\bf The set ${\sf Lab}_{\cal P}$.} For ${\cal P}\in {\sf P}_{\sigma}$, let ${\sf Lab}_{\cal P}$ be the the set of all \textcolor{blue}{finite} and all \textcolor{blue}{infinite} program paths of ${\cal P}$. Further notations for several subsets of ${\sf Lab}_{\cal P}$ are given in Overview \ref{SetsLabelsProgrPath}. For these subsets, we have
${\sf Lab}_{\cal P}^{\rm fin}\cap {\sf Lab}_{\cal P}^{\rm infin}=\emptyset$,
${\sf Lab}_{\cal P}^{\rm fin}\cup {\sf Lab}_{\cal P}^{\rm infin}={\sf Lab}_{\cal P}\not=\emptyset$, and
${\sf Lab}_{\cal P}^{\rm infin}\subseteq$ ${\sf Lab}_{\cal P}\cap{\sf Lab}_{\cal P} ^{[\leq \omega]}$.
${\sf Lab}_{\cal P}^{\rm fin} \!=\! \emptyset$ implies ${\sf Lab}_{\cal P}^{\rm infin} =$ ${\sf Lab}_{\cal P}$ $={\sf Lab}_{\cal P} ^{[\leq \omega]}$. 
${\sf Lab}_{\cal P}^{\rm infin} \!=\! \emptyset$ implies ${\sf Lab}_{\cal P}^{\rm fin} ={\sf Lab}_{\cal P}$ and ${\sf Lab}_{\cal P}\cap {\sf Lab}_{\cal P} ^{[\leq \omega]}=\emptyset$. 
\footnote{\label{Footn} ${\cal L}_{\cal P}^\omega$ consists of the infinite sequences with components in ${\cal L}_{\cal P}$, $\{0,1\}^{(\omega)}$ consists of the infinite strings with characters in $\{0,1\}$, ${\sf Lab}_{\cal P} ^{[\leq \omega]}$ is defined in Overview \ref{SetsLabelsProgrPath}, etc. The superscript $^{[\leq \omega]}$ states that we come from sequences $B_0$ of length $\leq \omega$ in $ {\cal L}^\infty\cup {\cal L}^\omega$ and extend them to obtain infinite sequences $B_0^{[\leq \omega]}$ in ${\cal L}^\omega$. The small brackets $[$ and $]$ in the strings such as ${\sf Lab}_{\cal P}^{[\leq \omega]}$ and ${\sf Lab}_{{\cal P},n}^{[*]}$ belong to the notation, whereas we use the big brackets $\big[$ and $\big]$ for indicating options.}
\begin{overview}[Notations: Sets of ${\cal P}$-paths and other label paths]\label{SetsLabelsProgrPath}

\hfill

\nopagebreak 

\noindent \fbox{\parbox{11.8cm}{

\hfill{\small Let ${\cal P}\in {\sf P}_{\sigma}$ and $n\geq 1$.}

\em 
\begin{tabular}{ll}
${\sf Lab}_{\cal P}^{\rm fin}$&the set of all finite ${\cal P}$-paths,\\
${\sf Lab}_{\cal P}^{\rm infin}\!\!$& the set of all infinite ${\cal P}$-paths, \\
${\sf Lab}_{\cal P}$&$\!\!\!=_{\rm df} {\sf Lab}_{\cal P}^{\rm fin}\cup {\sf Lab}_{\cal P}^{\rm infin}$.\vspace{0.05cm}\\
${\sf Lab}_{\cal P} ^{[\leq \omega]}\!\!$& the set of all label paths $B_0^{[\leq \omega]}\in {\cal L}_{\cal P}^\omega$ derived from $B_0\in {\sf Lab}_{\cal P}$ by\\ &
\begin{tabular}{lll} & $\textcolor{blue}{\vec \ell^{\,[\leq \omega]}}=(\ell_1,\ldots,\ell_s,\ell_s,\ldots )$&
if $\vec \ell=\!(\ell_1,\ldots,\ell_s)\! \in {\cal L}_{\cal P}^s$ and\\ &$\textcolor{blue}{\bar \ell ^{\,[\leq \omega]}}=\bar \ell$ & if $\bar \ell\in {\cal L}_{\cal P}^{\omega}$. \end{tabular}\\
${\sf Lab}_{{\cal P}}^{\rm Co}$& the set of all ${\cal P}$-paths containing a subsequence $(\ell_i,\ell_{i+1},\ell_{i+2})$\\& with $\ell_i=\ell_{i+1}\not=\ell_{i+2}$.\\
${\sf Lab}_{{\cal P},n}^{\rm Co}$& the set of all ${\cal P}$-paths for which it can be ensured\\& by evaluating index instructions that they cannot be traversed\\
& by any ${\cal A}$-machine ${\cal M}$ with ${\cal P}_{\cal M}={\cal P}$ for any inputs of length $n$\\&\hfill{\small \em \textcolor{gray}{(for details, see also property {\sf (P6)} in Theorem \ref{PathCanc})}},\\
${\sf Lab}_{{\cal P},n}^{[*]}$& the set of all ${\cal P}$-paths that do not belong to ${\sf Lab}_{{\cal P},n}^{\rm Co}$. 
\end{tabular}}}
\,
\noindent \fbox{\parbox{11.8cm}{
\hspace{0.3cm} {\em For $\bar\ell \in {\sf Lab}_{\cal P} ^{[\leq \omega]}$, let $\textcolor{blue}{\textcolor{blue}{B_{\bar \ell}}}=\vec \ell $ \,\, if $\bar \ell =\vec \ell ^{\,[\leq \omega]}$ and $\vec \ell \in{\sf Lab}_{\cal P}^{\rm fin}$ and 

\hspace{3.35cm} $\textcolor{blue}{B_{\bar \ell}}=\bar \ell $ \,\, if $\bar \ell \in {\sf Lab}_{\cal P}^{\rm infin}$.}

\vspace{0.2cm}
\hspace{0.3cm} {\small\em Note: For $B_1,B_2\in {\sf Lab}_{\cal P}$, $B_1\not= B_2 \mbox{ if and only if }B_1^{[\leq \omega]}\not= B_2^{[\leq \omega]}$ 
(by Prop.\,\ref{NoInitIsPath}).}
}}
\end{overview}

\begin{example}[${\cal P}$-paths]\label{ExBerWurzeln1}
Let ${\cal P}$ be the $(2;2,2;2)$-program given in Fig. \ref{BerWurzeln1}.

\noindent
Sets of labels:

\qquad ${\cal L}_{{\cal P},{\rm T}}=\{6\}$, ${\cal L}_{{\cal P},{\rm H}_{\rm T}}=\{1\}, {\cal L}_{{\cal P},{\rm F}_{0}}=\{2,3,7\}, {\cal L}_{{\cal P},{\rm F}}=\{4,5\}, {\cal L}_{{\cal P},{\rm S}}=\{8\}$.

\vspace{0.05cm}
\noindent Some program paths (summarized and illustrated in Figure \ref{BerWurzelnProgrammpfade}):

\vspace{0.05cm}

\begin{tabular}{l}
$B_1 =(1,2,3,4,5,6,7,8)\in {\cal L}_{\cal P}^\infty$,
$B_2 =(1,2,3,4,5,6,5,6,7,8)\in {\cal L}_{\cal P}^\infty$,\\
$B_3 =(1,2,3,4,5,6,5,6,5,6,7,8)\in {\cal L}_{\cal P}^\infty$, \ldots,\\
$B_0 =(1,2,3,4,5,6,5,6,5,6,5,6,\ldots) \in {\cal L}_{\cal P}^\omega$, 
$B_0' =(1,1,1,\ldots) \in {\cal L}_{\cal P}^\omega$,\\
\end{tabular}

\begin{tabular}{l}
$B_1' =(1,1,2,3,4,5,6,7,8) \in {\cal L}_{\cal P}^\infty$, 
$B_1'' =(1,1,1,2,3,4,5,6,7,8) \in {\cal L}_{\cal P}^\infty$.
\end{tabular}

\vspace{0.05cm}

\noindent A label path:

\hspace{0.2cm}$(B_1)^{[\leq \omega]} \hspace{0.15cm}=(1,2,3,4,5,6,7,8, 8,8\ldots )\in {\cal L}_{\cal P}^\omega$.

\vspace{0.05cm}

\noindent Some sets of program paths and label paths:

\vspace{0.05cm}
\begin{tabular}{lll}
$ {\sf Lab}_{\cal P}$&$=$&$\{B_0, B_0', B_1,B_2,\ldots, B_1',B_1'',\ldots \}$,\\
$ {\sf Lab}_{\cal P}^{\rm fin}$&$=$&$\{B_1,B_2,\ldots, B_1',B_1'',\ldots \}$,\\
$ {\sf Lab}_{\cal P}^{\rm infin}$&$=$&$\{B_0,B_0',\ldots \}$,\\
${\sf Lab}_{\cal P} ^{[\leq \omega]}$&$=$&$\{ B_0, B_0', (\overbrace{1,2,3,4,5,6,7,8}^{B_1},8,8, \ldots ),$\\
& &\hspace{1.58cm}$(\underbrace{1,2,3,4,5,6,5,6,7,8}_{B_2},8,8,\ldots),\ldots \}$,\\
${\sf Lab}_{{\cal P},n}^{[*]}$&$=$&$ \{B_0'\}$ \hfill for $n\geq 2$,\\
${\sf Lab}_{{\cal P},1}^{[*]}$&$=$&${\{B_0, B_1,B_2,\ldots \} \subseteq \sf Lab}_{{\cal P},n}^{\rm Co}$\hfill for $n\geq 2$,\\
$ {\sf Lab}_{{\cal P},1}^{\rm Co}$&$\supseteq$&$\{B_0', B_1', B_1'',\ldots \}$,\\
${\sf Lab}_{{\cal P}}^{\rm Co}$&
$\supseteq$&$\{B_1', B_1'',\ldots\} \subseteq {\sf Lab}_{{\cal P},n}^{\rm Co}$ \hfill for $n\geq 1$.\\
\end{tabular}
\end{example}

For machines, we introduce the following sets.
\begin{overview}[Notations: Sets of computation paths and more]\label{SetsLabels}

\hfill

\nopagebreak 

\noindent \fbox{\parbox{11.8cm}{

\hfill{\small Let ${\cal M}$ be any BSS RAM over ${\cal A}$, $\vec x \in U_{\cal A}^\infty$, and $n\geq 1$.}

\em 
\begin{tabular}{ll}
${\sf Lab}_{\cal M}$& the set of all computation paths of ${\cal M}$,\\
${\sf Lab}_{{\cal M},\vec x}\!\!\!$& the set of all computation paths of ${\cal M}$ traversed by input $\vec x$,\\
${\sf Lab}_{{\cal M},n}\!\!\!\!$& $=_{\rm df }\bigcup_{\vec x\in U_{\cal A}^n }{\sf Lab}_{{\cal M},\vec x} $ \hspace{2.7cm} \textcolor{gray}{$\Rightarrow$ ${\sf Lab}_{{\cal M}} = \bigcup_{n\geq 1}{\sf Lab}_{{\cal M},n} $}. \vspace{0.1cm} \\
${\sf Lab}_{{\cal M},\vec x} ^{[\leq \omega]}\!\!\!\!$& the set of all label paths $(\ell_1,\ell_2,\ldots) \in {\sf Lab}_{{\cal M},\vec x} \cap {\sf Lab}_{{\cal P}_{\cal M}}^{\rm infin}$\\
 &\hspace{1cm} and $(\ell_1,\ldots, \ell_s, \ell_s, \ldots)$ with $(\ell_1,\ldots, \ell_s)\in {\sf Lab}_{{\cal M},\vec x}\cap {\sf Lab}_{{\cal P}_{\cal M}}^{\rm fin}$,\vspace{0.1cm}\\
${\sf Lab}_{{\cal M},n} ^{[\leq \omega]}\!\!\!\!$& $=_{\rm df }\bigcup_{\vec x\in U_{\cal A}^n }{\sf Lab}_{{\cal M},\vec x} ^{[\leq \omega]}$, \quad ${\sf Lab}_{{\cal M}} ^{[\leq \omega]}\,=_{\rm df }\bigcup_{n\geq 1}{\sf Lab}_{{\cal M},n} ^{[\leq \omega]}$, \quad $\ldots$\\
\end{tabular}}}
\end{overview}

\begin{example}[Computation paths traversed by a machine]\label{ExBerWurzeln2}

Let ${\cal P}$ be the $(2;2,2;2)$-program given in Fig.\,\,\ref{BerWurzeln1}, let ${\cal A}=(\bbbr; 1,-1;+,\cdot \,;=)$, and let ${\cal M}$ be a machine in ${\sf M}_{\cal A}$ or ${\sf M}_{\cal A}^{\rm ND}$ with ${\cal P}_{\cal M}={\cal P}$. Then, each computation path of ${\cal M}$ is a ${\cal P}_{\cal M}$-path given by a walk in $\mathfrak{Fl\,}_{{\cal P}_{\cal M}}^{\sf label}$. For better understanding the evaluation of ${\cal P}$-paths, see also $\mathfrak{Fl\,}_{\cal P}^{\sf insque}$ in Fig.\,\,\ref{BerWurzeln2}. Some ${\cal P}$-paths considered in the Example \ref{ExBerWurzeln1} are also computation paths of ${\cal M}$:

\vspace{0.05cm}

$B_1$ is a finite computation path of ${\cal M}$ traversed by input $0$, 

$B_2$ is a finite computation path of ${\cal M}$ traversed by input $1$ and input $-1$, 

$B_3$ is a finite computation path of ${\cal M}$ traversed by input $\sqrt{2}$ and input $-\sqrt{2}$, 

$B_0\in {\cal L}_{{\cal P}_{\cal M}}^\omega$ is traversed by all numbers in $\bbbr \setminus \{0,1,-1,\sqrt{2}, -\sqrt{2}, \sqrt{3}, \ldots\}$,

$B_0'\in {\cal L}_{{\cal P}_{\cal M}}^\omega$ is traversed by all tuples in $\bbbr^\infty \setminus \bbbr$.

\vspace{0.05cm}
\noindent The set of all finite computation paths traversed by ${\cal M}$ is a proper subset of $ {\sf Lab}_{{\cal P}_{\cal M}}^{\rm fin}$. We have $B_1'\in {\sf Lab}_{{\cal P}_{\cal M}}^{\rm fin}\setminus {\sf Lab}_{\cal M}$. The inputs traversing one of the finite computation paths $B_1$, $B_2$, $\ldots$ form together the halting set $H_{\cal M}$ of ${\cal M}$.
\end{example}
\begin{proposition}[Computation and ${\cal P}_{\cal M}$-paths]
For any $\sigma$-structure ${\cal A}$, let ${\cal M}\in {\sf M}_{\cal A}\cup {\sf M}_{\cal A}^{\rm ND}\cup {\sf M}_{\cal A}^{\rm DND} \cup {\sf M}_{\cal A}^{\rm NDB}$. We have ${\sf Lab}_{{\cal P}_{\cal M}}^{\rm Co}\cap {\sf Lab}_{\cal M}=\emptyset$ and ${\sf Lab}_{{\cal M},\vec x}\subseteq{\sf Lab}_{{\cal M},n}\subseteq{\sf Lab}_{{\cal P}_{\cal M},n}^{[*]}\subseteq{\sf Lab}_{{\cal P}_{\cal M}}$ for any $n\geq 1$ and $\vec x \in U_{\cal A}^n$.
\end{proposition}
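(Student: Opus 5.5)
We prove the two claims separately, treating the chain of inclusions from left to right and the disjointness claim last.

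\emph{The first two inclusions.} The inclusion ${\sf Lab}_{{\cal M},\vec x}\subseteq{\sf Lab}_{{\cal M},n}$ holds by definition, since ${\sf Lab}_{{\cal M},n}$ is the union of the sets ${\sf Lab}_{{\cal M},\vec x}$ over all $\vec x\in U_{\cal A}^n$.

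The last inclusion ${\sf Lab}_{{\cal P}_{\cal M},n}^{[*]}\subseteq{\sf Lab}_{{\cal P}_{\cal M}}$ is also immediate. By Overview \ref{SetsLabelsProgrPath}, ${\sf Lab}_{{\cal P},n}^{[*]}$ consists of ${\cal P}$-paths not in ${\sf Lab}_{{\cal P},n}^{\rm Co}$, so in particular of ${\cal P}$-paths.

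\emph{The middle inclusion.} For ${\sf Lab}_{{\cal M},n}\subseteq{\sf Lab}_{{\cal P}_{\cal M},n}^{[*]}$ we take $B^*\in{\sf Lab}_{{\cal M},n}$. Then $B^*$ is traversed by some $\vec x\in U_{\cal A}^n$ (possibly together with guesses, in the nondeterministic case).
\begin{itemize}
\item By the preceding proposition (computation paths are program paths), $B^*\in{\sf Lab}_{{\cal P}_{\cal M}}$. This rests on Proposition \ref{ComputPathInFlow}: each step $con_t\to_{\cal M}con_{t+1}$ yields an edge $(\ell_t,\ell_{t+1})\in E_{{\cal P}_{\cal M}}$, which gives (L4). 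Also ${\rm Input}_{\cal M}$ starts at label $1$, giving (L3), and the stop criterion gives (L1)/(L5).
\item It remains to see $B^*\notin{\sf Lab}_{{\cal P}_{\cal M},n}^{\rm Co}$. By definition, this set contains only paths for which evaluation of index instructions ensures that no machine with program ${\cal P}_{\cal M}$ traverses them on inputs of length $n$. Since ${\cal M}$ actually traverses $B^*$ for an input of length $n$, such a certificate would be unsound. Hence $B^*$ is not in this set, so $B^*\in{\sf Lab}_{{\cal P}_{\cal M},n}^{[*]}$.
\end{itemize}
The main obstacle is that ${\sf Lab}_{{\cal P},n}^{\rm Co}$ is only informally specified here, with the details deferred to (P6) of Theorem \ref{PathCanc}. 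I would argue soundness directly. The index part $\vec\nu$ of configurations of ${\cal M}$ evolves exactly as in the partial configurations $(\ell\,.\,\vec\nu)$, starting from $(1\,.\,(n,1,\ldots,1))$. This start is the same for ${\sf M}_{\cal A}$, ${\sf M}_{\cal A}^{\rm ND}$, ${\sf M}_{\cal A}^{\rm DND}$ and ${\sf M}_{\cal A}^{\rm NDB}$, because guesses only affect $\bar u$ and B-instructions do not alter $\vec\nu$. Since ${\rm H}_{\rm T}$-branchings depend only on $\vec\nu$, any index-based refutation of $B^*$ for length $n$ would contradict its actual traversal.

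\emph{Disjointness.} For ${\sf Lab}_{{\cal P}_{\cal M}}^{\rm Co}\cap{\sf Lab}_{\cal M}=\emptyset$, suppose a computation path contains $(\ell_i,\ell_{i+1},\ell_{i+2})$ with $\ell_i=\ell_{i+1}\ne\ell_{i+2}$.
\begin{itemize}
\item We have $\ell_i\neq\ell_{\cal P}$, since otherwise the path would already have ended at position $i$ by the stop criterion. Then $(\ell_i,\ell_i)$ is a loop, so $\ell_i\in{\cal L}^{\rm B}$ by Proposition \ref{PropertWalks}(4).
\item If $\ell_i$ is a T- or H$_{\rm T}$-instruction, then $\to_{\cal M}$ leaves $\vec\nu$ and $\bar u$ unchanged while moving to $\ell_i$. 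So $con_{i+1}=con_i$, and by determinism of these transitions $\ell_{i+2}=T_{\ell_i}(\cdot)=\ell_{i+1}$, a contradiction.
\item If $\ell_i$ is a B-instruction (machines in ${\sf M}_{\cal A}^{\rm NDB}$), then a second guess could indeed leave the loop. I expect the definition of ${\sf Lab}^{\rm Co}$, or the intended reading, to exclude this case, and I would check it against Extras' conventions, restricting the argument to non-B loops if necessary.
\end{itemize}
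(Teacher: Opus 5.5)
Your proof of the chain of inclusions is correct and follows the intended route. The paper gives no separate proof: the outer inclusions are definitional, and the middle one is the soundness content of Proposition~\ref{LabIndFixLen} and Theorem~\ref{PathCanc}.

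One point there should be made explicit. Write ${\cal P}={\cal P}_{\cal M}$. Then ${\sf Lab}_{{\cal P},n}^{\rm Co}$ is the union of the sets ${\sf Lab}_{{\cal P},\kappa,n}^{\rm Co}$ over all $\kappa\geq k_{\cal P}$. Tracking the actual index registers of ${\cal M}$, however, only excludes $B^*$ from ${\sf Lab}_{{\cal P},k_{\cal M},n}^{\rm Co}$. You therefore need that {\sf (P6)} does not depend on $\kappa$. This holds because ${\cal P}$ only touches $I_1,\ldots,I_{k_{\cal P}}$, and all start tuples $(n,1,\ldots,1)$ agree on these components.

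Your disjointness argument for loops at ${\rm T}$- and ${\rm H}_{\rm T}$-labels is also correct. So the proposition is fully proved for ${\cal M}\in{\sf M}_{\cal A}\cup{\sf M}_{\cal A}^{\rm ND}\cup{\sf M}_{\cal A}^{\rm DND}$.

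The case you left open is not a gap that any argument can close, because for ${\sf M}_{\cal A}^{\rm NDB}$ the disjointness claim is false as stated. Take ${\cal P}$ given by $1\!:$ {\sf goto $1$ or goto $2$}; $2\!:$ {\sf stop}, and any ${\cal M}\in{\sf M}_{\cal A}^{\rm NDB}$ with ${\cal P}_{\cal M}={\cal P}$.
\begin{itemize}
\item By $(trans\,3)$, the steps $(1\,.\,\vec\nu\,.\,\bar u)\to_{\cal M}(1\,.\,\vec\nu\,.\,\bar u)\to_{\cal M}(2\,.\,\vec\nu\,.\,\bar u)$ are admissible for every input. Hence $(1,1,2)$ is a finite computation path of ${\cal M}$.
\item This path satisfies {\sf (L1)} and {\sf (L3)}--{\sf (L5)}. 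It contains $(\ell_1,\ell_2,\ell_3)$ with $\ell_1=\ell_2\neq\ell_3$, so it lies in ${\sf Lab}_{{\cal P}_{\cal M}}^{\rm Co}\cap{\sf Lab}_{\cal M}$.
\end{itemize}
The definition of ${\sf Lab}_{\cal P}^{\rm Co}$ in Overview~\ref{SetsLabelsProgrPath} places no restriction on the instruction type of $\ell_i$, so you should not expect it to exclude this case. The paper itself uses such ${\rm B}$-loops, and the corresponding machines have computation paths in ${\sf Lab}^{\rm Co}$:
\begin{itemize}
\item the subprogram containing $\ell'\!:$ {\sf goto $\ell_1$ or goto $\ell'$} in footnote~\ref{BranchGeneral};
\item the program $1\!:$ {\sf goto $1$ or goto $2$}; $2\!:$ {\sf goto $1$ or goto $2$}; $3\!:$ {\sf stop}, used to obtain an uncountable ${\sf Lab}_{\cal M}$.
\end{itemize}

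So instead of ``restricting the argument if necessary,'' state the correct result. The inclusions hold for all four machine classes exactly as you proved them. The equation ${\sf Lab}_{{\cal P}_{\cal M}}^{\rm Co}\cap{\sf Lab}_{\cal M}=\emptyset$ holds for ${\sf M}_{\cal A}\cup{\sf M}_{\cal A}^{\rm ND}\cup{\sf M}_{\cal A}^{\rm DND}$. For ${\cal M}\in{\sf M}_{\cal A}^{\rm NDB}$ it holds only when no ${\rm B}$-instruction of ${\cal P}_{\cal M}$ is its own successor, or alternatively when ${\sf Lab}^{\rm Co}$ is redefined to require $\ell_i\notin{\cal L}_{{\cal P},{\rm B}}$.
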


The following overview provides further insights and outlooks. \footnote{ For $\ell_2,\ldots,\ell_1$ etc., see fn.\,\,\ref{EmptyList}.}
\begin{overview}[From configurations to graphs and other structures]\label{FromConfToPaths}

\hfill

\nopagebreak 

\noindent \fbox{\parbox{11.8cm}{
\centering\em
\begin{tabular}{c}
Any \textcolor{blue}{ $({\cal M},\vec x)$-path} of configurations in ${\sf Con}_{{\cal M},\vec x}$ for an input $\vec x$\\ determines\\
{\bf a computation path} $(\ell_1,\ell_2,\ldots[,\ell_s\big])$ of a machine ${\cal M}$.\\
\begin{tabular}{c} ------------------------------------------------------------------------------------------\\\end{tabular}\\
Any \textcolor{blue}{computation path} in ${\sf Lab}_{\cal M}$\\
is\\a {\bf ${\cal P}_{\cal M}$-path} $(\ell_1,\ell_2,\ldots\big[,\ell_s\big])$ in ${\sf Lab}_{{\cal P}_{\cal M}}$.\\
\begin{tabular}{c} ------------------------------------------------------------------------------------------\\ 
\end{tabular}\\
Each \textcolor{blue}{${\cal P}$-path} $B^*$ of a program ${\cal P}$ in ${\sf Lab}_{\cal P}$\\
can be represented by\\
 a {\bf directed walk} $W_{B^*}$ in $\mathfrak{Fl\,}_{\cal P}^{\sf label}$ given by a sequence\\
$(\ell_1, (\ell_1,\ell_2),\ell_2, \ldots\big[,(\ell_{s-1},\ell_s),\ell_s\big]) $ of nodes and edges.\\
\begin{tabular}{c} ------------------------------------------------------------------------------------------\\\end{tabular}\\ 
Such a \textcolor{blue}{walk} in $\mathfrak{Fl\,}_{\cal P}^{\sf label}$\\ can be converted into\\a {\bf structure}\\ 
containing a function $label: \{1,2,\ldots\big[,s\big]\}\to{\cal L}_{\cal P}$ with\\ 
$label(t)=\ell_t$ for $t\geq 1$ \big[and $t\leq s$\big].\\
\begin{tabular}{c} ------------------------------------------------------------------------------------------\\\end{tabular}\\
Such \textcolor{blue}{structures}\\ can be\\ {\bf labeled paths} (or {\bf linear graphs}) of the form\\
$((\{1,2,\ldots\big[,s\big]\}, \{(1,2),(2,3),\ldots\big[,(s-1,s)\big]\}), label) $,\vspace{0.1cm}\\
{\bf single-sorted structures} 
given by\\
$(\{1,2,\ldots\big[,s\big]\}\cup {\cal L}_{\cal P};\,\,1,\ldots,\ell_{\cal P}\big[,s\big];\,\, label;\,\,\{(1,2),(2,3),\ldots\big[,(s-1,s)\big]\}) $\\ 
 with ${\cal L}_{\cal P}\subset \bbbn_+$ \big[and a partial function $label$ if $s<|{\cal L}_{\cal P}|$\big],  etc.
\end{tabular}}}
\end{overview}

\noindent {\bf Our planned approach to evaluating paths, potential applications, and benefits.} 
Taking into account the observations summarized in Overview \ref{FromConfToPaths}, several kinds of paths and walks can be considered and evaluated. Moreover, the members of program paths can be extended by adding indices and $\sigma$-terms, and certain sequences of these extensions can be generated by new transition systems for describing the execution of machines at different levels of abstraction. 

\begin{itemize} \parskip -0.2mm
\item Each {\sf partial configuration} contains a label and indices. The sequences of {\sf partial configurations} can be syntactically generated by {\sf transition systems} ${\cal S}_{\cal P}^{\rm ind}$ and ${\cal S}_{{\cal P},\kappa}^{\rm ind}$ derived from a program ${\cal P}$ for any $\kappa \geq k_{\cal P}$ (cf.\,\,below).
\item {\sf Graph-theoretical representations} of program paths mentioned in Overview \ref{FromConfToPaths} and collections of such representations can be summarized in trees (cf.\,\,{\sf Extras II}). These trees help to better understand the meaning of partial configurations and the consequences of executing index instructions.
\item A further extension of partial configurations by adding {\sf $\sigma$-terms} leads to {\sf term configurations} that can be used to describe all branching conditions by literals uniquely (cf.\,\,{\sf Extras III}). These $\sigma$-literals belong to {\sf decision trees} that can be generated by {\sf transition systems} ${\cal S}_{{\cal P},\kappa}$ for $\kappa \geq k_{\cal P}$ (cf.\,\,Overview \ref{OutlookTermConf1}).
\item Path graphs~-- also called paths (cf.\,\cite[p.\,6]{Diestel06} and \cite[p.\,83]{Guichard}) or linear graphs~-- represent program paths $B^*$. They can be  mapped isomorphically and label-compatibly  into {\sf trees of term configurations}. This enables a detailed characterization of the behavior of machines ${\cal M}$  based on evaluating single systems ${\sf sy}_{{\cal P}_{\cal M},n}^{\sf decision}(B^*)$ of $\sigma$-literals (cf.\,\,{\sf Extras III}). 
\end{itemize}
In the following, we will discuss the properties of partial configurations in detail.

\section{Partial configurations: Labels and indices}\label{PropertRestrConf0}
We want to extend all members of the ${\cal P}$-paths for any given program ${\cal P}\in {\sf P}_{\sigma}$. All components $\ell_t$ of any label path $\bar \ell\in {\sf Lab}_{\cal P}^{[\leq\omega]}$ should be supplemented by tuples $\vec \nu^{(t)}$ of indices that could be computed by BSS RAMs ${\cal M}\in {\sf M}_{\cal A} \cup {\sf M}_{\cal A}^{\rm ND}\cup {\sf M}_{\cal A}^{\rm DND} \cup {\sf M}_{\cal A}^{\rm NDB}$ with ${\cal P}_{\cal M}={\cal P}$ along the computation path $B_{\bar \ell}$ for inputs in $U_{\cal A}^n$ if $B_{\bar \ell}\in {\sf Lab}_{{\cal M},n}$ holds. For any $\ell\in {\cal L}_{\cal M}$ and $\vec \nu\in \bbbn_+^{k_{\cal M}}$, $(\ell\,.\,\vec \nu)$ is called a {\em partial configuration of ${\cal M}$} or a {\em partial ${\cal M}$-configuration}. ${\sf S}_{\cal M}^{\rm ind}$ is the set of all partial ${\cal M}$-configurations and determined by ${\cal L}_{\cal M}$ and $k_{\cal M}$. The elements of $({\sf S}_{\cal M}^{\rm ind})^\omega$ are used to group the label paths and investigate the ${\cal P}_{\cal M}$-paths with the aim of evaluating the execution of ${\cal P}_{\cal M}$ by ${\cal M}$ on inputs of a fixed length. For characterizing the behavior of ${\cal M}$, we introduce some sets of sequences of ${\cal M}$-configurations (cf.\,\,Overview \ref{SetsCon}), several sets of sequences of partial configurations determined by $\sigma$-programs ${\cal P}$ (cf.\,\,Overview \ref{SetsPartCon2}), and in particular the sets $ {\sf Con}^{\rm ind}_{{\cal P},\kappa,n}$. The sets ${\sf Con}_{{\cal P}_{\cal M}, k_{\cal M}, n}^{\rm ind}$ enable us to take a further step toward reducing the set all sequences of ${\cal M}$-configurations that could be possibly generated by ${\cal M}$ for inputs of length $n$. For any $\bar \ell\in{\sf Lab}_{{\cal P}_{\cal M}}^{[\leq \omega]}$, $n\geq 1$, and all inputs $\vec x\in U_{\cal A}^n$ traversing $B_{\bar \ell}\in {\sf Lab}_{{\cal M},n}$, all $t$-th members of all $({\cal M},\vec x)$-paths $(\ell_t\,.\,\vec \nu^{(t)}\,.\,\bar u^{(t)})_{t\geq 1}$ have the same initial parts $(\ell_t\,.\,\vec \nu^{(t)})$ and the sequence $(\ell_t\,.\,\vec \nu^{(t)})_{t\geq 1}$ of partial ${\cal M}$-configurations belongs to ${\sf Con}_{{\cal P}_{\cal M}, k_{\cal M}, n}^{\rm ind}$. Our tools allow us to describe, for any BSS RAM ${\cal M}$ over a $\sigma$-structure ${\cal A}$ with ${\cal P}_{\cal M}={\cal P}$ and $k_{\cal M}=\kappa$, exactly how a suitable sequence $(\ell_t\,.\,\vec \nu^{(t)})_{t\geq 1}$ can be derived from ${\cal P}$. $ {\sf Con}^{\rm ind}_{{\cal P},\kappa,n}$ includes the set of these sequences and can be defined without using $\to_{\cal M}$. For any $\bar \ell\in {\sf Lab}_{\cal P} ^{[\leq \omega]}$, a suitable sequence $(\vec \nu^{(t)})_{t\geq 1} \in (\bbbn_+^{\kappa})^\omega$ can be constructed in a unique manner by using only ${\cal P}\in {\sf P}_\sigma$, $\kappa\geq k_{\cal P}$, and $n\geq 1$. Neither the $Z$-registers of ${\cal M}$ nor their contents are important. Consequently, we can consider the $\sigma$-programs themselves and the transformation of indices by using ${\cal A}_\bbbn$ for determining and evaluating partial configurations. Let us first describe some observations that are important in this context and lead to the mentioned statements step by step.

\subsection{Partial configurations of BSS RAMs}\label{PropertRestrConf}
We know by \cite[Example 3.4]{GASS25A} that the restriction of the input space of a BSS RAM ${\cal M}$ to inputs of a fixed length $n$, {\bf can lead} to a proper inclusion ${\sf Lab}_{{\cal M},n}\subset {\sf Lab}_{\cal M} $. Examples \ref{ExBerWurzeln1} and \ref{ExBerWurzeln2} show that ${\sf Lab}_{\cal M} \subset {\sf Lab}_{{\cal P}_{\cal M}} $ is possible since $B_1'$ and $B_1''$ are not computation paths of the machine ${\cal M}$ considered in Example \ref{ExBerWurzeln2}. This means that a program path in ${\sf Lab}_{{\cal P}_{\cal M}}$ does not necessarily belong to ${\sf Lab}_{\cal M}$. One possible reason for this is that each execution of an index instruction is a transformation step determined by the {\sf Peano structure ${\cal A}_{\bbbn}$}. In any case, we use ${\cal A}_{\bbbn}=_{\rm df}(\bbbn_+; 1;succ;=)$ and assume that $succ(n)=n+1$ holds for all $n\in \bbbn_+$. Under this assumption, the term $I_j+1$ can be considered to be a formal string that stands, e.g., for $f_0^ 1(I_j)$, such that it can be interpreted by using the function $succ:\bbbn_+\to \bbbn_+$ in all cases and independent of any (further) underlying $\sigma$-structure ${\cal A}$. For evaluating index registers, we can use only the identity relation that belongs to ${\cal A}_\bbbn$. 
\begin{observ} 
Let  $B^*$ be a ${\cal P}_{\cal M}$-path of a deterministic or non-deterministic BSS RAM ${\cal M}$. If ${\cal M}$ traverses this path for inputs, then the content $\nu_{t+1,j}$ of an index register $I_j$ of ${\cal M}$ after $t$ steps is the value of a linear function $f_{B^*,j}(n)=n+t_0$ (only for $j=1$ possible) and a constant function $f_{B^*,j}(n)=1+t_0$, respectively, depending on the length $n\geq 1$ of the input and determined by an integer $t_0\in\{0,\ldots,t\}$ which results from the numbers of executed ${\rm H}_{1}$- and ${\rm H}_{+1}$-instructions applied to $I_j$. For all inputs of length $n$, every index test along this path can be described by two terms (logical expressions) in an atomic formula of the form $n+t_1\!=n+t_1$, $n+t_1\!=1+t_2$, or $1+t_1\!=1+t_2$ for some $t_1,t_2\geq 0$ \footnote{ More formally, we can say that we have terms like $f_0^1(f_0^1(f_0^1(f_0^1(c_0^0)) ))$~-- $(f_0^1)^4(c_0^0)$ for short~--,  the literal $r_0^2((f_0^1)^{(n-1)+t_1}(c_0^0),(f_0^1)^{t_2}(c_0^0))$, and further similar literals. The signature of these literals is $(1;1;2)$. To distinguish the new symbols from the symbols in the $\sigma$-program ${\cal P}_{\cal M}$, we use here the index $_0$. Thus, $c_0^0$ is the symbol for the constant 1, etc.}. 
\end{observ}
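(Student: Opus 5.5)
We argue by induction on the number $t$ of executed steps along $B^*=(\ell_1,\ell_2,\ldots\big[,\ell_s\big])$. We fix an input $\vec x\in U_{\cal A}^n$ for which ${\cal M}$ traverses $B^*$, together with guesses if ${\cal M}$ is non-deterministic, and we let $(\ell_t\,.\,\vec\nu^{\,(t)}\,.\,\bar u^{(t)})_{t\geq 1}$ be the corresponding $({\cal M},\vec x)$-path. The claim to prove is stronger than the statement: for every $t$ and $j\leq k_{\cal M}$, the index content $\nu_{t,j}$ equals either $n+t_0$ or $1+t_0$. Here $t_0$ is the number of ${\rm H}_{+1}$-instructions applied to $I_j$ at the positions $t'<t$ after the last ${\rm H}_1$-instruction applied to $I_j$ (or after position $1$ if there is none). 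The form is $n+t_0$ only if $j=1$ and no ${\rm H}_1$-instruction on $I_1$ occurs before $t$. The key point is that $t_0$ depends only on $B^*$ and $t$, not on $\vec x$, since the instruction executed at step $t'$ is determined by $\ell_{t'}$.

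For the base case we use ${\rm Input}_{\cal M}$. In all machine classes considered (${\sf M}_{\cal A}$, ${\sf M}_{\cal A}^{\rm ND}$, ${\sf M}_{\cal A}^{\rm DND}$, ${\sf M}_{\cal A}^{\rm NDB}$) we have $\vec\nu^{\,(1)}=(n,1,\ldots,1)$. Thus $\nu_{1,1}=n+0$ and $\nu_{1,j}=1+0$ for $j\geq 2$.

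For the step $t\to t+1$ we go through the transition function $\to_{\cal M}$ case by case according to the type of ${\sf instruction}_{\ell_t}$. For ${\rm F}$-, ${\rm F}_0$-, ${\rm C}$-, ${\rm T}$-, ${\rm H}_{\rm T}$- and ${\rm B}$-instructions, $\vec\nu$ is unchanged; copy instructions change only $\bar u$, not the index registers. An ${\rm H}_1$-instruction $I_j:=1$ resets $\nu_{t+1,j}=1=1+0$, so $t_0$ restarts and the form becomes constant. An ${\rm H}_{+1}$-instruction raises $t_0$ by one via $succ$ in ${\cal A}_\bbbn$. Since each step changes at most one index register by at most one, $t_0\leq t$ follows immediately. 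The only step needing care is that the input-dependent summand $n$ can never enter a register $I_j$ with $j\neq 1$. This holds because no index instruction copies one index register into another: the ${\rm C}$-instructions $Z_{I_j}:=Z_{I_k}$ move only $Z$-contents.

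It remains to treat the index tests. An ${\rm H}_{\rm T}$-instruction at position $t$ compares $\nu_{t,j}$ and $\nu_{t,k}$, both of which have one of the two forms above with constants $t_1,t_2\leq t$. So the tested condition is an atomic formula $n+t_1=n+t_2$, $n+t_1=1+t_2$, or $1+t_1=1+t_2$; the first case occurs only for $j=k=1$, hence with $t_1=t_2$, and any two terms of the form $n+\cdot$ refer to $I_1$, since no other register carries $n$. Since the branch taken by ${\cal M}$ is fixed by $\ell_{t+1}$, this literal, or its negation, must hold for the given $n$. This does not depend on $\vec x$ beyond its length, which gives the statement for all inputs of length $n$. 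The main obstacle is not mathematical but bookkeeping: stating $t_0$ precisely as a function of the prefix $(\ell_1,\ldots,\ell_t)$, and checking that the non-deterministic input procedures leave $\vec\nu^{\,(1)}$ unchanged.
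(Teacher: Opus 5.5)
Your proof is correct and follows essentially the reasoning the paper relies on, which is made explicit in the proof of Proposition~\ref{LabIndFixLen}: the indices start at $(n,1,\ldots,1)$, only ${\rm H}_1$- and ${\rm H}_{+1}$-instructions change them, no instruction copies one index register into another, and which of these instructions are executed is fixed by the labels of $B^*$ alone, independently of $\vec x$ and the guesses. Two small wording fixes: when no ${\rm H}_1$ occurs, $t_0$ must count the ${\rm H}_{+1}$-instructions on $I_j$ from position $1$ on, inclusive (a program may begin with $1:\, I_2:=I_2+1$); and the stop instruction belongs in your list of index-preserving instructions.
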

So let us assume that the index registers of a BSS RAM ${\cal M}$ over ${\cal A}$ receive the initial values given by $(n,1,\ldots,1)\in \bbbn_+^{k_{\cal M}}$ at the beginning. Let $(\ell_1,\ldots, \ell_t)$ be an initial part of a computation path $B^*\in {\sf Lab}_{{\cal M},n}$ resulting from $t-1$ computation steps by ${\cal M}$ on some input in $U_{\cal A}^n$ and let $(\ell_1,\ldots,\ell_t, \ell_{t+1})$ be an initial part of a ${\cal P}_{\cal M}$-path $B_0$. For $\ell_t\in {\cal L}_{{\cal P}_{\cal M}}^{\rm NB}$, $\ell_{t+1}$ is uniquely determined by $\ell_{t+1}=\ell_t+1$ and also the next label in $B^*$. For $\ell_t\in {\cal L}_{{\cal M},{\rm H}_{\rm T}}$, the values $\nu_{t,1},\ldots,\nu_{t,k_{\cal M}}$ stored in the index registers and the test function $T_{\ell_t}:(\bbbn_+)^{k_{\cal M}}\to {\cal L}_{\cal M}$ in ${\cal F}_{\cal M}$ \footnote{ \label{FootnTl} For each label $\ell$ of an instruction of the ${\rm B}$-type $(I_j=I_k,l_1,l_2)$, we have $T_{\ell}(\vec \nu)=l_1$ if $ \nu_j=\nu_k$ and $T_{\ell }(\vec \nu)=l_2$ if $ \nu_j\not=\nu_k$. For more, see Overview \ref{TransRelConfM} \,and \cite[p.\,10]{GASS25A}. $T_{\ell}$ depends on ${\cal M}$. However, let us mention that, for all BSS RAMs ${\cal M}$, $T_{\ell}:(\bbbn_+)^{k_{\cal M}}\to {\cal L}_{\cal M}$ is defined in the same way if $\ell$ belongs to ${\cal L}_{{\cal M},{\rm H_T}}$ and marks an instruction of ${\rm B}$-type $(I_j=I_k,l_1,l_2)$.} are sufficient to determine the next possible label $T_{\ell_t}(\vec \nu^{(t)})$ in $B^*$ and to recognize whether $T_{\ell_t}(\vec \nu^{(t)})=\ell_{t+1}$ holds or not. Only if $T_{\ell_t}(\vec \nu^{(t)})=\ell_{t+1}$ holds, then $B_0$ could belong to ${\sf Lab}_ {{\cal P}_{\cal M},n}^{[*]}$ and could possibly be traversed by ${\cal M}$ on an input of length $n$. For a discussion of further details, the following notations are useful. 

\begin{overview}[Notations: Sets of $({\cal M},\vec x)$-paths and more]\label{SetsCon}

\hfill

\nopagebreak 

\noindent \fbox{\parbox{11.8cm}{

\hfill{\small Let ${\cal M}$ be a BSS RAM over ${\cal A}$, $n\geq 1$, $\vec x\in U_{\cal A}^n$, $\vec y \in U_{\cal A}^\infty$,

\hfill and $\bar \ell=(\ell_1,\ell_2,\ldots)\in {\sf Lab}_{{\cal P}_{\cal M}}^{[\leq \omega]}$.}

\em
$\!\!\!$\begin{tabular}{ll}
${\sf Con}_{{\cal M},\vec x}$ & the set of all $({\cal M},\vec x)$-paths $(con_t)_{t\geq 1}$\hfill \textcolor{gray}{\em \small (with $(\vec x, con_1)\in{\rm Input}_{\cal M}$)},\\
${\sf Con}_{{\cal M},\vec x,\vec y}\!\!\!\!$& the set of all $(con_t)_{t\geq 1}\in {\sf Con}_{{\cal M},\vec x}$ with\\& \hfill $con_1=(1\,.\,(n,1,\ldots,1)\,.\,(\vec x\,.\,\vec y\,.\,(x_n,x_n,\ldots)))$,\\
${\sf Con}_{{\cal M},n}$& $=_{\rm df} \bigcup_{\vec x\in U_{\cal A}^n}{\sf Con}_{{\cal M},\vec x}$, the set of all {\em $({\cal M},n)$-paths},\\
${\sf Con}_{{\cal M},\bar \ell,n}\!\!\!$ & the set of all $(con_t)_{t\geq 1}\in {\sf Con}_{{\cal M},n}$ for which there is a sequence\\
& \hspace{0.8cm} $(\vec \nu ^{(t)}\,.\,\bar u^{(t)})_{t\geq 1}$ such that $con_t=(\ell_t\,.\,\vec \nu ^{(t)}\,.\,\bar u^{(t)})$ for all $t\geq 1$.\\
\end{tabular}

\vspace{0.05cm}
\noindent $\!\!${\small\em \begin{tabular}{lll}\hline Thus,&$\!\!\!\!{\sf Con}_{{\cal M},\vec x}=\bigcup_{\vec y \in U_{\cal A}^\infty}{\sf Con}_{{\cal M},\vec x,\vec y}$&$\!\!$holds for ${\cal M}\in {\sf M}_{\cal A}^{\rm ND}$,\\
&$\!\!\!\!{\sf Con}_{{\cal M},\vec x}=\bigcup_{\vec y \in \{c_1,c_2\}^\infty}{\sf Con}_{{\cal M},\vec x,\vec y}$ &$\!\!$holds for ${\cal M}\in {\sf M}_{\cal A}^{\rm DND}$ (${\cal A}\in {\sf Struc}_{c_1,c_2}$).\\
\end{tabular}}

\vspace{0.2cm}
{\scriptsize\em $|{\sf Con}_{{\cal M},\vec x}|\!=\!1$ for ${\cal M}\in {\sf M}_{\cal A}$. $|{\sf Con}_{{\cal M},\vec x,\vec y}|\!=\!1$ for ${\cal M}\in {\sf M}_{\cal A}^{\rm ND}\cup {\sf M}_{\cal A}^{\rm DND}$. $|{\sf Con}_{{\cal M},\vec x}|\!\geq\! 1$ for ${\cal M}\in {\sf M}_{\cal A}^{\rm NDB}$.

\vspace{0.05cm}
 }
}}
\end{overview}

{\bf The sets ${\sf Lab}_{{\cal M},n} ^{[\leq \omega]}$.} Let ${\sf Lab}_{{\cal M},n} ^{[\leq \omega]}$ be the set of all sequences $(\ell_t)_{t\geq 1}$ whose members $\ell_t$ can be extended to configurations $(\ell_t\,.\,\vec \nu^{(t)}\,.\,\bar u^{(t)})$ such that one of the resulting sequences $(\ell_t\,.\,\vec \nu^{(t)}\,.\,\bar u^{(t)})_{t\geq 1}$ belongs to ${\sf Con}_{{\cal M},n}$ (and thus to ${\sf Con}_{{\cal M},\bar \ell,n}$). Consequently, for each $\bar \ell \in {\sf Lab}_{{\cal M},n} ^{[\leq \omega]}$, $B_{\bar \ell}$ is a finite or an infinite computation path traversed by ${\cal M}$ for some inputs $\vec x$ of length $n\geq 1$. 

\begin{proposition}[Partial ${\cal M}$-configurations for fixed input length]\label{LabIndFixLen} \hfill Let ${\cal A}$ be a first-order structure, ${\cal M}$ be a BSS RAM in $ {\sf M}_{\cal A}\cup {\sf M}_{\cal A}^{\rm ND}\cup {\sf M}_{\cal A}^{\rm DND} \cup {\sf M}_{\cal A}^{\rm NDB}$, and $n\geq 1$. Then, for any sequence $(\ell_t\,.\,\vec \nu^{(t)}\,.\,\bar u^{(t)})_{t\geq 1} $ in $ {\sf Con}_{{\cal M},n}$, we have

\vspace{0.1cm}
\hspace{1cm} 
$\ell_1\hspace{0.67cm}=\quad 1$ \qquad and \qquad $\vec \nu^{(1)} \hspace{0.45cm} = \quad \,(n,1,\ldots,1) \in \bbbn_+^{k_{\cal M}}$ 
\vspace{0.1cm}

\noindent and, for all $t\geq 1$, 

\hspace{1cm} 
$\ell_{t+1}\quad =\quad \left\{\begin{array}{ll}\beta_{{\cal P}_{\cal M}}(\ell_t)& \hspace{0.47cm} \mbox {if \hspace{0.23cm} $\ell_t \in {\cal L}_{{\cal P}_{\cal M}} ^{\rm NB},$}\\
T_{\ell_t}(\vec \nu ^{(t)})& \hspace{0.47cm}\mbox {if \hspace{0.23cm} $\ell_t\in {\cal L}_{{\cal M},{{\rm H}_{\rm T}}}$},\\
\ell_t & \hspace{0.47cm}\mbox {if \hspace{0.23cm} $\ell_t\in {\cal L}_{{\cal M},{{\rm S}}}$,}\\
\end{array}\right.$

\vspace{0.2cm}
\hspace{1cm} $\ell_{t+1}\quad \in \quad\, \{\beta_{{\cal P}_{\cal M}}^+(\ell_t),\beta_{{\cal P}_{\cal M}}^- (\ell_t)\}\!$ \quad if \hspace{0.23cm} $\ell_t\in {\cal L}_{{\cal M},{\rm T}}\cup{\cal L}_{{\cal M},{\rm B}}$,

and 

\hspace{1cm} $\vec \nu^{(t+1)}\hspace{0.1cm}=\quad \left\{\begin{array}{ll}\vec \nu^{(t)}& \hspace{0.4cm} \mbox {if \hspace{0.23cm} $\ell_t \in {\cal L}_{{\cal M}} \setminus ({\cal L}_{{\cal M},{\rm H}_{1}}\cup {\cal L}_{{\cal M},{\rm H}_{+1}}),$}\\
 H_{\ell_t}(\vec \nu ^{(t)})& \hspace{0.4cm}\mbox {if \hspace{0.23cm} $\ell_t\in {\cal L}_{{\cal M},{\rm H}_{1}}\cup {\cal L}_{{\cal M},{\rm H}_{+1}}$}\\
\end{array}\right.$ 

\vspace{0.2cm}
\noindent where $\,T_{\ell_t},H_{\ell_t} \in {\cal F}_{\cal M}$ and $\beta_{{\cal P}_{\cal M}}$, $\beta_{{\cal P}_{\cal M}}^+$, and $\beta_{{\cal P}_{\cal M}}^-$ are the $\beta_{{\cal P}_{\cal M}}^\circ$-functions of $\mathfrak{Fl\,}_{{\cal P}_{\cal M}}^{\sf label}$.
\end{proposition}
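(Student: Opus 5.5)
The plan is to unfold the definitions and read the claimed equations off the transition relation $\to_{\cal M}$ and the input procedure ${\rm Input}_{\cal M}$. Fix a sequence $(\ell_t\,.\,\vec \nu^{(t)}\,.\,\bar u^{(t)})_{t\geq 1}\in{\sf Con}_{{\cal M},n}$. By the definition of ${\sf Con}_{{\cal M},n}$ there is an input $\vec x\in U_{\cal A}^n$ with $(\vec x,con_1)\in{\rm Input}_{\cal M}$, and for all $t\geq 1$ we have $con_t\to_{\cal M} con_{t+1}$. Which of $(trans\,1)$, $(trans\,2)$, $(trans\,3)$ applies depends on the machine class.

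\emph{Initial configuration.} By the definition of ${\rm Input}_{\cal M}$, $con_1=(1\,.\,\vec\nu\,.\,\bar u)$ with $(\vec x,(\vec\nu\,.\,\bar u))\in{\rm In}_{\cal M}$. For all four machine classes ${\sf M}_{\cal A}$, ${\sf M}_{\cal A}^{\rm ND}$, ${\sf M}_{\cal A}^{\rm DND}$ and ${\sf M}_{\cal A}^{\rm NDB}$, the description of ${\rm In}_{\cal M}$ fixes $\vec \nu=(n,1,\ldots,1)$. Guesses only affect $\bar u$, and B-instructions only affect later labels. Hence $\ell_1=1$ and $\vec\nu^{(1)}=(n,1,\ldots,1)\in\bbbn_+^{k_{\cal M}}$.

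\emph{Inductive step.} For the step from $t$ to $t+1$ I will distinguish cases on the set in the partition of ${\cal L}_{\cal M}$ that contains $\ell_t$, and compare each case with the corresponding clause of $\to_{\cal M}$. Since these sets are pairwise disjoint, exactly one clause applies to $con_t$.
\begin{itemize}
\item For $\ell_t\in{\cal L}_{{\cal M},{\rm F}}\cup{\cal L}_{{\cal M},{\rm F}_0}\cup{\cal L}_{{\cal M},{\rm C}}$, the clause yields label $\ell_t+1=\beta_{{\cal P}_{\cal M}}(\ell_t)$ (by (FL5)) and leaves $\vec\nu$ unchanged.
\item For ${\rm H}_1$- and ${\rm H}_{+1}$-labels, the clause yields label $\ell_t+1=\beta_{{\cal P}_{\cal M}}(\ell_t)$ and $\vec\nu^{(t+1)}=H_{\ell_t}(\vec\nu^{(t)})$.
\item For ${\rm H}_{\rm T}$-labels, the clause gives $T_{\ell_t}(\vec\nu^{(t)})$ with $\vec\nu$ unchanged.
\item For ${\rm T}$-labels, the next label is $T_{\ell_t}(\bar u^{(t)})$. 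By the definition of $T_\ell$ in ${\cal F}_{\cal M}$ (cf.\ fn.\,\ref{FootnTl}), this label is one of $\ell_1,\ell_2$ from the B-type $({\sf cond}_{\ell_t},\ell_1,\ell_2)$, i.e.\ it lies in $\{\beta^+_{{\cal P}_{\cal M}}(\ell_t),\beta^-_{{\cal P}_{\cal M}}(\ell_t)\}$ by (FL4).
\item For ${\rm B}$-labels, the next label is some $\ell'\in G_{\ell_t}=\{\ell_1,\ell_2\}$, which is the same set.
\item For the stop label, the last clause gives $con_{t+1}=con_t$.
\end{itemize}
In every case other than ${\rm H}_1$/${\rm H}_{+1}$, the index vector is copied unchanged, which gives the first line of the formula for $\vec\nu^{(t+1)}$.

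\emph{Main obstacle.} No step is deep; the only point needing care is the multi-valued and non-deterministic cases, $(trans\,2)$ and $(trans\,3)$. There I must argue that every admissible successor, not just one, obeys the stated constraints. This holds because each clause of $\to_{\cal M}$ constrains the label and $\vec\nu$ components uniformly, independently of the choice made. I would also note explicitly that the sets $G_\ell$ and the ranges of $T_\ell$ coincide with $\{\beta^+_{{\cal P}_{\cal M}}(\ell),\beta^-_{{\cal P}_{\cal M}}(\ell)\}$, which I take from the definitions in \cite{GASS25A} as recalled in Overview \ref{LabelBetas}.
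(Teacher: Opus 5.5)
Your proposal is correct and follows the same approach as the paper. Both read $(\ell_1\,.\,\vec\nu^{(1)})=(1\,.\,(n,1,\ldots,1))$ off ${\rm Input}_{\cal M}$, and both then obtain each clause for $\ell_{t+1}$ and $\vec\nu^{(t+1)}$ directly from the matching clause of $\to_{\cal M}$, leaving the branch open for ${\rm T}$- and ${\rm B}$-labels. The one difference is in the ${\rm H}_{\rm T}$ case, where the paper also tracks the ${\rm H}_1$/${\rm H}_{+1}$-labels among $\ell_1,\ldots,\ell_t$ to show that $\vec\nu^{(t)}$ depends only on the label prefix, not on the input or guesses; the stated equations do not need this, and your direct reading of the ${\rm H}_{\rm T}$ clause suffices, but the paper uses it for Corollary~\ref{CoroIndices}.
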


{\bf Proof.} Let ${\cal M}\in {\sf M}_{\cal A}\cup {\sf M}_{\cal A}^{\rm ND}\cup {\sf M}_{\cal A}^{\rm DND} \cup {\sf M}_{\cal A}^{\rm NDB}$ and $n\geq 1$ be given.
 The first member of an $({\cal M},n)$-path is a suitable initial configuration starting with the tuple $(\ell_1\,.\,\vec \nu^{(1)})$  defined by $\ell_1=1$ and $\vec \nu^{(1)}=(n,1,\ldots,1)$. Now, let $t\geq 1$ and $(\ell_1,\ldots, \ell_{t+1})$ be an initial segment of a label path in ${\sf Lab}_{{\cal M},n}^{[\leq \omega]}$. Then,  $\ell_1,\ldots, \ell_{t+1}$ are the first components in the members of an initial segment  of at least one $({\cal M},n)$-path $(\ell_t\,.\,\vec \nu^{(t)}\,.\,\bar u^{(t)})_{t\geq 1}$ where $\bar u^{(1)}$ is determined by $(\vec x,(\ell_1\,.\,\vec \nu^{(1)}\,.\,\bar u^{(1)}))\in {\sf Input}_{\cal M}$ for an input $\vec x$ in $ U_{\cal A}^n$ \big[and additional guesses\big] and $(\ell_{t+1}\,.\,\vec \nu^{(t+1)}\,.\,\bar u^{(t+1)})$ results from applying $\to_{\cal M}$ to $(\ell_t\,.\,\vec \nu^{(t)}\,.\,\bar u^{(t)})$.
We consider two cases.

\noindent {\it 1)} For $\ell_t\!\in\! {\cal L}_{\cal M} \setminus {\cal L}_{{\cal M},{{\rm H}_{\rm T}}}$, the description of $\vec \nu^{(t+1)}$ and $\ell_{t+1}$ given above can be directly~-- and without any further restrictions~-- derived from $(\ell_t\,.\,\vec \nu^{(t)}\,.\,\bar u^{(t)})$. Whether $\ell_{t+1}\!=\!\beta_{{\cal P}_{\cal M}}^+(\ell_t)$ or $\ell_{t+1}\!=\!\beta_{{\cal P}_{\cal M}}^-(\ell_t)$ holds for $\ell_t\!\in\! {\cal L}_{{\cal M},{\rm T}}$ depends on $\bar u^{(t)}$.

\noindent {\it 2)} Let $\ell_t\in {\cal L}_{{\cal M},{{\rm H}_{\rm T}}}$. If there is no $i<t$ with $\ell_i\in {\cal L}_{{\cal M},{\rm H}_{1}}\cup {\cal L}_{{\cal M},{\rm H}_{+1}}$, then $\vec\nu^{(t)}=(n,1,\ldots,1)$ holds and $T_{\ell_t}(\vec \nu ^{(t)})$ is thus by definition (cf.\,\,fn.\,\,\ref{FootnTl} the only successor of $\ell_t$ in a computation path with the initial part $(\ell_1,\ldots,\ell_t)$ which means that $\ell_{t+1}=T_{\ell_t}(\vec \nu ^{(1)})$ holds. Now, let $r_0>0$ and $(\ell_{i_1},\ldots, \ell_{i_{r_0}})$ be the subsequence of $(\ell_1,\ldots,\ell_t)$ that belongs to $({\cal L}_{{\cal M},{\rm H}_{1}}\cup {\cal L}_{{\cal M},{\rm H}_{+1}})^{r_0}$ such that 

\vspace{0.1cm}
\quad $(\{\ell_1,\ldots,\ell_t\}\setminus \{\ell_{i_1},\ldots, \ell_{i_{r_0}}\})\,\cap\, ({\cal L}_{{\cal M},{\rm H}_{1}}\cup {\cal L}_{{\cal M},{\rm H}_{+1}})=\emptyset$,

\vspace{0.2cm}
\noindent $i_{r_0}<t$, and $i_1<\cdots <i_{r_0}$ \textcolor{gray}{(if $r_0>1$)} hold. For $i_1>1$, $(\ell_1, \ldots,\ell_{i_1-1})$ belongs to $ ({\cal L}_{\cal M} \setminus( {\cal L}_{{\cal M},{{\rm H}_1}}\cup {\cal L}_{{\cal M},{{\rm H}_{+1}}}))^{i_1-1}$. This implies $\vec \nu^{(i_1)}=(n,1,\ldots,1)$ in any case. Consequently,

\vspace{0.05cm}
$\begin{array}{llllll}\vec \nu ^{(i_{\alpha+1})} \!\!\!&=\cdots =  \vec \nu ^{(i_\alpha+1)} & =\,H_{\ell_{i_\alpha}}(\vec \nu ^{(i_\alpha)}) &\,\mbox{for all $\alpha\geq 1$ with $\alpha <r_0$ and}\\
\vec \nu ^{(t)} &=\cdots =  \vec \nu ^{(i_{r_0}+1)} \!\!\! & =\,H_{\ell_{i_{r_0}}}(\vec \nu ^{(i_{r_0})}) &\, \textcolor{gray}{(\mbox{where } r_0> 0)} \end{array}$

\vspace{0.05cm}
\noindent are uniquely determined by the initial part $(\ell_1,\ldots, \ell_t)$ and $\to_{\cal M}$, regardless of the specific values $x_1,\ldots,x_n$ of the input \big[and the guesses\big]. By applying $\to_{\cal M}$, we obtain $\vec \nu ^{(t+1)}=\vec \nu ^{(t)}$ and $\ell_{t+1}= T_{\ell_t}(\vec \nu ^{(t)})$.
\qed

\vspace{0.2cm}
{\bf Uniqueness of indices for fixed $n$ and $\bar \ell$.}
From the proof of Proposition \ref{LabIndFixLen}, it can be concluded that traversing any initial part $(\ell_1,\ldots, \ell_{t})$ of $\bar \ell \in {\sf Lab}_{{\cal M},n}^{[\leq \omega]}$ involves computing a sequence $(\vec \nu^{(1)}, \ldots, \vec \nu^{(t+1)})$ and that this leads to the equality $\ell_{t+1}= T_{\ell_t}(\vec \nu ^{(t)})$ such that we can use $\ell_{t+1}= T_{\ell_t}(\vec \nu ^{(t)})$ instead of the condition $\ell_{t+1}\in \{\beta_{\cal P}^+(\ell_t),\beta_{\cal P}^- (\ell_t)\}$ whenever $\ell_t\in {\cal L}_{{\cal M},{{\rm H}_{\rm T}}}$ holds. This results in a stronger statement.

\begin{corollary}[Indices uniquely determined for $B_0\in{\sf Lab}_{{\cal M},n}$]\label{CoroIndices}$\!$Let ${\cal M}$ be any BSS RAM in $ {\sf M}_{\cal A}\cup {\sf M}_{\cal A}^{\rm ND}\cup {\sf M}_{\cal A}^{\rm DND} \cup {\sf M}_{\cal A}^{\rm NDB}$ for an arbitrary $\sigma$-structure ${\cal A}$ and $n\geq 1$. Let $B_0$ be a computation path in ${\sf Lab}_{{\cal M},n}$ and $\bar \ell=B_0^{[\leq \omega]}$. Then, there is exactly one sequence $(\vec \nu^{(1)},\vec \nu^{(2)},\ldots)\in ( \bbbn_+^{k_{\cal M}})^\omega$ with $\vec \nu^{(1)}\!=(n,1,\ldots,1)\in \bbbn_+^{k_{\cal M}}$ such that for all inputs $\vec x \in U_{\cal A}^n$ \big[and all used tuples $\vec y$ of guesses in $U_{\cal A}^\infty$\big] traversing $B_0$, there is a sequence $(\bar u^{(t)})_{t\geq 1}\in (U_{\cal A}^\omega)^{\omega}$ such that the equations $con_t= (\ell_t\,.\,\vec \nu^{(t)}\,.\,\bar u^{(t)})$ hold for all members of $(con_t)_{t\geq 1} \in {\sf Con}_{{\cal M},\bar \ell,n}\cap {\sf Con}_{{\cal M},\vec x\big[,\vec y\big]}$. 
\end{corollary}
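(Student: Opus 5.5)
The plan is to define the index sequence explicitly from $\bar\ell$ and $n$ alone, and then to show that every admissible configuration sequence must use exactly this sequence. Set $\vec\nu^{(1)}=(n,1,\ldots,1)\in\bbbn_+^{k_{\cal M}}$. For $t\geq 1$, put $\vec\nu^{(t+1)}=H_{\ell_t}(\vec\nu^{(t)})$ if $\ell_t\in{\cal L}_{{\cal M},{\rm H}_1}\cup{\cal L}_{{\cal M},{\rm H}_{+1}}$, and $\vec\nu^{(t+1)}=\vec\nu^{(t)}$ otherwise. This recursion is driven only by the labels $\ell_t$ of $\bar\ell=B_0^{[\leq\omega]}$ and by the functions $H_\ell$, which act on indices alone. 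Hence it yields a well-defined sequence in $(\bbbn_+^{k_{\cal M}})^\omega$ that does not depend on any input or guess. For finite $B_0$ of length $s$, the tail is constant from $t=s$ onward, because $\ell_t=\ell_{{\cal P}_{\cal M}}$ there and the stop transition leaves the configuration unchanged.

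For existence, fix an input $\vec x\in U_{\cal A}^n$ (together with guesses $\vec y$, where applicable) that traverses $B_0$, and take any $(con_t)_{t\geq1}\in{\sf Con}_{{\cal M},\bar\ell,n}\cap{\sf Con}_{{\cal M},\vec x[,\vec y]}$. By the definition of ${\sf Con}_{{\cal M},\bar\ell,n}$, we can write $con_t=(\ell_t\,.\,\vec\mu^{(t)}\,.\,\bar u^{(t)})$ for some $\vec\mu^{(t)}$ and $\bar u^{(t)}$. Induction on $t$ with Proposition~\ref{LabIndFixLen} then gives $\vec\mu^{(t)}=\vec\nu^{(t)}$. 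The base case holds because $\vec\mu^{(1)}=(n,1,\ldots,1)$. For the step, Proposition~\ref{LabIndFixLen} says that $\vec\mu^{(t+1)}$ equals $H_{\ell_t}(\vec\mu^{(t)})$ or $\vec\mu^{(t)}$, according to the type of $\ell_t$, which is exactly the recursion above. The induction runs over all $t\geq1$ and so also covers the constant tail after the stop label. The sequence $(\bar u^{(t)})_{t\geq1}$ required in the statement is simply read off from the configurations $con_t$.

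For uniqueness, suppose a second sequence $(\vec\nu'^{(t)})_{t\geq1}$ with $\vec\nu'^{(1)}=(n,1,\ldots,1)$ also has the stated property. Since $B_0\in{\sf Lab}_{{\cal M},n}$, there is at least one input (and guess tuple) traversing $B_0$, so the set of such configuration sequences is nonempty. Pick one, $(con_t)_{t\geq1}$. Its index components then equal both $\vec\nu'^{(t)}$ and, by the previous paragraph, $\vec\nu^{(t)}$. Hence the two sequences coincide. This is the only place where nonemptiness is needed.

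The main obstacle is to make sure that the branching at ${\rm H}_{\rm T}$-labels and at ${\rm T}$/${\rm B}$-labels does not undermine the claim. For this we note that branching affects only the choice of $\ell_{t+1}$, never $\vec\nu^{(t+1)}$. That choice is already fixed by $\bar\ell$, and Proposition~\ref{LabIndFixLen} guarantees its consistency, namely $\ell_{t+1}=T_{\ell_t}(\vec\nu^{(t)})$. For nondeterministic ${\cal M}$ there is a second point: several configuration sequences may lie in ${\sf Con}_{{\cal M},\vec x}$. The statement, however, restricts to those whose label sequence is $\bar\ell$, so the argument of the second paragraph applies to each of them uniformly.
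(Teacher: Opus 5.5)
Your proposal is correct and follows essentially the paper's route. The paper derives the corollary directly from the proof of Proposition~\ref{LabIndFixLen}, which shows that along $\bar\ell$ the index vectors are fixed recursively from $(n,1,\ldots,1)$ by the functions $H_{\ell_t}$, independently of inputs and guesses; this is exactly your recursion and induction, and your nonemptiness argument via $B_0\in{\sf Lab}_{{\cal M},n}$ makes the uniqueness half explicit. If you read the statement literally as asking for one sequence $(\bar u^{(t)})_{t\geq 1}$ serving all members of ${\sf Con}_{{\cal M},\bar \ell,n}\cap {\sf Con}_{{\cal M},\vec x[,\vec y]}$, add that this intersection has at most one element: once $\vec x$ (and $\vec y$) and the label sequence $\bar\ell$ are fixed, the initial configuration and every step of $\to_{\cal M}$ are determined, also for ${\cal M}\in{\sf M}_{\cal A}^{\rm NDB}$.
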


Proposition \ref{LabIndFixLen} states that, for any usual BSS RAM ${\cal M}$ over any structure ${\cal A}$ and each possible sequence $\bar \ell\in {\sf Lab}_{{\cal M},n} ^{[\leq \omega]}$ traversed by inputs in $U_{\cal A}^n$, the contents $\vec \nu^{(t)}\in \bbbn_+ ^{k_{\cal M }}$ of the $k_{\cal M}$ index registers are uniquely determined solely by the index instructions of the program ${\cal P}_{\cal M}$. This means that the first components $(\ell_t,\nu_{t,1},\ldots, \nu_{t,k_{\cal M}})$ of all generated configurations of ${\cal M}$ are completely defined by its program ${\cal P}_{\cal M}$ and the length of the inputs. On the one hand, the computation of $(\vec \nu^{(t)})_{t\geq 1}$ depends on the computation path $B_{\bar \ell}$ and, on the other hand, it is independent of the used structure ${\cal A}$. $\bar \ell$ and $n$ determine $(\vec \nu^{(t)})_{t\geq 1}$ uniquely. Thus, this statement can be generalized.

\begin{proposition}[Indices uniquely determined for all ${\cal M}$ by ${\cal P}_{{\cal M}_0}$]\label{Prop} \hfill
 Let \linebreak ${\cal P}$ be any $\sigma$-program whose instructions can be of the types $(1)$ to $(8)$ and $(11)$. Let $\bar \ell$ be any sequence $(\ell_1,\ell_2,\ldots)$ that belongs to ${\sf Lab}_{{\cal M}_0,n}^{[\leq \omega]}$ for some $n\geq 1$ and some BSS RAM ${\cal M}_0$ whose program ${\cal P}_{{\cal M}_0}$ is ${\cal P}$. Then, there is a sequence $(\vec \nu^{(t)})_{t\geq 1}$ in $( \bbbn_+^{k_{{\cal M}_0}})^\omega$ such that, for each BSS RAM ${\cal M}$ over any $\sigma$-structure ${\cal A}$ with ${\cal P}_{\cal M}={\cal P}$ and $k_{\cal M}= k_{{\cal M}_0} $, for each $\vec x \in U_{\cal A}^n$ with $\bar \ell\in {\sf Lab}_{{\cal M},\vec x} ^{[\leq \omega]}$, and for all resulting sequences $(\ell_t\,.\,\vec \mu^{(t)}\,.\,\bar u^{(t)})_{t\geq 1}\in {\sf Con}_{{\cal M},\vec x}$ with $(\bar u^{(t)})_{t\geq 1}\in U_{\cal A}^\omega$, the sequences $(\vec \mu^{(t)})_{t\geq 1}$ and $(\vec \nu^{(t)})_{t\geq 1}$ match.
\end{proposition}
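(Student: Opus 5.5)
The plan is to construct the sequence $(\vec \nu^{(t)})_{t\geq 1}$ explicitly from ${\cal P}$, $\bar \ell$, $n$ and $\kappa=k_{{\cal M}_0}$ alone, without reference to any structure. Then I will show by induction on $t$ that every admissible configuration sequence of every machine ${\cal M}$ in the statement has exactly these index components.

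\emph{Construction.} Put $\vec \nu^{(1)}=(n,1,\ldots,1)\in\bbbn_+^{\kappa}$. For $t\geq 1$ let $\vec \nu^{(t+1)}=H_{\ell_t}(\vec \nu^{(t)})$ if $\ell_t\in{\cal L}_{{\cal P},{\rm H}_1}\cup{\cal L}_{{\cal P},{\rm H}_{+1}}$, and $\vec \nu^{(t+1)}=\vec \nu^{(t)}$ otherwise. Here $H_\ell$ is the map with $I_j:=1$ or $I_j:=I_j+1$ in component $j$, interpreted in ${\cal A}_\bbbn$. As noted in footnote~\ref{FootnTl} and in the discussion of ${\cal A}_\bbbn$, the maps $H_\ell$ and the index tests $T_\ell$ for $\ell\in{\cal L}_{{\cal P},{\rm H}_{\rm T}}$ depend only on the instruction in ${\cal P}$ and on $\kappa$. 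They depend neither on ${\cal M}$ nor on ${\cal A}$. So the sequence is determined by $(\bar \ell,n,{\cal P},\kappa)$ alone. Since $\bar \ell\in{\sf Lab}_{{\cal M}_0,n}^{[\leq\omega]}$, Corollary~\ref{CoroIndices} applied to ${\cal M}_0$ shows that this is exactly the unique index sequence attached to $B_{\bar\ell}$ for ${\cal M}_0$. This is a consistency check rather than a needed step.

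\emph{Induction.} Fix ${\cal M}$ over ${\cal A}$ with ${\cal P}_{\cal M}={\cal P}$ and $k_{\cal M}=\kappa$, and fix $\vec x\in U_{\cal A}^n$ with $\bar\ell\in{\sf Lab}_{{\cal M},\vec x}^{[\leq\omega]}$. Take any $(\ell_t\,.\,\vec \mu^{(t)}\,.\,\bar u^{(t)})_{t\geq1}\in{\sf Con}_{{\cal M},\vec x}$ whose label sequence is $\bar\ell$; in the nondeterministic cases this holds for any choice of guesses. By Proposition~\ref{LabIndFixLen}, $\vec\mu^{(1)}=(n,1,\ldots,1)=\vec\nu^{(1)}$. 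The same proposition gives $\vec\mu^{(t+1)}$ as $\vec\mu^{(t)}$ or $H_{\ell_t}(\vec\mu^{(t)})$, according to the same case split on $\ell_t$ used in the construction. So $\vec\mu^{(t)}=\vec\nu^{(t)}$ implies $\vec\mu^{(t+1)}=\vec\nu^{(t+1)}$, and the sequences match. If $B_{\bar\ell}$ is finite with stop label $\ell_s$, the stop transition leaves indices unchanged, and the construction also keeps them constant for $t\geq s$. So the tail causes no trouble.

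\emph{Main obstacle.} The delicate point is justifying that $H_\ell$ is literally the same function for all ${\cal M}$ and ${\cal A}$ with the same $k_{\cal M}$. This is why the hypothesis $k_{\cal M}=k_{{\cal M}_0}$ is needed: otherwise the tuples live in different spaces $\bbbn_+^{k}$. I would handle this by appealing to the definition of ${\cal F}_{\cal M}$ from \cite{GASS25A}, where index operations are defined through $succ$ and constant $1$ of ${\cal A}_\bbbn$, exactly as footnote~\ref{FootnTl} does for $T_\ell$.

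The ${\rm H}_{\rm T}$-steps also need a word. The labels are given by $\bar\ell$, so no choice is made there. Consistency of $\ell_{t+1}=T_{\ell_t}(\vec\nu^{(t)})$ holds automatically, because some ${\cal M}$ actually traverses $\bar\ell$. It is not needed for the matching claim itself.
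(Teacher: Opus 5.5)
Your proposal is correct and follows essentially the paper's route. The paper obtains this proposition by observing that the recursion for the index tuples established in the proof of Proposition~\ref{LabIndFixLen} (start value $(n,1,\ldots,1)$, update by $H_{\ell_t}$ or by the identity) depends only on $\bar\ell$, $n$, ${\cal P}$ and $k_{\cal M}$, and not on ${\cal A}$; your explicit construction plus induction formalizes exactly this. The ``main obstacle'' you identify is settled directly by the structure-independent transition rules (6) and (7) in Overview~\ref{TransfConf}, so you do not need any further appeal to the definition of ${\cal F}_{\cal M}$.
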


Now, we consider all ${\cal P}$-paths and we do not assume that they have to be computation paths traversed by a machine. For all $t\geq 1$, all $(\ell_1,\ldots,\ell_t)\in {\cal L}_{\cal P}^t$, all $n\geq 1$, the label $\ell_{t+1}$ must be one and the same element from $ \{\beta_{\cal P}^+(\ell_t),\beta_{\cal P}^- (\ell_t)\}$ whenever $\ell_t\in {\cal L}_{{\cal P},{\rm H}_{\rm T}}$ holds and $(\ell_1,\ldots,\ell_{t+1})$ is an initial part of some $\bar \ell\in {\sf Lab}_{{\cal M},n}$ for any arbitrary BSS RAM ${\cal M}$ with ${\cal P}_{\cal M}={\cal P}$ and thus $k_{\cal M}\geq k_{\cal P}$. In such a case, we can assume $\beta_{\cal P}^+(\ell_t)\not=\beta_{\cal P}^- (\ell_t)$ (cf.\,\,$({\sf FL4})$ in Definition \ref{DefFlowLabel}). Consequently, ${\cal L}_{{\cal P},{\rm H}_{\rm T}}\not=\emptyset$ implies that there are ${\cal P}$-paths that cannot be computation paths for inputs of length $n$ and all program paths whose first components are $(\ell_1,\ldots,\ell_t,\ell)$ with $\ell \in \{\beta_{\cal P}^+(\ell_t),\beta_{\cal P}^- (\ell_t)\}\setminus \{\ell_{t+1}\}$ belong to ${\sf Lab}_{{\cal P},n}^{\rm Co}$. Only the reachability of $\ell_{t+1} \in \{\beta_{\cal P}^+(\ell_t),\beta_{\cal P}^- (\ell_t)\}$ for $\ell_t\in {\cal L}_{{\cal P},{\rm T}}$ depends on a $\sigma$-structure ${\cal A}$ and the contents of $Z$-registers. For $\ell_t\in{\cal L}_{{\cal P},{\rm B}}$, a branching is in any case possible. Let us consider all paths in ${\sf Con}_{{\cal M},n}= \bigcup_{\vec x \in U_{\cal A}^n}{\sf Con}_{{\cal M},\vec x}$. 

\begin{theorem}[Indices uniquely determined by ${\cal P}$]\label{TheoUniq1}
Let ${\cal P}$ be a $\sigma$-program and let $ \kappa \geq k_{\cal P}$, $n\geq 1$, and $\bar \ell\in {\sf Lab}_{\cal P} ^{[\leq \omega]}$ be given. There is one and only one sequence $(\vec \nu^{(t)})_{t\geq 1}\in (\bbbn_+^{\kappa} )^\omega$ such that 

\vspace{0.1cm}
$\!\!\{(\vec \mu^{(t)})_{t\geq 1}\in (\bbbn_+^{\kappa} )^\omega\mid 
(\exists {\cal A}\in {\sf Struc})(\exists {\cal M}\in {\sf M}_{\cal A} \cup {\sf M}_{\cal A}^{\rm DND} \cup{\sf M}_{\cal A}^{\rm ND} \cup {\sf M}_{\cal A}^{\rm NDB})$

\hspace{3.2cm} $(\kappa= k_{\cal M}\,\, \&\,\, {\cal P}={\cal P}_{\cal M}\,\,$

\vspace{0.1cm}
\hfill$ \&\,\, (\exists (\bar u^{(t)})_{t\geq 1}\in ({U_{\cal A}^\omega})^\omega)((\ell_t\,.\,\vec \mu^{(t)}\,.\, \bar u^{(t)} )_{t\geq 1}\in{\sf Con}_{{\cal M},n}))\}$

$\subseteq \{(\vec \nu^{(t)})_{t\geq 1}\}$.
\end{theorem}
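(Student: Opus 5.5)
We construct the candidate sequence $(\vec \nu^{(t)})_{t\geq 1}$ explicitly and purely syntactically from ${\cal P}$, $\kappa$, $n$ and $\bar \ell=(\ell_1,\ell_2,\ldots)$, without reference to any structure or machine. Put $\vec \nu^{(1)}=(n,1,\ldots,1)\in\bbbn_+^{\kappa}$. For $t\geq 1$, let $\vec \nu^{(t+1)}=\vec \nu^{(t)}$ if $\ell_t\notin {\cal L}_{{\cal P},{\rm H}_1}\cup{\cal L}_{{\cal P},{\rm H}_{+1}}$. If $\ell_t$ labels $I_j:=1$, replace the $j$-th component by $1$. If $\ell_t$ labels $I_j:=I_j+1$, replace the $j$-th component by its successor, the other components remaining unchanged. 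Here $j\leq k_{\cal P}\leq\kappa$, so this is well defined. By recursion on $t$ this gives exactly one sequence in $(\bbbn_+^{\kappa})^\omega$, and it is the only candidate. Since the statement asks only for an inclusion in a singleton, it then suffices to show that every $(\vec\mu^{(t)})_{t\geq1}$ in the left-hand set equals this sequence. An empty left-hand set is allowed, which happens, e.g., when $B_{\bar\ell}\in{\sf Lab}_{{\cal P},n}^{\rm Co}$.

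Next, fix ${\cal A}$, ${\cal M}$ with $k_{\cal M}=\kappa$ and ${\cal P}_{\cal M}={\cal P}$, and a sequence $(\ell_t\,.\,\vec\mu^{(t)}\,.\,\bar u^{(t)})_{t\geq1}\in{\sf Con}_{{\cal M},n}$. We apply Proposition \ref{LabIndFixLen}, which holds uniformly for all four machine classes. It gives $\vec\mu^{(1)}=(n,1,\ldots,1)=\vec\nu^{(1)}$. It also gives $\vec\mu^{(t+1)}=\vec\mu^{(t)}$ when $\ell_t\notin{\cal L}_{{\cal M},{\rm H}_1}\cup{\cal L}_{{\cal M},{\rm H}_{+1}}$, and $\vec\mu^{(t+1)}=H_{\ell_t}(\vec\mu^{(t)})$ otherwise. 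Because ${\cal L}_{{\cal M},\ast}={\cal L}_{{\cal P},\ast}$ for every instruction type, and because $H_{\ell}\in{\cal F}_{\cal M}$ is, by the definition of the transition system, exactly the operation "set component $j$ to $1$" or "increase component $j$ by $1$", the recursion for $\vec\mu^{(t)}$ coincides with that for $\vec\nu^{(t)}$. Induction on $t$ then yields $\vec\mu^{(t)}=\vec\nu^{(t)}$ for all $t\geq1$.

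The main point to verify carefully is that $H_\ell$ does not depend on ${\cal A}$ or on the particular machine. The index instructions act on $\bbbn_+$ via ${\cal A}_\bbbn$, and their semantics is fixed by the instruction itself, not by the $Z$-registers or by guesses. This is the same observation already used in the proof of Proposition \ref{LabIndFixLen} and in Corollary \ref{CoroIndices}, so we only need to state it explicitly for an arbitrary machine whose program is ${\cal P}$.

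A second, minor point concerns stop configurations and finite paths, where $\bar\ell=\vec\ell^{\,[\leq\omega]}$. Once $\ell_t=\ell_{\cal P}$, the transition $\to_{\cal M}$ leaves the configuration unchanged, which matches $\vec\nu^{(t+1)}=\vec\nu^{(t)}$ in our definition because $\ell_{\cal P}$ is not an ${\rm H}$-label. The nondeterministic cases need nothing extra, since guesses affect only $\bar u^{(t)}$ and the label choices at ${\rm B}$-labels, while the labels themselves are fixed by $\bar\ell$.
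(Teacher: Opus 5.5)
Your proof is correct and follows essentially the paper's route. The syntactically defined sequence is exactly ${\rm seq}^{\rm ind}_{{\cal P},\bar \ell,\kappa,n}$, given by {\sf (P1)}--{\sf (P4)} with the machine-independent functions $H_{{\cal P},\kappa,\ell}=H_{{\cal M},\ell}$ of Definition~\ref{AuxilFP}, and Proposition~\ref{LabIndFixLen} shows that every admissible $(\vec\mu^{(t)})_{t\geq 1}$ obeys the same recursion.

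Your remark that it is ``the only candidate'' is the right reading, because literally any sequence satisfies the inclusion when the left-hand set is empty. This is why the paper restates the result as Theorem~\ref{TheoUniq2}, with uniqueness taken inside ${\sf Seq}^{\rm ind}_{{\cal P},\kappa,n}$.
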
 

We will give the details for a proof in Section \ref{SectTransSyst} where the sequence of partial ${\cal M}$-configurations, $(\ell_t\,.\,\vec \nu ^{(t)})_{t\geq 1} $, determined by $n$ and $\bar \ell$ is denoted by ${\rm con}^{\rm ind}_{ {\cal M},\bar \ell, n}$ and partial configurations are considered for any $\sigma$-program.

\vspace{0.1cm}
{\bf A partial $({\cal P},\kappa)$-configuration}.
For any ${\cal P}\in {\sf P}_{\sigma}$, $\ell\in {\cal L}_{\cal P}$, and $\vec \nu \in \bbbn_+^{\kappa}$, the tuple $(\ell\,.\,\vec\nu)$ is called a {\em partial $({\cal P},\kappa)$-configuration}.

\subsection{Partial configurations determined by programs}\label{SectTransSyst} 
Inspired by the observations in Section \ref{PropertRestrConf}, we want to provide a general framework for describing the transformation of partial configurations in ${\sf S}_{\cal M}^{\rm ind}$ for arbitrary BSS RAMs ${\cal M}$ formally in accordance with the generation of ${\cal M}$-configurations by $\to_{\cal M}$. We consider sets  ${\sf S}_{{\cal P},\kappa}^{\rm ind}$ of partial configurations for any $\sigma$-program ${\cal P}$ and introduce new transition functions $\to_{{\cal P},\kappa}^{\rm ind}: {\sf S}_{{\cal P},\kappa}^{\rm ind} \genfrac{}{}{0pt}{2}{\longrightarrow} {\longrightarrow} {\sf S}_{{\cal P},\kappa}^{\rm ind}$ for transforming partial configurations in ${\sf S}_{\cal M}^{\rm ind}$ and other sets $ {\sf S}_{{\cal P},\kappa}^{\rm ind}$. This introduction leads, in particular, to ${\sf S}_{\cal M}^{\rm ind}={\sf S}_{{\cal P}_{\cal M},k_{\cal M}}^{\rm ind} $ and $\to_{{\cal P}_{\cal M},k_{\cal M}}^{\rm ind}\subset ({\sf S}_{\cal M}^{\rm ind})^2 $ and enables thus a promising step toward an abstract characterization of the transformation of configurations by $\to_{\cal M}$. For $\ell\in {\cal L}_{{\cal M},{\rm H}_1}\cup {\cal L}_{{\cal M},{\rm H}_{+1}} $, we also denote $H_\ell\in {\cal F}_{\cal M}$ by $H_{{\cal M}, \ell}$ and, for $\ell\in {\cal L}_{{\cal M},{\rm H}_{\rm T}}$, we also denote $T_\ell \in {\cal F}_{\cal M}$ by $T_{{\cal M},\ell} $. For generating sequences of partial $({\cal P},\kappa)$-configurations, we use the following functions. 

\begin{definition}[Test and other auxiliary functions in $\,{\cal F}_{{\cal P},\kappa}^{\rm ind}$]\label{AuxilFP}\hfill Let $\,{\cal P}$ be \linebreak any $\sigma$-program, $\kappa\geq k_{\cal P}$, $n\geq 1$, and ${\cal M}$ be an arbitrary BSS RAM with ${\cal P}_{\cal M}={\cal P}$ and $ k _{\cal M}=\kappa$. 
For $\ell\in {\cal L}_{{\cal P},{\rm H}_1}\cup {\cal L}_{{\cal P},{\rm H}_{+1}} $, let \textcolor{magenta}{$H_{{\cal P},\kappa,\ell}$} be the {\em auxiliary function} given by $H_{{\cal P},\kappa,\ell}=H_{{\cal M}, \ell}$. For $\ell\in {\cal L}_{{\cal P},{\rm H}_{\rm T}}$, let \textcolor{magenta}{$T_{{\cal P},\kappa,\ell}$} be the {\em test function} given by $T_{{\cal P},\kappa,\ell}=T_{{\cal M}, \ell}$. These functions $H_{{\cal P},\kappa,\ell}$ and $T_{{\cal P},\kappa,\ell}$ now also belong to the {\em system ${\cal F}_{{\cal P},\kappa}^{\rm ind}$}. For any fixed ${\cal P}\in {\sf P}_\sigma$, we also denote $H_{{\cal P},\kappa,\ell}$ and $T_{{\cal P},\kappa,\ell}$ by \textcolor{magenta}{$H_\ell^{(\kappa)}$} and \textcolor{magenta}{$T_\ell^{(\kappa)}$}, respectively. For details, see Overviews \ref{RestrTransf} and \ref{TransPKappaPaths}.
\end{definition} 

Consequently, the functions in ${\cal F}_{{\cal P},\kappa}^{\rm ind}$ allow to transform and evaluate partial $({\cal P},\kappa)$-configurations independently of the content of any $Z$-register.

\begin{definition}[$({\cal P}\!,\kappa,n)$-paths in $ {\sf Con}^{\rm ind}_{{\cal P},\kappa,n}$]$\!$Let ${\cal P}\!\in\! {\sf P}_\sigma$, $\kappa\geq k_{\cal P}$, and $n\geq 1$. The sequence $(\ell_t\,.\, \vec \nu^{(t)})_{t\geq 1}$ of  partial $({\cal P},\kappa)$-configurations is called {\em a $({\cal P},\kappa,n)$-path} if {\sf (P1)} to {\sf (P5)} hold. Let $ {\sf Con}^{\rm ind}_{{\cal P},\kappa,n}$ be the set of all $({\cal P},\kappa,n)$-paths.
\end{definition}

\begin{overview}[Properties of the $({\cal P},\kappa,n)$-paths $(\ell_t\,.\, \vec \nu^{(t)})_{t\geq 1}$]\label{PropPartCon}

\hfill

\nopagebreak 

\noindent \fbox{\parbox{11.8cm}{

\hfill{\small Let ${\cal P}\in {\sf P}_{\sigma}$, $\kappa\geq k_{\cal P}$, $n\geq 1$, and $\bar \ell=(\ell_1,\ell_2,\ldots)$.}

\em
\begin{tabular}{ll}
{\sf (P1)} & $\bar \ell \in {\sf Lab}_{\cal P}^{[\leq \omega]}$.\\
{\sf (P2)} & $ \vec \nu^{(1)}\in \bbbn_+ ^{\kappa}$,\\
{\sf (P3)} & $\vec \nu^{(t+1)}\hspace{0.1cm}=\quad \left\{\begin{array}{ll}\vec \nu^{(t)}& 
\!\mbox {if \,$\ell_t \in {\cal L}_{{\cal P}} \setminus ({\cal L}_{{\cal P},{\rm H}_{1}}\cup {\cal L}_{{\cal P},{\rm H}_{+1}})$, }\\
H_{{\cal P},\kappa,\ell_t}(\vec \nu ^{(t)})&\!\mbox {if \,$\ell_t\in {\cal L}_{{\cal P},{\rm H}_{1}}\cup {\cal L}_{{\cal P},{\rm H}_{+1}}$}\\
\end{array}\right.$\vspace{0.1cm}\\
&\hspace{8.25cm} for all $t\geq 1$.\\
{\sf (P4)} & $(\ell_1\,.\,\vec \nu^{(1)})=(1,n,1,\ldots,1) \in \bbbn_+^{\kappa +1}$.\\
{\sf (P5)} & $\ell_{t+1}=T_{{\cal P},\kappa,\ell_t}(\vec \nu ^{(t)})$\hspace{1.45cm} if \,$t\geq 1$ and $\ell_t\in {\cal L}_{{\cal P},{{\rm H}_{\rm T}}}$.\vspace{0.2cm}\\
\end{tabular}

\hfill {\small \em {\sf P} stands for {\sf partial}.}
}}
\end{overview}

By {\sf (P1)}, we have one of three cases for $\bar \ell$: 
{\it 1) $s=1$, {\sf (L5)}, and {\sf (L6)}}, 
{\it 2) $s\geq 2$, {\sf (L1)}, {\sf (L6)}, {\sf (L3)}, and {\sf (L4)} for all $t<s$}, or
{\it 3) {\sf (L2)}, {\sf (L3)}, and {\sf (L4)} for all $t\geq 1$}. For {\sf (L1)} to {\sf (L6)}, see Overview \ref{PropProgrPaths}. {\sf (P2)} and {\sf (P3)} together reflect the potential for a recursive definition. The initial values given by {\sf (P4)} indicate that we want to consider $({\cal P},\kappa,n)$-paths {\sf only for an analysis of BSS RAMs}. The justification to consider {\sf (P5)} follows from Proposition \ref{LabIndFixLen}.

By the proof of Proposition \ref{LabIndFixLen} and in accordance with Corollary \ref{CoroIndices}, there is, for each $\bar \ell\in {\sf Lab}_{{\cal M},n} ^{[\leq \omega]}$, exactly one sequence $(\ell_t\,.\,\vec \nu ^{(t)})_{t\geq 1} $ of partial configurations in $ {\sf Con}_{{\cal P}_{\cal M},k_{\cal M},n}^{\rm ind}$ which is of interest to us. We denote it by ${\rm con}^{\rm ind}_{ {\cal M},\bar \ell, n}$ such that ${\rm con}^{\rm ind}_{ {\cal M},\bar \ell, n}\in {\sf Con}_{{\cal P}_{\cal M},k_{\cal M},n}^{\rm ind}$ holds. In this context, we will also consider the sets listed in Overview \ref{SetsPartCon2}. Taking into account all the details of Theorem \ref{TheoUniq1}, we will introduce the transition functions $\to _{{\cal P},\kappa}^{\rm ind}$ for the partial $({\cal P},\kappa)$-configurations in sets ${\sf S}_{{\cal P},\kappa}^{\rm ind}$~-- that can also be denoted by $\genfrac{}{}{0pt}{2}{\longrightarrow} {\longrightarrow}_{{\cal P},\kappa}^{\rm ind}$~-- in order to generate sequences of partial $({{\cal P},\kappa})$-configurations in ${\sf Con}^{\rm ind}_{{\cal P},\kappa,n}$ in a non-deterministic way.

\vspace{0.3cm}
{\bf The sets from $ {\sf Seq}^{\rm ind}_{{\cal P},\kappa}$ to $ {\sf Con}^{\rm ind}_{{\cal P},\kappa,n}$.} Let ${\sf Seq}^{\rm ind}_{{\cal P},\kappa}$ be a subset of the set $\{(\ell_t\,.\, \vec \nu^{(t)})_{t\geq 1}\mid (\ell_t)_{t\geq 1}\in {\sf Lab}_{\cal P} ^{[\leq \omega]}\,\,\&\,\, (\vec \nu^{(t)})_{t\geq 1} \in (\bbbn^{\kappa}_+)^\omega \}$ and consist of all sequences $(\ell_t\,.\, \vec \nu^{(t)})_{t\geq 1}$ of $({\cal P,\kappa})$-configurations $(\ell_t,\nu_{t,1},\ldots, \nu_{t,\kappa})$ determined by a sequence $\bar \ell\in {\sf Lab}_{\cal P} ^{[\leq \omega]}$ and the functions $H^{(\kappa)} _\ell$~-- that belong to ${\cal F}_{{\cal P},\kappa}^{\rm ind}$ for all $\ell\in {\cal L}_{{\cal P},{\rm H}_{1}}\cup {\cal L}_{{\cal P},{\rm H}_{+1}}$~-- such that all components $\nu_{t+1,1},\ldots, \nu_{t+1,\kappa}$ of all tuples $\vec \nu^{(t+1)}\in \bbbn_+ ^{\kappa}$ satisfy the properties given by {\sf (P2)} and {\sf (P3)}. For any $n\geq 1$, let $ {\sf Seq}^{\rm ind}_{{\cal P},\kappa,n}$ be the set of all sequences $(\ell_t\,.\, \vec \nu^{(t)})_{t\geq 1} \in {\sf Seq}^{\rm ind}_{{\cal P},\kappa}$ whose initial values satisfy {\sf (P4)}. Thus, these sets satisfy ${\sf Con}^{\rm ind}_{{\cal P},\kappa,n} \subseteq {\sf Seq}^{\rm ind}_{{\cal P},\kappa,n}\subset {\sf Seq}^{\rm ind}_{{\cal P},\kappa} \subset ({\sf S}_{{\cal P},\kappa}^{\rm ind})^\omega$.

\vspace{0.1cm}
\begin{overview}[Sets of sequences of partial $({\cal P},\kappa)$-configurations]\label{SetsPartCon2}

\hfill

\nopagebreak 

\noindent \fbox{\parbox{11.8cm}{

\hfill{\small Let ${\cal P}\in {\sf P}_{\sigma}$, $\kappa\geq k_{\cal P}$, $n\geq 1$, ${\cal M}$ be a BSS RAM, and 

\hfill $\bar \ell=(\ell_1,\ell_2,\ldots)\in {\sf Lab}_{\cal P}^{[\leq \omega]}$.}

\vspace{0.2cm}

\em
\quad\,
\begin{tabular}{ll}
${\sf Seq}^{\rm ind}_{{\cal P},\kappa}$ &is the set of all $(\ell_t\,.\,\vec \nu ^{(t)})_{t\geq 1} $ determined by {\sf (P1)} to {\sf (P3)},\\
${\sf Seq}^{\rm ind}_{{\cal P},\kappa,n}\!$&is the set of all $(\ell_t\,.\,\vec \nu ^{(t)})_{t\geq 1} $ determined by {\sf (P1)} to {\sf (P4)},\\
${\sf Con}^{\rm ind}_{{\cal P},\kappa,n}\!\!\!$&is the set of all $(\ell_t\,.\,\vec \nu ^{(t)})_{t\geq 1} $ determined by {\sf (P1)} to {\sf (P5)}.\vspace{0.1cm}\\\end{tabular}

\vspace{0.2cm}
{\em Consequently, we will use:}

\vspace{0.2cm}
$\!$\begin{tabular}{ll}
{\it 1)} $ {\sf Con}^{\rm ind}_{{\cal P},\kappa,n}\!\!\!$ & is the set of all $(\ell_t\,.\,\vec \nu ^{(t)})_{t\geq 1} $ in $ {\sf Seq}^{\rm ind}_{{\cal P},\kappa,n}$ satisfying\\
&\,\,\, $(\ell_t\,.\, \vec \nu^{(t)})\genfrac{}{}{0pt}{2}{\longrightarrow} {\longrightarrow}_{{\cal P},\kappa}^{\rm ind}(\ell_{t+1}\,.\, \vec \nu^{(t+1)})$\quad for $t\geq 1$\quad\,\,\, \textcolor{gray}{\small\em (see Def.\,\,\ref{TransRestrConf})},\vspace{0.2cm}\\
{\it 2)} ${\sf Con}_{{\cal M},\bar \ell,n}\!\!\!$&contains only $({\cal M},n)$-paths $(\ell_t\,.\,\vec \nu ^{(t)}\,.\,\bar u^{(t)})_{t\geq 1}$ with \\
&\,\,\, $(\ell_t\,.\,\vec \nu ^{(t)})_{t\geq 1}={\rm con}^{\rm ind}_{ {\cal M},\bar \ell, n} \in {\sf Con}_{{\cal P}_{\cal M},k_{\cal M},n}^{\rm ind}\!$\hfill \textcolor{gray}{\small \em (if ${\sf Con}_{{\cal M},\bar \ell,n}\not = \emptyset$)},\vspace{0.2cm}\\
{\it 3)} $ {\sf Con}^{\rm ind}_{{\cal P}_{\cal M},k_{\cal M},n}\!\!\!$ &is the set of all $({\cal P}_{\cal M},k_{\cal M},n)$-paths 
\\&\,\,\, that can be useful for evaluating the behavior of ${\cal M}$.\\ 
\end{tabular}}}
\end{overview}

\vspace{0.1cm}
\begin{example}[${\cal P}$- and $({\cal P},\kappa,n)$-paths]\label{ExePKappaN2} Let the following program ${\cal P}$ be the $(3;1,1;2)$-program ${\cal P}_{{\cal M}_1}$ of the machine ${\cal M}_1$ considered in \cite[pp.\,\,14, 40]{GASS25C}. Consequently, we can use $\kappa\geq 2$.

\vspace{0.1cm}
\noindent {\sf 
\begin{tabular}{rll}${\cal P}$:
&$1: I_2:=I_2+1;$ \,\, 
$2: $ if $I_1=I_2$ then goto $3$ else goto $2;$ \\
& $3: Z_5:= c_3^0;$ \\
&$4 : Z_3:=f_1^1(Z_1);$ \,\, $5 : Z_4:=f_1^1(Z_2); $ \\
&$6 : Z_1:=f_2^1(Z_1);$ \,\, $7 : Z_2:=f_2^1(Z_2); $ \\
&$8 : $ if $r_1^{2}(Z_3,Z_4)$ then goto $9$ else goto $8;$ \\
&$9 : $ if $r_1^{2}(Z_4,Z_5)$ then goto $10$ else goto $4;$ \\&$\!\!\!10: $ stop. \\
\end{tabular}}

\vspace{0.2cm}
\noindent A finite ${\cal P}$-path: $B_1=(1,2,3,4,5,6,7,8,9, 4, 5,6,7,8,9, 4,5,6,7,8,9,\textcolor{blue}{10})$.

\vspace{0.15cm}
\noindent An infinite ${\cal P}$-path: $B_2: (1,2,3,4,5,6,7,8,9, 4,5,6,7,8,9, 4,5,6,7,8,8,8,\ldots)$. 

\vspace{0.15cm}
\noindent 
Some $({\cal P},\kappa,n)$-paths in ${\sf Con}_{{\cal P},\kappa,n}^{\rm ind}$ for $\kappa\in\{2,3\}$ and $n\in \{2,5\}$.

\vspace{0.1cm}

\noindent A $({\cal P},2,2)$-path: $((1\,.\, (2,1)),(2\,.\, (2,2)),(3\,.\, (2,2)),\ldots,(9\,.\, (2,2) ),\ldots,(8\,.\, (2,2) ),$

$ (9\,.\, (2,2) ), (4\,.\, (2,2) ),\ldots,(\textcolor{blue}{8}\,.\, (2,2)),(\textcolor{blue}{8}\,.\, (2,2)), \ldots)$ {\small \textcolor{gray}{ $\in {\sf Con}_{{\cal M}_1,B_2,2}^{\rm ind}$}},

\vspace{0.05cm}
another notation: $((1,2,1),(2,2,2),(3,2,2),\ldots,(9,2,2),\ldots,(8,2,2), $

$(9,2,2), (4,2,2),\ldots,(\textcolor{blue}{8},2,2),(\textcolor{blue}{8},2,2), \ldots)$.

\vspace{0.15cm}
\noindent A $({\cal P},3,2)$-path: $((1,2,1,1),(2,2,2,1),(3,2,2,1),\ldots,(9,2,2,1),\ldots,(8,2,2,1), $

$(9,2,2,1), (4,2,2,1),\ldots,(\textcolor{blue}{8},2,2,1),(\textcolor{blue}{8},2,2,1), \ldots)$

\vspace{0.15cm}
\noindent A $({\cal P},3,5)$-path: $((1,5,1,1),(2,5,2,1),(\textcolor{blue}{2},5,2,1), (\textcolor{blue}{2},5,2,1), \ldots)$.
\end{example}

\begin{example}[Sequences in ${\sf Lab}_{\cal P}^{[\leq \omega]}\!$ and ${\sf Seq}^{\rm ind}_{{\cal P},\kappa}$]\label{ExePKappaN1}
Let ${\cal P}$ be the program with $k_{\cal P}=2$ considered in Fig. \ref{BerWurzeln1} and in Example \ref{ExBerWurzeln1} and $\kappa\geq 2$.

\vspace{0.15cm}
\noindent Some sequences in ${\sf Lab}_{\cal P}^{[\leq \omega]}$:
\vspace{0.05cm}

\begin{tabular}{ll}
$B_0^{[\leq \omega]} $&$\!\!\!=B_0=(1,2,3,4,5,6,5,6,5,6,5,6,\ldots)$,\\
$B_1^{[\leq \omega]} $&$\!\!\!=(1,2,3,4,5,6,7,8,8,8, \ldots)$,\\
$(B_0')^{[\leq \omega]} $&$\!\!\!=B_0'=(1,1,1,\ldots)$,\\
$(B_1')^{[\leq \omega]} $&$\!\!\!=(1,1,2,3,4,5,6,7,8,8,8, \ldots)$.
\end{tabular}

\vspace{0.15cm}
\noindent $({\cal P},2,1)$-paths: $((1,1,1),\ldots, ( 4,1,1),( 5,1,1),( 6,1,1),( 5,1,1),( 6,1,1),\ldots)$,

$((1,1,1),( 2,1,1),\ldots,( 7,1,1),( 8,1,1),( 8,1,1),\ldots)$.

\vspace{0.15cm}
\noindent A $({\cal P},3,2)$-path: $((1,2,1,1),(1,2,1,1),(1,2,1,1),\ldots)$.

\vspace{0.15cm}
\noindent A $({\cal P},2,3)$-path: $((1,3,1),(1,3,1),(1,3,1),\ldots)$.

\vspace{0.15cm}
\noindent A sequence in ${\sf Seq}^{\rm ind}_{{\cal P},2,5}$: $((1,5,1),(1,5,1),(2,5,1),(3,5,1),(4,5,1),(5,5,1),$

$(6,5,1),(7,5,1),(8,5,1),(8,5,1),(8,5,1), \ldots)$.

\vspace{0.15cm}
\noindent A sequence in ${\sf Seq}^{\rm ind}_{{\cal P},3}$: $((1,7,6,2), (1,7,6,2), (1,7,6,2), \ldots)$.
\end{example}

Based on our observations, we define the transition functions $\to_{{\cal P},\kappa} ^{\rm ind}$ and the disjoint union $\to_{\cal P} ^{\rm ind}$ of all these functions $\to_{{\cal P},\kappa} ^{\rm ind}$ with $\kappa\geq k_{\cal P}$.

\begin{definition}[The transition systems ${\cal S}_{{\cal P},\kappa}^{\rm ind}$ and ${\cal S}_{{\cal P}}^{\rm ind}$]\label{TransRestrConf}
Let ${\cal P}\in {\sf P}_{\sigma}$ and $\kappa\geq k_{\cal P}$. \mbox{$({\sf S}_{{\cal P},\kappa}^{\rm ind},\to_{{\cal P},\kappa}^{\rm ind})$} is a {\em transition system} determined by ${\cal P}$ and $\kappa$ and denoted by $\,{\cal S}_{{\cal P},\kappa}^{\rm ind}\,$ if $\,{\sf S}_{{\cal P},\kappa}^{\rm ind}\,$ is the set of all {\em partial $({\cal P},\kappa)$-configurations} given by 

\vspace{0.3cm}
\quad ${\sf S}_{{\cal P},\kappa}^{\rm ind}= \{ (\ell\,.\,\vec\nu)\mid \ell\in {\cal L}_{\cal P} \,\,\,\&\,\,\, \vec \nu \in \bbbn_+^{\kappa} \}$

\vspace{0.3cm}
\noindent and $\textcolor{red}{\to_{{\cal P},\kappa}^{\rm ind}}: {\sf S}_{{\cal P},\kappa}^{\rm ind} \textcolor{red}{\genfrac{}{}{0pt}{2}{\longrightarrow} {\longrightarrow}} {\sf S}_{{\cal P},\kappa}^{\rm ind} $ is a total multi-valued  {\em transition function} that is defined on ${\sf S}_{{\cal P},\kappa}^{\rm ind}$ by

\vspace{0.3cm}
$\begin{array}{ll}
\textcolor{red}{\to_{{\cal P},\kappa}^{\rm ind}} \,= \hspace{0.23cm}\{((\ell\,.\,\vec \nu), (\textcolor{blue}{\ell +1}\,.\,\vec \nu))&\!\!\in ({\sf S}_{{\cal P},\kappa}^{\rm ind})^2\mid \ell \in{\cal L}_{{\cal P}, { \rm F} } \cup {\cal L}_{{\cal P}, { \rm F}_0 } \cup {\cal L}_{{\cal P}, { \rm C} } \}\\
\hspace{1.3cm}\cup \,\{((\ell\,.\,\vec \nu), (\textcolor{blue}{\beta_{\cal P}^+(\ell)}\,.\,\vec \nu))&\!\!\in ({\sf S}_{{\cal P},\kappa}^{\rm ind})^2\mid \ell\in {\cal L}_{{\cal P},{\rm T}}\cup {\cal L}_{{\cal P},{\rm B}}\}\\
\hspace{1.3cm}\cup \,\{((\ell\,.\,\vec \nu), (\textcolor{blue}{\beta_{\cal P}^-(\ell)}\,.\,\vec \nu))&\!\!\in ({\sf S}_{{\cal P},\kappa}^{\rm ind})^2\mid \ell\in {\cal L}_{{\cal P},{\rm T}}\cup {\cal L}_{{\cal P},{\rm B}}\}\\
\hspace{1.3cm}\cup \,\{((\ell\,.\,\vec \nu), (\textcolor{blue}{T_{\ell}^{(\kappa)}(\vec \nu)}\,.\,\vec \nu))&\!\!\in ({\sf S}_{{\cal P},\kappa}^{\rm ind})^2\mid \ell\in {\cal L}_{{\cal P},{\rm H}_{\rm T}} \}\\
 \hspace{1.3cm}\cup \,\{((\ell\,.\,\vec \nu), (\textcolor{blue}{\ell +1}\,.\,\textcolor{blue}{H_{\ell}^{(\kappa)}(\vec \nu)}))\!\!&\!\!\in ({\sf S}_{{\cal P},\kappa}^{\rm ind})^2\mid \ell\in {\cal L}_{{\cal P},{\rm H}_1}\cup \, {\cal L}_{{\cal P},{\rm H}_{+1}} \}\\
 \hspace{1.3cm}\cup \, \{((\ell\,.\,\vec \nu), (\ell\,.\,\vec \nu))&\!\!\in ({\sf S}_{{\cal P},\kappa}^{\rm ind})^2 \mid \ell=\ell_{\cal P}\}.\\\end{array}$

\vspace{0.3cm}
\noindent $T_{\ell}^{(\kappa)}$ and $H_{\ell}^{(\kappa)}$ belong to $ {\cal F}_{{\cal P},\kappa}^{\rm ind}$, as determined in Definition \ref{AuxilFP}. Moreover, $({\sf S}_{\cal P}^{\rm ind},\to_{\cal P}^{\rm ind})$ is the {\em transition system ${\cal S}_{\cal P}^{\rm ind}$} that is  defined by ${\sf S}_{\cal P}^{\rm ind}=\bigcup\nolimits_{ \kappa \geq k_{\cal P}} {\sf S}_{{\cal P},\kappa}^{\rm ind} $ and $\textcolor{red}{\to_{\cal P}^{\rm ind}}\,=\, \bigcup\nolimits_{ \kappa \geq k_{\cal P}} (\to_{{\cal P},\kappa}^{\rm ind})$. 
\end{definition}

${\sf S}_{\cal P}^{\rm ind}$ consists of all possible {\em partial ${\cal P}$-configurations $(\ell\,.\,\vec \nu)$} whose first component belongs to ${\cal L}_{\cal P}$ and whose length $\kappa+1$ is bounded from below by $ k_{\cal P}+1$ and $\to_{\cal P}^{\rm ind}$ is a transition function on ${\sf S}_{\cal P}^{\rm ind}$. 

Each function $\to_{{\cal P},\kappa} ^{\rm ind}$ is a total multi-valued function that enables to generate sequences of partial $({\cal P},\kappa)$-configurations such that the potential branching behavior of all BSS RAMs caused by the execution of ${\rm T}$-, ${\rm H}_{\rm T}$-, or ${\rm B}$-instructions of ${\cal P}$ can be fully captured in the broadest sense and modeled at a new abstraction level. Consequently, $\to_{\cal P}^{\rm ind}$ is also a multi-valued function. To express this, we can also use the symbols \textcolor{red}{$\genfrac{}{}{0pt}{2}{\longrightarrow} {\longrightarrow}_{{\cal P},\kappa} ^{\rm ind}$} and \textcolor{red}{$\genfrac{}{}{0pt}{2}{\longrightarrow} {\longrightarrow}_{\cal P} ^{\rm ind}$}. 

By repeatedly applying $\to_{\cal P}^{\rm ind}$ to $(1,n,1,\ldots,1)$, we can recursively generate sequences of partial ${\cal P}$-configurations in $\bigcup_{\kappa\geq k_{\cal P}}\bigcup_{n\geq 1} {\sf Con}^{\rm ind}_{{\cal P},\kappa,n}$ where each of these sequences is a $({\cal P},\kappa,n)$-path for some $\kappa \!\geq k_{\cal P}$ and $n\!\geq 1$.

For any partial configurations $(\ell\,.\,\vec\nu)$ and $(\ell'\,.\,\vec\mu)$ in ${\sf S}_{{\cal P},\kappa}^ {\rm ind}$, $(\ell\,.\,\vec\nu)\to_{{\cal P},\kappa}^{\rm ind} (\ell'\,.\,\vec\mu)$ is the infix notation for expressing $((\ell\,.\,\vec\nu), (\ell'\,.\,\vec\mu))\, \in \, \to_{{\cal P},\kappa}^{\rm ind}$ which means

\vspace{0.2cm}
\quad $\to_{{\cal P},\kappa}^{\rm ind}=\{ ((\ell\,.\,\vec\nu),(\ell'\,.\,\vec\mu))\in ({\sf S}_{{\cal P},\kappa}^{\rm ind})^2 \mid (\ell\,.\,\vec\nu)\to_{{\cal P},\kappa} ^{\rm ind}(\ell'\,.\,\vec\mu)\}$. 

\vspace{0.3cm}
{\bf Sequences ${\rm seq}^{\rm ind}_{ {\cal P},\bar \ell, \kappa, n}\!$ and $({\cal P},\kappa,n)$-paths ${\rm con}^{\rm ind}_{ {\cal P},\bar \ell, \kappa, n}$.} For ${\cal P}\!\in {\sf P}_{\sigma}$, $ \kappa \!\geq k_{\cal P}$, $n\!\geq 1$, and $\bar \ell\in {\sf Lab}_{\cal P} ^{[\leq \omega]}$, there is~-- by ${\sf (P1)}$ to ${\sf (P4)}$~-- only one sequence $(\vec \nu^{(t)})_{t\geq 1}\in (\bbbn_+^{\kappa})^\omega$ such that the sequence $(\ell_t\,.\,\vec \nu^{(t)})_{t\geq 1}$ of partial $({\cal P},\kappa)$-configurations\linebreak  $(\ell_t\,.\,\vec \nu^{(t)})$  belongs to ${\sf Seq}^{\rm ind}_{{\cal P},\kappa,n}$. We denote it  by \textcolor{blue}{${\rm seq}^{\rm ind}_{ {\cal P},\bar \ell, \kappa, n}$}. If ${\rm seq}^{\rm ind}_{ {\cal P},\bar \ell, \kappa, n}$ is a $({\cal P},\kappa,n)$-path, then we also denote it  by \textcolor{blue}{${\rm con}^{\rm ind}_{ {\cal P},\bar \ell, \kappa, n}$}.

\vspace{0.1cm}
{\bf $({\cal P}_{\cal M},k_{\cal M},n)$-paths ${\rm con}^{\rm ind}_{ {\cal M},\bar \ell, n}$ in $ {\sf Con}^{\rm ind}_{ {\cal M},\bar \ell, n}$ and $ {\sf Con}^{\rm ind}_{ {\cal M},n}$.} For any ${\cal M}\in {\sf M}_{\cal A}\cup {\sf M}_{\cal A}^{\rm ND}\cup {\sf M}_{\cal A}^{\rm DND}\cup {\sf M}_{\cal A}^{\rm NDB}$,  $\bar \ell\in {\sf Lab}_{\cal M} ^{[\leq \omega]}$, and $n\geq 1$, let $ {\sf Con}^{\rm ind}_{ {\cal M},\bar \ell, n}$ be the set $\{ (\ell_t\,.\,\vec \nu^{(t)})_{t\geq 1} \mid (\exists (\bar u^{(t)} )_{t\geq 1} \in (U_{\cal A}^\omega)^{\omega} ) ((\ell_t\,.\,\vec \nu^{(t)}\,.\,\bar u^{(t)} )_{t\geq 1} \in {\sf Con}_{{\cal M},n})\}$. Moreover, let $ {\sf Con}^{\rm ind}_{ {\cal M}, n}= \bigcup_{\bar \ell\in {\sf Lab}_{{\cal M}} ^{[\leq \omega]} } {\sf Con}^{\rm ind}_{ {\cal M},\bar \ell, n}$. On the one hand, the definition of $\to_{{\cal P},\kappa}^{\rm ind}$ is possible independently of any machine. On the other hand, the definition includes the definition of $\to_{{\cal P}_{\cal M},k_{\cal M}}^{\rm ind}$ and provides thus an initial framework in order to answer the question how a machine ${\cal M}$ can transform its indices. In particular, ${\sf Con}^{\rm ind}_{ {\cal M},\bar \ell, n}\not=\emptyset$ implies that ${\rm con}^{\rm ind}_{ {\cal P}_{\cal M},\bar \ell, k_{\cal M}, n} $ is the only sequence in ${\sf Con}^{\rm ind}_{ {\cal M},\bar \ell, n}$ and, thus, we obtain the relationship ${\sf Con}^{\rm ind}_{ {\cal M},\bar \ell, n} \subseteq \{ {\rm con}^{\rm ind}_{ {\cal P}_{\cal M},\bar \ell, k_{\cal M}, n}\}\subseteq {\sf Con}_{{\cal P}_{\cal M},k_{\cal M},n}^{\rm ind}$. If $B_{\bar \ell}$ belongs to ${\sf Lab}_{{\cal M},n}$, then $ {\rm con}^{\rm ind}_{ {\cal P}_{\cal M},\bar \ell, k_{\cal M}, n}$ can also be denoted by \textcolor{blue}{${\rm con}^{\rm ind}_{ {\cal M},\bar \ell, n}$} and then we have the equality ${\sf Con}^{\rm ind}_{ {\cal M},\bar \ell, n}=\{{\rm con}^{\rm ind}_{ {\cal M},\bar \ell, n}\}= \{ {\rm con}^{\rm ind}_{ {\cal P}_{\cal M},\bar \ell, k_{\cal M}, n}\}$. By using the new notations, we can consequently express Theorem \ref{TheoUniq1} as follows.

\begin{theorem}[Paths of partial $\!{\cal M}$-configurations for $B_0^{[\leq \omega]}$ and $n$]\label{TheoUniq2}\hfill 
Let  ${\cal P}\in {\sf P}_{\sigma}$  and let $ \kappa \geq k_{\cal P}$, $n\geq 1$, and $\bar \ell\in {\sf Lab}_{\cal P} ^{[\leq \omega]}$ be given. Then, there is one sequence ${\rm seq}^{\rm ind}_{ {\cal P},\bar \ell, \kappa, n}$ of partial $({\cal P},\kappa)$-configurations in ${\sf Seq}^{\rm ind}_{{\cal P},\kappa,n}$ such that 

\vspace{0.2cm}
$\!\!\{(\ell_t \,.\,\vec \mu^{(t)})_{t\geq 1}\in {\sf Con}^{\rm ind}_{ {\cal M},\bar \ell, n}\mid \, (\exists {\cal A}\in {\sf Struc})({\cal M}\in {\sf M}_{\cal A} \cup {\sf M}_{\cal A}^{\rm DND} \cup{\sf M}_{\cal A}^{\rm ND} \cup {\sf M}_{\cal A}^{\rm NDB}) $

\hspace{4.1cm}$ \&\,\, k_{\cal M}= \kappa\,\, \&\,\, {\cal P}_{\cal M}={\cal P}\}$

$\!\!\subseteq \{{\rm seq}^{\rm ind}_{ {\cal P},\bar \ell, \kappa, n}\}$.

\vspace{0.2cm}
\noindent holds. If the first set is not empty, then $\,{\rm seq}^{\rm ind}_{ {\cal P},\bar \ell, \kappa, n}$ is the $({\cal P}, \kappa, n)$-path $\,{\rm con}^{\rm ind}_{ {\cal P},\bar \ell, \kappa, n}$.
\end{theorem}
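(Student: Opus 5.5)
The proof proceeds in three steps: existence and uniqueness of ${\rm seq}^{\rm ind}_{{\cal P},\bar \ell,\kappa,n}$, the inclusion, and the identification with a $({\cal P},\kappa,n)$-path when the first set is non-empty.

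\emph{Existence and uniqueness.} Fix ${\cal P}$, $\kappa\geq k_{\cal P}$, $n\geq 1$ and $\bar \ell=(\ell_1,\ell_2,\ldots)\in {\sf Lab}_{\cal P}^{[\leq\omega]}$. I would define $(\vec\nu^{(t)})_{t\geq 1}$ by recursion on $t$. Put $\vec\nu^{(1)}=(n,1,\ldots,1)\in\bbbn_+^\kappa$, as demanded by {\sf (P4)}. For $t\geq 1$, let $\vec\nu^{(t+1)}$ be $\vec\nu^{(t)}$ or $H_{{\cal P},\kappa,\ell_t}(\vec\nu^{(t)})$, according to the case distinction in {\sf (P3)}. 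Since $H_{{\cal P},\kappa,\ell}$ is a total function $\bbbn_+^\kappa\to\bbbn_+^\kappa$ (Definition \ref{AuxilFP}), this recursion has exactly one solution. An induction on $t$ then shows that any sequence satisfying {\sf (P1)} to {\sf (P4)} with label part $\bar\ell$ coincides with it componentwise. This gives the sequence ${\rm seq}^{\rm ind}_{{\cal P},\bar \ell,\kappa,n}$ and shows it is the only element of ${\sf Seq}^{\rm ind}_{{\cal P},\kappa,n}$ with label part $\bar\ell$.

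\emph{Inclusion.} Let $(\ell_t\,.\,\vec\mu^{(t)})_{t\geq 1}\in{\sf Con}^{\rm ind}_{{\cal M},\bar\ell,n}$ for some BSS RAM ${\cal M}$ over some ${\cal A}$ with $k_{\cal M}=\kappa$ and ${\cal P}_{\cal M}={\cal P}$. Then there is an $({\cal M},n)$-path $(\ell_t\,.\,\vec\mu^{(t)}\,.\,\bar u^{(t)})_{t\geq 1}\in{\sf Con}_{{\cal M},n}$. By Proposition \ref{LabIndFixLen}, $\vec\mu^{(1)}=(n,1,\ldots,1)$, and $\vec\mu^{(t+1)}$ equals $\vec\mu^{(t)}$ or $H_{\ell_t}(\vec\mu^{(t)})$ by the same case distinction. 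Since $H_{\ell_t}=H_{{\cal M},\ell_t}=H_{{\cal P},\kappa,\ell_t}$ by Definition \ref{AuxilFP}, the sequence $(\vec\mu^{(t)})_{t\geq 1}$ satisfies {\sf (P2)} to {\sf (P4)}. Here $\bar\ell$ is a program path by assumption, so {\sf (P1)} holds as well. The uniqueness step then gives $(\ell_t\,.\,\vec\mu^{(t)})_{t\geq1}={\rm seq}^{\rm ind}_{{\cal P},\bar\ell,\kappa,n}$, which is the inclusion. Equivalently, this is Theorem \ref{TheoUniq1} restated with the new notation.

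\emph{The path property.} Suppose the first set contains some element. By the inclusion, that element is ${\rm seq}^{\rm ind}_{{\cal P},\bar\ell,\kappa,n}$, and it comes from a genuine $({\cal M},n)$-path. Proposition \ref{LabIndFixLen} gives $\ell_{t+1}=T_{\ell_t}(\vec\nu^{(t)})$ whenever $\ell_t\in{\cal L}_{{\cal M},{\rm H}_{\rm T}}={\cal L}_{{\cal P},{\rm H}_{\rm T}}$. Since $T_{\ell_t}=T_{{\cal P},\kappa,\ell_t}$, this is exactly {\sf (P5)}. Hence ${\rm seq}^{\rm ind}_{{\cal P},\bar\ell,\kappa,n}\in{\sf Con}^{\rm ind}_{{\cal P},\kappa,n}$, so it is the $({\cal P},\kappa,n)$-path ${\rm con}^{\rm ind}_{{\cal P},\bar\ell,\kappa,n}$.

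The main obstacle is bookkeeping rather than mathematics. One must check that the functions $H_{{\cal M},\ell}$ and $T_{{\cal M},\ell}$ really do not depend on ${\cal M}$ or ${\cal A}$, only on ${\cal P}$ and $\kappa$; this is implicit in Definition \ref{AuxilFP} and footnote \ref{FootnTl}. One must also handle finite paths, where $\bar\ell=\vec\ell^{\,[\leq\omega]}$. After the stop label $\ell_s=\ell_{\cal P}$ the labels repeat, and $\ell_{\cal P}$ is not an ${\rm H}$-label, so the indices stay constant in both the machine path and the recursive sequence.
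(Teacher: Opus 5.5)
Your proposal is correct and follows essentially the paper's route: uniqueness of ${\rm seq}^{\rm ind}_{{\cal P},\bar\ell,\kappa,n}$ from {\sf (P1)}--{\sf (P4)}; the inclusion via Proposition~\ref{LabIndFixLen} together with $H_{{\cal M},\ell}=H_{{\cal P},\kappa,\ell}$ and $T_{{\cal M},\ell}=T_{{\cal P},\kappa,\ell}$ from Definition~\ref{AuxilFP}; and {\sf (P5)} again from Proposition~\ref{LabIndFixLen}. The paper presents the theorem as a restatement of Theorem~\ref{TheoUniq1} built on exactly these ingredients, so your write-up makes explicit the recursion and induction that the paper leaves implicit.
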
 
\begin{corollary}[Paths of partial $\!{\cal M}$-configurations for a fixed $n$]Let ${\cal P}$\linebreak $\in {\sf P}_{\sigma}$, $\kappa \geq k_{\cal P}$, $n\geq 1$, ${\cal M}\in {\sf M}_{\cal A} \cup {\sf M}_{\cal A}^{\rm DND} \cup{\sf M}_{\cal A}^{\rm ND} \cup {\sf M}_{\cal A}^{\rm NDB}$ for an arbitrary $\sigma$-structure ${\cal A}$, ${\cal P}={\cal P}_{\cal M}$, and $k_{\cal M}=\kappa$. Then, we have

\vspace{0.2cm}
\qquad ${\sf Con}_{ {\cal M}, n} ^{\rm ind}= \{ {\rm con}^{\rm ind}_{ {\cal M},\bar \ell, n} \mid \bar \ell\in {\sf Lab}_{{\cal M},n} ^{[\leq \omega]} \} \subseteq \{{\rm seq}^{\rm ind}_{ {\cal P},\bar \ell, \kappa, n}\mid \bar \ell \in {\sf Lab}_{\cal P} ^{[\leq \omega]} \}$. 
\end{corollary}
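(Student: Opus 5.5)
We prove the equality and the inclusion separately, unwinding the definitions of ${\sf Con}^{\rm ind}_{{\cal M},n}$ and ${\sf Con}^{\rm ind}_{{\cal M},\bar\ell,n}$ and applying Theorem \ref{TheoUniq2} with the fixed machine ${\cal M}$.

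\textbf{The equality.} By definition, ${\sf Con}^{\rm ind}_{{\cal M},n}=\bigcup_{\bar\ell\in{\sf Lab}_{\cal M}^{[\leq\omega]}}{\sf Con}^{\rm ind}_{{\cal M},\bar\ell,n}$.

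First we restrict the index set. We claim ${\sf Con}^{\rm ind}_{{\cal M},\bar\ell,n}\neq\emptyset$ if and only if $\bar\ell\in{\sf Lab}_{{\cal M},n}^{[\leq\omega]}$.

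\begin{itemize}
\item Suppose some $(\ell_t\,.\,\vec\nu^{(t)}\,.\,\bar u^{(t)})_{t\geq1}$ lies in ${\sf Con}_{{\cal M},n}$. This is an $({\cal M},\vec x)$-path for some $\vec x\in U_{\cal A}^n$, so its label sequence $\bar\ell$ determines a computation path $B_{\bar\ell}\in{\sf Lab}_{{\cal M},\vec x}$. Hence $\bar\ell\in{\sf Lab}_{{\cal M},\vec x}^{[\leq\omega]}\subseteq{\sf Lab}_{{\cal M},n}^{[\leq\omega]}$. This uses the description of ${\sf Lab}_{{\cal M},n}^{[\leq\omega]}$ as the label sequences extendable to sequences in ${\sf Con}_{{\cal M},n}$.
\item Conversely, if $\bar\ell\in{\sf Lab}_{{\cal M},n}^{[\leq\omega]}$, such an extension exists by that same description, so the set is nonempty.
\end{itemize}

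So only $\bar\ell\in{\sf Lab}_{{\cal M},n}^{[\leq\omega]}$ contribute to the union. For each such $\bar\ell$, the discussion before Theorem \ref{TheoUniq2} gives ${\sf Con}^{\rm ind}_{{\cal M},\bar\ell,n}=\{{\rm con}^{\rm ind}_{{\cal M},\bar\ell,n}\}$. This uniqueness rests on Corollary \ref{CoroIndices}, or directly on Proposition \ref{LabIndFixLen}. Proposition \ref{LabIndFixLen} shows that the index part of any $({\cal M},n)$-path with label sequence $\bar\ell$ satisfies (P1)–(P5) with $\kappa=k_{\cal M}$. Since (P2)–(P4) fix $\vec\nu^{(1)}=(n,1,\ldots,1)$ and then determine $\vec\nu^{(t+1)}$ recursively from $\ell_t$ and $\vec\nu^{(t)}$, there is exactly one such sequence. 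Taking the union over $\bar\ell\in{\sf Lab}_{{\cal M},n}^{[\leq\omega]}$ gives the stated equality.

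\textbf{The inclusion.} Fix $\bar\ell\in{\sf Lab}_{{\cal M},n}^{[\leq\omega]}$.

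\begin{itemize}
\item Every computation path of ${\cal M}$ is a ${\cal P}_{\cal M}$-path, by the proposition that computation paths are program paths. Hence $\bar\ell\in{\sf Lab}_{\cal P}^{[\leq\omega]}$.
\item Theorem \ref{TheoUniq2} applies to this $\bar\ell$, with the given ${\cal A}$ and ${\cal M}$ witnessing membership of ${\rm con}^{\rm ind}_{{\cal M},\bar\ell,n}$ in the first set there. It yields ${\rm con}^{\rm ind}_{{\cal M},\bar\ell,n}={\rm seq}^{\rm ind}_{{\cal P},\bar\ell,\kappa,n}$.
\end{itemize}

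This proves the inclusion.

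\textbf{Main obstacle.} The only delicate point is the bookkeeping that ties ${\sf Lab}_{\cal M}^{[\leq\omega]}$, ${\sf Lab}_{{\cal M},n}^{[\leq\omega]}$ and ${\sf Con}_{{\cal M},n}$ together: labels from ${\sf Lab}_{\cal M}^{[\leq\omega]}$ that are not reached by inputs of length $n$ must be shown to give empty sets. We handle this via the nonemptiness equivalence above. No genuine computation is required, since uniqueness is already provided by Theorem \ref{TheoUniq2}.
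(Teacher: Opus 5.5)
Your proposal is correct and follows the route the paper intends. The paper gives no separate proof: the corollary is read off from the definition ${\sf Con}^{\rm ind}_{{\cal M},n}=\bigcup_{\bar\ell}{\sf Con}^{\rm ind}_{{\cal M},\bar\ell,n}$, the remark before Theorem \ref{TheoUniq2} that ${\sf Con}^{\rm ind}_{{\cal M},\bar\ell,n}=\{{\rm con}^{\rm ind}_{{\cal M},\bar\ell,n}\}$ whenever $B_{\bar\ell}\in{\sf Lab}_{{\cal M},n}$, and Theorem \ref{TheoUniq2} itself. Your explicit argument that these sets are empty for $\bar\ell\notin{\sf Lab}_{{\cal M},n}^{[\leq\omega]}$ makes precise a step the paper leaves implicit.
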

\vspace{0.1cm}
The inclusion can be strict.
If $\ell_t\in {\cal L}_{{\cal P},{\rm H}_{\rm T}}$ is the label of an instruction  of {\sf ${\rm B}$-type} $({\sf cond}_{\ell_t},l_1,l_2)$,  then $\ell_t\to_{\cal P}^{\rm lab}l_1$ and $\ell_t\to_{\cal P}^{\rm lab}l_2$   hold. By Definition \ref{DefFlowLabel} (see {\sf (FL4)} and fn.\,\,\ref{BranchGeneral}), we can assume that $\{l_1,l_2\}\setminus \{T_{\ell_t}^{(\kappa)}(\vec \nu)\}$ is not empty for any $\vec \nu\in \bbbn_+^\kappa$. 

\begin{theorem}[Exclusion of certain ${\cal P}$-paths for a fixed length $n$]\label{PathCanc}\hfill Let  $\sigma$ be any signature, ${\cal P}\in {\sf P}_{\sigma}$, $ \kappa \geq k_{\cal P}$, and $n\geq 1$. Let ${\cal A}$ be any $\sigma$-structure and ${\cal M}$ be any BSS RAM in $ {\sf M}_{\cal A} \cup {\sf M}_{\cal A}^{\rm DND} \cup{\sf M}_{\cal A}^{\rm ND} \cup {\sf M}_{\cal A}^{\rm NDB}$ with ${\cal P}={\cal P}_{\cal M}$ and $k_{\cal M}=\kappa$.
Then, for any sequence $(\ell_t\,.\,\vec \nu^{(t)})_{t\geq 1}\in {\sf Seq}^{\rm ind}_{{\cal P},\kappa,n}$ with 

\vspace{0.2cm}
\qquad $\ell_{t+1}\not= T_{\ell_t}^{(\kappa)}(\vec \nu ^{(t)})$ for some $\ell_t\in {\cal L}_{{\cal P},{{\rm H}_{\rm T}}}$ and $T_{\ell_t}^{(\kappa)}$ in ${\cal F}_{{\cal P},\kappa}^{\rm ind}$,\hfill {\sf (P6)}

\vspace{0.2cm}
\noindent the ${\cal P}$-path $\bar \ell \in {\sf Lab}_{\cal P}$ cannot be traversed by ${\cal M}$ for any inputs of length $n$. 
\end{theorem}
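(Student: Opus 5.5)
The plan is to argue by contradiction, using the uniqueness of index sequences from Proposition \ref{LabIndFixLen}, Corollary \ref{CoroIndices} and Theorem \ref{TheoUniq2}. Let $(\ell_t\,.\,\vec \nu^{(t)})_{t\geq 1}\in {\sf Seq}^{\rm ind}_{{\cal P},\kappa,n}$ satisfy {\sf (P6)} for some index $t_0$, so $\ell_{t_0}\in {\cal L}_{{\cal P},{\rm H}_{\rm T}}$ and $\ell_{t_0+1}\not= T_{\ell_{t_0}}^{(\kappa)}(\vec \nu^{(t_0)})$. Assume that ${\cal M}$ traverses $B_{\bar \ell}$ for some input $\vec x\in U_{\cal A}^n$ (and, in the nondeterministic cases, some guesses $\vec y$ or some choice of branches at ${\rm B}$-instructions). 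Then there is an $({\cal M},n)$-path $(\ell_t\,.\,\vec \mu^{(t)}\,.\,\bar u^{(t)})_{t\geq 1}\in{\sf Con}_{{\cal M},\bar \ell,n}$. In the finite case we extend by the stop loop of $\to_{\cal M}$, which matches the padding {\sf (L6)} in $\bar\ell=B_{\bar\ell}^{[\leq\omega]}$.

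First, identify $\vec \mu^{(t)}$ with $\vec \nu^{(t)}$. By Proposition \ref{LabIndFixLen}, $\vec \mu^{(1)}=(n,1,\ldots,1)$, and $\vec\mu^{(t+1)}$ is either $\vec\mu^{(t)}$ or $H_{\ell_t}(\vec\mu^{(t)})$, depending only on the type of $\ell_t$. Since $H_{{\cal P},\kappa,\ell}=H_{{\cal M},\ell}$ by Definition \ref{AuxilFP} (using ${\cal P}_{\cal M}={\cal P}$ and $k_{\cal M}=\kappa$), the sequence $(\vec\mu^{(t)})_{t\geq 1}$ satisfies {\sf (P2)}--{\sf (P4)} along the same label sequence $\bar\ell$. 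The recursion in {\sf (P3)} with the start value from {\sf (P4)} determines the index sequence uniquely; this is the uniqueness of ${\rm seq}^{\rm ind}_{{\cal P},\bar\ell,\kappa,n}$. An induction on $t$ therefore gives $\vec\mu^{(t)}=\vec\nu^{(t)}$ for all $t\geq1$.

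Second, apply the ${\rm H}_{\rm T}$-clause of Proposition \ref{LabIndFixLen} at $t_0$. It gives $\ell_{t_0+1}=T_{{\cal M},\ell_{t_0}}(\vec\mu^{(t_0)})=T^{(\kappa)}_{\ell_{t_0}}(\vec\nu^{(t_0)})$, which contradicts {\sf (P6)}. Hence no input of length $n$, for any admissible guesses or branch choices, makes ${\cal M}$ traverse $B_{\bar\ell}$.

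The main obstacle is bookkeeping rather than mathematics: the argument must be uniform over all four machine classes and over finite versus infinite paths. For the nondeterministic classes, the key point to stress is that guesses only affect the $Z$-registers (${\sf M}^{\rm ND}$, ${\sf M}^{\rm DND}$) or the ${\rm B}$-branches (${\sf M}^{\rm NDB}$). They never affect index registers or ${\rm H}_{\rm T}$-decisions, which is exactly what Proposition \ref{LabIndFixLen} and Corollary \ref{CoroIndices} already state. For a finite path, one must check that $t_0<s$: this holds because $\ell_s=\ell_{\cal P}\notin{\cal L}_{{\cal P},{\rm H}_{\rm T}}$, so the violating step lies inside the genuine path and not in the padding.
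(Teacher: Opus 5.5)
Your argument is correct and is essentially the paper's. The paper gives no separate proof block: the theorem follows from the uniqueness of the index sequence ${\rm seq}^{\rm ind}_{{\cal P},\bar \ell,\kappa,n}$ along $\bar\ell$ for any $({\cal M},n)$-path (Proposition \ref{LabIndFixLen}, Corollary \ref{CoroIndices}, Theorem \ref{TheoUniq2}) together with the ${\rm H}_{\rm T}$-clause $\ell_{t+1}=T_{\ell_t}(\vec \nu^{(t)})$, so a traversed path would force {\sf (P5)} and contradict {\sf (P6)}; your explicit induction identifying $\vec\mu^{(t)}$ with $\vec\nu^{(t)}$ and your check that $t_0<s$ for finite paths just spell out that implicit step.
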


\vspace{0.1cm}

{\bf From sets such as  ${\sf Lab}_{{\cal P},\kappa,n}^{\rm Co}$ to ${\sf Lab}_{{\cal P},n}^{[*]}$.} For any ${\cal P}\in {\sf P}_{\sigma}$, $ \kappa \geq k_{\cal P}$, and $n\geq 1$, let ${\sf Lab}_{{\cal P},\kappa,n}^{\rm Co}$ be the set consisting of the ${\cal P}$-paths $B_0$ for which ${\rm seq}^{\rm ind}_{ {\cal P},B_0^{[\leq \omega]}, \kappa, n}$ satisfies {\sf (P6)}. Let ${\sf Lab}_{{\cal P},\kappa,n}^{[*]}=_{\rm df } {\sf Lab}_{{\cal P}} \setminus {\sf Lab}_{{\cal P},\kappa,n}^{\rm Co}$ and $ {\sf Lab}_{{\cal P},n}^{\rm Co}=_{\rm df}\bigcup_{\kappa\geq k_{\cal P}}{\sf Lab}_{{\cal P},\kappa,n}^{\rm Co}$. Consequently, ${\sf Lab}_{{\cal P},\kappa,n}^{\rm Co}$ \textcolor{blue}{is equal to} $ {\sf Lab}_{{\cal P},n}^{\rm Co}$. This implies ${\sf Lab}_{{\cal P},n}^{[*]}={\sf Lab}_{{\cal P},\kappa,n}^{[*]}$. 

This kind of exclusion of certain paths is only of meaning for length $n$. Theorem \ref{PathCanc} means that the paths in ${\sf Lab}_{{\cal P}_{\cal M},k_{\cal M},n}^{\rm Co}$ are not useful for the evaluation of ${\cal M}$ on inputs of length $n$. They do not need to be evaluated. These paths or at least the remaining values $(\ell_{t+1}, \ell_{t+2}, \ldots)$ of a sequence $\bar \ell$ can be cancelled if $\ell_{t+1}\not= T_{\ell_t}^{(k_{\cal M})}(\vec \nu ^{(t)})$ holds as described in {\sf (P6)}. Thus, we only need to evaluate ${\sf Lab}_{{\cal P}_{\cal M},k_{\cal M},n}^{[*]}$ or ${\sf Lab}_{{\cal P}_{\cal M},n}^{[*]}$. For more details, see {\sf Extras II} and {\sf Extras III}.

\section{Ordered sets of program paths and more}\label{Order}
The investigation of the behavior of  a BSS RAM ${\cal M}$ is often closely connected with the evaluation of the program paths in ${\sf Lab}_{{\cal P}_{\cal M}}$ and the characterization of the computation paths in ${\sf Lab}_{\cal M}$. The following basic statements confirm once more that, generally, finite paths in ${\sf Lab}_{{\cal P}_{\cal M}}^{\rm fin }$ and infinite paths in ${\sf Lab}_{{\cal P}_{\cal M}}^{\rm infin}$ must be considered separately. Let ${\cal A}$ be any structure in ${\sf Struc}$. By definition (cf.\,\,\cite[p.\,\,15]{GASS25A}), a non-deterministic machine ${\cal M}\in {\sf M}_{\cal A}^{\rm ND}\cup {\sf M}_{\cal A}^{\rm DND} \cup {\sf M}_{\cal A}^{\rm NDB}$ {\sf halts} on $\vec x\in U_{\cal A}^\infty$ if ${\rm Res}_{\cal M}(\vec x)\not=\emptyset$ is satisfied. Then, ${\cal M}$ accepts $\vec x$ and we write $ {\cal M} ( \vec x)\downarrow$. For a deterministic machine ${\cal M}\in {\sf M}_{\cal A}$ and any $\vec x\in U_{\cal A}^\infty$, ${\cal M} ( \vec x)\downarrow $ means that ${\rm {\rm Res}_{\cal M}}(\vec x)$ is defined. Less formally, we can also say that halting means traversing a finite computation path and executing the stop instruction. The {\em halting set $H_{\cal M}$ of any BSS RAM ${\cal M }$} is given by $H_{\cal M}=\{ \vec x \in U_{\cal A}^{\infty} \mid {\cal M} ( \vec x)\downarrow \}$. 

\begin{propert}[Halting requires ${\sf Lab}_{\cal M}\cap{\sf Lab}_{{\cal P}_{\cal M}}^{\rm fin}\not=\emptyset$] Any BSS RAM ${\cal M}$ in ${\sf M}_{\cal A} \cup {\sf M}_{\cal A}^{\rm ND}\cup {\sf M}_{\cal A}^{\rm DND} \cup {\sf M}_{\cal A}^{\rm NDB}$ can halt only if it traverses a finite ${\cal P}_{\cal M}$-path.
\end{propert}

\begin{proposition}[Totality implies ${\sf Lab}_{\cal M}\!\subseteq {\sf Lab}_{{\cal P}_{\cal M}}^{\rm fin}$ for ${\cal M} \!\in\! {\sf M}_{\cal A}$]\label{TotalFuncImplFinPaths} If the result function ${\rm {\rm Res}_{\cal M}}$ of a BSS RAM ${\cal M}$ in ${\sf M}_{\cal A}$ is a total function, then each computation path in ${\sf Lab}_{\cal M}$ has finite length. 
\end{proposition}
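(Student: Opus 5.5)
The proof will be by contradiction, using the definitions of the result function and of computation paths for deterministic machines. Let ${\cal M}\in {\sf M}_{\cal A}$ with total ${\rm Res}_{\cal M}$, and assume that some $B_0\in {\sf Lab}_{\cal M}$ is infinite, i.e.\ $B_0=(\ell_t)_{t\geq 1}\in {\sf Lab}_{{\cal P}_{\cal M}}^{\rm infin}$. Since ${\sf Lab}_{\cal M}=\bigcup_{n\geq 1}{\sf Lab}_{{\cal M},n}$ and ${\sf Lab}_{{\cal M},n}=\bigcup_{\vec x\in U_{\cal A}^n}{\sf Lab}_{{\cal M},\vec x}$ (Overview \ref{SetsLabels}), there are $n\geq 1$ and an input $\vec x\in U_{\cal A}^n$ such that $B_0$ is traversed by ${\cal M}$ for $\vec x$.

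Next I use determinism. By Overview \ref{SetsCon}, $|{\sf Con}_{{\cal M},\vec x}|=1$, so the unique $({\cal M},\vec x)$-path $(\ell_t\,.\,\vec\nu^{(t)}\,.\,\bar u^{(t)})_{t\geq 1}$ is generated by $(trans\,1)$, and its label sequence is exactly $B_0$. Because $B_0$ is infinite, the definition of an infinite computation path gives $\ell_t\not=\ell_{{\cal P}_{\cal M}}$ for all $t\geq 1$. Consequently, ${\rm Stop}_{\cal M}((\to_{\cal M})^{st}(con_1))=0$ never holds, so the set $St_{con_1}$ is empty. Then $(\to_{\cal M})_{{\rm Stop}_{\cal M}}(con_1)$ is undefined, and ${\rm Res}_{\cal M}(\vec x)$, which is ${\rm Output}_{\cal M}$ applied to this stop configuration, is undefined as well. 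In the paper's terms, ${\cal M}(\vec x)\downarrow$ fails, so $\vec x\notin H_{\cal M}$.

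Totality of ${\rm Res}_{\cal M}$ means that ${\rm Res}_{\cal M}(\vec x)$ is defined for every $\vec x\in U_{\cal A}^\infty$, so we have a contradiction. Hence every computation path in ${\sf Lab}_{\cal M}$ is finite, i.e.\ ${\sf Lab}_{\cal M}\subseteq{\sf Lab}_{{\cal P}_{\cal M}}^{\rm fin}$. Here I use the earlier proposition that computation paths are ${\cal P}_{\cal M}$-paths, together with the fact that ${\sf Lab}_{{\cal P}_{\cal M}}$ is the disjoint union of its finite and infinite parts.

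The main obstacle is the link between ``${\rm Res}_{\cal M}(\vec x)$ defined'' and ``the unique $({\cal M},\vec x)$-path reaches $\ell_{{\cal P}_{\cal M}}$''. This relies on the definition of ${\rm Res}_{\cal M}$ via $(\to_{\cal M})_{{\rm Stop}_{\cal M}}$ in \cite{GASS25A}, which is only recalled informally here as ``halting means traversing a finite computation path''. I would state this identification explicitly, using the uniqueness of the path for deterministic machines, so that no other path for the same input could witness halting.
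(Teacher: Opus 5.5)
Your proof is correct and matches the reasoning the paper leaves implicit; the paper states this proposition without a written proof and treats it as immediate from determinism. For ${\cal M}\in{\sf M}_{\cal A}$ each input $\vec x$ has exactly one $({\cal M},\vec x)$-path, so an input traversing an infinite computation path never reaches $\ell_{{\cal P}_{\cal M}}$, which puts $\vec x\notin H_{\cal M}$ and contradicts the totality of ${\rm Res}_{\cal M}$ (i.e.\ $H_{\cal M}=U_{\cal A}^\infty$).
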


Note that this generally is not valid for non-deterministic machines in ${\sf M}_{\cal A}^{\rm ND}\cup {\sf M}_{\cal A}^{\rm DND}\cup {\sf M}_{\cal A}^{\rm DNB}$ even if they compute total single-valued functions etc. 

\begin{corollary}[Computing $\chi_P$ implies ${\sf Lab}_{\cal M}\subseteq {\sf Lab}_{{\cal P}_{\cal M}}^{\rm fin}$] \hfill Let the structure ${\cal A}$ belong to ${\sf Struc}_{c_1,c_2}$ such that ${\cal A}$ contains two constants $c_1$ and $c_2$. If ${\cal M}$ is a BSS RAM in ${\sf M}_{\cal A}$ that computes the characteristic function $\chi_P$ of a decision problem $P\subseteq U_{\cal A}^\infty$, then each computation path in ${\sf Lab}_{\cal M}$ has finite length. 
\end{corollary}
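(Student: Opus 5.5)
The corollary follows from Proposition \ref{TotalFuncImplFinPaths}, so the plan is to verify its hypothesis: for a BSS RAM ${\cal M}\in{\sf M}_{\cal A}$ that computes $\chi_P$, the result function ${\rm Res}_{\cal M}$ is total on $U_{\cal A}^\infty$.

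First, I will recall what it means for ${\cal M}$ to compute the characteristic function. Since ${\cal A}\in{\sf Struc}_{c_1,c_2}$, the function $\chi_P:U_{\cal A}^\infty\to\{c_1,c_2\}$ (or its tuple version) is defined for every $\vec x\in U_{\cal A}^\infty$. It takes one of the two constant values according to whether $\vec x\in P$ or not. Computing $\chi_P$ means ${\rm Res}_{\cal M}=\chi_P$. This is in contrast to the partial $\bar\chi_P$ used for semi-decidability, as in Fig.\,\ref{BerWurzeln2}. Hence ${\rm Res}_{\cal M}(\vec x)$ is defined for all inputs, so ${\rm Res}_{\cal M}$ is total, and ${\cal M}(\vec x)\downarrow$ for all $\vec x$, i.e.\ $H_{\cal M}=U_{\cal A}^\infty$.

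Second, I will apply Proposition \ref{TotalFuncImplFinPaths} directly to obtain ${\sf Lab}_{\cal M}\subseteq{\sf Lab}_{{\cal P}_{\cal M}}^{\rm fin}$. Alternatively, the argument can be given without that proposition. Since ${\cal M}$ is deterministic, ${\sf Con}_{{\cal M},\vec x}$ contains exactly one $({\cal M},\vec x)$-path (Overview \ref{SetsCon}). So every computation path in ${\sf Lab}_{\cal M}$ is the unique path $B^*$ traversed by some input $\vec x$. Because $\vec x\in H_{\cal M}$, this unique $({\cal M},\vec x)$-path reaches a stop configuration with label $\ell_{{\cal P}_{\cal M}}$, so $B^*$ is a finite computation path. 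By the definition of computation paths, no infinite one can also be determined by the same sequence of configurations.

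I expect no step to be a real obstacle. The only point needing care is the first step: making sure the paper's notion of computing $\chi_P$ demands definedness on all of $U_{\cal A}^\infty$, which is where the constants $c_1,c_2$ serve as the two output values. After that, the result is immediate from the totality proposition.
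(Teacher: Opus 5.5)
Your proposal is correct and follows the paper's intended route. The corollary sits directly after Proposition \ref{TotalFuncImplFinPaths} and follows from it: $\chi_P$ is total, with values given by the constants $c_1,c_2$, so ${\rm Res}_{\cal M}$ is total. Your direct alternative, which uses the uniqueness of the $({\cal M},\vec x)$-path for deterministic machines, is just that proposition's proof unfolded.
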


\begin{proposition}[Inclusion $H_{\cal M}\subset U_{\cal A}^\infty$ implies ${\sf Lab}_{\cal M} \cap {\sf Lab}_{{\cal P}_{\cal M}}^{\rm infin}\not=\emptyset$] If ${\cal M}$ is a BSS RAM in $ {\sf M}_{\cal A} \cup {\sf M}_{\cal A}^{\rm ND}\cup {\sf M}_{\cal A}^{\rm DND} \cup {\sf M}_{\cal A}^{\rm NDB}$ whose halting set is a proper subset of $U_{\cal A}^\infty$, then there is an infinite computation path traversed by ${\cal M}$. 
\end{proposition}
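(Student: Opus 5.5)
The argument is by contraposition: assuming $H_{\cal M}\subset U_{\cal A}^\infty$, I will pick an input outside the halting set and show that the computation paths it generates are infinite. Choose $\vec x\in U_{\cal A}^\infty\setminus H_{\cal M}$ and let $n$ be its length.

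\emph{Step 1: some $({\cal M},\vec x)$-path exists.} For ${\cal M}\in{\sf M}_{\cal A}$ the input procedure ${\rm Input}_{\cal M}$ is total on $U_{\cal A}^\infty$, so $con_1={\rm Input}_{\cal M}(\vec x)$ is defined. For ${\cal M}\in{\sf M}_{\cal A}^{\rm ND}\cup{\sf M}_{\cal A}^{\rm DND}$, any choice of guesses $\vec y$ gives a pair $(\vec x,con_1)\in{\rm Input}_{\cal M}$; for example the empty guess tuple, or $\vec y\in\{c_1,c_2\}^\infty$ in the DND case. For ${\cal M}\in{\sf M}_{\cal A}^{\rm NDB}$, $con_1$ is again uniquely determined.

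The relation $\to_{\cal M}$ is total on ${\sf S}_{\cal M}$. Every label $\ell\in{\cal L}_{\cal M}$ has at least one successor configuration: by the defining clauses of $\to_{\cal M}$, where B-instructions have a nonempty set $G_\ell$ of successors and the stop label has its self-loop. Hence by $(trans\,1)$, $(trans\,2)$ or $(trans\,3)$ we can iteratively build, by dependent choice, an infinite sequence $(con_t)_{t\geq1}\in{\sf Con}_{{\cal M},\vec x}$. The sequence of its labels $(\ell_t)_{t\geq1}$ then determines a computation path of ${\cal M}$. By the paragraph on the traversal of computation paths, this path is a ${\cal P}_{\cal M}$-path.

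\emph{Step 2: the path is infinite.} Suppose the path were finite, i.e.\ $\ell_s=\ell_{{\cal P}_{\cal M}}$ for some $s$. Then $(con_t)_{t=1..s}$ reaches a stop configuration, so ${\rm Stop}_{\cal M}$ is satisfied and ${\cal M}$ traverses a finite computation path on $\vec x$. In the deterministic case this means ${\rm Res}_{\cal M}(\vec x)$ is defined. In the nondeterministic cases, by the definition of acceptance recalled before Properties on halting, it yields ${\rm Res}_{\cal M}(\vec x)\neq\emptyset$. In every case ${\cal M}(\vec x)\downarrow$, so $\vec x\in H_{\cal M}$, a contradiction. Therefore $\ell_t\neq\ell_{{\cal P}_{\cal M}}$ for all $t$, and $(\ell_t)_{t\geq1}$ is an infinite computation path in ${\sf Lab}_{\cal M}\cap{\sf Lab}_{{\cal P}_{\cal M}}^{\rm infin}$.

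The main obstacle is Step 2 for the nondeterministic machines. There I must verify from the definitions in \cite{GASS25A} that reaching $\ell_{{\cal P}_{\cal M}}$ along a single admissible $({\cal M},\vec x)$-path already makes ${\rm Res}_{\cal M}(\vec x)$ nonempty, i.e.\ that the output of that stop configuration is included in the result set. I would cite the definition of ${\rm Res}_{\cal M}$ as the set of outputs ${\rm Output}_{\cal M}$ of stop configurations over all paths in ${\sf Con}_{{\cal M},\vec x}$. A secondary check is the totality of $\to_{\cal M}$, which is needed in Step 1 so that the constructed sequence never gets stuck.
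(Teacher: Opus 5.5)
Your proof is correct and follows the reasoning the paper leaves implicit; the paper states this proposition without proof. A non-halting input $\vec x$ still generates at least one $({\cal M},\vec x)$-path, and that path can never reach $\ell_{{\cal P}_{\cal M}}$. The step you flag as the main obstacle is already recorded in the paper's introduction: if an $({\cal M},{\sf i})$-path determines a finite computation path, then ${\sf i}\in H_{\cal M}$. One small correction: guess tuples must lie in $U_{\cal A}^\infty$, which contains only tuples of length $\geq 1$, so use any nonempty $\vec y$ rather than the empty one.
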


\begin{corollary}[Semi-decidability of $P\subset U_{\cal A}^\infty$ means ${\sf Lab}_{\cal M} \cap {\sf Lab}_{{\cal P}_{\cal M}}^{\rm infin}\not= \emptyset$]\hfill Let $P\!\subset U_{\cal A}^\infty$. If ${\cal M}\!\in {\sf M}_{\cal A}$ semi-decides $P$ or computes the partial characteristic function $\bar \chi_P$ of $P$, then there is an infinite computation path traversed by ${\cal M}$. 
\end{corollary}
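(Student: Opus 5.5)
We argue by contraposition. Let ${\cal M}$ be a BSS RAM in ${\sf M}_{\cal A} \cup {\sf M}_{\cal A}^{\rm ND}\cup {\sf M}_{\cal A}^{\rm DND} \cup {\sf M}_{\cal A}^{\rm NDB}$ with $H_{\cal M}\subset U_{\cal A}^\infty$, and choose an input $\vec x\in U_{\cal A}^\infty\setminus H_{\cal M}$. We show that some $({\cal M},\vec x)$-path determines an infinite computation path. Such a path lies in ${\sf Lab}_{\cal M}\cap{\sf Lab}_{{\cal P}_{\cal M}}^{\rm infin}$, since every computation path is a ${\cal P}_{\cal M}$-path, and the conclusion follows.

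First we check that ${\sf Con}_{{\cal M},\vec x}\neq\emptyset$. The input procedure assigns to $\vec x$ at least one start configuration $con_1$: for ${\cal M}\in{\sf M}_{\cal A}$ this is ${\rm In}_{\cal M}(\vec x)$, and in the non-deterministic cases we take, say, the empty tuple of guesses. The transition function $\to_{\cal M}$ is total on ${\sf S}_{\cal M}$, being single- or multi-valued with at least one successor for every label, including the stop label, which loops. So we can choose successors $con_1\to_{\cal M}con_2\to_{\cal M}\cdots$ step by step (using choice in the multi-valued case, as permitted in ZFC). This yields an $({\cal M},\vec x)$-path $(\ell_t\,.\,\vec\nu^{(t)}\,.\,\bar u^{(t)})_{t\geq1}$.

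Next, suppose some $\ell_s$ equalled $\ell_{{\cal P}_{\cal M}}$, and take the least such $s$. Then $(\ell_t)_{t=1..s}$ would be a finite computation path traversed for $\vec x$, which means $\vec x\in H_{\cal M}$. For deterministic machines this holds because ${\rm Res}_{\cal M}(\vec x)$ is then defined. For non-deterministic machines it holds because the stop configuration contributes its output to ${\rm Res}_{\cal M}(\vec x)$, so ${\rm Res}_{\cal M}(\vec x)\neq\emptyset$. This contradicts the choice of $\vec x$. Hence $\ell_t\neq\ell_{{\cal P}_{\cal M}}$ for all $t$, and $(\ell_t)_{t\geq1}$ is by definition an infinite computation path traversed by ${\cal M}$ for $\vec x$.

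The main point needing care is the second step, namely the identification of halting with traversing a finite computation path in the non-deterministic cases. Here I would rely on Properties (Halting requires ${\sf Lab}_{\cal M}\cap{\sf Lab}_{{\cal P}_{\cal M}}^{\rm fin}\neq\emptyset$) together with the definition of ${\rm Res}_{\cal M}$ from \cite{GASS25A}. This gives the equivalence: ${\cal M}(\vec x)\downarrow$ exactly when some $({\cal M},\vec x)$-path reaches $\ell_{{\cal P}_{\cal M}}$. Since $\vec x\notin H_{\cal M}$, every $({\cal M},\vec x)$-path avoids the stop label, and in fact any one of them witnesses the claim.
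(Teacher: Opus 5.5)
Your argument is correct and takes the paper's route. The paper obtains the corollary directly from the preceding proposition: a BSS RAM whose halting set is a proper subset of $U_{\cal A}^\infty$ traverses an infinite computation path. What you write is a proof of that proposition, using a non-halting input whose $({\cal M},\vec x)$-path never reaches $\ell_{{\cal P}_{\cal M}}$.

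The one step specific to the corollary is missing, though. You should state that semi-deciding $P$, or computing $\bar\chi_P$, means $H_{\cal M}=P$. You should also state that $P\subset U_{\cal A}^\infty$ is a proper inclusion, which is the paper's convention for $\subset$. Together these give $U_{\cal A}^\infty\setminus H_{\cal M}\neq\emptyset$, so your $\vec x$ exists. Properness is essential: for $P=U_{\cal A}^\infty$, the program ``$1\!:$ stop'' is a counterexample.

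Two minor points:
\begin{itemize}
\item The property ``Halting requires ${\sf Lab}_{\cal M}\cap{\sf Lab}_{{\cal P}_{\cal M}}^{\rm fin}\neq\emptyset$'' gives the direction halting $\Rightarrow$ finite path. You need the converse. That converse is the paper's definitional remark that an $({\cal M},{\sf i})$-path determining a finite computation path puts ${\sf i}$ into $H_{\cal M}$.
\item The corollary only concerns ${\cal M}\in{\sf M}_{\cal A}$, so ${\sf Con}_{{\cal M},\vec x}$ is a singleton. Your discussion of guesses and choice is therefore unnecessary. In any case, the guess tuples range over $U_{\cal A}^\infty$, whose tuples have length $\geq 1$, so the empty tuple is not available.
\end{itemize}
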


\subsection{Countability and enumerability of sets of paths}

As in classical recursion theory, we want to consider the notions of {\sf countability} and {\sf enumerability}. A set $N$ is {\em finite} if $N$ is the empty set $\emptyset$ or there are an $n\geq 1$ and a surjective function from $\{1,\ldots, n\}$ onto $N$. Otherwise, $N$ is {\em infinite}. In the case of infinite sets, it can be important to know whether a countably infinite set is enumerable over a $\sigma$-structure or not. 

\vspace{0.1cm}
{\bf Countable sets.} $\!\emptyset$ is {\em countable}. A non-empty set $N$ is {\em countable} if there is a surjective function from $\bbbn_+$ onto $N$. 
It is known that the set of all finite strings (including the empty string) over a finite alphabet is countable. The same holds for any set ${\cal L}^{\infty}$ of all finite tuples (of length $\geq 1$) with components in ${\cal L}$ if ${\cal L}$ is a finite set. By definition, for each $\sigma$-program ${\cal P}$, ${\cal L}_{\cal P}$ is a finite set and all components of all ${\cal P}$-paths belong to ${\cal L}_{\cal P}$. Consequently, all finite ${\cal P}$-paths in ${\sf Lab}_{\cal P}^{\rm fin}$ belong to ${\cal L}_{\cal P}^{\infty}$. Thus, the set $\,{\sf Lab}_{\cal P}^{\rm fin}$ can be infinite, but it is countable.
\begin{proposition}[Countability of ${\sf Lab}_{\cal P}^{\rm fin}$]
Let ${\cal P}\in {\sf P}_\sigma$ and ${\sf Lab}_{\cal P}^{\rm fin}\not=\emptyset$. Then, there is a sequence $(\vec \ell^{(i)})_{i\geq 1 }$ of ${\cal P}$-paths with $\{\vec\ell^{(i)}\in {\cal L}_{\cal P}^{\infty}\mid i\geq 1\} = {\sf Lab}_{\cal P}^{\rm fin}$. 
\end{proposition}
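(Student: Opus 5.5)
Since ${\sf Lab}_{\cal P}^{\rm fin}$ is non-empty and contained in ${\cal L}_{\cal P}^{\infty}$, the plan is to exhibit an explicit surjection from $\bbbn_+$ onto ${\cal L}_{\cal P}^{\infty}$ and then restrict it to ${\sf Lab}_{\cal P}^{\rm fin}$. The restriction is obtained by composing with a map that sends non-paths to a fixed path.

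\textbf{Step 1: enumerate ${\cal L}_{\cal P}^{\infty}$.} Since ${\cal L}_{\cal P}=\{1,\ldots,\ell_{\cal P}\}$ is finite, each set ${\cal L}_{\cal P}^s$ is finite, with $|{\cal L}_{\cal P}^s|=\ell_{\cal P}^{\,s}$. List the tuples in order of increasing length $s=1,2,\ldots$, and within each length lexicographically with respect to $\leq$ on $\bbbn_+$. This yields a sequence $(\vec w^{(j)})_{j\geq 1}$ in which every element of ${\cal L}_{\cal P}^{\infty}=\bigcup_{s\geq 1}{\cal L}_{\cal P}^s$ occurs. The $j$-th element is well defined because every initial block is finite. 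Concretely, $\vec w^{(j)}$ is the tuple of length $s$ at position $j-\sum_{r<s}\ell_{\cal P}^{\,r}$ in the lexicographic order of ${\cal L}_{\cal P}^s$, where $s$ is the least length with $j\leq\sum_{r\leq s}\ell_{\cal P}^{\,r}$.

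\textbf{Step 2: filter.} By assumption ${\sf Lab}_{\cal P}^{\rm fin}\neq\emptyset$, so fix some $\vec\ell_0\in{\sf Lab}_{\cal P}^{\rm fin}$. Define
$$\vec\ell^{(i)}=\vec w^{(i)} \mbox{ if } \vec w^{(i)}\in{\sf Lab}_{\cal P}^{\rm fin}, \qquad \vec\ell^{(i)}=\vec\ell_0 \mbox{ otherwise}.$$
By construction every $\vec\ell^{(i)}$ is a finite ${\cal P}$-path, so $\{\vec\ell^{(i)}\mid i\geq1\}\subseteq{\sf Lab}_{\cal P}^{\rm fin}$. Conversely, each finite ${\cal P}$-path lies in ${\cal L}_{\cal P}^{\infty}$, because by Definition \ref{DefProgrPath} it is a tuple of some length $s\geq1$. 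It therefore equals some $\vec w^{(i)}$ and hence equals $\vec\ell^{(i)}$, which gives equality of the two sets.

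Optionally, one can note that membership of a tuple in ${\sf Lab}_{\cal P}^{\rm fin}$ is decidable by checking {\sf (L1)}, {\sf (L3)}, {\sf (L4)} for all $t<s$, and {\sf (L5)}. This check uses only $\beta_{\cal P},\beta_{\cal P}^+,\beta_{\cal P}^-$, so the enumeration is even effective, although the proposition asks only for existence.

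The main obstacle is mild. It consists of making sure the enumeration of ${\cal L}_{\cal P}^{\infty}$ genuinely reaches every tuple, which is why lengths are ordered first and a purely lexicographic order on infinitely many tuples is avoided. It also requires handling the case where ${\sf Lab}_{\cal P}^{\rm fin}$ is finite. The padding with $\vec\ell_0$ covers that case, including the degenerate program {\sf 1: stop} from Properties \ref{PropertOnePath}.
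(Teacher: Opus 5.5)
Your proof is correct and follows the paper's route. The paper only notes that ${\cal L}_{\cal P}^{\infty}$ is countable because ${\cal L}_{\cal P}$ is finite, and that ${\sf Lab}_{\cal P}^{\rm fin}\subseteq{\cal L}_{\cal P}^{\infty}$, so the non-empty set ${\sf Lab}_{\cal P}^{\rm fin}$ is countable; your length-then-lexicographic listing and the padding with a fixed path $\vec\ell_0$ spell out the surjection from $\bbbn_+$ that this argument leaves implicit.
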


This, together with Proposition \ref{TotalFuncImplFinPaths}, leads to the following conclusion.
\begin{proposition}[$H_{\cal M}=U_{\cal A}^\infty$ implies a countable ${\sf Lab}_{\cal M}$ for ${\cal M} \in {\sf M}_{\cal A}$]\hfill If \linebreak ${\cal M}\in {\sf M}_{\cal A}$ computes a total function, then the set ${\sf Lab}_{\cal M}$ is countable. 
\end{proposition}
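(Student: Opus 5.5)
The plan is to combine Proposition \ref{TotalFuncImplFinPaths} with the Proposition on the countability of ${\sf Lab}_{\cal P}^{\rm fin}$. Let ${\cal M}\in {\sf M}_{\cal A}$ compute a total function, so that ${\rm Res}_{\cal M}$ is total. By Proposition \ref{TotalFuncImplFinPaths}, every computation path in ${\sf Lab}_{\cal M}$ is finite. Every computation path is a ${\cal P}_{\cal M}$-path, by the proposition stating that computation paths are program paths. Together these give the inclusion ${\sf Lab}_{\cal M}\subseteq {\sf Lab}_{{\cal P}_{\cal M}}^{\rm fin}\subseteq {\cal L}_{{\cal P}_{\cal M}}^\infty$.

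Next I would check that ${\sf Lab}_{\cal M}$ is non-empty. Since $U_{\cal A}\not=\emptyset$, there is an input $\vec x$, and the deterministic machine has exactly one $({\cal M},\vec x)$-path, which determines a computation path. By totality, this path is finite, so ${\sf Lab}_{{\cal P}_{\cal M}}^{\rm fin}\not=\emptyset$ as well. The countability proposition then gives a surjection $i\mapsto \vec\ell^{(i)}$ from $\bbbn_+$ onto ${\sf Lab}_{{\cal P}_{\cal M}}^{\rm fin}$. Alternatively, one can argue directly that ${\cal L}_{{\cal P}_{\cal M}}$ is finite, so ${\cal L}_{{\cal P}_{\cal M}}^\infty$ is countable.

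Finally, I would use the elementary fact that a non-empty subset of a countable set is countable. Fix some $B_0\in {\sf Lab}_{\cal M}$ and define $g:\bbbn_+\to{\sf Lab}_{\cal M}$ by $g(i)=\vec\ell^{(i)}$ if $\vec\ell^{(i)}\in{\sf Lab}_{\cal M}$, and $g(i)=B_0$ otherwise. This $g$ is surjective, so ${\sf Lab}_{\cal M}$ is countable by the definition above.

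The only delicate point is the non-emptiness needed to build $g$. Even this is harmless, since $\emptyset$ is countable by definition. So the argument has no real obstacle: its content lies entirely in Proposition \ref{TotalFuncImplFinPaths}, which rules out infinite computation paths. Without that step, infinite paths in ${\cal L}^\omega$ could make the set uncountable.
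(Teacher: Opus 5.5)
Your proof is correct and follows the paper's route. The paper obtains this proposition directly from Proposition \ref{TotalFuncImplFinPaths}, which gives ${\sf Lab}_{\cal M}\subseteq {\sf Lab}_{{\cal P}_{\cal M}}^{\rm fin}$, combined with the countability of ${\sf Lab}_{\cal P}^{\rm fin}$; your non-emptiness check and the explicit surjection $g$ only spell out the elementary fact that a subset of a countable set is countable.
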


On the other hand, for any $\sigma$-program ${\cal P}$ and any $s\geq 1$, the set ${\cal L}_{\cal P}^s$ of tuples is finite and, therefore, the set $\bigcup_{s=1}^r{\cal L}_{\cal P} ^s$ is finite for any $r\geq 1$.
\begin{proposition}[More paths also mean longer paths]\label{LemmaAbzEndlPfade}Let ${\cal P}\in {\sf P}_\sigma$ and ${\sf Lab}_{\cal P}^{\rm fin}$ be infinite. Then, $({\sf Lab}_{\cal P}^{\rm fin})^\omega$ contains a sequence $(\vec \ell^{(i)})_{i\geq 1 }$ of ${\cal P}$-paths with $\vec \ell^{(i)}\in {\cal L}_{\cal P}^{s_i}$ for all $i\geq 1$ and $s_1<s_2< \cdots$. 
\end{proposition}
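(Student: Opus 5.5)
The plan is to build the sequence greedily by length, using only that ${\sf Lab}_{\cal P}^{\rm fin}$ is infinite while each ${\cal L}_{\cal P}^s$ is finite. Write ${\sf Lab}_{\cal P}^{\rm fin}=\bigcup_{s\geq 1}({\sf Lab}_{\cal P}^{\rm fin}\cap{\cal L}_{\cal P}^s)$. For every $r\geq 1$ the set $\bigcup_{s=1}^r{\cal L}_{\cal P}^s$ is finite, since $|{\cal L}_{\cal P}^s|=\ell_{\cal P}^s$. Hence ${\sf Lab}_{\cal P}^{\rm fin}\cap\bigcup_{s=1}^r{\cal L}_{\cal P}^s$ is finite, and ${\sf Lab}_{\cal P}^{\rm fin}\setminus\bigcup_{s=1}^r{\cal L}_{\cal P}^s$ is nonempty. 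This gives the key lemma: for every $r\geq 1$ there is a finite ${\cal P}$-path of length $>r$.

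With the lemma, define the sequence recursively. Let $s_1=\min\{s\geq 1\mid {\sf Lab}_{\cal P}^{\rm fin}\cap{\cal L}_{\cal P}^s\not=\emptyset\}$, which exists because ${\sf Lab}_{\cal P}^{\rm fin}\not=\emptyset$. Given $s_i$, put
\[ s_{i+1}=\min\{s>s_i\mid {\sf Lab}_{\cal P}^{\rm fin}\cap{\cal L}_{\cal P}^s\not=\emptyset\}, \]
which exists by the lemma with $r=s_i$. For each $i$ choose $\vec\ell^{(i)}\in{\sf Lab}_{\cal P}^{\rm fin}\cap{\cal L}_{\cal P}^{s_i}$. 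To avoid any appeal to choice (ZFC allows it anyway), take the lexicographically least such tuple with respect to $\leq$ on ${\cal L}_{\cal P}\subset\bbbn_+$; this is a finite minimum. Then $(\vec\ell^{(i)})_{i\geq1}\in({\sf Lab}_{\cal P}^{\rm fin})^\omega$, $\vec\ell^{(i)}\in{\cal L}_{\cal P}^{s_i}$, and $s_1<s_2<\cdots$ by construction.

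There is no real obstacle. The only step needing care is the pigeonhole counting that yields the lemma, namely that a bounded-length subset of ${\cal L}_{\cal P}^\infty$ is finite because ${\cal L}_{\cal P}$ is finite.
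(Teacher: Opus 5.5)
Your proposal is correct and follows the paper's approach. The paper justifies the proposition only by noting that $\bigcup_{s=1}^r{\cal L}_{\cal P}^s$ is finite for every $r\geq 1$, which is exactly your key lemma. Your recursive choice of the least admissible length and the lexicographically least path of that length just makes the choice-free construction explicit.
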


For any $s>|{\cal L}_{\cal P}|$ and $(\ell_1,\ldots,\ell_s)\in {\cal L}_{\cal P}^s$, there are $j,k\leq s$ with $j\not=k$ and $\ell_{j}=\ell_{k}$. For any $s_i>|{\cal L}_{\cal P}|$ considered in Proposition \ref{LemmaAbzEndlPfade} and every ${\cal P}$-path $(\ell_{i,1},\ldots,\ell_{i,s_i})$ in ${\sf Lab}_{\cal P}^{\rm fin}$, there are even two indices $j< s_i$ and $k< s_i$ with $j\not=k$ and $\ell_{i,j}=\ell_{i,k}$ since such a sequence $(\ell_{i,1},\ldots,\ell_{i,s_i})$ belongs to $({\cal L}_{\cal P}^\circ)^{s_i-1}\times \{\ell_{\cal P}\}$ by {\sf (L1)}. Thus, we obtain the following conclusion by Proposition \ref{PropertPathsAsWalks} (without using the axiom of choice or K\oo nig's Infinity Lemma, unlike in the general case).

\begin{corollary}[Infiniteness of ${\sf Lab}_{\cal P}^{\rm fin}$ implies ${\sf Lab}_{\cal P}^{\rm infin}\not=\emptyset$]Let ${\cal P}\in {\sf P}_\sigma$. If ${\sf Lab}_{\cal P}^{\rm fin}$ is an infinite set, then there is an infinite ${\cal P}$-path in ${\sf Lab}_{\cal P}$.
\end{corollary}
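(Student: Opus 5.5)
The plan is to combine Proposition \ref{LemmaAbzEndlPfade} with the pigeonhole observation stated just before the corollary, and then to invoke Proposition \ref{PropertPathsAsWalks}.

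First, since ${\sf Lab}_{\cal P}^{\rm fin}$ is infinite, Proposition \ref{LemmaAbzEndlPfade} gives finite ${\cal P}$-paths of strictly increasing lengths $s_1<s_2<\cdots$. It is therefore enough to take a single finite ${\cal P}$-path $\vec\ell=(\ell_1,\ldots,\ell_s)$ with $s>|{\cal L}_{\cal P}|$. By {\sf (L1)}, the prefix $(\ell_1,\ldots,\ell_{s-1})$ lies in $({\cal L}_{\cal P}^\circ)^{s-1}$, and $s-1\geq|{\cal L}_{\cal P}|>|{\cal L}_{\cal P}^\circ|=\ell_{\cal P}-1$. By the pigeonhole principle there are therefore indices $i<j<s$ with $\ell_i=\ell_j$. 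Consequently the walk $W_{\vec\ell}$ is not simple.

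Next, apply Proposition \ref{PropertPathsAsWalks}(3) to $W_{\vec\ell}$: since it is a finite walk that is not simple, there exists an infinite ${\cal P}$-path, and this is the claim.

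For robustness, I would also make the infinite path explicit rather than rely only on the cited property. Take the prefix $(\ell_1,\ldots,\ell_{i-1})$ and append the cycle $(\ell_i,\ldots,\ell_{j-1})$ repeated forever. The result is $\bar\ell=(\ell_1,\ldots,\ell_{i-1},\ell_i,\ldots,\ell_{j-1},\ell_i,\ldots,\ell_{j-1},\ldots)$. This sequence satisfies the defining conditions of an infinite ${\cal P}$-path:
\begin{itemize}
\item {\sf (L3)} holds because $\ell_1=1$ when $i>1$; if $i=1$, then $\ell_1=1$ starts the cycle.
\item {\sf (L2)} holds because all entries lie in ${\cal L}_{\cal P}^\circ$, as $j<s$.
\item {\sf (L4)} holds at each consecutive pair. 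Inside the original path this is immediate. At the wrap-around step from $\ell_{j-1}$ back to $\ell_i$, note that $\ell_i=\ell_j$ and $(\ell_{j-1},\ell_j)$ already satisfies {\sf (L4)}.
\end{itemize}
The only point requiring care, and the main (mild) obstacle, is ensuring that the repeated label lies strictly before position $s$. This is exactly why {\sf (L1)} is needed to exclude $\ell_{\cal P}$ from the prefix. No choice principle or K\H{o}nig's lemma is required, since a single sufficiently long path suffices.
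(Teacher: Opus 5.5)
Your proposal is correct and follows the paper's own argument. Proposition~\ref{LemmaAbzEndlPfade} supplies a finite ${\cal P}$-path of length $s>|{\cal L}_{\cal P}|$. By {\sf (L1)} and the pigeonhole principle, some label repeats strictly before position $s$. Proposition~\ref{PropertPathsAsWalks}(3) then yields an infinite ${\cal P}$-path. Your explicit eventually periodic path, with the wrap-around step justified by $\ell_i=\ell_j$, is a useful addition: it makes precise the infinite path that the paper's proof of Proposition~\ref{PropertPathsAsWalks} only sketches.
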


\vspace{0.1cm}
{\bf Enumerable sets.} $\emptyset$ is {\em enumerable over ${\cal A}$}.
According to \cite[p.\,\,600]{GASS20}, we say that any non-empty set $N\subseteq U_{\cal A}^\infty$ is {\em enumerable by an ${\cal A}$-machine ${\cal M}$} if 
\begin{itemize}\parskip -0.5mm
\item the input space ${\sf I}_{\cal M}$ is $U_{\cal A}^\infty$ or $\{u\}^\infty$ for some $u\in U_{\cal A}$,
\item the procedures ${\rm Input}_{\cal M}$ and ${\rm Output}_{\cal M}$ correspond to those of BSS RAMs on ${\sf I}_{\cal M}$ and ${\sf S}_{\cal M}$, respectively,
\item $H_{\cal M}= {\sf I}_{\cal M}$ and $N=\{{\rm Res}_{\cal M}(\vec x)\mid \vec x \in {\sf I}_{\cal M}\}$ hold, and
\item for all $n\geq 1$, $\vec x\in {\sf I}_{\cal M}\cap U_{\cal A}^n$ and $ \vec y\in {\sf I}_{\cal M}\cap U_{\cal A}^n$ imply ${\rm Res}_{\cal M}(\vec x)={\rm Res}_{\cal M}(\vec y)$.
\end{itemize}
In both cases, we say that $N$ is {\em enumerable by an ${\cal A}$-machine} and {\em enumerable over ${\cal A}$} and that {\em the elements of $N$ can be enumerated} ({\em by ${\cal M}$}) ({\em over ${\cal A}$}). The enumeration by a {\sf B}SS RAM {\em is of type} \textcolor{red}{$(B)$}. If ${\sf I}_{\cal M}=\{u\}^\infty$ holds, then the enumeration {\em is of type $(A_{u})$} and ${\rm Res}_{\cal M}$ is called {\em an enumeration function for $N$ of type \textcolor{red}{$(A_u)$}}. 

\vspace{0.1cm}
Now, let $u$ be any element in $U_{\cal A}$. If there is an enumeration of the elements of $N$ that is of type $(B)$, then there is an enumeration function for $N$ of type $(A_u)$. The set $\{u\}^\infty$ contains {\sf exactly one unary code} in $\{u\}^n$, denoted by $\vec u^{[n]}$, for each $n\in \bbbn_+$. Therefore, surjective functions of the form $f:\{u\}^\infty \to N$ are particularly well-suited for describing the enumerability of a set $N$. However, our definition is broader. This can be advantageous in certain situations because, for an enumeration of type $(B)$ by an ${\cal M}\in {\sf M}_{\cal A}$, the specific input values are irrelevant. We can start with $c(I_1)=n$ and arbitrary initial values in the $Z$-registers and obtain the $n$-th member of the sequence $(f(\vec u^{[n]}))_{n\geq 1}$. The advantage is that ${\cal P}_{\cal M}$ can be easily adapted for simulating the behavior of ${\cal M}$ by a $d$-tape machine. It is, for example, possible to use tape 2 for executing an adapted subprogram $({\cal P}_{\cal M})'$~-- regardless of the values stored in the $Z$-registers~-- if it is ensured that the initial values in the index registers for tape 2 are suitably determined and $c(I_{2,1})=n$ and $c(I_{2,2})=\cdots= c(I_{2,k_{{\cal P}_{\cal M}}})=1$ hold. For the definition of $d$-tape machines, see \cite[pp.\,\,591--592]{GASS20} and \cite[pp.\,\,5--6]{GASS25B}. 

Note, that sets in ${\rm SDEC}_{\cal A}$ {\sf do not have to be enumerable} over ${\cal A}$ {\sf or countable}. Moreover, we {\bf cannot assume} that $u$ is computable or that $\{u\}^\infty$ is semi-decidable by a BSS RAM. We say that $u$ {\em is computable over ${\cal A}$} if $f:U_{\cal A}^\infty \to \{u\}$ with $f(\vec x)=u$ for all $\vec x \in U_{\cal A}^\infty $ is computable by a BSS RAM in ${\sf M}_{\cal A}$. Consequently, the constants of ${\cal A}$ are computable over ${\cal A}$. We say that $N$ is {\em \textcolor{blue}{$(u,A_u)$}-enumerable} ({\em by a BSS RAM}) if there is a machine in ${\sf M}_{\cal A}$ that first computes $\vec u^{[n]}\in U_{\cal A}^n$  for all $n\geq 1$ and  any input $\vec x \in U_{\cal A}^n$ before it simulates the execution of an ${\cal A}$-machine computing a surjective function $f:\{u\}^\infty \to N$. 

\begin{proposition}[Enumerability: from type $(A_u)$ to $(B)$]\label{EnumSets2}\hfill If $u\in U_{\cal A}$ is \linebreak computable over ${\cal A}$, $N$ is a subset of $U_{\cal A}^\infty$, and there is an enumeration function for $N$ of type $(A_u)$, then $N$ is $(u,A_u)$-enumerable by a BSS RAM in $ {\sf M}_{\cal A}$.
\end{proposition}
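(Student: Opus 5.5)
The statement is Proposition \ref{EnumSets2}. I will build the required machine directly, as the concatenation of two subprograms. The first computes $\vec u^{[n]}$ from an arbitrary input of length $n$. The second simulates a machine ${\cal M}_0$ whose result function is the given enumeration function $f:\{u\}^\infty\to N$ of type $(A_u)$.

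\textbf{Step 1: a machine producing $\vec u^{[n]}$.} Since $u$ is computable over ${\cal A}$, there is a BSS RAM ${\cal M}_u\in{\sf M}_{\cal A}$ with ${\rm Res}_{{\cal M}_u}(\vec x)=u$ for every $\vec x\in U_{\cal A}^\infty$. I run ${\cal P}_{{\cal M}_u}$ with a relabelled program in which every ${\sf stop}$ becomes a jump to a continuation, so the program halts on every input. Its index registers and $Z$-registers are renamed so that the input length $n$ is first saved in a fresh index register $I_{a}$. The $Z$-registers it uses are shifted to a working area far from the output area; this is done with copy instructions via fresh index registers, in the usual way subprograms are combined in \cite{GASS20,GASS25A}.

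After the value $u$ is obtained, a loop built from ${\rm H}_1$-, ${\rm H}_{+1}$-, ${\rm H}_{\rm T}$- and ${\rm C}$-instructions writes $u$ into $Z_1,\ldots,Z_n$. A counter index register runs from $1$ to the saved $n$, compared with $I_a$ by an ${\rm H}_{\rm T}$-test. Before handing over, the loop also fills the remaining tape with $u$: it writes $u$ into $Z_{n+1},\ldots$ up to the highest register the next phase might read before writing. More simply, the simulated program can be modified so that any register beyond $n$ is first read as $x_n=u$. Finally, $I_1:=n$ is restored, and all other index registers of the second phase are reset to $1$ by ${\rm H}_1$-instructions. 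The configuration is then exactly ${\rm Input}_{{\cal M}_0}(\vec u^{[n]})$, up to the contents of registers that ${\cal M}_0$ never distinguishes.

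\textbf{Step 2: simulation.} Next I append ${\cal P}_{{\cal M}_0}$, with labels shifted and index registers renamed so that they do not clash with the auxiliary ones. By Proposition \ref{LabIndFixLen} and Corollary \ref{CoroIndices}, the partial configurations of ${\cal M}_0$ depend only on $n$ and the label path. Hence the appended subprogram, started in the prepared configuration, traverses the same computation path as ${\cal M}_0$ on $\vec u^{[n]}$ and ends with the same output $(u_1,\ldots,u_{\nu_1})=f(\vec u^{[n]})$.

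The combined machine is deterministic and total, because $H_{{\cal M}_0}=\{u\}^\infty$ and ${\cal M}_u$ is total. Its result on $\vec x\in U_{\cal A}^n$ is $f(\vec u^{[n]})$, so it computes $\vec u^{[n]}$ first and then simulates ${\cal M}_0$, as the definition of $(u,A_u)$-enumerability demands. It follows that its range is $N$, and the result depends only on $n$.

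\textbf{Main obstacle.} The delicate point is the hand-over between the phases: the initial configuration of ${\cal M}_0$ must be reproduced faithfully, including the tail $(x_n,x_n,\ldots)=(u,u,\ldots)$. Writing infinitely many registers is impossible. I would handle this by a bounded prefill, because ${\cal M}_0$ halts on every $\vec u^{[n]}$, but that bound is unknown in advance. So my first choice is the program transformation instead: each read of $Z_{I_j}$ is guarded by an ${\rm H}_{\rm T}$-style comparison of $I_j$ against a maintained maximum-written index, and $u$ is returned when $I_j$ exceeds it. The step I would verify most carefully is that this guard can be expressed with ${\rm H}_{\rm T}$-instructions, which only test equality. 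It can, because the maximum is tracked incrementally by ${\rm H}_{+1}$-steps in lockstep with the index increments.
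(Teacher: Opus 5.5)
Your route is the one the paper intends. The paper gives no separate proof: the statement is read off from the definition of $(u,A_u)$-enumerability, namely compute $u$, write $\vec u^{[n]}$ using $c(I_1)=n$, then simulate the machine ${\cal M}_0$ computing the $(A_u)$-enumeration function. The only step with real content is the hand-over you single out, and there your construction fails as written.

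The claim closing Step~1 is false. That claim says the configuration equals ${\rm Input}_{{\cal M}_0}(\vec u^{[n]})$ up to registers ${\cal M}_0$ never distinguishes. But $Z_{n+1},Z_{n+2},\ldots$ hold $x_n$ or debris of ${\cal P}_{{\cal M}_u}$. ${\cal M}_0$ may read them through constant addresses $Z_j$, $j>n$, in ${\rm F}$- and ${\rm T}$-instructions, as well as through $Z_{I_k}$ in ${\rm C}$-instructions, and your guard only covers the latter. The guard does not repair the indirect case either:
\begin{itemize}
\item If $M$ is the largest address written so far, a read of $Z_{I_k}$ with $n<c(I_k)<M$ can hit a register that was never written and still holds garbage.
\item If instead $M$ is raised in lockstep with the index increments, as you say, then $c(I_k)\le M$ always. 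The guard never fires, and nothing ever puts $u$ into the registers between $n$ and $M$.
\end{itemize}
What works is an invariant: $M$ bounds every index value and every constant address of ${\cal P}_{{\cal M}_0}$, and every simulated register with address $\le M$ holds the value ${\cal M}_0$ would have on $\vec u^{[n]}$. To maintain it, prefill up to $\max(n,m_0)$, where $m_0$ is the largest constant address, and start with $M=\max(n,m_0)$. Then precede each $I_j:=I_j+1$ by the test $I_j=M$; if it is true, increment $M$ and copy $u$ into the newly covered register. This needs only ${\rm H}_{\rm T}$-, ${\rm H}_{+1}$- and ${\rm C}$-instructions, and no read guard at all.

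Three smaller corrections follow.
\begin{itemize}
\item The cell holding $u$ must be out of reach of the simulated program, so relocate the simulated register file. Store simulated $Z_i$ as real $Z_{i+1}$ and simulated index value $\nu$ as $\nu+1$, keep $u$ in $Z_1$, and copy $Z_2,\ldots,Z_{\nu_1+1}$ back to $Z_1,\ldots,Z_{\nu_1}$ before stopping.
\item Shifting the registers of ${\cal P}_{{\cal M}_u}$ ``far away'' is unnecessary, and with unbounded indirect addressing not well defined. ${\cal P}_{{\cal M}_u}$ can run on the raw input and leave $u$ in $Z_1$; its debris is neutralised by the invariant above.
\item Proposition~\ref{LabIndFixLen} and Corollary~\ref{CoroIndices} do not show that the simulation follows the computation path of ${\cal M}_0$ on $\vec u^{[n]}$. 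They only say that the indices are determined by $n$ and a given label path. That the paths coincide follows from reproducing the $Z$-contents faithfully, together with determinism.
\end{itemize}
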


For any ${\cal M}\in {\sf M}_{\cal A}$ and $P\subseteq U_{\cal A}^\infty$, let $f_{{\cal M},P}:\,\subseteq\! P \to U_{\cal A}^\infty$ be the restriction of ${\rm Res}_{\cal M}:\,\subseteq\! U_{\cal A}^\infty \to U_{\cal A}^\infty$ to the domain $P$. If $P\subseteq H_{\cal M}$, then $f_{{\cal M},P}$ is total and we write $f_{{\cal M},P}:P \to U_{\cal A}^\infty$. $({\rm Res}_{\cal M})_{|H_{\cal M}}$ is the restriction $f_{{\cal M},H_{\cal M}}:H_{\cal M} \to U_{\cal A}^\infty$. If $N\subseteq U_{\cal A}^\infty$ and \textcolor{blue}{$\{u\}^\infty\subseteq H_{\cal M}$} hold and \textcolor{blue}{$f_{{\cal M},\{u\}^\infty}$} is a  surjective function onto $N$, then  $f_{{\cal M},\{u\}^\infty}$ is called {\em an enumeration function for $N$ of type \textcolor{red}{$(B_u)$}}. Such an enumeration function  is computable by an ${\cal A}$-machine that uses  the input space $\{u\}^\infty$, executes the  program ${\cal P}_{\cal M}$, and provides then an output like ${\cal M}$.

\begin{proposition}[Enumeration: from type $(B_u)$ to type $(A_u)$]\label{EnumSets} \hfill Let $N$ \linebreak be a subset of $U_{\cal A}^\infty$ and $u\in U_{\cal A}$. Each  enumeration function for $N$ of type $(B_u)$ is also an enumeration function for $N$ of type $(A_u)$.
\end{proposition}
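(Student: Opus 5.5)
The plan is to show directly that $f_{{\cal M},\{u\}^\infty}$ satisfies the four defining conditions for an enumeration of type $(A_u)$. Everything is read off from the definition of type $(B_u)$. The witnessing ${\cal A}$-machine is ${\cal M}'$: it has input space ${\sf I}_{{\cal M}'}=\{u\}^\infty$, executes the program ${\cal P}_{\cal M}$ with $k_{{\cal M}'}=k_{\cal M}$, and uses the same ${\rm Input}$ and ${\rm Output}$ procedures as the BSS RAM ${\cal M}$, restricted to $\{u\}^\infty$ and ${\sf S}_{{\cal M}'}={\sf S}_{\cal M}$.

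First, I would check that ${\cal M}'$ and ${\cal M}$ agree on $\{u\}^\infty$. For each $n\geq 1$, the only element of $\{u\}^\infty\cap U_{\cal A}^n$ is $\vec u^{[n]}$, and ${\rm Input}_{{\cal M}'}(\vec u^{[n]})={\rm Input}_{\cal M}(\vec u^{[n]})=(1\,.\,(n,1,\ldots,1)\,.\,\vec u^{[n]}\,.\,(u,u,\ldots))$. Since $\to_{{\cal M}'}$ is determined by ${\cal P}_{\cal M}$ and the structure ${\cal A}$ exactly as $\to_{\cal M}$ is, the $({\cal M}',\vec u^{[n]})$-path and the $({\cal M},\vec u^{[n]})$-path coincide ${\cal M}\in{\sf M}_{\cal A}$ is deterministic (trans 1), so each has exactly one such path. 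Hence ${\rm Res}_{{\cal M}'}(\vec u^{[n]})={\rm Res}_{\cal M}(\vec u^{[n]})=f_{{\cal M},\{u\}^\infty}(\vec u^{[n]})$, and both machines halt on the same inputs from $\{u\}^\infty$.

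Next come the four conditions. The first two hold by construction: the input space is $\{u\}^\infty$, and the input and output procedures are those of a BSS RAM. For the third, $\{u\}^\infty\subseteq H_{\cal M}$ gives $H_{{\cal M}'}=\{u\}^\infty={\sf I}_{{\cal M}'}$, and surjectivity of $f_{{\cal M},\{u\}^\infty}$ onto $N$ gives $N=\{{\rm Res}_{{\cal M}'}(\vec x)\mid \vec x\in{\sf I}_{{\cal M}'}\}$. The fourth condition is trivial, because for each $n$ the set ${\sf I}_{{\cal M}'}\cap U_{\cal A}^n=\{\vec u^{[n]}\}$ is a singleton, so $\vec x,\vec y$ in it are equal. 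Therefore ${\rm Res}_{{\cal M}'}=f_{{\cal M},\{u\}^\infty}$ is an enumeration function for $N$ of type $(A_u)$.

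The only delicate point is the first step. One has to be careful that restricting the input space does not alter the transition system or the result function, and that $N\neq\emptyset$ (respectively the case $N=\emptyset$) is handled. Since $\{u\}^\infty\neq\emptyset$ and $f$ is total and surjective onto $N$, we get $N\neq\emptyset$ automatically, so the non-empty clause of the definition applies.
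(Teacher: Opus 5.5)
Your proposal is correct and follows the same route as the paper. The paper's only justification is the remark just before the proposition: $f_{{\cal M},\{u\}^\infty}$ is computed by an ${\cal A}$-machine that uses the input space $\{u\}^\infty$, executes ${\cal P}_{\cal M}$, and produces its output like ${\cal M}$. You make explicit the checks of the four defining conditions of type $(A_u)$, namely the singleton argument for the fourth condition and $N\neq\emptyset$. Your sentence on determinism is missing a ``since'', but the reasoning there is sound.
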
 

\begin{proposition}[From type $(B)$ to $({\rm Res}_{\cal M})_{|H_{\cal M}}$ of type $(B_u)$]\label{EnumSets3} Let $\,N\in U_{\cal A}^\infty$ and $u\in U_{\cal A}$.
If $\{u\}\in {\rm SDEC}_{\cal A}$ holds and there is an enumeration of the elements of $N$ of type $(B)$, then there is an ${\cal M}\in {\sf M}_{\cal A}$ such that \textcolor{blue}{$H_{\cal M}=\{u\}^\infty$} holds and $({\rm Res}_{\cal M})_{|H_{\cal M}}$ is an enumeration function for $N$ of type $(B_u)$.
\end{proposition}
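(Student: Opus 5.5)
The plan is to build ${\cal M}$ as a composition of two machines: a \emph{checking phase} that halts exactly on the tuples in $\{u\}^\infty$, followed by an \emph{enumeration phase} that runs the given type-$(B)$ enumerator. Fix ${\cal M}_0\in{\sf M}_{\cal A}$ witnessing the type-$(B)$ enumeration of $N$ (read as $N\subseteq U_{\cal A}^\infty$). Then $H_{{\cal M}_0}=U_{\cal A}^\infty$, $N=\{{\rm Res}_{{\cal M}_0}(\vec x)\mid \vec x\in U_{\cal A}^\infty\}$, and ${\rm Res}_{{\cal M}_0}(\vec x)$ depends only on the length $n$ of $\vec x$. Fix also ${\cal M}_1\in{\sf M}_{\cal A}$ with $H_{{\cal M}_1}=\{u\}$, which exists because $\{u\}\in{\rm SDEC}_{\cal A}$.

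On input $\vec x\in U_{\cal A}^n$, the checking phase runs through $i=1,\ldots,n$, using an index register that starts at $1$, is incremented by ${\rm H}_{+1}$-instructions, and is compared with $I_1=n$ by an ${\rm H}_{\rm T}$-instruction. For each $i$ it starts a fresh copy of ${\cal P}_{{\cal M}_1}$ on the length-one input $(x_i)$, with initial configuration $(1\,.\,(1,1,\ldots,1)\,.\,(x_i,x_i,\ldots))$. The stop instruction of this copy is replaced by a jump back to the loop. Hence the checking phase terminates iff $x_i=u$ for all $i\leq n$, and otherwise it runs forever. This already yields $H_{\cal M}\subseteq\{u\}^\infty$.

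When all checks succeed, we know $\vec x=\vec u^{[n]}$. The enumeration phase restores the initial ${\cal M}_0$-configuration $(1\,.\,(n,1,\ldots,1)\,.\,(\vec x\,.\,(x_n,x_n,\ldots)))$ and then executes ${\cal P}_{{\cal M}_0}$. Since $H_{{\cal M}_0}=U_{\cal A}^\infty$, it always halts. Its output is ${\rm Res}_{{\cal M}_0}(\vec u^{[n]})$, which equals ${\rm Res}_{{\cal M}_0}(\vec y)$ for every $\vec y\in U_{\cal A}^n$. Therefore $H_{\cal M}=\{u\}^\infty$, and $({\rm Res}_{\cal M})_{|H_{\cal M}}=f_{{\cal M},\{u\}^\infty}$ maps $\vec u^{[n]}$ to the $n$-th value of the enumeration. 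The range of this map is $\{{\rm Res}_{{\cal M}_0}(\vec x)\mid\vec x\in U_{\cal A}^\infty\}=N$, so it is an enumeration function for $N$ of type $(B_u)$, as required.

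The main obstacle is the register bookkeeping. Each run of ${\cal P}_{{\cal M}_1}$ may overwrite unboundedly many $Z$-registers, and that would destroy both the remaining components $x_{i+1},\ldots,x_n$ and the data needed afterwards for ${\cal M}_0$. I would handle this as the paper suggests after the definition of enumerability. First design ${\cal M}$ as a $2$-tape machine (cf.\ \cite[pp.\,\,591--592]{GASS20}). Tape $1$ holds the input unchanged. Tape $2$ serves as scratch space: before each check its index registers are reset by ${\rm H}_1$-instructions, $x_i$ is copied into its first cell and into its tail using the index-controlled ${\rm C}$-instructions, and then an adapted program $({\cal P}_{{\cal M}_1})'$ runs on tape $2$. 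Because tape $1$ is untouched, the enumeration phase can run $({\cal P}_{{\cal M}_0})'$ directly on tape $1$ with $c(I_{1,1})=n$ and the other indices equal to $1$.

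Finally, I would invoke the known simulation of $d$-tape machines by BSS RAMs in ${\sf M}_{\cal A}$ from \cite{GASS20,GASS25B}, which preserves halting sets and results, to obtain the required ${\cal M}\in{\sf M}_{\cal A}$. The points to verify are that this simulation keeps the input/output conventions of BSS RAMs and that it preserves the halting set exactly; together these give $H_{\cal M}=\{u\}^\infty$.
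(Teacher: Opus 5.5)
Your overall design matches what the paper has in mind: check each component with a machine ${\cal M}_1\in{\sf M}_{\cal A}$ satisfying $H_{{\cal M}_1}=\{u\}$, then run the type-$(B)$ enumerator ${\cal M}_0$ with $c(I_1)=n$, organize this on two tapes, and pass to a BSS RAM. (The paper gives no detailed proof, only the remark that an adapted $({\cal P}_{\cal M})'$ can be run on a second tape.) Keeping tape~1 untouched, so that $({\cal P}_{{\cal M}_0})'$ starts from the genuine configuration $(1\,.\,(n,1,\ldots,1)\,.\,(\vec x\,.\,(x_n,x_n,\ldots)))$, is the right choice. Constancy of ${\rm Res}_{{\cal M}_0}$ on $U_{\cal A}^n$ says nothing about runs from other register contents.

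The checking phase does not respect this principle, and there your argument has a gap. The step ``$x_i$ is copied into its first cell and into its tail'' cannot be executed, because the tail is infinite. Moreover, for $i\geq 2$ tape~2 carries leftovers of the previous check in a region whose extent is not known in advance. The hypothesis $H_{{\cal M}_1}=\{u\}$ only controls runs of ${\cal P}_{{\cal M}_1}$ from $(1\,.\,(1,\ldots,1)\,.\,(x_i,x_i,\ldots))$. If $({\cal P}_{{\cal M}_1})'$ reads a cell holding the initial content of tape~2 or a leftover from the check of $x_{i-1}$, it may halt although $x_i\neq u$, or run forever although $x_i=u$. So the claim that the checking phase terminates iff all $x_i=u$, and with it $H_{\cal M}=\{u\}^\infty$, is not justified as written.

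The missing idea is \emph{lazy initialization} of the scratch tape.
\begin{enumerate}
\item Let $K\geq 1$ bound the fixed register indices occurring in ${\cal P}_{{\cal M}_1}$. Reserve a further index register $I_{2,m}$ of tape~2 as a frontier.
\item At the start of check $i$, copy $x_i$ (read from tape~1 via your loop counter) into cells $1,\ldots,K$ of tape~2. Then set $c(I_{2,m})=K$ and the simulated index registers to $1$ by ${\rm H}_1$- and ${\rm H}_{+1}$-instructions.
\item Simulate every ${\rm H}_{+1}$-instruction on a simulated register $I_{2,j}$ as follows. First test $I_{2,j}=I_{2,m}$. If this holds, increment $I_{2,m}$ and copy $x_i$ into cell $c(I_{2,m})$. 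Only then increment $I_{2,j}$.
\end{enumerate}
Simulated index registers change only by resets to $1$ and increments by $1$. Hence the following invariant is preserved: all simulated index values are $\leq c(I_{2,m})$, and cells $1,\ldots,c(I_{2,m})$ of tape~2 agree with the genuine run of ${\cal M}_1$ on $(x_i)$. ${\rm F}$-, ${\rm F}_0$- and ${\rm T}$-instructions only touch cells $\leq K$, and ${\rm C}$-instructions only cells $\leq c(I_{2,m})$. So every value read is the one of the genuine run, whatever lies further out.

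With this, check $i$ halts iff $x_i=u$. The rest of your argument then goes through as you wrote it: ${\cal M}_0$ halts on $\vec u^{[n]}$, the range of the resulting function is $N$, and the cited reduction from two tapes to a BSS RAM finishes the construction.
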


The enumerability of certain subsets of ${\sf Lab}_{\cal P}$ or sets of initial segments of ${\cal P}$-paths can be a sufficient condition or sometimes even a necessary condition for an automatic evaluation of ${\cal P}$-paths or for an automatic or computer-assisted proof if this proof requires an evaluation of program paths. However, not every countable set is enumerable within a suitable framework.\,\footnote{ Any non-empty subset of $\bbbn_+$ is countable and enumerable over $(\bbbr;\bbbr;-,\cdot\,;\geq)$ by a suitable BSS RAM over the reduct $\bbbr_{{\sf Ex0},r}=(\bbbr;0,1,10,r;-,\cdot\,;\geq)$ for some $r\in \bbbr$ (cf.\,\,also Example \ref{NichtabzPfade}). More precisely, each machine constant $r\in\,\, ]0,1[$ (in decimal form) with digits in $\{0,1\}$ can be used as a code and its digits can be treated as  function values of the characteristic function of an $N_r\subseteq \bbbn_+$. Therefore, the power set of $\bbbn_+$ is not countable and there are not enough BSS RAMs over ${\cal A}_{\bbbn}$ for enumerating the elements of every non-empty $N\subseteq \bbbn_+$ by a machine ${\cal M}_N$ in ${\sf M}_{{\cal A}_{\bbbn}}$ since the set of $(1;1;2)$-programs and ${\cal A}_{\bbbn}$ are countable.} Regardless of this, a well-order of type $\omega$ can be useful. We say that an enumeration function $enum :\{u\}^\infty \to N$ and the resulting enumeration are {\em $\preceq$-compatible} and {\em compatible with $\preceq$} and allow to enumerate the elements of $N$ {\em in accordance with the order $\preceq$} if $n_1<n_2$ implies $enum(\vec u^{[n_1]})\preceq enum(\vec u^{[n_2]})$ for all $n_1,n_2\in \bbbn_+$. Let us consider some questions.

\vspace{0.05cm}
\begin{questions}[Enumeration functions or orderings for ${\cal P}$-paths]
\vspace{0.01cm}
\hfill

\nopagebreak 
\noindent \fbox{\parbox{11.8cm}{ \centering \vspace{0.05cm}

How can  the sets  $\,{\sf Lab}_{\cal P}^{\rm fin}$ and $\,{\sf Lab}_{\cal P}$ be ordered?
\vspace{0.05cm}}}

\vspace{0.1cm}
\nopagebreak 
\noindent \fbox{\parbox{11.8cm}{\centering \vspace{0.05cm}

Is it  possible to enumerate the  finite ${\cal P}$-paths  or the finite initial segments of  ${\cal P}$-paths 
 in accordance with  a well-order?
\vspace{0.05cm}}}
\end{questions}

\vspace{0.05cm}

\subsection{Finite paths, their enumeration, and a well-order} Let ${\cal P}$ be a $\sigma$-program. For $\ell_{\cal P}>1$ and $s\geq 1$, let $\leq_{{\cal P},s}^{\sf lexic}$ be the lexicographic order on ${\cal L}_{\cal P}^s$ that results from $1<\cdots <\ell_{\cal P}$ for the underlying alphabet ${\cal L}_{\cal P}$. We want to define an order $\,\leq_{\cal P}^{\rm fin}\,$ on ${\sf Lab}_{\cal P}^{\rm fin}$ such that

\vspace{0.2cm}
\quad $\{(B',B'')\!\in\!({\sf Lab}_{\cal P}^{\rm fin})^2\mid B'\leq_{{\cal P},s}^{\sf lexic}B''\}= \{(B',B'')\!\in\!({\cal L}_{\cal P}^s)^2 \mid B'\leq_{\cal P}^{\rm fin}B''\}$ 

\vspace{0.25cm}
\noindent holds for each $s\geq 1$ and thus $\leq_{{\cal P},s}^{\sf lexic}\cap ({\sf Lab}_{\cal P}^{\rm fin})^2$ and $\leq_{\cal P}^{\rm fin}\cap ({\cal L}_{\cal P}^s)^2 $ are the same set. For $s=1$, we only need to consider $\ell_{\cal P}=1$, $\ell_{\cal P} \leq_{{\cal P},s}^{\sf lexic} \ell_{\cal P} $, and  $\ell_{\cal P} \leq_{\cal P}^{\rm fin} \ell_{\cal P} $. If $s=2$ holds, then  we are interested in $(1,\ell_{\cal P}) \leq_{{\cal P},s}^{\sf lexic} (1,\ell_{\cal P}) $ and $(1,\ell_{\cal P}) \leq_{\cal P}^{\rm fin} (1,\ell_{\cal P})$. 

 \vspace{0.05cm}
\begin{overview}[The binary relation $\,\leq_{\cal P}^{\rm fin}\,$ on ${\,\sf Lab}_{\cal P}^{\rm fin}$]\label{OrderOnFinP}

\hfill \vspace{0.05cm}

\nopagebreak 
\noindent \fbox{\parbox{11.8cm}{\hfill{\small Let ${\cal P}\in {\sf P}_\sigma$, ${\cal L}_{\cal P}=\{1,\ldots,\ell_{\cal P}\}$, and $1<\cdots <\ell_{\cal P}$ if $1<\ell_{\cal P}$.}

\vspace{0.1cm}
\em
For any program paths $B^{(1)}$ and $B^{(2)}$ in ${\sf Lab}_{\cal P}^{\rm fin}$ given by 

\vspace{0.2cm}
\hspace{1cm}$B^{(1)}=(\ell_{1,1},\ldots,\ell_{1,s_1})\in {\cal L}_{\cal P}^\infty$ 
 \quad and \quad $B^{(2)} =(\ell_{2,1},\ldots,\ell_{2,s_2})\in {\cal L}_{\cal P}^\infty $,

\vspace{0.2cm}
\noindent let $B^{(1)}\textcolor{red}{<_{\cal P}^{\rm fin} \,} B^{(2)}$ hold
 if either condition {\sf (O1)} or condition {\sf (O2)} holds.

\vspace{0.05cm}

\quad {\sf (O1)}  $s_1<s_2$.

\quad {\sf (O2)}  $s_1=s_2\geq 3$ and there is a $j\in \{2,\ldots, s_1-1\}$ with {\sf (O2a)} and {\sf (O2b)}.

\vspace{0.05cm}

\begin{tabular}{lll}

\qquad\,\, {\sf (O2a)} \, $\ell_{1,i}=\ell_{2,i}$ for all $i \in \{1,\ldots, j-1\}$,\\ 
\qquad\,\,  {\sf (O2b)} \, $\ell_{1,j} < \ell_{2,j}$. &\vspace{0.1cm}\\
\end{tabular}

\vspace{0.05cm}
\noindent
Let $B^{(1)}\textcolor{red}{\leq_{\cal P}^{\rm fin}\,} B^{(2)}$ be true if $B^{(1)}\,\textcolor{blue}{<_{\cal P}^{\rm fin} \,}B^{(2)}\,$ or $\,B^{(1)}\,\textcolor{blue}{=}\, B^{(2)}$ holds.

\em \hfill {\small {\sf O} is derived from {\sf order}.}
}}
\end{overview}
\begin{proposition}[The strict order $\,<_{\cal P}^{\rm fin}$ and the well-order $\,\leq_{\cal P}^{\rm fin}$]\label{PropStand1}\hfill 
$\,<_{\cal P}^{\rm fin}$ \linebreak is irreflexive and a strict order. The relationships $B'\not <_{\cal P} ^{\rm fin} B'' $ and $B''\not <_{\cal P} ^{\rm fin} B' $ mean $B'= B'' $. $\leq_{\cal P}^{\rm fin}$ is a reflexive, transitive, antisymmetric, and linear order. Moreover, any subset of $\,{\sf Lab}_{\cal P}^{\rm fin} $ has a minimal element with respect to $\leq_{\cal P}^{\rm fin}$.
\end{proposition}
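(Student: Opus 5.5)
The plan is to reduce everything to the lexicographic order on tuples of a fixed length. I will show that $<_{\cal P}^{\rm fin}$ coincides with the length-lexicographic (shortlex) order restricted to ${\sf Lab}_{\cal P}^{\rm fin}$. For $B^{(1)}\not=B^{(2)}$ with $s_1=s_2=s$, let $j$ be the first index with $\ell_{1,j}\not=\ell_{2,j}$. Such a $j$ exists because the tuples differ.

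The first step is to check that $j\in\{2,\ldots,s-1\}$, so that {\sf (O2)} really covers every case of distinct paths of equal length. By {\sf (L3)} we have $\ell_{1,1}=\ell_{2,1}=1$, hence $j\geq 2$. By {\sf (L5)} we have $\ell_{1,s}=\ell_{2,s}=\ell_{\cal P}$, hence $j\leq s-1$. This also forces $s\geq 3$. For $s\leq 2$ there is only one ${\cal P}$-path of that length (namely $(\ell_{\cal P})$, respectively $(1,\ell_{\cal P})$), which is consistent with Properties \ref{PropertOnePath}.

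\emph{Irreflexivity.} Neither {\sf (O1)} nor {\sf (O2)} can hold for $B^{(1)}=B^{(2)}$: {\sf (O1)} needs $s_1<s_1$, and {\sf (O2)} needs $\ell_{1,j}<\ell_{1,j}$.

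\emph{Transitivity.} I will split into cases according to the lengths. If one of the two steps is by {\sf (O1)}, then $s_1<s_3$ follows, since the other step satisfies $s\leq s'$. If both steps are by {\sf (O2)} with witnesses $j$ and $j'$, take $j''=\min(j,j')$. Then {\sf (O2a)} holds up to $j''-1$, and at $j''$ we get $\ell_{1,j''}<\ell_{3,j''}$: the inequality is strict in the case $j\not=j'$ and also when $j=j'$. Moreover $j''\in\{2,\ldots,s-1\}$ is inherited.

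\emph{Trichotomy.} If $s_1\not=s_2$, then {\sf (O1)} applies in one direction. If $s_1=s_2$ and the paths differ, take the first differing index $j$ described above; it yields {\sf (O2)} in exactly one direction. Hence if $B'\not<B''$ and $B''\not<B'$, then $B'=B''$. From this, $\leq_{\cal P}^{\rm fin}$ is reflexive, transitive and linear, and it is antisymmetric because $<$ is irreflexive and transitive, hence asymmetric.

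\emph{Well-foundedness.} This is the main point, and it uses the finiteness of ${\cal L}_{\cal P}$. Let $N\subseteq{\sf Lab}_{\cal P}^{\rm fin}$ be nonempty. Choose the least length $s_0$ occurring in $N$; it exists because $\bbbn_+$ is well-ordered. The set $N\cap{\cal L}_{\cal P}^{s_0}$ is finite and nonempty, so it has a least element under the linear order. Every element of $N$ of greater length is larger by {\sf (O1)}, so this element is minimal (indeed least) in $N$. No choice principle is needed, because each step is a minimum of a finite set or of a set of positive integers.
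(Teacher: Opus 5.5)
Your proof is correct and follows the route the paper intends. The paper states the proposition without a separate proof, but it introduces $\leq_{\cal P}^{\rm fin}$ exactly as the order that agrees with $\leq_{{\cal P},s}^{\sf lexic}$ on each ${\cal L}_{\cal P}^s$, places shorter paths first via {\sf (O1)}, and relies on the finiteness of ${\cal L}_{\cal P}^s$; your use of {\sf (L3)} and {\sf (L5)} to push the first differing index into $\{2,\ldots,s-1\}$ is precisely what makes the restricted range of $j$ in {\sf (O2)} harmless, and you are right to read the minimality claim as applying only to nonempty subsets.
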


{\bf The standard order $\leq_{\cal P}^{\rm fin}$.} We call $\leq_{\cal P}^{\rm fin}$ {\em the standard order on ${\sf Lab}_{\cal P}^{\rm fin}$}. 

\begin{theorem}[Well-order on ${\sf Lab}_{\cal P}^{\rm fin}$]\label{WellOrderFinLabel}For any ${\cal P}\in {\sf P}_\sigma$, the standard order $\leq_{\cal P}^{\rm fin}$ on ${\sf Lab}_{\cal P}^{\rm fin}$ is a well-order of the order type $\omega$.
\end{theorem}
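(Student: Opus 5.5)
We prove Theorem \ref{WellOrderFinLabel} by reducing it to the finiteness of each length class. By Proposition \ref{PropStand1}, $\leq_{\cal P}^{\rm fin}$ is already a reflexive, transitive, antisymmetric and linear order on ${\sf Lab}_{\cal P}^{\rm fin}$, and every subset has a minimal element. So $\leq_{\cal P}^{\rm fin}$ is a well-order. It remains to determine its order type: $\omega$ when ${\sf Lab}_{\cal P}^{\rm fin}$ is infinite, and a finite ordinal otherwise. We read the theorem as "order type at most $\omega$", i.e. every proper initial segment is finite. Finiteness of ${\sf Lab}_{\cal P}^{\rm fin}$ can occur, for example for the program \,$1\!:$ stop., cf. Properties \ref{PropertOnePath}.

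\emph{Step 1: finite length classes.} For each $s\geq 1$, let ${\sf Lab}_{\cal P}^{{\rm fin},s}={\sf Lab}_{\cal P}^{\rm fin}\cap {\cal L}_{\cal P}^s$. This set is finite because ${\cal L}_{\cal P}^s$ has $\ell_{\cal P}^s$ elements.

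\emph{Step 2: block structure.} By {\sf (O1)}, every path in ${\sf Lab}_{\cal P}^{{\rm fin},s_1}$ is $<_{\cal P}^{\rm fin}$-below every path in ${\sf Lab}_{\cal P}^{{\rm fin},s_2}$ whenever $s_1<s_2$. Within one block of length $s$, $\leq_{\cal P}^{\rm fin}$ coincides with $\leq_{{\cal P},s}^{\sf lexic}$. Restricting the comparison index to $j\in\{2,\ldots,s-1\}$ is harmless: all paths start with $\ell_1=1$ by {\sf (L3)} and end with $\ell_s=\ell_{\cal P}$ by {\sf (L5)}, so two distinct paths of equal length first differ at some $j$ with $2\le j\le s-1$. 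Hence, for any $B\in{\sf Lab}_{\cal P}^{{\rm fin},s}$, the set of its predecessors is contained in $\bigcup_{r\le s}{\sf Lab}_{\cal P}^{{\rm fin},r}$, and this union is finite by Step 1.

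\emph{Step 3: the isomorphism with $\omega$.} Assume ${\sf Lab}_{\cal P}^{\rm fin}$ is infinite. Then, by Proposition \ref{LemmaAbzEndlPfade}, infinitely many blocks are nonempty. We define $enum(B)=|\{B'\mid B'<_{\cal P}^{\rm fin}B\}|+1$; this is finite by Step 2. The map $enum$ is injective and order preserving, because the order is linear. It is surjective onto $\bbbn_+$ because its image is downward closed and unbounded. Concretely, we list the nonempty blocks in increasing length and each block lexicographically; the concatenation gives a sequence $(\vec\ell^{(i)})_{i\ge1}$ that is $\preceq$-compatible and exhausts ${\sf Lab}_{\cal P}^{\rm fin}$. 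Thus $({\sf Lab}_{\cal P}^{\rm fin},\leq_{\cal P}^{\rm fin})\cong(\bbbn_+,\le)$, which has order type $\omega$.

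The main obstacle is Step 2. We must verify carefully that the restriction $j\in\{2,\ldots,s_1-1\}$ in {\sf (O2)} still compares every pair of distinct equal-length paths. This uses {\sf (L3)} and {\sf (L5)} together with the fact that distinct paths of the same length must differ somewhere. Prefix-freeness (Proposition \ref{NoInitIsPath}) is not needed within a block, since the lengths coincide. The rest is bookkeeping once Proposition \ref{PropStand1} is granted.
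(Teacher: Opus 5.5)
Your proof is correct and takes essentially the route the paper intends. The paper gives no separate proof; it relies on the length-first clause {\sf (O1)}, the finiteness of each ${\cal L}_{\cal P}^s$, and the $\leq_{\cal P}^{\rm fin}$-compatible enumeration $enum_{\cal P}^{\sf (O)}$ of Overview \ref{ExamEnumLab1}. Your caveat is also justified: when ${\sf Lab}_{\cal P}^{\rm fin}$ is finite (e.g.\ for \,$1\!:$ stop.) or even empty, the order type is a finite ordinal, which the paper tacitly concedes by capping its enumeration at $i_0=|{\sf Lab}_{\cal P}^{\rm fin}|$.
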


Let us introduce some $\leq_{\cal P}^{\rm fin}$-compatible enumeration functions of the form $enum:\{u\}^\infty \to {\sf Lab}_{\cal P}^{\rm fin}$.
Because of {\sf (O1)},  the relation $\leq_{\cal P}^{\rm fin}$ and, consequently, the resulting functions are {\em length-compatible}. This means by definition that the inequality $n_1<n_2$ implies $length(enum(\vec u^{[n_1]}))\leq length(enum(\vec u^{[n_2]}))$ for all $n_1,n_2\in \bbbn_+$. Such functions are useful for evaluating the complexity of machines ${\cal M}$. If the running time of ${\cal M}$ is bounded by a number $r$, then only ${\cal P}_{\cal M}$-paths of length $s\leq r$ are to be investigated.

If ${\sf Lab}_{\cal P}^{\rm fin}\not=\emptyset$, then the finite ${\cal P}$-paths can be $\leq_{\cal P}^{\rm fin}$-compatibly enumerated by machines over ${\cal A}_{\bbbn}$ and over finite structures such as ${\cal A}_{{\cal L}_{\cal P}}$.

\vspace{0.3cm}
{\bf The structure ${\cal A}_{{\cal L}_{\cal P}}$.} 
Let ${\cal A}_{{\cal L}_{\cal P}}$ be a first-order {\em label structure} of signature $(\ell_{\cal P}+1;1,1,1;2)$ that is given by 
 ${\cal A}_{{\cal L}_{\cal P}}\!=({\cal L}_{\cal P}\cup\{0\}; {\cal L}_{\cal P}\cup\{0\};\hat\beta_{\cal P}, \hat \beta^+_{\cal P}, \hat \beta^-_{\cal P};=)$. 
The total functions $\hat \beta_{\cal P}$, $\hat \beta^+_{\cal P}$, and $\hat\beta^-_{\cal P}$ are derived from the partial $ \beta_{\cal P}^\circ$-functions $\beta_{\cal P}$, $ \beta^+_{\cal P}$, and $\beta^-_{\cal P}$ that belong to $\mathfrak{Fl\,}_{\cal P}^{\sf label}$. They are defined on $\{0,1,\ldots, \ell_{\cal P}\}$ by

\vspace{0.1cm}
\qquad $\hat \beta_{\cal P}(\ell)=\left\{ \begin{array}{ll}\beta_{\cal P}(\ell)\quad & \mbox {\rm if } \ell\in {\cal L}_{\cal P}^{\rm NB},\\
0& \mbox {\rm if } \ell\in ({\cal L}_{\cal P}\cup \{0\})\setminus {\cal L}_{\cal P}^{\rm NB}\,$,$\\
\end{array}\right.$

\vspace{0.1cm}
\qquad $\hat \beta_{\cal P}^{\mbmss{o}}(\ell)=\left\{ \begin{array}{ll}\beta_{\cal P}^{\mbmss{o}}(\ell)\quad &\mbox {\rm if } \ell\in {\cal L}_{\cal P}^{\rm B},\\
0\quad& \mbox {\rm if } \ell\in ({\cal L}_{\cal P}\cup \{0\})\setminus {\cal L}_{\cal P}^{\rm B}\\
\end{array}\right.$ \quad for $\textcolor{red}{{\mbm{o}}}\in \{\textcolor{blue}{+},\textcolor{blue}{-}\}$.

\vspace{0.1cm}
Overview \ref{ExamEnumLab1} contains an algorithm for an enumeration of all finite ${\cal P}$-paths over ${\cal A}_{{\cal L}_{\cal P}}$ if $\ell_{\cal P}>1$ holds.\,\footnote{ In the following representations of algorithms, $\hat \beta_{\cal P}$ is an operation symbol. It is used instead of the  symbol $f_1^2$ that can be interpreted by the function $\hat \beta_{\cal P}$. The same holds for $\hat \beta_{\cal P}^{\mbmss{o}}$.}
If $\ell_{\cal P}=1$ holds, then we use Properties \ref{PropertOnePath} and  define \textcolor{blue}{$enum_{\cal P}^{\sf (O)}:\{1\}^\infty\to \{1\}$}  by $enum_{\cal P}^{\sf (O)}(\vec 1^{[n]})=1$ for all $n\geq 1$.  $enum_{\cal P}^{\sf (O)}$ is an enumeration function of type $(A_1)$ and ${\sf Lab}_{\cal P}^{\rm fin}$ is enumerable over ${\cal A}_{{\cal L}_{\cal P}}$. 

 \vspace{0.2cm}
{\bf An algorithm for enumerating the ${\cal P}$-paths of length $s$.} The ${\cal P}$-paths of any fixed length $s$ can be enumerated in accordance with $\leq_{\cal P}^{\rm fin} \cap\, ({\cal L}_{\cal P}^s)^2$.  Let $1<\ell_{\cal P}$, $s\geq 2$, $\textcolor{blue}{i_0}=|{\sf Lab}_{\cal P}^{\rm fin} \cap {\cal L}_{\cal P}^s|$, and $i_0>0$ and let the $i_0$-th finite ${\cal P}$-path  also be {\sf the $n$-th path} and the  output for input $\vec 1^{[n]}$ if  $n>i_0$ holds. 
 The algorithm presented in Overview \ref{ExamEnumLab0} enables to compute an enumeration function \textcolor{blue}{$enum_{{\cal P},s}^{\sf (O)}: \{1\}^\infty\to ({\sf Lab}_{\cal P}^{\rm fin} \cap {\cal L}_{\cal P}^s)$} for $s\geq 4$.  After computing the paths $enum_{{\cal P},s}^{\sf (O)}(\vec 1^{[1]}), \ldots , enum_{{\cal P},s}^{\sf (O)}(\vec 1^{[n]})$ for input $\vec 1^{[n]}$,  the ${\cal P}$-path $enum_{{\cal P},s}^{\sf (O)}(\vec 1^{[n]})$ is  output.  For the used subprograms, see also \cite{GASS25B}. For ${\cal P}$-paths of length $s$, it is enough to use $s-2$ loop constructs. The number $i_0$ is bounded by $i_0\leq |{\cal L}_{\cal P}^{s-2}|$. It can be computed, for instance, by executing a subprogram summarized by the pseudo instruction $I_2:=succ^{i_0-1}(I_2)$. This means that there is one program for any possible value $i_0$ and one of these programs enables a complete enumeration. If we know $i_0$, then we know which program provides all paths of length $s$. The values stored in $Z_1,\ldots,Z_s$ are constants of ${\cal A}_{{\cal L}_{\cal P}}$ that belong to ${\cal L}_{\cal P}$. For evaluating the test conditions such as $(cond\,2)$ by a series of ${\sf if}$-instructions, it is sufficient that the used functions $\hat \beta_{\cal P}$, $\hat \beta_{\cal P}^+$, and $\hat \beta_{\cal P}^-$ are defined for labels in ${\cal L}_{\cal P}^\circ$. 
\begin{overview}[An algorithm for computing \textcolor{blue}{$enum_{{\cal P},s}^{\sf (O)}$} over ${\cal A}_{{\cal L}_{\cal P}}$]\label{ExamEnumLab0}

\hfill

\nopagebreak 
\noindent \fbox{\parbox{11.8cm}{

\hfill {\small Let ${\cal P}\in{\sf P}_\sigma$, \textcolor{red}{$|{\cal L}_{\cal P}|>1$}, $s$ be fixed and \textcolor{red}{$s\geq 4$}, 
 $i_0=|{\sf Lab}_{\cal P}^{\rm fin} \cap {\cal L}_{\cal P}^s|$, and \textcolor{red}{$i_0\geq 1$}.}

\em 
\hspace{0.62cm}{\sf Input} $\vec 1^{[n]}\in \{1\}^{n}.$

\hspace{0.62cm}{\sf if $n>i_0$ then $n:=i_0;$} \hfill \textcolor{gray}{\small \it (If a ${\cal P}$-path is found and it is not the $n$-th path,} 

{\sf \hspace{0.62cm}$i:=1;$ $Z_1:=1;$   \hfill \textcolor{gray}{\small \it then $i$ is incremented, see below.)}

\hspace{0.62cm}for $Z_2:=1,2,\ldots, \ell_{\cal P}-1$ do   

\hspace{0.82cm}\{ if $Z_2\in \{\hat \beta_{\cal P}(Z_1), \hat \beta_{\cal P}^+(Z_1), \hat \beta_{\cal P}^-(Z_1)\} $ \hfill {\small $(cond\,2)$}

\hspace{1cm} then for $Z_3:=1,2,\ldots, \ell_{\cal P}-1$ do 

\hspace{2.1cm}\{ if $Z_3\in \{\hat \beta_{\cal P}(Z_2), \hat \beta_{\cal P}^+(Z_2), \hat \beta_{\cal P}^-(Z_2)\} $

\hspace{2.28cm} then $\,\cdots$

\vspace{0.2cm}

\hspace{2.44cm} $\,\,\cdots$ for $Z_{s-1}:=1,2,\ldots, \ell_{\cal P}-1$ do 

\hspace{3.4cm} \{ if $Z_{s-1}\in \{\hat \beta_{\cal P}(Z_{s-2}), \hat \beta_{\cal P}^+(Z_{s-2}), \hat \beta_{\cal P}^-(Z_{s-2})\} $ 

\hspace{3.7cm} then \{ $Z_s:=\ell_{\cal P};$

\hspace{4.78cm} if $Z_s\!\in\! \{\hat \beta_{\cal P}(Z_{s-1}), \hat \beta_{\cal P}^+(Z_{s-1}), \hat \beta_{\cal P}^-(Z_{s-1})\}$ 

\hspace{4.78cm} then \textcolor{red}{\{} if $i=n$ then goto \textcolor{magenta}{$l_1$}$;$ $i:=i+1$ \textcolor{red}{\}} 
\}

\hspace{3.4cm} \} $\,\cdots$ \} \}$;$

\noindent \textcolor{magenta}{$l_1$} : {\sf Output} $(c(Z_1),\ldots,c(Z_s))$.
} 
}}\end{overview}

\vspace{0.1cm}
{\bf Stacks and backtracking.} For determining the $n$-th member in a sequence of finite ${\cal P}$-paths ordered by $\leq_{\cal P}^{\rm fin}$ (and ordered by $<_{\cal P}^{\rm fin}$ for the first $i_0$ members), we can use the $Z$-registers like a stack.\,\footnote{ Before the content of a register $Z_{s_0}$ is changed,  the contents of  $Z_{s_0}, \ldots,Z_s$ can be deleted  (cleared) and set to 1 since the previous values are irrelevant for further processing.} For  the input $\vec 1^{[n]}$, the search for the $n$-th ${\cal P}$-path of length $s$ can be realized by a {\sf simple backtracking procedure}.\,\footnote{ This means that the contents of the registers from $Z_{s_0}$ to $Z_{s}$ can be changed if we interpret \linebreak the position $s_0$ as a backtracking point. By saving and  processing  all possible backtracking points, label paths can be generated  recursively in order to enumerate the ${\cal P}$-paths in ${\cal L}_{\cal P}^s$.} If the components $\ell_1,\ldots,\ell_s$ of the $i$-th ${\cal P}$-path in ${\cal L}_{\cal P}^s$ with $i< n \leq i_0 $ are stored in $Z_1,\dots, Z_s$, then let $\ell_{s_0}$ be the last component with $\ell_{s_0}< \ell_{\cal P}-1$. In such a case (including the paths in ${\sf Lab}_{\cal P}^{\rm Co}$), $c(Z_{s_0+1})\!=\!\cdots\! =\!c(Z_{s-1})\!=\ell_{\cal P}-1$ and $c(Z_s)=\ell_{\cal P}$ hold. Because of  $\ell_{s_0}<\ell_{\cal P}-1$, $Z_{s_0}$ obtains~-- via a {\sf backtracking step}~-- a new value $\ell_{s_0}+1$. If $\ell_{s_0}+j $ belongs to $\{ \hat \beta_{\cal P}^+(c(Z_{s_0-1})),\hat \beta_{\cal P}^-(c(Z_{s_0-1}))\}\setminus \{\ell_{s_0}\}$ for some $j= 1,\ldots, \ell_{\cal P}-\ell_{s_0}-1$, then the assignment  $Z_{s_0}:=\ell_{s_0}+j$ leads to the next loop construct \, {\sf for $Z_{s_0+1}:=1,2,$} \ldots\,. Otherwise, a further backtracking to $Z_{s_1}$ with $s_1<s_0$ is necessary. Further explanations are given below.

\vspace{0.2cm}
{\bf An algorithm for enumerating all finite paths.} Let  $|{\sf Lab}_{\cal P}^{\rm fin}|>0$.
The algorithm in Overview \ref{ExamEnumLab1} can be used to compute an enumeration function \textcolor{blue}{$enum_{\cal P}^{\sf (O)}:\{1\}^\infty\!\to {\sf Lab}_{\cal P}^{\rm fin}$}  over ${\cal A}_{{\cal L}_{\cal P}}$. $n_1<n_2$ implies $length(enum_{\cal P}^{\sf (O)}(\vec 1^{[n_1]}))\leq length(enum_{\cal P}^{\sf (O)}(\vec 1^{[n_2]}))$ for all $n_1,n_2\in \bbbn_+$, and thus this enumeration is {\em length-compatible}. This means that we generally need more and more $Z$-registers if we want to use the $Z$-registers like a stack that contains a sequence of labels. If ${\sf Lab}_{\cal P}^{\rm fin}$ is infinite, then ${\sf ass}(Z_1)$ stands for $Z_1:=0$. 
Otherwise, ${\sf ass}(Z_1)$ stands for $Z_1:=1$ and we use $i_0=|{\sf Lab}_{\cal P}^{\rm fin}|$. One of the resulting programs~-- using $Z_1:=0$ or $Z_1:=1$ and some $i_0$~-- enables the wished enumeration.\,\footnote{  No output could  be found only if ${\sf Lab}_{\cal P}^{\rm fin}$ would be empty.}  ${\sf ass}(Z_1)$ can be realized by an instruction of type (2) at the beginning. If ${\sf Lab}_{\cal P}^{\rm fin}$ is infinite, then we obtain the $n$-th finite ${\cal P}$-path for input $\vec 1^{[n]}$. If ${\sf Lab}_{\cal P}^{\rm fin}$ is finite and $n>i_0$ holds, then the $i_0$-th finite ${\cal P}$-path is output for input $\vec 1^{[n]}$. 

\vspace{0.1cm}
{\bf Backtracking.\,} Let $s$ be a fixed length.
Assume that  $\,c(Z_{j-1})\in {\cal L}_{\cal P}^{\rm B}$ holds for some $j<s$.  \footnote{ We assume that $\hat \beta_{\cal P}^+(c(Z_{j-1}))\not=\hat \beta_{\cal P}^-(c(Z_{j-1}))$ holds (cf.\,\,fn.\,\,\ref{BranchGeneral}).} If, for $\ell\in{\cal L}_{\cal P}^\circ$,  $\{1,\ldots, \ell-1\}\cap
Succ_{\cal P}(c(Z_{j-1}))$ is empty \footnote{ We can here use the equality $Succ_{\cal P}(c(Z_{j-1})) =\{ \hat \beta_{\cal P}^+(c(Z_{j-1})),\hat \beta_{\cal P}^-(c(Z_{j-1})) \}$.}
and $\ell\in Succ_{\cal P}(c(Z_{j-1})) $ holds, then the assignment $Z_j:=\ell$ defines a backtracking point $j$ that could be important. The parameters $j$ and $\ell+1$ derived from  $c(Z_j)=\ell$ can be used to identify this point. We have  $\ell=\min Succ_{\cal P}(c(Z_{j-1}))$ and   a second case  could be considered later. If the set  $\{\ell+1,\ldots, \ell_{\cal P}-1\}$ $\cap \, Succ_{\cal P}(c(Z_{j-1}))$ is not empty, then $\max(Succ_{\cal P}(c(Z_{j-1})) \setminus \{\ell_{\cal P}\})$ will be later assigned to $Z_j$. If there are backtracking points  and $s_0$ results from evaluating these points by the subprogram $(sub\,0)$ and thus $s_0\not=0$ holds, then $Z_{s_0}, \ldots, Z_s$ can be changed by $Z_{s_0}:=\max \cdots$ and $Z_{s_0+1}:=1$ and by $Z_{s_0+2}:=1$ if $(cond\, k)$ is satisfied for $k=s_0+1$. 
For  the next $s$, the search for finite paths can start again with $c(Z_1)=1$. At first, there is no further backtracking point. 

\begin{overview}[An algorithm for computing \textcolor{blue}{$enum_{\cal P}^{\sf (O)}$} over ${\cal A}_{{\cal L}_{\cal P}}$]\label{ExamEnumLab1}
\vspace{0.05cm}

\hfill 

\nopagebreak 
\noindent \fbox{\parbox{11.8cm}{

\hfill {\small Let ${\cal P}\in {\sf P}_\sigma$, $|{\cal L}_{\cal P}|>1$, $i_0=|{\sf Lab}_{\cal P}^{\rm fin}|$, and $i_0\geq 1$.}
\em 

\hspace{0.62cm}{\sf Input} $\vec 1^{[n]}\in \{1\}^{n}.$

\hspace{0.62cm}${\sf ass}(Z_1);\,$ {\sf if $Z_1=1$ \& $n>i_0$ then $n:=i_0;$}

{\sf
\hspace{0.62cm}$s:=1;$ $i:=1;$ $\textcolor{blue}{s_0}:=0;$ $Z_1:=1;$

\noindent \textcolor{magenta}{$l_1$} : if $\textcolor{blue}{s_0}=0$ then \{ $s:=s+1;$  $k:=2$ \}

\hspace{1.95cm} else \hspace{0.07cm}\{ $Z_{\textcolor{blue}{s_0}}:= \max\{\hat\beta_{\cal P}^+(Z_{s_0-1}), \hat\beta_{\cal P}^-(Z_{s_0-1} ) \};$  $k:= s_0+1$ \}$;$

\vspace{0.1cm}
\noindent \textcolor{magenta}{$l_2$} : \textcolor{brown}{if $k=s$ then goto \textcolor{magenta}{$l_3$}}

\hspace{1.78cm} \textcolor{brown}{ else} for $Z_{k}:=1,2,\ldots, \ell_{\cal P}-1$ do 

\hspace{2.8cm}\{ if $Z_k\in \{\hat \beta_{\cal P}(Z_{k-1}), \hat \beta_{\cal P}^+(Z_{k-1}), \hat \beta_{\cal P}^-(Z_{k-1})\} $ \hfill {\it\small $(cond\, k)$}

\hspace{2.95cm} then \{ $k:=k+1;$ goto \textcolor{magenta}{$l_2$} \} \}$;$

\vspace{0.15cm}
\textcolor{magenta}{$l_3$} : if $k=s$ \,\&\, $\ell_{\cal P}\!\in\! \{\hat \beta_{\cal P}(Z_{s-1}), \hat \beta_{\cal P}^+(Z_{s-1}), \hat \beta_{\cal P}^-(Z_{s-1})\}$

\hspace{0.62cm}then \textcolor{red}{\{} if $i=n$ then \{ $Z_s:=\ell_{\cal P};$ goto \textcolor{magenta}{$l_4$} \} else $i:=i+1$ \textcolor{red}{\}}$;$

\vspace{0.1cm}
\hspace{0.62cm}$\textcolor{blue}{s_0}\!:=\!\max \{j<k\mid j>1\,\,\&\,\, Z_{j-1} \!\in\! {\cal L}_{\cal P}^{\rm B} \,\,\&\,\,Z_{j}\!=\!\min\{\hat\beta_{\cal P}^+(Z_{j-1}), \hat\beta_{\cal P}^-(Z_{j-1} ) \}$

\hspace{3cm}    $\&\,\, \max\{\hat\beta_{\cal P}^+(Z_{s_0-1}), \hat\beta_{\cal P}^-(Z_{s_0-1} )\} <\ell_{\cal P} \mbox{ or } j=0\};$  \hfill  {\it\small $(sub\,0)$}

\hspace{0.62cm}goto \textcolor{magenta}{$l_1$}$;$ 

\noindent \textcolor{magenta}{$l_4$} : {\sf Output} $(c(Z_1),\ldots,c(Z_s))$.} 
\hfill {\small \em \textcolor{gray}{An example  is attached (cf.\,\,p.\,\pageref{ExampleFirstAlgo}).}}
}}\end{overview}

The algorithm enables to compute a $\leq_{\cal P}^{\rm fin}$-compatible enumeration function of type $(A_1)$ by an ${\cal A}_{{\cal L}_{\cal P}}$-machine. By Proposition \ref{EnumSets2}, the enumeration is also possible by a BSS RAM since $1$ is a constant of ${\cal A}_{{\cal L}_{\cal P}}$.
\begin{proposition}[Enumerability of ${\sf Lab}_{\cal P}^{\rm fin}$ over ${\cal A}_{{\cal L}_{\cal P}}$] For any ${\cal P}\in {\sf P}_\sigma$, the set ${\sf Lab}_{\cal P}^{\rm fin}$ is $(1,A_1)$-enumerable by a BSS RAM in ${\sf M}_{{\cal A}_{{\cal L}_{\cal P}}}$.
\end{proposition}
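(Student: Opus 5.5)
We need to show that ${\sf Lab}_{\cal P}^{\rm fin}$ is $(1,A_1)$-enumerable by a BSS RAM in ${\sf M}_{{\cal A}_{{\cal L}_{\cal P}}}$. The plan is to build an enumeration function of type $(A_1)$ over ${\cal A}_{{\cal L}_{\cal P}}$ and then apply Proposition \ref{EnumSets2}. That proposition needs the element $1$ to be computable over ${\cal A}_{{\cal L}_{\cal P}}$. This is immediate, because $1$ is a constant of the label structure: a program consisting of $Z_1:=1$ followed by an output of length one computes the constant function $\vec x\mapsto 1$. The empty set is enumerable by definition, so we assume ${\sf Lab}_{\cal P}^{\rm fin}\not=\emptyset$.

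\textbf{Case $\ell_{\cal P}=1$.} By Properties \ref{PropertOnePath}, ${\sf Lab}_{\cal P}^{\rm fin}=\{(1)\}$. The function $enum_{\cal P}^{\sf (O)}(\vec 1^{[n]})=1$ is clearly computable over ${\cal A}_{{\cal L}_{\cal P}}$: write the constant $1$ into $Z_1$, set $I_1:=1$, and stop. It is surjective onto ${\sf Lab}_{\cal P}^{\rm fin}$, and since its domain is $\{1\}^\infty$ it is of type $(A_1)$.

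\textbf{Case $\ell_{\cal P}>1$.} Here we use the algorithm of Overview \ref{ExamEnumLab1} and split into two subcases.
\begin{itemize}
\item If ${\sf Lab}_{\cal P}^{\rm fin}$ is finite with $i_0=|{\sf Lab}_{\cal P}^{\rm fin}|$, we take the variant using ${\sf ass}(Z_1)=(Z_1:=1)$ and the hard-coded value $i_0$. Since $i_0$ is a fixed natural number, it can be built into the program, e.g.\ by a pseudo instruction $I_2:=succ^{i_0-1}(I_2)$ on index registers.
\item If ${\sf Lab}_{\cal P}^{\rm fin}$ is infinite, we use $Z_1:=0$.
\end{itemize}
In both subcases the ingredients are all available in ${\cal A}_{{\cal L}_{\cal P}}$. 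The tests $(cond\,k)$ need only the operations $\hat\beta_{\cal P},\hat\beta^\pm_{\cal P}$, the constants of ${\cal L}_{\cal P}\cup\{0\}$ and the relation $=$. Membership $Z_{j-1}\in{\cal L}_{\cal P}^{\rm B}$ is a finite disjunction of equalities with constants. The counters $i,n,s,k,s_0$ are kept in index registers, comparing them with $I_j=I_k$ tests. The $Z$-registers $Z_1,\dots,Z_s$ are addressed indirectly via an index register, which is what makes the stack of unbounded depth possible. Each pseudo instruction, including the loops, the maximum in $(sub\,0)$ and $\max\{\hat\beta^+,\hat\beta^-\}$, then expands by routine subprograms (cf.\ \cite{GASS25B}) into instructions of types (1)--(8).

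The main obstacle is correctness: for every $n$ the machine must halt and output the $\min(n,i_0)$-th element, respectively the $n$-th element, of ${\sf Lab}_{\cal P}^{\rm fin}$ in the order $\leq_{\cal P}^{\rm fin}$. We would prove this by induction on the path length $s$, with three claims.
\begin{enumerate}[(a)]
\item For fixed $s$, the backtracking search (with backtracking points determined by $(sub\,0)$) visits exactly the label tuples in $\{1\}\times({\cal L}_{\cal P}^\circ)^{s-2}\times\{\ell_{\cal P}\}$ satisfying {\sf (L4)}, in lexicographic order. By (L1) and (L3)--(L5) these are exactly the ${\cal P}$-paths of length $s$. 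This is an invariant argument over the nested loops, in the style of Overview \ref{ExamEnumLab0}; the point is that each branching node has exactly the two successors $\beta^+_{\cal P}$ and $\beta^-_{\cal P}$, which are distinct by (FL4).
\item When no backtracking point remains ($s_0=0$), the search moves to $s+1$. Together with (a), this gives the order {\sf (O1)} between lengths and {\sf (O2)} within a length, so by Theorem \ref{WellOrderFinLabel} the $n$-th output is the $n$-th element of the well-order of type $\omega$.
\item Termination for each input. In the infinite subcase, Proposition \ref{LemmaAbzEndlPfade} together with the finiteness of each ${\cal L}_{\cal P}^s$ guarantees that the $n$-th path is reached after finitely many lengths. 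In the finite subcase, the bound $n:=i_0$ ensures we never search past the last path.
\end{enumerate}

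It follows that the function computed is surjective onto ${\sf Lab}_{\cal P}^{\rm fin}$ and depends only on the input length, so it is an enumeration function of type $(A_1)$. Proposition \ref{EnumSets2} then gives the $(1,A_1)$-enumerability by a BSS RAM in ${\sf M}_{{\cal A}_{{\cal L}_{\cal P}}}$.
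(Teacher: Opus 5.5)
Your proposal is correct and follows the paper's own route. You treat the case $\ell_{\cal P}=1$ via Properties~\ref{PropertOnePath}. For $\ell_{\cal P}>1$ you use the algorithm of Overview~\ref{ExamEnumLab1}, with ${\sf ass}(Z_1)$ and a hard-coded $i_0$ chosen according to whether ${\sf Lab}_{\cal P}^{\rm fin}$ is finite. You then apply Proposition~\ref{EnumSets2}, using that $1$ is a constant of ${\cal A}_{{\cal L}_{\cal P}}$. Your claims (a)--(c) supply the correctness argument that the paper only illustrates by the attached trace.

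The one soft spot is the empty case, and the paper shares it, since it tacitly assumes $i_0\geq 1$. ``$\emptyset$ is enumerable by definition'' refers to plain enumerability, whereas $(1,A_1)$-enumerability requires a surjection from $\{1\}^\infty$. So that case has to be excluded explicitly rather than disposed of.
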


{\bf An algorithm for an enumeration over ${\cal A}_{\bbbn}$.} It is also possible to compute $enum_{\cal P}^{\sf (O)}: \{1\}^\infty\to {\sf Lab}_{\cal P}^{\rm fin}$ by an ${\cal A}_{\bbbn}$-machine. $1$ is here the only constant of ${\cal A}_{\bbbn}$. For this purpose, the algorithm given in Overview \ref{ExamEnumLab1} can be adapted. The symbol $0$ can be replaced by  $\ell_{\cal P}+1$ or the term $(f_1^1)^{\ell_{\cal P}}(c_1^0)$ since  ${\cal L}_{\cal P}\cup\{succ^{\ell_{\cal P}}(1)\}$ is a subset of $\bbbn_+$. Because of ${\sf Lab}_{\cal P}^{\rm fin}\subset \bbbn_+^\infty$, $enum_{\cal P}^{\sf (O)}: \{1\}^\infty\to \bbbn_+^\infty$ is satisfied. Suitable new functions $\hat{\hat\beta}_{\cal P}$, $ \hat{\hat\beta}^+_{\cal P}$, and $\hat{\hat\beta}^-_{\cal P}$ given by

\vspace{0.2cm} \noindent\,\,\, 
$\hat{\hat\beta}_{\cal P}(\ell)=\left\{ \begin{array}{ll}\!\!\beta_{\cal P}(\ell) &\!\!\mbox {\rm if } \ell\in {\cal L}_{\cal P}^{\rm NB},\\
\!\!\ell_{\cal P}+1&\!\!\mbox {\rm if } \ell\in \bbbn_+\setminus {\cal L}_{\cal P}^{\rm NB}\\
\end{array}\right.$ 
 and  \quad $\hat{\hat\beta}_{\cal P}^{\mbmss{o}}(\ell)=\left\{ \begin{array}{ll}\!\!\beta_{\cal P}^{\mbmss{o}}(\ell) &\!\!\mbox {\rm if } \ell\in {\cal L}_{\cal P}^{\rm B},\\
\!\!\ell_{\cal P}+1&\!\!\mbox {\rm if } \ell\in \bbbn_+\setminus {\cal L}_{\cal P}^{\rm B}\\
\end{array}\right.$

\vspace{0.25cm}
\noindent for $\,\mbm{o}\in \{+,-\}$ and $succ^{\ell_{\cal P}}(1)$ can be computed over $(\bbbn_+;1;succ;=)$.  In such a case, $Z_{s_0}:=\! \hat{\hat\beta}_{\cal P}^+(Z_{s_0-1})$ is a pseudo instruction and $\hat{\hat\beta}_{\cal P}^+$ is no operation symbol.

\begin{proposition}[Enumerability of ${\sf Lab}_{\cal P}^{\rm fin}$ over ${\cal A}_{\bbbn}$]For any ${\cal P}\in {\sf P}_\sigma$, the set ${\sf Lab}_{\cal P}^{\rm fin}$ is $(1,A_1)$-enumerable by a BSS RAM in ${\sf M}_{{\cal A}_{\bbbn}}$.
\end{proposition}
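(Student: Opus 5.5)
To prove the statement, we transfer the $(1,A_1)$-enumeration over ${\cal A}_{{\cal L}_{\cal P}}$ to the Peano structure ${\cal A}_{\bbbn}=(\bbbn_+;1;succ;=)$. We use the fact that every constant and every function value the algorithm of Overview \ref{ExamEnumLab1} handles is a label in ${\cal L}_{\cal P}\subset\bbbn_+$ or the dummy value $0$. We split into the cases $\ell_{\cal P}=1$ and $\ell_{\cal P}>1$.

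\textbf{Case $\ell_{\cal P}=1$.} By Properties \ref{PropertOnePath}, ${\sf Lab}_{\cal P}^{\rm fin}=\{(1)\}$. A BSS RAM over ${\cal A}_{\bbbn}$ that assigns $Z_1:=1$ (an ${\rm F}_0$-instruction with the constant $1$), sets $I_1:=1$ and stops computes $enum_{\cal P}^{\sf (O)}$ of type $(A_1)$ on all of $\bbbn_+^\infty$. This already gives $(1,A_1)$-enumerability.

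\textbf{Case $\ell_{\cal P}>1$.} First, we replace $0$ by $\ell_{\cal P}+1=succ^{\ell_{\cal P}}(1)$, which is computable by $\ell_{\cal P}$ applications of $f_1^1$ to $c_1^0$. Likewise, every label constant $\ell\in{\cal L}_{\cal P}$ is obtained as $succ^{\ell-1}(1)$. Next, we realize the functions $\hat{\hat\beta}_{\cal P}$ and $\hat{\hat\beta}_{\cal P}^{\pm}$ by finite subprograms. For an argument in $Z_k$, we compare $Z_k$ successively with the finitely many constants $1,\ldots,\ell_{\cal P}$ via ${\rm T}$-instructions using $=$. In the branch for $\ell$, we assign the fixed value $\beta_{\cal P}(\ell)$, $\beta_{\cal P}^\pm(\ell)$ or $\ell_{\cal P}+1$ as a precomputed constant. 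This is a finite case table, because ${\cal P}$ is fixed. The remaining operations of the algorithm, namely the loops ``for $Z_k:=1,\ldots,\ell_{\cal P}-1$'', the tests $(cond\,k)$, $\min$ and $\max$ of two labels, and the backtracking subprogram $(sub\,0)$, all reduce to finitely many equality tests against constants, to assignments of constants, and to $succ$. Since the strict order on ${\cal L}_{\cal P}$ is finite, we can decide $\ell<\ell'$ by such a table as well. The unbounded quantities $s$, $k$, $i$, $n$ and $s_0$ are kept in index registers, or in $Z$-registers incremented by $succ$. Indirect addressing of $Z_k$ and $Z_{s_0}$ uses ${\rm C}$-instructions $Z_{I_j}:=Z_{I_k}$ together with ${\rm H}$-instructions, exactly as in the subprograms of \cite{GASS25B}. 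The counter $n$ comes from $I_1$ initially, and the comparison $i=n$ is an ${\rm H}_{\rm T}$-test.

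We then check that this adapted machine produces the same sequence of outputs as the algorithm over ${\cal A}_{{\cal L}_{\cal P}}$ for each input $\vec 1^{[n]}$, and indeed for every input of length $n$, since the input values are never read. This holds because every simulated step agrees on ${\cal L}_{\cal P}\cup\{\ell_{\cal P}+1\}$. By the preceding discussion, which already relies on Theorem \ref{WellOrderFinLabel}, the result is a $\leq_{\cal P}^{\rm fin}$-compatible enumeration function of type $(A_1)$, or even of type $(B)$. The choice between ${\sf ass}(Z_1)$ as the ``infinite'' version and the version with the fixed $i_0$ is non-uniform, but it is legitimate: we only need the existence of some program. Finally, $1$ is a constant of ${\cal A}_{\bbbn}$ and hence computable over ${\cal A}_{\bbbn}$, so Proposition \ref{EnumSets2} yields $(1,A_1)$-enumerability by a BSS RAM in ${\sf M}_{{\cal A}_{\bbbn}}$.

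The main obstacle I expect is not the arithmetic but the faithful handling of the unbounded stack $Z_1,\ldots,Z_s$ with a fixed finite program. The loop nesting depth grows with $s$, so it cannot be hard-coded. My first attempt is to rely on the iterative formulation in Overview \ref{ExamEnumLab1}, with labels $l_1$ to $l_4$ and computed jump targets only via index registers. There I would verify carefully that $(sub\,0)$ can be implemented as a backward scan with $I_j:=I_j+1$ and equality tests, since a decrement is unavailable. I would do this by storing positions, scanning forward from $1$ to $k$ and remembering the last qualifying $j$ in an extra index register.
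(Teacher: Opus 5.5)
Your proposal is correct and follows essentially the same route as the paper: you adapt the backtracking algorithm of Overview \ref{ExamEnumLab1} to ${\cal A}_{\bbbn}$ by replacing the dummy value $0$ with $\ell_{\cal P}+1=succ^{\ell_{\cal P}}(1)$, you realize $\hat{\hat\beta}_{\cal P}$ and $\hat{\hat\beta}_{\cal P}^{\pm}$ as finite pseudo instructions (your case tables of equality tests against constants spell these out), and you conclude with Proposition \ref{EnumSets2} because $1$ is a constant of ${\cal A}_{\bbbn}$. The one point that you, like the paper, leave implicit is the degenerate case ${\sf Lab}_{\cal P}^{\rm fin}=\emptyset$, which can occur when $\ell_{\cal P}>1$ (e.g.\ if label $\ell_{\cal P}$ is unreachable): there the algorithm's precondition $i_0\geq 1$ fails and no surjection exists, so the statement can only be read via the convention that $\emptyset$ is enumerable.
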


The computation over ${\cal A}_{\bbbn}$ can be simulated by executing index instructions.  We use terms such as {\sf index-computed} and {\sf index-enumerated} in order to express that  functions or the elements of sets are the result of executing only index instructions. 

Each of our labels can be stored in any arbitrary index register of an ${\cal A}$-machine over any arbitrary first-order structure ${\cal A}$. The difference between the computation over ${\cal A}_{\bbbn}$ and the execution of index instructions by an ${\cal A}$-machine results solely from the fact that each machine has only a fixed finite number of index registers. An ${\cal A}$-machine ${\cal M}$ has $k_{\cal M}$ index registers. A consequence is that sequences of labels must be encoded when a certain length is reached. One possibility is to convert each complete sequence of labels into a suitable index-computable G\"odel number that can be stored in a single index register (for details, see \cite{GASS20}). Here, we use the G\"odel number $\nu_{\vec\ell}\,$ given by $\nu_{\vec\ell} =2^{\ell_1 }3^{\ell_2} \cdots p_s^{\ell_s}$ for the label path $\vec\ell$ given by $\vec\ell= (\ell_1,\ldots,\ell_s )$. $p_i$ is the $i$-th number ($i\leq s$) in the sequence of all prime numbers $p_1<p_2<\cdots$. And thus, every value of the functions $enum_{{\cal P},{\sf ind}}$ and $enum_{{\cal P},u}$ considered in Overview \ref{EnumForIndices} and Theorem \ref{EnumGoedel} is a tuple and the length of such a tuple is a G\"odel number of a ${\cal P}$-path. 

\vspace{0.1cm}
\begin{overview}[Enumeration of codes of finite ${\cal P}$-paths  by indices]\label{EnumForIndices}

\hfill

\nopagebreak 

\noindent \fbox{\parbox{11.8cm}{
{\small 
\hfill Let ${\cal P}\in {\sf P}_\sigma$, ${\sf Lab}_{\cal P}^{\rm fin}\not=\emptyset$, and $\vec 1^{[n]} \in \{1\}^{n} $ for $1 \in {\cal L}_{\cal P}\subset \bbbn_+$ and $n\geq 1$. 

\hfill Let ${\cal A}\in {\sf Struc}$ and $\vec u^{[j]}=(u,\ldots, u) \in \{u\}^{j} $ for any $u \in U_{\cal A}$ and $j\geq 1$.

\hfill Let $\{p_i\mid i\geq 1\}$ consist of all prime numbers and $p_1<p_2 <p_3<\cdots $.}

\vspace{0.2cm}
\em
Let $\textcolor{blue}{enum_{{\cal P},{\sf ind}}} :U_{\cal A}^ \infty\to\, \bigcup_{u\in U_{\cal A}}\{u\}^\infty$ and 

\vspace{0.1cm}
\qquad $enum_{{\cal P},{\sf ind}}(\vec x) = \vec u^{[2^{\ell_{1} }3^{\ell_{2}} \cdots p_{s}^{\ell_{s}}]} $ 
\hfill if $\vec x\in U_{\cal A}^n$, $n\geq 1$, $u=_{\rm df} x_1$, and 

\hspace{7.28cm}$enum_{\cal P}^{\sf (O)} (\vec 1^{[n]})=(\ell_{1}, \ldots, \ell_{s})$. 

For any $u\in U_{\cal A}$, let $\textcolor{blue}{enum_{{\cal P},u}}: \{u\}^\infty \to \{u\}^\infty $ and 

\vspace{0.1cm}
\qquad $enum_{{\cal P},\textcolor{blue}{u}} (\vec u^{[n]}) =enum_{{\cal P},\textcolor{blue}{\sf ind}} (\vec u^{[n]}) $ \hspace{1.02cm} for $n\geq 1$.
\vspace{0.1cm}}}
\end{overview}

If the function $enum_{{\cal P},{\sf ind}}$ can be computed by means of index instructions, then this means that the  G\"odel numbers $\nu_{\vec\ell}\,$ of all ${\cal P}$-paths $\vec \ell\in {\sf Lab}_{\cal P}^{\rm fin}$ can be {\sf index-enumerated} and the $n$-th number $\nu_{\vec\ell}\,$ is {\sf index-computable} from index $n$.

\begin{theorem}[Index-enumerability of ${\sf Lab}_{\cal P}^{\rm fin}$]\label{EnumGoedel} Let $\sigma$ be any finite signature, ${\cal P}\in {\sf P}_\sigma$, and ${\sf Lab}_{\cal P}^{\rm fin}\not = \emptyset$. Let ${\cal A}$ be any first-order structure of an arbitrary signature. Then, we have the following properties. 

\begin{itemize} \parskip -0.4mm
\item Each G\"odel number $\nu_{\vec\ell}\,$ of a program path $\vec\ell$ in ${\sf Lab}_{\cal P}^{\rm fin}$ can be index-generated and stored into one index register of any arbitrary ${\cal A}$-machine. 
\item The set of all these G\"odel numbers is index-enumerable by a  BSS RAM over ${\cal A}$ that computes $enum_{{\cal P}, {\sf ind}} :U_{\cal A}^ \infty\to \bigcup_{u\in U_{\cal A}}\{u\}^\infty$. 
\item Each  G\"odel number $\nu_{\vec\ell}\,$ of a finite ${\cal P}$-path $\vec \ell\in {\cal L}_{\cal P}^s$ can be evaluated by a BSS RAM over ${\cal A}$. For $i\leq s$, the single component $\ell_i$ of the path can be computed and assigned to an index register. \end{itemize}
\end{theorem}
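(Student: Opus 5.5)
We prove the three items in turn. All arithmetic is carried out on natural numbers stored in index registers, using only the index instructions of types (5), (6), (7). Everything then rests on one observation: over ${\cal A}_{\bbbn}=(\bbbn_+;1;succ;=)$, the computation can be simulated by a BSS RAM over an arbitrary ${\cal A}$ that executes only index instructions. Register $I_j$ plays the role of a variable over $\bbbn_+$: $I_j:=1$ gives the constant, $I_j:=I_j+1$ gives $succ$, and ${\rm H}_{\rm T}$-tests give $=$. Copying, addition, multiplication and exponentiation are then obtained by standard loop constructs. These constructs need only finitely many auxiliary index registers and are independent of ${\cal A}$ (cf.\ the subprograms in \cite{GASS25B} and the G\"odel coding in \cite{GASS20}).

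\emph{First item.} Given a fixed $\vec\ell=(\ell_1,\ldots,\ell_s)\in{\sf Lab}_{\cal P}^{\rm fin}$, $\nu_{\vec\ell}=2^{\ell_1}\cdots p_s^{\ell_s}$ is a fixed positive integer. It is produced in one index register by $I_j:=1$ followed by $\nu_{\vec\ell}-1$ instructions $I_j:=I_j+1$. Alternatively, a fixed-size loop program computes the primes $p_1,\ldots,p_s$ (trial division by index arithmetic) and multiplies them up. Either way, one index register suffices.

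\emph{Second item.} This is the main step. We start from the algorithm of Overview \ref{ExamEnumLab1} in its ${\cal A}_{\bbbn}$-variant: $0$ is replaced by $\ell_{\cal P}+1$, and the finitely tabulated functions $\hat{\hat\beta}_{\cal P},\hat{\hat\beta}_{\cal P}^{\pm}$ are realized by case distinctions via ${\rm H}_{\rm T}$-tests against the constants $1,\ldots,\ell_{\cal P}+1$.

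The obstacle is that this algorithm uses an unbounded number $s$ of registers $Z_1,\ldots,Z_s$ as a stack, whereas an ${\cal A}$-machine has only $k_{\cal M}$ index registers. We resolve this by keeping the whole stack as the single code $\nu=\prod_{i\le s}p_i^{c(Z_i)}$. Reading $Z_i$ means computing the exponent of $p_i$ in $\nu$ by repeated divisibility tests. Writing $Z_i:=a$ means dividing out the old power and multiplying by $p_i^a$. Each step is a bounded loop in index arithmetic. Registers $s,k,i,s_0$ remain ordinary index registers, and the maximum and minimum computations in $(sub\,0)$ become loops over $j<k$ using decoding.

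The input length $n$ is available in $I_1$ at the start, and the result $\nu_{\vec\ell}$ ends up in some $I_j$. The output is $\vec u^{[\nu_{\vec\ell}]}$ with $u=x_1$. To produce it, we write $Z_1$ into cells $1,\ldots,\nu_{\vec\ell}$ using ${\rm C}$-instructions $Z_{I_a}:=Z_{I_b}$ with $c(I_b)=1$, and set $I_1:=\nu_{\vec\ell}$ so that ${\rm Out}_{\cal M}$ yields exactly that tuple. The surjectivity and the equal-length condition in the definition of an enumeration then follow from the already established enumeration $enum_{\cal P}^{\sf (O)}$: the result depends only on $n$. The finite-versus-infinite alternative for ${\sf Lab}_{\cal P}^{\rm fin}$ (the choice of ${\sf ass}(Z_1)$ and $i_0$) is handled as there: one of the finitely specified programs works. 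The case $\ell_{\cal P}=1$ is handled directly by Properties \ref{PropertOnePath}.

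\emph{Third item.} Given $\nu_{\vec\ell}$ in an index register and $i\le s$, we compute $p_i$ by an index loop and then extract the exponent of $p_i$ by repeated division, counting in a fresh index register. This yields $\ell_i$. The length $s$ is obtained as the largest $i$ with $p_i\mid\nu_{\vec\ell}$; this is well defined because all $\ell_i\geq 1$. Hence the path can be fully evaluated by a BSS RAM over ${\cal A}$.
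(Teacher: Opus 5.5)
Your proposal is correct and takes the paper's route: simulate the ${\cal A}_{\bbbn}$-version of the algorithm in Overview~\ref{ExamEnumLab1} using only index instructions, with the unbounded label stack packed into a single G\"odel number that is coded and decoded by index arithmetic. You in fact supply more detail than the paper's short sketch, namely the explicit stack encoding, the construction of the output $\vec u^{[\nu_{\vec\ell}]}$ via ${\rm C}$-instructions, and the recovery of $s$ as the index of the largest prime divisor.

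One small imprecision. The output tuple $\vec u^{[\nu_{\vec\ell}]}$ with $u=x_1$ depends on $x_1$, so it is not the result itself but only its length $\nu_{\vec\ell}$ that depends solely on $n$. That is exactly the sense in which the paper calls the set of G\"odel numbers index-enumerable.
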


{\bf Proof.} The algorithm for the enumeration of the finite ${\cal P}$-paths over ${\cal A}_{{\cal L}_{\cal P}}$  (given in Overview \ref{ExamEnumLab1}) and ${\cal A}_{\bbbn}$, respectively,  can be simulated by a deterministic or non-deterministic BSS RAM  without using $Z$-registers. The G\"odel number of the $n$-th path is index-computable from index $n$ by a BSS RAM over ${\cal A}$ if this machine is equipped with a suitable number of index registers. Useful pseudo instructions for computing or  decoding a G\"odel number are detailed in \cite{GASS20}.\,\qed

\vspace{0.15cm}
\noindent For $u \in U_{\cal A}$, we call $enum_{{\cal P},u}$ a {\em  G\"odel-enumeration function for ${\sf Lab}_{\cal P}^{\rm fin}$}. Alternatively, we can use codes such as $\ell_1 b^0 +\cdots+ \ell_sb^{s-1}$ with $b=|{\cal L}_{\cal P}|+1$  for $\vec \ell\in {\cal L}_{\cal P}^s$ or the restricted Cantor pairing function for defining codes, and the like.

\vspace{0.05cm}
\subsection{Infinite paths and a total order for all paths}  Let us  call a set {\em an uncountable set} if it is not countable. 
The following example shows that there is a $\sigma$-program ${\cal P}$ such that the set ${\sf Lab}_{\cal P}$ of ${\cal P}$-paths is not countable  (and thus not enumerable).  Moreover, we can show that there is an uncountable set of  ${\cal P}$-paths that can be traversed by inputs of a machine ${\cal M}$ whose  input space ${\sf I}_{\cal M}$ is uncountable. 

\begin{example} [Uncountable sets\, of\, computation paths]\label{NichtabzPfade} \hfill Let $\,\,\bbbr_{\sf Ex1}\,$ be\linebreak the structure $(\bbbr;0,1,2,10;-,\cdot\,;\geq)$. An uncountable set ${\sf Lab}_{\cal M} $ of computation paths can be generated, for instance, by the transition system $({\sf S}_{\cal M},\to_{\cal M})$ of the BSS RAM ${\cal M}$ over $\bbbr_{\sf Ex1}$ that uses the following $(4;2,2;2)$-program ${\cal P}$. 

\vspace{0.4cm}
{\sf\small 
\noindent
\begin{tabular}{l}
\,\, Input $\vec x\in \bbbr^\infty.$\vspace{0.05cm}\\
\,\, $1:$ if $r_1^2 (Z_1, c_2^0)$ then goto $8$ else goto $2;$\\
\,\, $2:$ if $r_1^2 (c_1^0,Z_1)$ then goto $8$ else goto $3;$\\
\,\, $3:$ $Z_1:= f_2^2(c_4^0, Z_1);$\\
\,\, $4:$ if $r_1^2(Z_1, c_2^0)$ then goto $5$ else goto $3;$\\
\,\, $5:$ if $r_1^2 (Z_1, c_3^0)$ then goto $8$ else goto $6;$\\
\,\, $6:$ $Z_1:= f_1^2(Z_1,c_2^0);$\\
\,\, $7:$ goto $3;$\\
\,\, $8:$ stop.\vspace{0.05cm}\\
\,\, Output $c(Z_1)$.\\
\end{tabular}
 \begin{tabular}{l}
\,\, Input $\vec x\in \bbbr^\infty.$\vspace{0.05cm}\\
\,\, $1:$ if ($Z_1 \geq 1$) then goto $8$ else goto $2;$\\
\,\, $2:$ if ($0\geq Z_1$) then goto $8$ else goto $3;$\\
\,\, $3:$ $Z_1:= 10* Z_1;$\\
\,\, $4:$ if ($Z_1 \geq 1$) then goto $5$ else goto $3;$\\
\,\, $5:$ if ($Z_1 \geq 2$) then goto $8$ else goto $6;$\\
\,\, $6:$ $Z_1:= Z_1-1;$\\
\,\, $7:$ goto $3;$\\
\,\, $8:$ stop.\vspace{0.05cm}\\
\,\, Output $c(Z_1)$.\\
\end{tabular}}

\vspace{0.4cm}

\noindent
In the left column of the presentation, we find the formal version of ${\cal P}$ written by using the symbols derived directly from the signature. The symbols used in the right column help to better understand the algorithm when it should be executed only over $\bbbr_{\sf Ex1}$. The symbol $*$ stands for the multiplication also denoted by $\cdot$. For simplicity, we moreover use some pseudo instructions. For example, the pseudo instruction $Z_1:= f_2^2(c_4^0, Z_1)$ can stand for the subprogram that consists of the instructions $Z_2:=c_4^0$ and $Z_1:= f_2^2(Z_2, Z_1)$. 
\end{example}

A program path can be a computation path of a BSS RAM over ${\cal A}$ only if there is an input in the input space $U_{\cal A}^\infty$ traversing this path. Whenever $U_{\cal A}$ is countable, this implies the countability of the sets $U_{\cal A}, U_{\cal A}^2, U_{\cal A}^3,\ldots, U_{\cal A}^\infty$, and $\{(\vec x\,.\, \vec y) \mid (\vec x, \vec y) \in U_{\cal A}^\infty\times U_{\cal A}^\infty\}$ (by Cantor), and other sets.

\begin{proposition}[Countability of \,$U_{\cal A}$\, implies a countable $\,{\sf Lab}_{\cal M}$]\hfill 
Let ${\cal A}$ be any first-order structure with a countable universe $U_{\cal A}$. Then, for any BSS RAM ${\cal M}\in {\sf M}_{\cal A}\cup {\sf M}_{\cal A}^{\rm DND} \cup {\sf M}_{\cal A}^{\rm ND}$, the sets ${\sf Lab}_{\cal M}$ and ${\sf Lab}_{\cal M}^{[\leq \omega]}$ are countable sets. 
\end{proposition}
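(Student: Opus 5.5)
The plan is to build a surjection from a countable set onto ${\sf Lab}_{\cal M}$. Every element of ${\sf Lab}_{\cal M}$ is a computation path, so it is determined by some $({\cal M},\vec x)$-path in ${\sf Con}_{{\cal M},\vec x}$ for an input $\vec x\in U_{\cal A}^\infty$. The map $B^*\mapsto B^{*[\leq\omega]}$ is injective by Proposition \ref{NoInitIsPath}. Hence ${\sf Lab}_{\cal M}^{[\leq\omega]}$ is the image of ${\sf Lab}_{\cal M}$ under an injection, and it suffices to show that ${\sf Lab}_{\cal M}$ is countable. The empty set and finite sets are countable by definition, so I may assume ${\sf Lab}_{\cal M}\not=\emptyset$.

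\textbf{Deterministic case.} For ${\cal M}\in{\sf M}_{\cal A}$ I have $|{\sf Con}_{{\cal M},\vec x}|=1$ (Overview \ref{SetsCon}). So $\vec x\mapsto$ ``the computation path determined by the unique element of ${\sf Con}_{{\cal M},\vec x}$'' is a well-defined function from $U_{\cal A}^\infty$ onto ${\sf Lab}_{\cal M}$. Since $U_{\cal A}$ is countable, $U_{\cal A}^\infty=\bigcup_{n\geq1}U_{\cal A}^n$ is countable, as noted just before the statement. Composing a surjection $\bbbn_+\to U_{\cal A}^\infty$ with this map gives the required surjection.

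\textbf{Non-deterministic cases ${\sf M}_{\cal A}^{\rm ND}$ and ${\sf M}_{\cal A}^{\rm DND}$.} Here ${\sf Con}_{{\cal M},\vec x}=\bigcup_{\vec y}{\sf Con}_{{\cal M},\vec x,\vec y}$, with $\vec y$ ranging over $U_{\cal A}^\infty$ or over $\{c_1,c_2\}^\infty\subseteq U_{\cal A}^\infty$. Each ${\sf Con}_{{\cal M},\vec x,\vec y}$ has exactly one element, because the transition $\to_{\cal M}$ is single-valued for these machines; only the initial configuration is guessed. This gives a function from a subset of $U_{\cal A}^\infty\times U_{\cal A}^\infty$ onto ${\sf Lab}_{\cal M}$. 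The domain is countable: it is contained in the set $\{(\vec x\,.\,\vec y)\}$, which is countable by the Cantor argument cited before the statement.

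\textbf{Main obstacle.} The difficulty is making sure every computation path really arises from some pair $(\vec x,\vec y)$ of the allowed form. For ${\cal M}\in{\sf M}_{\cal A}^{\rm ND}$, ${\rm In}_{\cal M}$ admits exactly the start configurations $(1\,.\,(n,1,\ldots,1)\,.\,\vec x\,.\,\vec y\,.\,(x_n,x_n,\ldots))$. So each element of ${\sf Con}_{{\cal M},\vec x}$ lies in some ${\sf Con}_{{\cal M},\vec x,\vec y}$, and the surjection is well defined. Restricting the domain to those pairs $(\vec x,\vec y)$ with nonempty ${\sf Con}_{{\cal M},\vec x,\vec y}$ keeps it countable, since it is a subset of a countable set.

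This argument deliberately excludes ${\sf M}_{\cal A}^{\rm NDB}$, where branching guesses could produce uncountably many label sequences for a single input. That matches the statement, which does not cover ${\sf M}_{\cal A}^{\rm NDB}$.
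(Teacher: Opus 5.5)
Your proof is correct and follows essentially the paper's own reasoning, which is the remark just before the statement. Every computation path is traversed by some input $\vec x$, together with a guess tuple $\vec y$ in the ${\sf M}_{\cal A}^{\rm ND}$ and ${\sf M}_{\cal A}^{\rm DND}$ cases. Each such input or pair determines exactly one configuration sequence, and $U_{\cal A}^\infty$ and $U_{\cal A}^\infty\times U_{\cal A}^\infty$ are countable whenever $U_{\cal A}$ is. You do not need injectivity of $B^*\mapsto B^{*[\leq\omega]}$ for ${\sf Lab}_{\cal M}^{[\leq\omega]}$, since any image of a countable set under a function is countable.
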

The same does not hold if an input can traverse an uncountable number of computation paths via non-deterministic branching steps. 

\begin{proposition}[BSS RAMs in ${\sf M}_{\cal A}^{\rm NDB}$ with uncountable ${\sf Lab}_{\cal M}$]\hfill 
For any first-order structure ${\cal A}$, there is a BSS RAM ${\cal M}\in{\sf M}_{\cal A}^{\rm NDB}$ such that the sets ${\sf Lab}_{\cal M}$ and ${\sf Lab}_{\cal M}^{[\leq \omega]}$ are not countable. 
\end{proposition}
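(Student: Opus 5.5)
We construct, for an arbitrary first-order structure ${\cal A}$, one concrete machine ${\cal M}\in{\sf M}_{\cal A}^{\rm NDB}$ whose program contains a ${\rm B}$-instruction in a loop. The natural candidate is the $\sigma$-program
\[
1:\ \mbox{\sf goto } 2 \mbox{ \sf or goto } 3;\quad 2:\ \mbox{\sf goto } 1 \mbox{ \sf or goto } 1';\ \ldots
\]
It is simpler to make it concrete: $1:$ {\sf goto $2$ or goto $3$}; $2:$ {\sf goto $1$ or goto $4$}; $3:$ {\sf goto $1$ or goto $4$}; $4:$ {\sf stop}. This program uses only ${\rm B}$-instructions and the ${\rm S}$-instruction, so it is a $\sigma$-program for every signature $\sigma$. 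By footnote \ref{BranchGeneral}, both targets of each ${\rm B}$-instruction are distinct, as required. Because no ${\rm T}$- or ${\rm H}_{\rm T}$-instructions occur, the transition function $\to_{\cal M}$ of Definition/Overview for ${\cal R}_{\cal M}$ allows, at every ${\rm B}$-label $\ell$, both successors in $G_\ell=\{\beta_{\cal P}^+(\ell),\beta_{\cal P}^-(\ell)\}$. This holds independently of $\vec\nu$ and $\bar u$, and hence independently of ${\cal A}$ and of the input.

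First we fix any input $\vec x\in U_{\cal A}^1$, which exists since $U_{\cal A}\neq\emptyset$, and its start configuration $con_1={\rm Input}_{\cal M}(\vec x)$. For each infinite $0$-$1$ string $w=w_1w_2\cdots\in\{0,1\}^{(\omega)}$ we define the label sequence $B_w=(1,\ell_2,1,\ell_4,1,\ldots)$ with $\ell_{2k}=2$ if $w_k=0$ and $\ell_{2k}=3$ if $w_k=1$. Using $(trans\,3)$, we check by induction on $t$ that the configurations $con_{t+1}=(\ell_{t+1}\,.\,\vec\nu^{(1)}\,.\,\bar u^{(1)})$ satisfy $con_t\to_{\cal M}con_{t+1}$. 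Only the ${\rm B}$-clause of $\to_{\cal M}$ is used, and it leaves $\vec\nu$ and $\bar u$ unchanged. Hence $(con_t)_{t\ge1}\in{\sf Con}_{{\cal M},\vec x}$. Since $\ell_t\ne4=\ell_{\cal P}$ for all $t$, the sequence $B_w$ is an infinite computation path. So $B_w\in{\sf Lab}_{\cal M}$, and since $B_w$ is infinite, $B_w^{[\leq\omega]}=B_w\in{\sf Lab}_{\cal M}^{[\leq\omega]}$.

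Second, the map $w\mapsto B_w$ is injective, since $w$ can be read off from the even positions of $B_w$. Therefore ${\sf Lab}_{\cal M}$ and ${\sf Lab}_{\cal M}^{[\leq\omega]}$ contain injective images of $\{0,1\}^{(\omega)}$. By Cantor's diagonal argument $\{0,1\}^{(\omega)}$ is uncountable. A surjection $\bbbn_+\to{\sf Lab}_{\cal M}$ would yield a surjection onto $\{0,1\}^{(\omega)}$: map each path in the image of $w\mapsto B_w$ to its preimage, and send all other paths to some fixed string. So neither set is countable.

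The only delicate point is formal rather than mathematical. We must confirm that the definition of ${\sf M}_{\cal A}^{\rm NDB}$ in \cite{GASS25A} admits programs consisting solely of ${\rm B}$-instructions, and that $(trans\,3)$ really permits free choice at every step for a single fixed input, so that ${\sf Lab}_{{\cal M},\vec x}$ itself is already uncountable. If the model additionally requires a nonzero number of index registers or a particular input convention, these features are irrelevant, because the constructed paths never touch $\vec\nu$ or $\bar u$.
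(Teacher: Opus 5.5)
Your proof is correct and uses the same idea as the paper. The paper takes the program $1:$ {\sf goto $1$ or goto $2$}; $2:$ {\sf goto $1$ or goto $2$}; $3:$ {\sf stop}, whose stop label is unreachable, and observes that unconditional ${\rm B}$-branching makes every one of the uncountably many infinite ${\cal P}_{\cal M}$-paths a computation path. Your variant has a reachable stop label and an explicit injection $w\mapsto B_w$ from $\{0,1\}^{(\omega)}$; it simply spells out the verification that the paper leaves implicit.
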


{\bf Proof.} If the program ${\cal P}_{\cal M}$ of an ${\cal A}$-machine ${\cal M} \in {\sf M}_{\cal A}^{\rm NDB}$ is the following program, then ${\sf Lab}_{{\cal P}_{\cal M}}^{\rm infin}$ and ${\sf Lab}_{\cal M}$ are not countable. 

\vspace{0.1cm}
\quad $ 1:$ {\sf goto $1$ or goto $2;$} \,\, $ 2:$ {\sf goto $1$ or goto $2;$} \,\, $3:$ {\sf stop}. \qed

\vspace{0.3cm}
\noindent Nevertheless, all label paths can be ordered.

\vspace{0.1cm}

{\bf The binary relations $\leq _{{\cal P}\!,[\leq \omega]} ^{\sf lexic}$ and $\leq _{\cal P} ^{\sf lexic}$.} For any $\sigma$-program ${\cal P}$, we can use a lexicographic order for ordering all infinite label paths in  ${\sf Lab}_{\cal P}^{[\leq \omega]}$ in any case and regardless of whether ${\sf Lab}_{\cal P}$ is countable or not.  We denote it by $\leq _{{\cal P}\!,[\leq \omega]} ^{\sf lexic}$ and introduce moreover an order $\leq _{\cal P} ^{\sf lexic}$ for ordering the set ${\sf Lab}_{\cal P}$ of all program paths. For all $B_1,B_2\in {\sf Lab}_{\cal P}$, let $B_1\leq _{\cal P} ^{\sf lexic}B_2$ hold if and only if  $B_1^{[\leq \omega]}\leq _{{\cal P}\!,[\leq \omega]} ^{\sf lexic} B_2^{[\leq \omega]}$ holds. For the notations, see Overview \ref{SetsLabelsProgrPath} and fn.\,\,\ref{Footn}. By the last remark in Overview \ref{SetsLabelsProgrPath}, we obtain a total order on ${\sf Lab}_{\cal P}$ that is lexicographic in a broader sense. An isomorphic embedding  of $({\sf Lab}_{\cal P}, \leq_{\cal P} ^{\sf lexic})$ into $(\bbbr,\leq)$ is possible.

\vspace{0.1cm}

A further very useful relation $\leq_{\cal P}$ can be defined without using an ordered alphabet. By Proposition \ref{NoInitIsPath}, the use of the $\beta_{\cal P}^{\mbmss{o}}$-functions of $\mathfrak{Fl\,}_{\cal P}^{\sf label}$ with $\mbm{o}\in \{+,-\}$ is sufficient for introducing $\leq_{\cal P}$. 

\newpage
\begin{overview}[The binary relation $\leq_{\cal P}$ on ${\sf Lab}_{\cal P}$]\label{OrderAnyPaths}

\hfill \vspace{0.01cm}

\nopagebreak 
\noindent \fbox{\parbox{11.8cm}{\hfill{\small Let ${\cal P}\in {\sf P}_{\sigma}$, $\beta_{\cal P}^+$ and $\beta_{\cal P}^-$ belong to $\mathfrak{Fl\,}_{\cal P}^{\sf label}$, 
 and $1<\cdots <\ell_{\cal P}$ if $1<\ell_{\cal P}$.}

\vspace{0.2cm}

\em
For any ${\cal P}$-paths $B^{(1)}$ and $B^{(2)}$ in ${\sf Lab}_{\cal P}$ given by 

\hspace{1.9cm} $B^{(1)}=(\ell_{1,1},\ell_{1,2},\ldots\big[,\ell_{1,s_1}\big])\in {\cal L}_{\cal P}^\infty \cup{\cal L}_{\cal P}^\omega$ and 

\hspace{1.9cm} $B^{(2)} =(\ell_{2,1},\ell_{2,2},\ldots\big[,\ell_{2,s_2}\big])\in {\cal L}_{\cal P}^\infty \cup{\cal L}_{\cal P}^\omega$,

\vspace{0.2cm}
\noindent let $B^{(1)}\textcolor{red}{ \,<_{\cal P} \,} B^{(2)}$ be valid if there is a $j$ in $\{ 2,3, \ldots\}$ satisfying {\sf (T0)} and {\sf (T1)}.

\vspace{0.2cm}
\qquad \begin{tabular}{ll}

{\sf (T0)}& If $\{B^{(1)}, B^{(2)}\}\cap {\cal L}_{\cal P}^\infty \not=\emptyset$, then \\

&\hspace{1.2cm}$ j\leq \min\{s_i \mid i\in\{0,1\} \,\,\&\,\, B^{(i)}\!\in\!{\cal L}_{\cal P}^\infty \}$.\vspace{0.1cm}\\

{\sf (T1)}& {\sf (T1a)} to {\sf (T1d)} hold. \vspace{0.05cm} \\

& {\sf (T1a)} \, $\ell_{1,i}=\ell_{2,i}$ for all $i \in \{1,\ldots, j-1\}$,\\
& {\sf (T1b)} \, $\ell_{1,j} \not= \ell_{2,j}$, \\
& {\sf (T1c)} \, $\ell_{1,j}=\beta_{\cal P}^+(\ell_{1,j-1})$, \\
& {\sf (T1d)} \, $\ell_{2,j}=\beta_{\cal P}^-(\ell_{1,j-1})$ \hspace{2.2cm} \textcolor{gray}{\small \em (where $\ell_{1,j-1}=\ell_{2,j-1}$)}.\vspace{0.1cm}\\
\end{tabular}

\vspace{0.2cm}
Let $B^{(1)} \textcolor{red}{ \,\leq_{\cal P}\,} B^{(2)} $ be true if $B^{(1)} \textcolor{blue}{\,<_{\cal P}\,} B^{(2)} $ or $B^{(1)}\textcolor{blue}{\,=\,} B^{(2)} $ holds.
 {\small

\em \hfill {\sf T} is derived from {\sf total}.}
}}
\end{overview}

\begin{proposition}[The strict order $<_{\cal P}$ and the total order $\leq_{\cal P}$]\label{PropStand2}\hfill The\linebreak binary relation $<_{\cal P}$ is irreflexive and a strict order. The relationships $B'\not <_{\cal P} B'' $ and $B''\not <_{\cal P} B' $ mean $B'= B'' $. $\leq_{\cal P}$ is a total (or linear) order. It is reflexive, antisymmetric, transitive, and total. \end{proposition}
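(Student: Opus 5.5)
The plan is to reduce everything to a single structural fact: for two distinct ${\cal P}$-paths $B'\neq B''$ there is a unique first index $j$ at which they differ. This $j$ satisfies $j\geq 2$ and $j\leq$ the length of any finite path among them, and $\ell'_{j-1}=\ell''_{j-1}\in{\cal L}_{\cal P}^{\rm B}$ with $\{\ell'_j,\ell''_j\}=\{\beta_{\cal P}^+(\ell'_{j-1}),\beta_{\cal P}^-(\ell'_{j-1})\}$.

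To establish this, I use {\sf (L3)} to get $\ell'_1=\ell''_1=1$. By Proposition \ref{NoInitIsPath} (prefix-freeness), neither path is an initial segment of the other, so a first differing index $j\geq2$ exists within the common length, and {\sf (T0)} holds. At $j-1$ the common label is not $\ell_{\cal P}$: the label $\ell_{\cal P}$ is the last member of a finite path by {\sf (L1)} and does not occur in an infinite path by {\sf (L2)}. It is also not in ${\cal L}_{\cal P}^{\rm NB}$, since then {\sf (L4)} would force $\ell'_j=\ell''_j=\beta_{\cal P}(\ell'_{j-1})$. Hence it lies in ${\cal L}_{\cal P}^{\rm B}$, both successors lie in $\{\beta^+_{\cal P},\beta^-_{\cal P}\}$ of it, and since they differ they are exactly the two values. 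Here I use $\beta^+_{\cal P}(\ell)\neq\beta^-_{\cal P}(\ell)$, as assumed in {\sf (FL4)} and fn.\,\ref{BranchGeneral}.

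From this lemma the order properties follow. For irreflexivity, $B<_{\cal P}B$ would need {\sf (T1b)} at some $j$, which is impossible. For asymmetry and uniqueness of the witness, note that any $j$ witnessing $B'<_{\cal P}B''$ must be the first differing index by {\sf (T1a)} and {\sf (T1b)}. So $B'<_{\cal P}B''$ means $\ell'_j=\beta^+_{\cal P}(\ell'_{j-1})$, and $B''<_{\cal P}B'$ would require $\ell''_j=\beta^+_{\cal P}(\ell'_{j-1})$ as well, contradicting $\ell'_j\neq\ell''_j$. Totality follows from the lemma: for $B'\neq B''$ exactly one of them takes the $\beta^+$ branch at the first difference, so exactly one of $B'<_{\cal P}B''$ or $B''<_{\cal P}B'$ holds. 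This also gives the claim that $B'\not<_{\cal P}B''$ and $B''\not<_{\cal P}B'$ imply $B'=B''$.

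The main work is transitivity. Suppose $B_1<_{\cal P}B_2$ at index $j$ and $B_2<_{\cal P}B_3$ at index $k$. I split into three cases.
\begin{itemize}
\item If $j<k$, then $B_3$ agrees with $B_2$ up to $j$, so $B_1$ and $B_3$ first differ at $j$ with the same branch pattern as $B_1,B_2$.
\item If $k<j$, then $B_1$ agrees with $B_2$ up to $k$, so $B_1$ and $B_3$ first differ at $k$ with $B_1$ on the $\beta^+$ side, as $B_2$ was.
\item The case $j=k$ is impossible, since $B_2$ would have to be simultaneously $\beta^-$ (relative to $B_1$) and $\beta^+$ (relative to $B_3$) of the same label.
\end{itemize}
In both surviving cases {\sf (T0)} holds automatically, because the new witness is a first differing index, which by the lemma lies within the length of any finite path among $B_1,B_3$.

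Reflexivity and antisymmetry of $\leq_{\cal P}$ then follow immediately from irreflexivity and asymmetry of $<_{\cal P}$. The only delicate point I expect is handling finite versus infinite paths uniformly, which is where {\sf (T0)} and prefix-freeness must be invoked carefully.
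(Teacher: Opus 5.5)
Your proof is correct and follows the route the paper indicates. The paper gives no written proof of this proposition; it only points to prefix-freeness (Proposition~\ref{NoInitIsPath}) and the $\beta_{\cal P}^{\pm}$-functions, and your first-difference lemma is exactly what makes $<_{\cal P}$ irreflexive, asymmetric, total and transitive, with {\sf (T0)} satisfied automatically.

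One small simplification: you do not need the convention $\beta_{\cal P}^+(\ell)\neq\beta_{\cal P}^-(\ell)$ from {\sf (FL4)}. At the first differing index $j$, both $\ell'_j$ and $\ell''_j$ lie in $\{\beta_{\cal P}^+(\ell_{j-1}),\beta_{\cal P}^-(\ell_{j-1})\}$ and differ, which already forces that set to have exactly two elements.
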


{\bf The standard order $\leq_{\cal P}$.} We call $\leq_{\cal P}$ {\em the standard order on ${\sf Lab}_{\cal P}$}. 
The real numbers in decimal form $0.\,j_1j_2\cdots\!\big[j_s00\cdots\big]$ with digits  in $ \{0,1,2\}$ can be used as  codes for  ${\cal P}$-paths $(\ell_1,\ell_2, \ldots\!\big[,\ell_s\big])$.  $j_i=1$ can stand for $\ell_{i+1}\in \{\beta_{\cal P}(\ell_i), \beta_{\cal P}^+(\ell_i)\}$ and  $j_i=2$ for  $\ell_{i+1}=\beta_{\cal P}^-(\ell_i)$ \big[if $i< s$\big].  $j_s=0$ implies $\ell_s=\ell_{\cal P}$.
 
\vspace{0.05cm}
\begin{theorem}[Total order on ${\sf Lab}_{\cal P}$]\label{TotOrderLabel}For any ${\cal P}\in {\sf P}_\sigma$ and the standard order $\leq_{\cal P}$ on ${\sf Lab}_{\cal P}$, $({\sf Lab}_{\cal P}, \leq_{\cal P})$ can be isomorphically embedded into $(\bbbr,\leq)$.
\end{theorem}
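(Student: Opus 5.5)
We construct the embedding explicitly with the ternary-digit coding sketched just before the statement. Every ${\cal P}$-path $B=(\ell_1,\ell_2,\ldots\big[,\ell_s\big])$ is mapped to the real number $c(B)=\sum_{i\geq 1} j_i\,3^{-i}$ with digits $j_i\in\{0,1,2\}$. Here $j_i=1$ if $\ell_i\in{\cal L}_{\cal P}^{\rm NB}$, or if $\ell_i\in{\cal L}_{\cal P}^{\rm B}$ and $\ell_{i+1}=\beta_{\cal P}^+(\ell_i)$. We set $j_i=2$ if $\ell_i\in{\cal L}_{\cal P}^{\rm B}$ and $\ell_{i+1}=\beta_{\cal P}^-(\ell_i)$. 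We set $j_i=0$ for all $i\geq s$ if $B$ is finite.

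We use base $3$ rather than base $10$ on purpose. With base $3$ the decisive estimate is sharp but still strict, because of the digit $0$ at the tail. Base $10$ would also work and might be the safer choice, since the inequalities then have slack. I would fix base $4$ to avoid any boundary issues: with digits in $\{0,1,2\}$ and base $4$, the tail sum after position $j$ is at most $2\cdot 4^{-j}/3<4^{-j}$.

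\textbf{Step 1: $c$ is well defined and injective.} By (L3) and (L4), a ${\cal P}$-path is uniquely reconstructed from its digit string. Starting from $\ell_1=1$, each digit $j_i\neq 0$ determines $\ell_{i+1}$ via $\beta_{\cal P}$, $\beta_{\cal P}^+$ or $\beta_{\cal P}^-$. Here we use $\beta_{\cal P}^+(\ell)\neq\beta_{\cal P}^-(\ell)$ from (FL4) and footnote \ref{BranchGeneral}. The first digit $0$ occurs exactly at position $s$ with $\ell_s=\ell_{\cal P}$, by (L1) and (L5). A nonterminating expansion in a base larger than the maximal digit plus one has a unique value, so distinct digit strings give distinct reals. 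This gives injectivity and also recovers Proposition \ref{NoInitIsPath}.

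\textbf{Step 2: order preservation.} Let $B^{(1)}<_{\cal P}B^{(2)}$ with witness $j$ as in Overview \ref{OrderAnyPaths}. By (T1a) the labels agree up to $j-1$, so the digits agree at positions $1,\ldots,j-2$. At position $j-1$ the label $\ell_{1,j-1}=\ell_{2,j-1}$ lies in ${\cal L}_{\cal P}^{\rm B}$ by (T1c) and (T1d), and the digits are $1$ and $2$ respectively. Condition (T0) guarantees that neither path has ended before position $j$, so these digits are genuinely defined by the branching. Writing $4$ for the base, we get
\[
c(B^{(2)})-c(B^{(1)})\geq 4^{-(j-1)}-\sum_{i\geq j}2\cdot 4^{-i}=4^{-(j-1)}\Bigl(1-\tfrac{2}{3}\Bigr)>0 .
\]
Hence $c(B^{(1)})<c(B^{(2)})$.

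\textbf{Step 3: conclusion via linearity.} By Proposition \ref{PropStand2}, $\leq_{\cal P}$ is a total order. For $B'\neq B''$ exactly one of $B'<_{\cal P}B''$ and $B''<_{\cal P}B'$ holds, so Step 2 yields $B'\leq_{\cal P}B''\iff c(B')\leq c(B'')$. Therefore $c$ is an order embedding of $({\sf Lab}_{\cal P},\leq_{\cal P})$ into $(\bbbr,\leq)$.

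The main obstacle is the bookkeeping in Step 2. One must check that the first position where two distinct paths differ is always preceded by a branching node; a non-branching node or $\ell_{\cal P}$ cannot be the point of divergence, by (L4) and (L6). One must also check that the finite-path case, with trailing zeros, cannot spoil the strict inequality. This is where (T0) and the choice of a base strictly larger than $3$ are used.
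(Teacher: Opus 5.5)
Once the base is fixed to $4$, your argument is correct. It is the coding the paper sketches just before the theorem: digit $1$ for $\beta_{\cal P}$ or $\beta_{\cal P}^+$, digit $2$ for $\beta_{\cal P}^-$, and trailing $0$s after $\ell_{\cal P}$; the paper reads this digit string in base $10$. Your Step~2 estimate, and the check that two distinct paths can only diverge right after a branching node, supply details the paper leaves out.

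However, your opening claim that base $3$ works is false. The displayed definition $c(B)=\sum_i j_i3^{-i}$ must therefore be replaced by $\sum_i j_i4^{-i}$ from the start. In base $3$ the lower bound in Step~2 is $3^{-(j-1)}-\sum_{i\ge j}2\cdot 3^{-i}=0$, and this bound is actually attained. Take the program {\sf 1: if $r_1^1(Z_1)$ then goto 2 else goto 3; 2: if $r_1^1(Z_1)$ then goto 3 else goto 2; 3: stop}. The paths $B'=(1,2,2,2,\ldots)$ and $B''=(1,3)$ have digit strings $1222\cdots$ and $2000\cdots$. Both therefore receive the value $\frac13+2\sum_{i\ge 2}3^{-i}=\frac23$, although $B'<_{\cal P}B''$ with witness $j=2$.

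So the trailing zeros of a finite path are exactly what make such a collision possible, not what prevent it. For any base $b\ge 4$ the tail is at most $\frac{2}{b-1}\,b^{-(j-1)}<b^{-(j-1)}$, which is all Step~2 needs.
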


{\bf The total order $\leq_{\cal P}^{[\leq \omega]}$.} For ${\cal P}\in {\sf P}_\sigma$ and all label paths $B'$ and $B''$ in ${\sf Lab}_{\cal P}^{[\leq \omega]}$, let $B' \textcolor{red}{ \,<_{\cal P}^{[\leq \omega]}\,}B''$ be defined by means of {\sf (T1)}~-- as given in Overview \ref{OrderAnyPaths} for $<_{\cal P}$~-- without any restriction of $j$ by {\sf (T0)}. This means that $j$ can now be any integer in $\bbbn_+\setminus \{1\}$ since all sequences in ${\sf Lab}_{\cal P}^{[\leq \omega]}$ are infinite. Let $B' \textcolor{red}{ \,\leq_{\cal P}^{[\leq \omega]}\,} B'' $ be true if $B' \textcolor{blue}{\,<_{\cal P}^{[\leq \omega]}\,} B''$ or $B'\textcolor{blue}{\,=\,} B'' $ holds. Consequently, for all ${\cal P}$-paths $B_1$ and $B_2$, we have $B_1\leq _{\cal P}B_2 \mbox{ if and only if }B_1^{[\leq \omega]}\leq _{\cal P} ^{[\leq \omega]} B_2^{[\leq \omega]}$.

\begin{theorem}[Total order on ${\sf Lab}_{\cal P}^{[\leq \omega]}$]\label{TotOrderOmega}For ${\cal P}\in {\sf P}_\sigma$, $\leq _{\cal P} ^{[\leq \omega]}$ is a total order on ${\sf Lab}_{\cal P}^{[\leq \omega]}$ and $({\sf Lab}_{\cal P}^{[\leq \omega]}, \leq_{\cal P}^{[\leq \omega]})$ can be isomorphically embedded into $(\bbbr,\leq)$.
\end{theorem}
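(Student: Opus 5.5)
The plan is to prove two things for Theorem \ref{TotOrderOmega}: that $\leq_{\cal P}^{[\leq \omega]}$ is a total order on ${\sf Lab}_{\cal P}^{[\leq \omega]}$, and that there is an order embedding into $(\bbbr,\leq)$. For the order properties I will reduce to Proposition \ref{PropStand2}. The map $B\mapsto B^{[\leq\omega]}$ is a bijection from ${\sf Lab}_{\cal P}$ onto ${\sf Lab}_{\cal P}^{[\leq \omega]}$; it is surjective by definition and injective by the note in Overview \ref{SetsLabelsProgrPath}, which relies on Proposition \ref{NoInitIsPath}. By the remark preceding the theorem, $B_1\leq_{\cal P}B_2$ holds if and only if $B_1^{[\leq\omega]}\leq_{\cal P}^{[\leq\omega]}B_2^{[\leq\omega]}$. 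I would verify that remark directly.

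\begin{itemize}
\item If the first difference of $B_1^{[\leq\omega]}$ and $B_2^{[\leq\omega]}$ occurs at position $j$, then $\ell_{j-1}\neq\ell_{\cal P}$. Otherwise both sequences would already be constant from position $j-1$ on, since {\sf (L6)} makes them repeat $\ell_{\cal P}$.
\item Hence $j-1<s_i$ for each finite $B_i$, that is, $j\leq s_i$, so {\sf (T0)} is automatic.
\item The labels at positions $1,\ldots,j$ are genuine path labels. So {\sf (T1)} for the padded sequences coincides with {\sf (T1)} for the paths.
\end{itemize}

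Thus the bijection is an order isomorphism, and totality, antisymmetry, reflexivity and transitivity transfer from Proposition \ref{PropStand2}.

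For the embedding, I will use the ternary-digit coding described before Theorem \ref{TotOrderLabel}, but applied to infinite sequences, and with digits arranged so that the order is respected. To $\bar\ell\in{\sf Lab}_{\cal P}^{[\leq\omega]}$ I assign $c(\bar\ell)=\sum_{i\geq1} j_i\,10^{-i}$, with digits chosen as follows.

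\begin{itemize}
\item $j_i=1$ if $\ell_i\in{\cal L}_{\cal P}^{\rm NB}$, or if $\ell_i\in{\cal L}_{\cal P}^{\rm B}$ and $\ell_{i+1}=\beta_{\cal P}^+(\ell_i)$.
\item $j_i=2$ if $\ell_i\in{\cal L}_{\cal P}^{\rm B}$ and $\ell_{i+1}=\beta_{\cal P}^-(\ell_i)$.
\item $j_i=0$ if $\ell_i=\ell_{\cal P}$.
\end{itemize}

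Since $\beta_{\cal P}^+(\ell)\neq\beta_{\cal P}^-(\ell)$ by {\sf (FL4)}, the digit sequence is well defined. By induction on $i$, starting from $\ell_1=1$, the digit sequence determines $\bar\ell$.

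To show that $c$ is strictly monotone, suppose $\bar\ell<_{\cal P}^{[\leq\omega]}\bar\ell'$ with witness $j$. The two sequences agree up to position $j-1$, so the digits $j_1,\ldots,j_{j-2}$ coincide. At position $j-1$ the digits are $1$ and $2$ respectively, because $\ell_{j-1}\in{\cal L}_{\cal P}^{\rm B}$ and the successors are $\beta^+$ and $\beta^-$. The remaining tail sum satisfies $\sum_{i\geq j}j_i10^{-i}\leq 2\cdot 10^{-(j-1)}/9<10^{-(j-1)}$. Therefore $c(\bar\ell)<c(\bar\ell')$. Strict monotonicity on a total order gives injectivity and the equivalence $\bar\ell\leq\bar\ell'\iff c(\bar\ell)\leq c(\bar\ell')$, so $c$ is an isomorphic embedding.

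The main obstacle is the tail estimate. It needs a base large enough that a difference of one digit at position $j-1$ dominates every possible tail. This is why I take base $10$ with digits in $\{0,1,2\}$ rather than base $3$: in base $3$ a tail $0.1222\ldots$ would equal $0.2000\ldots$, which breaks injectivity. The other delicate point is the {\sf (T0)} bookkeeping in the first step, and the observation that the first difference can never occur after the stop label handles it.
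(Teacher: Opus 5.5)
Your proposal is correct and follows essentially the route the paper indicates. The order properties transfer from Proposition~\ref{PropStand2} through the bijection $B\mapsto B^{[\leq\omega]}$, and your observation that the first difference must occur before the stop label, which makes {\sf (T0)} automatic, supplies the detail the paper leaves implicit. The embedding uses exactly the decimal codes with digits in $\{0,1,2\}$ described before Theorem~\ref{TotOrderLabel}, and your tail estimate explains why base $10$ is needed rather than base $3$.
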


As Example \ref{NichtabzPfade} shows, the question whether there is a well-order on ${\sf Lab}_{\cal P}$ or ${\sf Lab}_{\cal P}^{[\leq \omega]}$ is related to the question whether there is a well-order on the set of real numbers. If the {\sf axiom of choice} is assumed and we consider only models of ZFC, then we have also a well-ordering on ${\sf Lab}_{\cal P}$ as Ernst Zermelo \cite{Zerm04} proved.

However, for any ${\cal P}\in {\sf P}_\sigma$ and any $s\geq 1$, ${\cal L}_{\cal P}^s$ is finite.  Thus, all finite ${\cal P}$-paths can be length-compatibly ordered and, for each fixed length $s$, enumerated in accordance with the well-order $\leq_{\cal P}\cap ({\sf Lab}_{\cal P}^{\rm fin}\cap {\cal L}_{\cal P}^s)^2$. 
An algorithm for computing  a suitable enumeration function \textcolor{blue}{$enum_{{\cal P}}^{\sf (\leq, T)} \!:\!\{1\}^\infty\!\to {\sf Lab}_{\cal P}^{\rm fin}$} is a little more complicated than the algorithm in Overview \ref{ExamEnumLab1} and easier to understand if we describe a possible execution of this algorithm by using a 2-tape machine over ${\cal A}_{{\cal L}_{\cal P}}$ (see Overview \ref{ExamEnumLab2}). In $enum_{{\cal P}}^{\sf (\leq, T)}$, the symbol $\leq$ stands for {\sf length-compatible} and the symbol ${\sf T}$ after the symbol $\leq$ for the {\sf restricted $\leq_{\cal P}$-compatibility}. $\leq$ has a higher priority. In this way, a total order $\leq_{\cal P}$ resulting from {\sf (T0)} and {\sf (T1)} leads to a well-ordered sequence of all finite ${\cal P}$-paths 

\vspace{0.1cm}
\qquad $(enum_{{\cal P}}^{\sf (\leq, T)}(\vec 1^{[1]}), enum_{{\cal P}}^{\sf (\leq, T)}(\vec 1^{[2]}), enum_{{\cal P}}^{\sf (\leq, T)}(\vec 1^{[3]}), \ldots) \in ({\sf Lab}_{\cal P}^{\rm fin})^\omega$. 

\vspace{0.2cm}
{\bf An enumeration of finite paths using $enum_{{\cal P}}^{\sf (\leq, T)}$.} For any ${\cal P}\in {\sf P}_\sigma$, the generation of the first $n$ finite ${\cal P}$-paths can be realized by executing a 2-tape $(\ell_{\cal P}+1;1,1,1;2)$-program by a 2-tape machine in ${\sf M}^{(2)}_{{\cal A}_{{\cal L}_{\cal P}}}$. The first tape is used for storing the labels of sequences in ${\cal L}_{\cal P}^\infty$. The second tape can be used to mark backtracking points. All $Z$-registers of tape 1 can again be recursively processed by using loops that allow to repeat the execution of subprograms as long as necessary. For the input $\vec 1^{[n]}$, the initial value of $I_{1,1}$ is $n$ and the other index registers and the $Z$-registers receive the value $1$ at the beginning. During the search for finding the $n$-th path, $I_{1,2}$ and $I_{2,1}$ obtain the values $2$ to ${s-1}$ or $s$ for $s=2,3,\ldots$ step by step. A further register, $I_{1,3}$, obtains the value $c(I_{1,2})-1$ for executing the subprograms $(sub\,1)$ to $(sub\,3)$. The labels of a path can be assigned to $Z_{1,1},\ldots,Z_{1,s}$  using pseudo instructions given by $Z_{1,k}:=\hat \beta_{\cal P}(Z_{1,k-1})$, $Z_{1,k}:=\hat \beta_{\cal P}^+(Z_{1,k-1})$, or $Z_{1,k}:=\hat \beta_{\cal P}^-(Z_{1,k-1})$ for $k\in \{2,\ldots,s-1\}$ and $Z_{1,s}:=\ell_{\cal P}$.

\vspace{0.2cm}
{\bf Backtracking.} 
For finding the next ${\cal P}$-path, a backtracking procedure is used. Certain positions on the tapes can be marked as backtracking points and thus reached by backtracking in order to have the possibility to assign new values to the last used registers $Z_{1,s_0}, \ldots, Z_{1,s-1}$. To save notes about the backtracking points, we can use the second tape. The information stored on tape 2 helps to find and evaluate the backtracking points that are marked with a 0. These points must be recursively processed step by step from a marked register with a higher index to the registers with lower  index numbers. After generating $c(Z_{1,1})$ to $c(Z_{1,s-1})$ or  to $\ell_{\cal P}$, the  registers $Z_{1,s}$ to $Z_{1,s_0}$ can obtain new values if $Z_{1,s_0}$ is marked as backtracking point by $c(Z_{2,s_0})=0$ and $c(Z_{2,s_0+1})=\cdots =c(Z_{2,s})=1$ holds. All registers from the last register $Z_{1,s}$ to the register $Z_{1,s_0}$ can be changed.  Since we are assuming that $\hat \beta_{\cal P}^+(c(Z_{1,s_0-1}))\not = \hat \beta_{\cal P}^-(c(Z_{1,s_0-1}))$ holds for $c(Z_ {1,s_0-1})\in {\cal L}_{\cal P}^{\rm B}$, $Z_{1,s_0}$ obtains a new value and, for $k>s_0$, $Z_{1,k}$ can obtain new values. 

\vspace{0.1cm}
For any arbitrary subprograms {\sf  inst$_1$}, {\sf  inst$_2$}, and {\sf  inst}, the execution of a subprogram of the form

\vspace{0.05cm}
\qquad{\sf if $cond$ then inst$_1$ else inst$_2;$\, $l_0:$ inst} \quad

\vspace{0.05cm}
\noindent can be realized by executing

\vspace{0.05cm}
\qquad{\sf if $cond$ then goto $\bar l_1$ else goto $\bar l_2;$ \, $\bar l_1:$  inst$_1;$\, goto $l_0;$ \, $\bar l_2:$  inst$_2;$  \, $l_0:$ inst}\,\,\,.

\begin{overview}[An algorithm for computing \textcolor{blue}{$enum_{{\cal P}}^{\sf (\leq, T)}$} over ${\cal A}_{{\cal L}_{\cal P}}$]\label{ExamEnumLab2}

\hfill \vspace{0.01cm}

\nopagebreak 
\noindent \fbox{\parbox{11.8cm}{
\hfill {\small Let ${\cal P}\in {\sf P}_\sigma$, $|{\cal L}_{\cal P}|>1$, $i_0=|{\sf Lab}_{\cal P}^{\rm fin}|$, and $i_0\geq 1$.}

\em 
\hspace{0.62cm}{\sf Input} $\vec 1^{[n]}\in \{1\}^{n}$.

\hspace{0.62cm}${\sf ass}(Z_{1,1});\,$ {\sf if $Z_{1,1}=1$ \& $n>i_0$ then $n:=i_0;$}

{\sf
\hspace{0.62cm}$s:=1;$ $i:=1;$ $Z_{1,1}:=1;$

\vspace{0.1cm}
\noindent \textcolor{magenta}{$l_1$} : 
 $\textcolor{blue}{s_0}:=\max\{j\leq s\mid Z_{2,j}=0 \mbox{ or } j=0\};$

\hspace{0.64cm}if $\textcolor{blue}{s_0}=0$ then \{ $s:=s+1;$  $k:=2$ \} 

\hspace{1.92cm} else \hspace{0.13cm}$k:=\textcolor{blue}{s_0};$ \hfill \textcolor{gray}{\small \it (back to the last backtracking point)}

\vspace{0.1cm}
\noindent \textcolor{magenta}{$l_2$} : \textcolor{brown}{if $k=s$ then goto \textcolor{magenta}{$l_3$}}

\hspace{1.78cm} \textcolor{brown}{ else \{} if $ Z_{1,k-1}\in {\cal L}_{\cal P}^{\rm NB}$

\hspace{2.7cm} then $Z_{1,k}:=\hat \beta_{\cal P}(Z_{1,k-1})$ \hfill {\small $(sub\,1)$}

\hspace{2.7cm} else \hfill \textcolor{gray}{\small \it (this means $ Z_{1,k-1}\in {\cal L}_{\cal P}^{\rm B}$)}
 
\hspace{3.3cm} if $Z_{2,k}=1$ \hfill \textcolor{gray}{\small \it (the backtracking info)}
 
\hspace{3.3cm} then \{ $Z_{1,k}:=\hat \beta_{\cal P}^+(Z_{1,k-1});$ \hfill {\small $(sub\,2)$}

\hspace{4.35cm} $Z_{2,k}:=0$ \}  \hfill \textcolor{gray}{\small \it (a new  info for the next backtracking)}
 
\hspace{3.3cm} else \hspace{0.13cm}\{ $Z_{1,k}:=\hat \beta_{\cal P}^-(Z_{1,k-1});$ \hfill  {\small $(sub\,3)$}

\hspace{4.35cm} $Z_{2,k}:=1$ \}$;$ \hfill \textcolor{gray}{\small \it (delete the backtracking info $0$)}

\hspace{2.7cm} if $Z_{1,k}=\ell_{\cal P}$ then   goto \textcolor{magenta}{$l_1$}$;$ \hfill \textcolor{gray}{\small \it (not enough labels)}

\hspace{2.7cm} $k:=k+1;$ goto \textcolor{magenta}{$l_2$} \textcolor{brown}{\}}$;$ 

\vspace{0.15cm}
\textcolor{magenta}{$l_3$} :  if $\ell_{\cal P} \in \{\hat \beta_{\cal P}(Z_{1,s-1}), \hat \beta_{\cal P}^+(Z_{1,s-1}), \hat \beta_{\cal P}^-(Z_{1,s-1})\}$ 

\hspace{0.64cm}then \textcolor{red}{\{} if $i=n$ then \{ $Z_{1,s}:=\ell_{\cal P};$ goto \textcolor{magenta}{$l_4$} \} else $i:=i+1$ \textcolor{red}{\}}$;$ 

\hspace{0.64cm}goto \textcolor{magenta}{$l_1$}$;$}

\vspace{0.1cm}
\noindent \textcolor{magenta}{$l_4$} : {\sf Output} $(c(Z_{1,1}),\ldots,c(Z_{1,s}))$.  

 \hfill {\small \em  \textcolor{gray}{An example  is attached (cf.\,\,p.\,\pageref{ExampleSecondAlgo}).}}

}}\end{overview}

\subsection{The meaning of standard orders}
Well-orders on sets of finite program paths and, in particular, the enumeration functions to enumerate the paths in these sets are good prerequisites for a systematic analysis of the behavior of machines over $\sigma$-structures. For evaluating the behavior of machines, length-compatible orders such as $\leq _{\cal P}^{\rm fin}$ and the discussed enumerations offer significant advantages. They are useful, for instance, when the execution of a non-deterministic machine ${\cal M}$ along single finite ${\cal P}_{\cal M}$-paths should be simulated by a deterministic machine as described in \cite{GASS25C}, etc. 
 
\vspace{0.2cm}
{\bf Graphical representation of program paths in the 2D plane.} The ${\cal P}$-paths of a $\sigma$-program ${\cal P}$ can be ordered and meaningfully grouped and represented by suitable graph-theoretical labeled trees (for the definition and more details, see {\sf Extras II}). In a 2D plane, the root of such a tree is generally at the top, represents the first label of the ${\cal P}$-paths and can, thus, be labeled by 1. The leaves are further down. By Proposition \ref{NoInitIsPath}, each finite ${\cal P}$-path can be uniquely represented by one walk from the root to a certain leaf in such a tree. Thus, the leaves of this kind can be labeled by $\ell_{\cal P}$ and could also be a representative for identifying exactly one of the finite ${\cal P}$-paths. 

Many trees are only generated layer by layer. In this way, we also obtain finite decision trees. Then, it is generally not known whether a walk starting at the root is incomplete and represents only an initial  segment of a finite ${\cal P}$-path and which length a complete ${\cal P}$-path could have if it contains such an initial part. The walks representing the finite ${\cal P}$-paths in $\bigcup _{s\leq r}{\cal L}_{\cal P}^s$ and the initial segments $(\ell_1,\ldots, \ell_r)$ of the other ${\cal P}$-paths~-- and just like the leaves of all these walks~-- can be ordered from the left to the right using an order such as $<_{\cal P}$, and thus the graphical representation of this order can be used to illustrate $\leq_{\cal P}$-compatible enumerations of the represented label paths. These trees can help, among other things, to evaluate the execution of programs by machines. They are suitable in particular for the derivation of finite systems of $\sigma$-literals from branching conditions that determine the branching behavior for inputs of a fixed length. Each of these conditions  results from passing a branching node and executing instructions including the corresponding branching instruction  of type $(4)$, and its truth value determines the next label to be selected. The branching behavior along a ${\cal P}$-path can be uniquely described by $\sigma$-terms in a standard way and thus by $\sigma$-literals. Certain systems of $\sigma$-literals, in turn, enables a characterization of the inputs of a given length traversing an initial segment of one of the considered and described ${\cal P}$-paths. 

Alternatively, for representing finite ${\cal P}$-paths, trees can be introduced by using $<_{\cal P}^{\rm fin}$. The order $<_{\cal P}^{\rm fin}$ has useful properties, but it is not really intended to define the usual decision trees known from computational geometry, etc. Unless we consider programs for machines over structures  whose family of relations is closed with respect to  complementation and inputs of a fixed length.
\begin{remark}We use also other representations of decision trees in figures since the space for figures is limited. In such cases, the order of the leaves does not correspond to $\leq_{\cal P}$ or $\leq_{\cal P}^{\rm fin}$. An example is attached below (see Figure \ref{BerWurzelnProgrammpfade}).
\end{remark}

\vspace{0.1cm}
{\bf An analysis  requires more information and  suitable orders.} Let ${\cal P}\in {\sf P}_\sigma$, $\kappa\geq k_{\cal P}$, and $n\geq 1$.
The paths in ${\sf Con}_{{\cal P},\kappa,n}^{\rm ind}$ and in particular in the sets ${\sf Con}_{{\cal P},\kappa,n}^{{\rm ind},[\omega>]}$ defined below are important for the evaluation of machines. 
The transformation of  ${\cal P}$-paths into sequences of partial $({\cal P},\kappa)$-configurations~-- as far as possible~-- can lead to fewer paths  in ${\sf Con}_{{\cal P},\kappa,n}^{\rm ind}$ since certain branching conditions  result in fewer jumps for all suitable machines  on inputs of length $n$. For $\ell \in {\cal L}_{{\cal P},{\rm B}}$, the branching process is unconditional and thus unrestricted possible. For $\ell \in {\cal L}_{{\cal P},{\rm H}_{\rm T}}$, the branching  depends on $n$ and is uniquely determined. The other branching processes depend primarily on the used ${\rm T}$-instructions and  the underlying structures. 

\vspace{0.2cm}
{\bf The sets ${\sf Con}_{{\cal P},\kappa,n}^{{\rm ind},[\omega>]}$ of $({\cal P},\kappa,n)^{[\omega>]}$-paths.}
An initial part $(\ell_t\,.\,\vec \nu ^{(t)})_{t=1..s}$ of a $({\cal P},\kappa,n)$-path in  ${\sf Con}_{{\cal P},\kappa,n}^{\rm ind}$  for which  there is  a path $\bar \ell$  in ${\sf Lab}_{\cal P}^{[\leq \omega]}$ satisfying   $\bar \ell= (\ell_t)_{t\geq 1}$ and    $B_{\bar \ell}=(\ell_1,\ldots,\ell_s)\in {\sf Lab}_{\cal P}^{\rm fin}$ is called a {\em $({\cal P},\kappa,n)^{[\omega>]}$-path}. Let ${\sf Con}_{{\cal P},\kappa,n}^{{\rm ind},[\omega>]}$ consist of all $({\cal P},\kappa,n)^{[\omega>]}$-paths.\,\footnote{ The superscript $^{[\omega>]}$ after the symbol $N$ for a set states that we come from sequences of length $ \omega$ in $ N^\omega$ and go back to finite sequences in $N^\infty$. Here, we derive ${\sf Con}_{{\cal P},\kappa,n}^{{\rm ind},[\omega>]}$ from ${\sf Con}_{{\cal P},\kappa,n}^{\rm ind}$. We consider the paths in ${\sf Con}_{{\cal P},\kappa,n}^{\rm ind}$ and ${\sf Seq}_{{\cal P},\kappa,n}^{\rm ind}$ and then the initial parts in sets such as ${\sf Con}_{{\cal P},\kappa,n}^{{\rm ind},[\omega>]}$  and ${\sf Seq}_{{\cal P},\kappa,n}^{{\rm ind},[\omega>]}$  consisting of finite paths $(\ell_t\,.\,\vec \nu^{(t)})_{t\leq s}$ for which $(\ell_t)_{t\leq s}$ are ${\cal P}$-paths in ${\sf Lab}_{\cal P}^{\rm fin}$.}  Let ${\sf Seq}_{{\cal P},\kappa,n}^{{\rm ind},[\omega>]}$ be defined in the same way and derived from ${\sf Seq}_{{\cal P},\kappa,n}^{\rm ind}$ (cf.\,\,Overview \ref{SetsPartCon2}).

\vspace{0.2cm}
Now, we want to derive new orders from $\leq_{\cal P}^{\rm fin}$ and $\leq_{\cal P}$ (cf.\,\,Overviews \ref{OrderOnFinP} and \ref{OrderAnyPaths}). For any $\kappa\geq k_{\cal P}$ and $n\geq 1$,  $<_{\cal P}^{\rm fin}$ can be transformed into a relation $<_{{\cal P},\kappa,n}^{\rm fin}$ on $({\sf S}_{{\cal P},\kappa}^{\rm ind})^\infty$  by converting each label  into a  member $(\ell\,.\,\vec \nu)$ of a sequence such that, in turn, $<_{\cal P}^{\rm fin}$ could be derived from $<_{{\cal P},\kappa,n}^{\rm fin}$ if ${\sf Seq}_{{\cal P},\kappa,n}^{{\rm ind},[\omega>]}$ is projected onto ${\sf Lab}_{\cal P}^{\rm fin}$ by mapping the sequences in ${\sf Seq}_{{\cal P},\kappa,n}^{{\rm ind},[\omega>]}$ to paths in  ${\sf Lab}_{\cal P}^{\rm fin}$ and  each member $(\ell\,.\,\vec \nu)$ of a sequence to $\ell$. A subset  of \textcolor{red}{$<_{{\cal P},\kappa,n}^{\rm fin}$}  can then be used to order ${\sf Con}_{{\cal P},\kappa,n}^{{\rm ind},[\omega>]}$. $<_{\cal P}$ can be transformed into an order $<_{{\cal P},\kappa,n}$ on $({\sf S}_{{\cal P},\kappa}^{\rm ind})^\omega\cup ({\sf S}_{{\cal P},\kappa}^{\rm ind})^\infty$ and used to define a subset \textcolor{red}{$<_{{\cal P},\kappa,n}^{[\omega>]}$} of $<_{{\cal P},\kappa,n}$ for ordering ${\sf Seq}_{{\cal P},\kappa,n}^{{\rm ind},[\omega>]}$. 

\vspace{0.3cm}
{\bf Standard orders $\leq_{{\cal P},\kappa,n}^{[\omega >]}$ and $<_{{\cal P},\kappa,n}^{\rm fin}$.} 
Let $\vec \ell$ and $\vec \ell'$ be any finite ${\cal P}$-paths $(\ell_t)_{t\leq s}$ and $(\ell_t')_{t\leq s'}$, respectively, in ${\sf Lab}_{\cal P}^{\rm fin}$ and let $(\ell_t\,.\,\vec \nu ^{(t)})_{t\leq s}$ and $(\ell_t'\,.\,\vec \mu ^{(t)})_{t\leq s'}$ be sequences that belong to $ {\sf Seq}_{{\cal P},\kappa,n}^{{\rm ind},[\omega>]}$.

\vspace{0.1cm}
Let $(\ell_t\,.\,\vec \nu ^{(t)})_{t\leq s} \textcolor{red}{\,<_{{\cal P},\kappa,n}^{\rm fin}\,} (\ell_t'\,.\,\vec \mu ^{(t)})_{t\leq s'}$
be satisfied if $\vec \ell \textcolor{blue}{\,<_{\cal P}^{\rm fin}\, } \vec \ell'$ holds. 

\vspace{0.1cm}
Let  $(\ell_t\,.\,\vec \nu ^{(t)})_{t\leq s} \textcolor{red}{\,<_{{\cal P},\kappa,n}^{[\omega >]}\,} (\ell_t'\,.\,\vec \mu ^{(t)})_{t\leq s'}$
be satisfied if $\vec \ell \textcolor{blue}{\,<_{\cal P}\,} \hspace{0.09cm} \vec \ell' $ holds. 

\vspace{0.15cm}
\noindent For sequences ${\rm seq}$ and ${\rm seq}'$ in ${\sf Seq}_{{\cal P},\kappa,n}^{{\rm ind},[\omega>]}$, let ${\rm seq} \textcolor{red}{\,\leq_{{\cal P},\kappa,n}^{[\omega >]}\,} {\rm seq}'$ hold if ${\rm seq} <_{{\cal P},\kappa,n}^{[\omega >]} {\rm seq}'$ or ${\rm seq} = {\rm seq}'$ holds. Let ${\rm seq} \textcolor{red}{\,\leq_{{\cal P},\kappa,n}^{\rm fin}\,} {\rm seq}'$ hold if ${\rm seq} <_{{\cal P},\kappa,n}^{\rm fin} {\rm seq}'$ or ${\rm seq} = {\rm seq}'$ holds. The definition of ${\sf Con}_{{\cal P},\kappa,n}^{\rm ind}$ and the paths in this set was inspired by and is based on statements such as Theorem \ref{TheoUniq1} and has led to Theorem \ref{TheoUniq2}. If  $\vec \ell \in {\sf Lab}_{\cal P} ^{\rm fin}\cap{\cal L}_{\cal P}^s$ holds, then there is one and only one  sequence $(\vec \nu^{(t)})_{t=1..s}\in (\bbbn_+^{\kappa})^s$ such that $(\ell_t\,.\,\vec\nu^{(t)})_{t=1..s}$  belongs to ${\sf Seq}_{{\cal P},\kappa,n}^{{\rm ind},[\omega>]}$. Consequently, for any $\sigma$-program ${\cal P}$, the algorithms given in Overviews \ref{ExamEnumLab1} and \ref{ExamEnumLab2} can be easily adapted in order to enumerate  the sequences in  ${\sf Seq}_{{\cal P},\kappa,n}^{{\rm ind},[\omega>]}$  or only the $({\cal P},\kappa,n)^{[\omega>]}$-paths in ${\sf Con}_{{\cal P},\kappa,n}^{{\rm ind},[\omega>]}$ over ${\cal A}_\bbbn$ or  index-enumerate the G\"odel numbers of these sequences of partial $({\cal P},\kappa)$-configurations. The adapted algorithms allow to compute the single members $({\ell\,.\,\vec \nu})$ of the $i$-th sequence step by step in exactly the same way as the algorithms described in Overview \ref{ExamEnumLab1}  and Overview \ref{ExamEnumLab2}  allow to generate the labels $\ell$ of the $i$-th finite ${\cal P}$-path.
The adapted programs start with the partial configuration $(1\,.\,(n,1,\ldots,1))\in \bbbn_+^{\kappa+1}$ and not just with label 1.

The enumeration functions  for $\!{\sf Seq}_{{\cal P},\kappa,n}^{{\rm ind},[\omega>]}$ computed  over ${\cal A}_\bbbn$ by executing the adapted programs are $\leq_{{\cal P},\kappa,n}^{\rm fin}$-compatible  and  functions whose $\leq_{{\cal P},\kappa,n}^{[\omega >]}$-compatibility is restricted to sequences of the same length, respectively.  If we compose
\begin{itemize} [label={--}]\parskip -1.5mm
\item  a decoding function assigning  the sequence  $((\ell_{1}\,.\,\vec\nu^{(1)}), \ldots, (\ell_{s}\,.\,\vec\nu^{(s)}))$ in ${\sf Seq}_{{\cal P},\kappa,n}^{{\rm ind},[\omega>]}$  to the tuple $1^{[2^{\ell_{1} }3^{\ell_{2}} \cdots p_{s}^{\ell_{s}}]}$  whose length is the  index-computable   G\"odel number $2^{\ell_{1} }3^{\ell_{2}} \cdots p_{s}^{\ell_{s}}$

\vspace{0.1cm}
\noindent $\!\!\!\!\!\!$and

\item  a G\"odel-enumeration function  for ${\sf Lab}_{\cal P}^{\rm fin}$  resulting from applying one of   
 the algorithms in Overviews \ref{ExamEnumLab1} and \ref{ExamEnumLab2}, 
\end{itemize}

\noindent then we obtain enumeration functions for ${\sf Seq}_{{\cal P},\kappa,n}^{{\rm ind},[\omega>]}$  that  are computable over  structures  with ${\cal A}_{\bbbn}$ as a reduct and that are
 $\,\leq_{{\cal P},\kappa,n}^{\rm fin}$-compatible or whose $\,\leq_{{\cal P},\kappa,n}^{[\omega >]}$-compatibility is restricted to sequences of the same length, respectively. 

\vspace{0.3cm}
{\bf Restrictions of ${\sf P}_\sigma$ contribute to standardization of analyses.} 
Each structure ${\cal A}$ can be expanded to a $\sigma$-structure ${\cal A}'$ that is closed under complementation for relations without changing the decision behavior of machines over these structures. We have, for example, ${\rm DEC}_{\cal A}={\rm DEC}_{{\cal A}'}$ even if we replace arbitrary branching instructions of type (4) and (11) in a $\sigma$-program ${\cal P}$ by {\sf label-ordered branching instructions}. These substitutions lead, for every resulting {\sf partially label-ordered $\sigma$-program ${\cal P}^{\rm ord}$}, to the equality of the new standard orders on domains such as ${\sf Con}_{{\cal P}^{\rm ord},\kappa,n}^{{\rm ind},[\omega>]}$. For this reason, we give some more precise definitions and rules for transforming arbitrary $\sigma$-programs into {\sf partially label-ordered $\sigma$-programs}. 

\vspace{0.2cm}
{\bf Partially label-ordered programs.} 
A $\sigma$-program is called a {\em partially label-ordered $\sigma$-program} if all its branching instructions of types (4) and (11)  are  label-ordered. Any branching instruction of ${\rm B}$-type $(cond_\ell,\ell_1,\ell_2)$ or $(\ell_1,\ell_2)$ is {\em label-ordered} if $\ell_1\leq \ell_2$ is satisfied.

\vspace{0.2cm}
{\bf Structures closed under complementation.} 
Let us consider first-order structures that are  closed with respect to the complementation for relations. Then, such a structure ${\cal A}$ contains, for any $k_{i}$-ary relation $r_{i}\subseteq U_{\cal A}^{k_i}$, also its {\em relative  complement} $r_{j}=U_{\cal A}^{k_i}\!\setminus r_{i}$. $r_{j}$ is a $k_{i}$-ary relation which means that $k_{i}=k_{j}$ holds. 

\vspace{0.2cm}
{\bf A suitable signature $\sigma$.} 
If a $\sigma$-structure contains the relations $r_{1},\ldots,r_{n'}$ and their relative complements $r_{n'+1},\ldots,r_{2n'}$, then $\sigma$ contains a family $(k_i)_{i\leq 2n'}$ of arities with $k_i=k_{n'+i}$ for all $i\leq n'$. In the following, let  \textcolor{blue}{$\sigma$}  be a signature of the form  $(n_1; (m_i)_{i\leq n_2}; (k_i)_{i\leq n_3})$ such that $(k_i)_{i\leq n_3}$  is a family of this kind for which $n_3=2n'$ holds. $(k_i)_{i\leq n_3}$ determines the arities  of $n_3$ relations and their symbols $r_1^{k_1},\ldots, r_{n_3}^{k_{n_3}}$  and  the arities satisfy $k_i=k_{n'+i}$ for all $i\leq n'$.  
Since each signature $\sigma_0$  can be extended to such a signature $\sigma$ and each $\sigma_0$-program   is also  a $\sigma$-program, let ${\cal P}$ be any  \textcolor{blue}{$\sigma$}-program.

\vspace{0.2cm}
{\bf From ${\cal P}$  to a partially label-ordered $\sigma$-programs ${\cal P}^{\rm ord}$.}  
For all $\ell \in {\cal L}_{{\cal P},{\rm T}}\cup {\cal L}_{{\cal P},{\rm B}}$, the branching instructions of ${\rm B}$-types $(cond_\ell,\ell_1,\ell_2)$ and $(\ell_1,\ell_2)$ in a $\sigma$-program ${\cal P}$ with $\ell_2<\ell_1$ can be replaced by {\em label-ordered branching instructions} as follows. 
\begin{itemize}[label={--}] \parskip -0.6mm

\item Instructions of the form\hspace{0.2cm}{\sf if $r_{i}^{k_{i}}(Z_{j_1},\ldots, Z_{j_{k_{i}}})$ then goto $\ell _1$ else goto $\ell _2$} 

\vspace{0.05cm}
can be replaced by\hspace{1cm}{\sf  if $r_{n'+i}^{k_{i}}(Z_{j_1},\ldots, Z_{j_{k_{i}}})$ then goto $\ell _2$ else goto $\ell _1$} . 

\item Instructions of the form\hspace{0.2cm}{\sf  goto $\ell_1$ or goto $\ell_2$} 

\vspace{0.05cm}
can be replaced by\hspace{1cm}{\sf goto $\ell_2$ or goto $\ell_1$} . 
\end{itemize}
In this way,  we obtain a  {\em partially label-ordered $\sigma$-program} denoted by ${\cal P}^{\rm ord}$. 
Because of ${\cal P}^{\rm ord}= {\cal P}$ for partially label-ordered $\sigma$-programs ${\cal P}$, we have  the idempotence $({\cal P}^{\rm ord})^{\rm ord}= {\cal P}^{\rm ord}$ for all programs ${\cal P}$. 

\begin{proposition}[Same results for partially label-ordered programs]\hfill
Let ${\cal A}$ be a $\sigma$-structure closed under complementation for relations and $n\geq 1$. Then for any BSS RAM ${\cal M}\in {\sf M}_{\cal A}\cup {\sf M}_{\cal A}^{\rm ND}\cup {\sf M}_{\cal A}^{\rm DND}\cup {\sf M}_{\cal A}^{\rm NDB}$, there is a BSS RAM ${\cal M}'$ over ${\cal A}$ such that ${\cal P}_{{\cal M}'} =({\cal P}_{\cal M})^{\rm ord}$ holds and ${\rm Res}_{\cal M}(\vec x)={\rm Res}_{{\cal M}'}(\vec x)$ holds for all $\vec x\in U_{\cal A}^n$. 
\end{proposition}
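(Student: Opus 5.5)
The plan is to take ${\cal M}'$ to be the machine that has the same input procedure, output procedure, number $k_{{\cal M}'}=k_{\cal M}$ of index registers, and the same kind of (non-)determinism as ${\cal M}$ (same ${\rm In}_{\cal M}$, same guess space in the cases ${\sf M}_{\cal A}^{\rm ND}$ and ${\sf M}_{\cal A}^{\rm DND}$), but whose program is ${\cal P}_{{\cal M}'}=({\cal P}_{\cal M})^{\rm ord}$. Since ${\cal P}^{\rm ord}$ arises from ${\cal P}={\cal P}_{\cal M}$ only by rewriting some branching instructions in place, the two programs have the same labels, the same $\ell_{\cal P}$, the same instruction types at every label, and identical instructions at every label in ${\cal L}_{\cal P}^{\rm NB}\cup{\cal L}_{{\cal P},{\rm H}_{\rm T}}\cup{\cal L}_{{\cal P},{\rm S}}$. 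So ${\sf S}_{\cal M}={\sf S}_{{\cal M}'}$, and ${\rm Input}$, ${\rm Output}$ and ${\rm Stop}$ coincide.

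The key step is to show that the transition relations coincide, $\to_{\cal M}\,=\,\to_{{\cal M}'}$, by going through the clauses in the definition of $\to_{\cal M}$ label by label. For the F-, F$_0$-, C-, H$_1$-, H$_{+1}$-, H$_{\rm T}$- and S-clauses this is immediate, because the instruction and hence the function $F_\ell$, $C_\ell$, $H_\ell$ or $T_\ell$ in ${\cal F}_{\cal M}$ is unchanged. For a B-instruction, the only change is swapping {\sf goto $\ell_1$ or goto $\ell_2$} into {\sf goto $\ell_2$ or goto $\ell_1$}; the successor set $G_\ell=\{\ell_1,\ell_2\}$ stays the same, so the multi-valued clause is the same. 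For a T-instruction with $\ell_2<\ell_1$, the instruction with $r_i$ is replaced by the one with $r_{n'+i}$ and swapped targets. Here we use that ${\cal A}$ is closed under complementation, so $r_{n'+i}$ is interpreted as $U_{\cal A}^{k_i}\setminus r_i$. Then for every $\bar u$ we get $T'_\ell(\bar u)=\ell_2$ iff $(u_{j_1},\ldots,u_{j_{k_i}})\in r_{n'+i}$ iff the tuple is not in $r_i$ iff $T_\ell(\bar u)=\ell_2$, so $T'_\ell=T_\ell$.

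From the equality of all components of the computational systems ${\cal R}_{\cal M}$ and ${\cal R}_{{\cal M}'}$ it follows that ${\sf Con}_{{\cal M},\vec x}={\sf Con}_{{\cal M}',\vec x}$ for every $\vec x\in U_{\cal A}^n$, whichever of $(trans\,1)$ to $(trans\,3)$ applies. Consequently the halting behaviour and the sets of stop configurations reached agree, and so do the output values ${\rm Out}_{\cal M}$. This gives ${\rm Res}_{\cal M}(\vec x)={\rm Res}_{{\cal M}'}(\vec x)$, both in the deterministic sense (defined/undefined together and equal) and in the non-deterministic sense (equal sets of results). The restriction to a fixed $n$ plays no role; the argument works uniformly.

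The main obstacle I expect is the T-instruction case, specifically making precise that the rewritten symbol $r_{n'+i}$ is really interpreted as the complement of $r_i$. This relies on the convention fixed for the signature $\sigma$ with $n_3=2n'$ and $k_i=k_{n'+i}$, which must be read as part of the hypothesis ``closed under complementation''. I would state this convention explicitly at the start. A secondary point is checking that the definition of ${\rm Res}$ for the non-deterministic classes depends only on ${\sf Con}_{{\cal M},\vec x}$ and ${\rm Output}_{\cal M}$, which holds by the definitions recalled from \cite{GASS25A}.
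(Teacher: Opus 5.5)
Your proposal is correct and matches the paper's intended argument: the paper gives no written proof and relies on the in-place rewriting of T- and B-instructions together with the convention that $r_{n'+i}$ is interpreted as $U_{\cal A}^{k_i}\setminus r_i$. Your step of running ${\cal M}$ with program $({\cal P}_{\cal M})^{\rm ord}$ and checking that the transition relations (hence the $({\cal M},\vec x)$-paths and the results) coincide is exactly that argument. One small addition: if a T-instruction with $\ell_2<\ell_1$ already uses a complement symbol $r_{n'+i}$, the rewrite must use $r_i$ rather than $r_{n'+i}$, and your verification that $T'_\ell=T_\ell$ goes through unchanged in that case.
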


Let us consider only finite program paths in $ {\sf Lab}_{{\cal P},n}^{[*]}$ that could be traversed by BSS RAMs on inputs of a fixed length $n$.  For any $n\geq 1$, the successor of a member $(\ell_t\,.\,\vec \nu^{(t)})$ in any $({\cal P},\kappa,n)$- or $({\cal P}^{\rm ord},\kappa,n)$-path is uniquely determined by the function $T_{\ell_t}^{(\kappa)}\in {\cal F}_{{\cal P},\kappa}^{\rm ind}$ if $\ell_t \in {\cal L}_{{\cal P},{\rm H}_{\rm T}}$ holds, and we have $T_{{\cal P}^{\rm ord},\kappa, \ell_t}(\vec \nu^{(t)}) =T_{{\cal P},\kappa, \ell_t}(\vec \nu^{(t)})$.  This means that the order of two paths is determined solely by instructions of types $(4)$ and $(11)$. A replacement of index instructions of ${\rm B}$-type $(cond_\ell,\ell_1,\ell_2)$ is not necessary. 

\begin{theorem}[One order on ${\sf Con}_{{\cal P},\kappa,n}^{{\rm ind},[\omega>]}$]Let $\,{\cal P}$ be a partially label-ordered $\sigma$-program, $n\geq 1$, $\kappa\geq k_{\cal P}$, and $\vec \ell\in {\sf Lab}_{{\cal P},n}^{[*]}\!\cap\! {\sf Lab}_{\cal P} ^{\rm fin}$.  On the set $ {\sf Lab}_{{\cal P},n}^{[*]}\!\cap\! {\sf Lab}_{\cal P} ^{\rm fin}$,  the standard orders $\leq _{\cal P}$ and $\leq _{\cal P}^{\rm fin}$ match and thus the orders $\leq_{{\cal P},\kappa,n}^{[\omega >]}$ and $\leq_{{\cal P},\kappa,n}^{\rm fin}$ on ${\sf Con}_{{\cal P},\kappa,n}^{{\rm ind},[\omega>]}$ also match.
\end{theorem}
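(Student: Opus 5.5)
The plan is to reduce everything to a comparison at the first position where two finite ${\cal P}$-paths differ, and then to use the two hypotheses: paths in ${\sf Lab}_{{\cal P},n}^{[*]}$ respect index tests, and ${\cal P}$ is partially label-ordered. First a caveat on scope. For paths of \emph{different} lengths the two orders are defined by different criteria: {\sf (O1)} compares lengths, while {\sf (T1)} compares the branch taken at the first divergence. So I read ``match'' as the statement that $\leq_{\cal P}$ and $\leq_{\cal P}^{\rm fin}$ coincide on each set ${\sf Lab}_{{\cal P},n}^{[*]}\cap {\cal L}_{\cal P}^s$. This is the same ``restricted'' sense used for $enum_{\cal P}^{\sf (\leq,T)}$ and for the adapted enumerations of ${\sf Seq}_{{\cal P},\kappa,n}^{{\rm ind},[\omega>]}$, which are compatible with $\leq_{{\cal P},\kappa,n}^{[\omega>]}$ only for sequences of equal length. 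Under the literal reading the claim would fail, and I would say so.

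Fix $s$ and two distinct paths $B^{(1)},B^{(2)}$ in ${\sf Lab}_{{\cal P},n}^{[*]}\cap{\sf Lab}_{\cal P}^{\rm fin}\cap{\cal L}_{\cal P}^s$, and let $j$ be the least index with $\ell_{1,j}\neq\ell_{2,j}$. I will establish four facts in turn.
\begin{enumerate}[label=(\roman*)]
\item $2\leq j\leq s-1$. Indeed $\ell_{1,1}=\ell_{2,1}=1$ by {\sf (L3)}, and $\ell_{1,s}=\ell_{2,s}=\ell_{\cal P}$ by {\sf (L5)}. This is exactly the range demanded in {\sf (O2)}, and {\sf (T0)} is satisfied.
\item The common label $\ell:=\ell_{1,j-1}=\ell_{2,j-1}$ lies in ${\cal L}_{\cal P}^{\rm B}$. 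Otherwise {\sf (L4)} forces $\ell_{1,j}=\beta_{\cal P}(\ell)=\ell_{2,j}$.
\item $\ell\notin{\cal L}_{{\cal P},{\rm H}_{\rm T}}$. By {\sf (P1)}--{\sf (P4)} the index tuples of ${\rm seq}^{\rm ind}_{{\cal P},\bar\ell,\kappa,n}$ are determined by the label prefix, so both paths carry the same $\vec\nu^{(j-1)}$. Since neither path lies in ${\sf Lab}_{{\cal P},n}^{\rm Co}$, property {\sf (P6)} fails for both (Theorem~\ref{PathCanc} and the definition of ${\sf Lab}_{{\cal P},n}^{[*]}$). Hence $\ell_{1,j}=T^{(\kappa)}_{\ell}(\vec\nu^{(j-1)})=\ell_{2,j}$, a contradiction.
\item $\ell\in{\cal L}_{{\cal P},{\rm T}}\cup{\cal L}_{{\cal P},{\rm B}}$, so its instruction is label-ordered and $\beta_{\cal P}^+(\ell)\leq\beta_{\cal P}^-(\ell)$. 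By {\sf (FL4)} and fn.\,\ref{BranchGeneral} these two successors differ, so $\beta_{\cal P}^+(\ell)<\beta_{\cal P}^-(\ell)$.
\end{enumerate}
From (iv) the two criteria agree: $\ell_{1,j}<\ell_{2,j}$ (condition {\sf (O2b)}) holds iff $\ell_{1,j}=\beta_{\cal P}^+(\ell)$ and $\ell_{2,j}=\beta_{\cal P}^-(\ell)$ (conditions {\sf (T1c)} and {\sf (T1d)}). Since {\sf (O2a)} is {\sf (T1a)} and {\sf (T1b)} holds by the choice of $j$, we get $B^{(1)}<_{\cal P}^{\rm fin}B^{(2)}$ iff $B^{(1)}<_{\cal P}B^{(2)}$. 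The reflexive parts agree trivially.

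For the orders on ${\sf Con}_{{\cal P},\kappa,n}^{{\rm ind},[\omega>]}$ I will argue directly from the definitions. Both $<_{{\cal P},\kappa,n}^{\rm fin}$ and $<_{{\cal P},\kappa,n}^{[\omega>]}$ are defined through the label projections. Each element of ${\sf Con}_{{\cal P},\kappa,n}^{{\rm ind},[\omega>]}$ projects to a finite path in ${\sf Lab}_{{\cal P},n}^{[*]}$, because it satisfies {\sf (P5)} and hence not {\sf (P6)}. The projection is injective by the uniqueness of ${\rm seq}^{\rm ind}_{{\cal P},\bar\ell,\kappa,n}$. The coincidence therefore transfers, with the same equal-length proviso.

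The step I expect to be the main obstacle is (iii), i.e.\ excluding a divergence at an ${\rm H}_{\rm T}$-node. That step is exactly where membership in ${\sf Lab}_{{\cal P},n}^{[*]}$ is needed, since index tests need not be label-ordered, and it must be checked carefully that both paths share the same index tuple at position $j-1$.
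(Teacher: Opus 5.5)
Your argument is correct and is essentially the paper's own justification, which the paper gives only as the remark preceding the theorem. For inputs of length $n$ the successor at an ${\rm H}_{\rm T}$-node is fixed by $T_{\ell}^{(\kappa)}$, so two paths in ${\sf Lab}_{{\cal P},n}^{[*]}$ can only diverge at label-ordered ${\rm T}$- or ${\rm B}$-nodes. There $\beta_{\cal P}^+(\ell)<\beta_{\cal P}^-(\ell)$ makes {\sf (O2b)} equivalent to {\sf (T1c)} and {\sf (T1d)} together.

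Your restriction to paths of equal length is justified and in fact necessary. Take ${\cal P}^{\rm ord}$ for ${\cal P}$ from Fig.~\ref{BerWurzeln1}, with $n=1$, and the paths $B_1,B_2$ of Example~\ref{ExBerWurzeln1}. They first differ after label $6$, where $\beta_{{\cal P}^{\rm ord}}^+(6)=5$, so $B_2<_{{\cal P}^{\rm ord}}B_1$; but $B_1<_{{\cal P}^{\rm ord}}^{\rm fin}B_2$ by {\sf (O1)}. Hence the statement can only hold length by length.
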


For this reason, an evaluation of ${\cal P}^{\rm ord}$-paths using one of the standard orders on ${\sf Lab}_{{{\cal P}^{\rm ord}},n}^{[*]}\cap {\sf Lab}_{{\cal P}^{\rm ord}} ^{\rm fin}$ can be sufficient  for any ${\cal P}\in {\sf P}_{\sigma}$ and  $n\geq 1$ if we allow  only structures closed under complementation  with respect to relations for interpreting the instructions.
 \begin{consequ}For BSS RAMs over structures closed under complementation for relations, it is enough to consider, use,  and evaluate decision trees derived from partially label-ordered programs ${\cal P}$ with the following properties.
\begin{itemize} \parskip -0.8mm
\item  For inputs of length $n$,   only the finite ${\cal P}$-paths in ${\sf Lab}_{{\cal P},n}^{[*]}$ are important.
\item Arbitrary initial segments of ${\cal P}$-paths  can be represented  in such a tree  from the left to the right in accordance with $\leq_{\cal P}$ (cf.\,\,\cite{Klein97} and  \cite[p.\,\,36]{Klein05}). 
\item All  ${\cal P}$-paths of a fixed length  can be $\leq_{\cal P}^{\rm fin}$- and $\leq_{\cal P}$-compatibly enumerated and the resulting sequence of paths can be represented  from the left to the right in such a tree.   
\item The finite ${\cal P}$-paths   can be $\leq_{\cal P}^{\rm fin}$-compatibly enumerated and the resulting sequence of paths can be represented  from the left to the right in such a tree.   
\item  The mentioned systems ${\sf sy}_{{\cal P},n}^{\sf decision}(B^*)$ derived from these trees and used for  characterizing the behavior of machines, consist of atomic $\sigma$-formulas. 
\end{itemize}
\end{consequ}
For more details, see {\sf Extras II}, {\sf Extras III}, and Figure \ref{BerWurzelnProgrammpfade}. 

\newpage
\section*{Summary and outlook}\label{Summary}
\addcontentsline{toc}{section}{\bf Summary and outlook}
\markboth{SUMMARY: TRANSFORMATIONS}{SUMMARY: TRANSFORMATIONS}

\subsection*{Transformations}
\addcontentsline{toc}{subsection}{Transformations}

\begin{overview}[Transformation of ${\cal M}$-configurations for BSS RAMs]\label{TransfConf}
\hfill

\nopagebreak 

\noindent \fbox{\parbox{11.8cm}{

{\small 
\hfill Let ${\cal A}$ be a structure of finite signature $\sigma$, ${\cal M}\in {\sf M}_{\cal A}\cup {\sf M}_{\cal A}^{\rm ND}\cup {\sf M}_{\cal A}^{\rm DND}\cup {\sf M}_{\cal A}^{\rm NDB}$, 

\hfill ${\cal P}_{\cal M}$ be a $\sigma$-program,
 and $(\ell\,.\,\vec\nu\,.\, \bar u)\in {\sf S}_{\cal M}$.

\vspace{ 0.2cm}
\em\begin{tabular}{l}
(1) \fbox{$\ell \!: \, Z_j:= $\colorbox{blue!7}{$f_i^{m_i}(Z_{j_1},\ldots, Z_{j_{m_i}})$} } in ${\cal P}_{\cal M}$ implies \vspace{ 0.2cm}\\
\hspace{1.4cm}$(\ell \,.\,\vec \nu\,.\,\bar u) \textcolor{red}{ \quad\to_{\cal M}\quad} (\textcolor{blue}{\ell +1} \,.\,\vec \nu\,.\, ( u_1,\ldots,u_{j-1},\textcolor{blue}{f_i(u_{j_1},\ldots, u_{j_{m_i}})},u_{j+1}, \ldots)) $. 
\vspace{ 0.2cm}\\ 

(2) \fbox{ $\ell \!: \ Z_j:= $\colorbox{blue!7}{$c_i^0 $}} in ${\cal P}_{\cal M}$ implies \vspace{ 0.2cm}\\
\hspace{1.45cm} $(\ell \,.\,\vec \nu\,.\,\bar u)\textcolor{red}{ \quad\to_{\cal M}\quad} (\textcolor{blue}{\ell +1} \,.\,\vec \nu\,.\, ( u_1,\ldots,u_{j-1},\textcolor{blue}{c_{i}},u_{j+1}, \ldots) )$.
\vspace{ 0.2cm}\\ 

(3) \fbox{ $\ell \!: \,Z_{I_j}:=$\colorbox{blue!7}{$Z_{I_k}$}} in ${\cal P}_{\cal M}$ implies \vspace{ 0.2cm}\\
\hspace{1.4cm} $(\ell \,.\,\vec \nu\,.\,\bar u)\textcolor{red}{ \quad\to_{\cal M}\quad} (\textcolor{blue}{\ell +1} \,.\,\vec \nu\,.\, (u_1,\ldots,u_{\nu_j-1},\textcolor{blue}{u_{\nu_k}},u_{\nu_j+1}, \ldots) )$.
\vspace{ 0.2cm}\\ 

(4) \fbox{ \sf $\ell \!: \,$ if $r_i^{k_i}(Z_{j_1},\ldots, Z_{j_{k_i}})$ then goto \colorbox{blue!7}{$\ell _1$} else goto \colorbox{blue!7}{$\ell _2$}} in ${\cal P}_{\cal M}$ implies \vspace{ 0.2cm}\\
\hspace{1.5cm} $(\ell \,.\,\vec \nu\,.\,\bar u)\textcolor{red}{ \quad\to_{\cal M}\quad} (\textcolor{blue}{\ell_1}\, .\,\vec \nu\,.\,\bar u)$ \qquad\qquad {\sf if } $ (u_{j_1},\ldots, u_{j_{k_i}})\in r_i$,\\
\hspace{1.5cm} $(\ell \,.\,\vec \nu\,.\,\bar u) \textcolor{red}{\quad \to_{\cal M}\quad} (\textcolor{blue}{\ell_2}\,.\,\vec \nu\,.\,\bar u)$ \qquad\qquad {\sf if } $ (u_{j_1},\ldots, u_{j_{k_i}})\not \in r_i$.
\vspace{ 0.2cm}\\ 

(5) \fbox{ \sf $\ell \!: \,$ if $I_j=I_k$ then goto \colorbox{blue!7}{$\ell _1$} else goto \colorbox{blue!7}{$\ell _2$}} in ${\cal P}_{\cal M}$ implies \vspace{ 0.2cm}\\
\hspace{1.5cm} $(\ell \,.\,\vec \nu\,.\,\bar u) \textcolor{red}{ \quad\to_{\cal M}\quad} (\textcolor{blue}{\ell_1}\, .\,\vec \nu\,.\,\bar u)$ \qquad\qquad {\sf if } $ \nu_j=\nu_k$,\\
\hspace{1.5cm} $(\ell \,.\,\vec \nu\,.\,\bar u)\textcolor{red}{ \quad\to_{\cal M}\quad} (\textcolor{blue}{\ell_2}\,.\,\vec \nu\,.\,\bar u)$ \qquad\qquad {\sf if } $\nu_j\not = \nu_k$.
\vspace{ 0.2cm}\\

(6) \fbox{ $\ell \!: \, I_j:=$ \colorbox{blue!7}{$1$}} in ${\cal P}_{\cal M}$ implies \vspace{ 0.2cm}\\
\hspace{1.4cm} $(\ell \,.\,\vec \nu\,.\,\bar u) \textcolor{red}{ \quad\to_{\cal M}\quad} (\textcolor{blue}{\ell +1}\,.\, ( \nu_1,\ldots,\nu_{j-1},\textcolor{blue}{1},\nu_{j+1}, \ldots,\nu_{k_{\cal M}})\,.\,\bar u )$. 
\vspace{ 0.2cm}\\

(7) \fbox{ $\ell \!: \, I_j:=$ \colorbox{blue!7}{$I_j+1$}} in ${\cal P}_{\cal M}$ implies \vspace{ 0.2cm}\\
\hspace{1.4cm} $(\ell \,.\,\vec \nu\,.\,\bar u) \textcolor{red}{ \quad\to_{\cal M}\quad} (\textcolor{blue}{\ell +1}\,.\, ( \nu_1,\ldots,\nu_{j-1},\textcolor{blue}{\nu_j+1},\nu_{j+1}, \ldots,\nu_{k_{\cal M}})\,.\,\bar u)$. 
\vspace{ 0.2cm}\\

(8) \fbox{ $\ell _{{\cal P}_{\cal M}}\!: $ {\sf stop}} in ${\cal P}_{\cal M}$ implies \vspace{ 0.2cm}\\
\hspace{1.4cm}$(\ell_{{\cal P}_{\cal M}}\,.\,\vec \nu\,.\,\bar u) \textcolor{red}{ \,\,\to_{\cal M}\quad} (\ell_{{\cal P}_{\cal M}}\,.\,\vec \nu\,.\,\bar u) $.
\vspace{ 0.2cm}\\

$\!\!\!(11)$ \fbox{\sf $\ell :$ \,goto \colorbox{blue!7}{$\ell _1$} or goto \colorbox{blue!7}{$\ell _2$}} \quad in ${\cal P}_{\cal M}$ implies \vspace{0.2cm}\\
\hspace{1.4cm} $(\ell \,.\,\vec \nu\,.\,\bar u) \textcolor{red}{ \quad\to_{\cal M}\quad} (\textcolor{blue}{\ell_1}\, .\,\vec \nu\,.\,\bar u)$ \colorbox{blue!7}{and}\\
\hspace{1.4cm} $(\ell \,.\,\vec \nu\,.\,\bar u) \textcolor{red}{ \quad\to_{\cal M}\quad} (\textcolor{blue}{\ell_2}\, .\,\vec \nu\,.\,\bar u)$.\\
\end{tabular}}}}
\end{overview}

\newpage
\begin{overview}
[Transformation of partial ${\cal P}$-configurations]\label{RestrTransf}
\hfill

\nopagebreak 
\noindent \fbox{\parbox{11.8cm}{\small 

\hfill Let ${\cal P}$ be a $\sigma$-program and $(\ell\,.\,\vec\nu)\in {\sf S}_{{\cal P}}^{\rm ind}$.

\hfill In {\rm (5)}, {\rm (6)}, and {\rm (7)}, we explicitly assume that 

\hfill $ j,k\leq k_{\cal P}\leq \kappa$ and $(\ell\,.\,\vec\nu)\in {\sf S}_{{\cal P},\kappa}^{\rm ind}$ hold.

\em\vspace{0.3cm}
\begin{tabular}{rl}
$(1$--$3)$ &For $\ell \in {\cal L}_{{\cal P},{\rm F}}\cup {\cal L}_{{\cal P},{\rm F}_0}\cup{\cal L}_{{\cal P},{\rm C}}$, let\vspace{0.2cm}\\
&\hspace{1.5cm} $(\ell \,.\,\vec \nu) \textcolor{red}{ \,\to^{\rm ind}_{\cal P}\,} (\textcolor{blue}{\ell+1}\,.\,\vec \nu)$. \vspace{0.2cm}\\

$(4)$& For $\ell \in {\cal L}_{{\cal P},{\rm T}}$ \vspace{0.15cm}\\& \,\, and \quad \fbox{\sf $\ell : \,$ if $r_i^{k_i}(Z_{j_1},\ldots, Z_{j_{k_i}})$ then goto \colorbox{blue!7}{$\ell _1$} else goto \colorbox{blue!7}{$\ell _2$}} \, in ${\cal P}$, let \vspace{0.2cm}\\
&\hspace{1.5cm} $(\ell \,.\,\vec \nu) \textcolor{red}{ \,\to^{\rm ind}_{\cal P}\,} (\textcolor{blue}{\ell_1}\, .\,\vec \nu)$ \colorbox{blue!7}{and}\\
&\hspace{1.5cm} $(\ell \,.\,\vec \nu) \textcolor{red}{ \,\to^{\rm ind}_{\cal P}\,} (\textcolor{blue}{\ell_2}\, .\,\vec \nu)$. 
\vspace{0.2cm}\\

$(5)$& For $\ell \in {\cal L}_{{\cal P},{\rm H}_{\rm T}}$ \vspace{0.15cm}\\& \,\, and \quad \fbox{ \sf $\ell \!: \,$ if $I_j=I_k$ then goto \colorbox{blue!7}{$\ell _1$} else goto \colorbox{blue!7}{$\ell _2$}} \quad in ${\cal P}$, let\vspace{0.2cm}\\
&\hspace{1.5cm} $(\ell \,.\,\vec \nu) \textcolor{red}{ \,\to^{\rm ind}_{\cal P}\,} (\textcolor{blue}{\ell_1}\,.\,\vec \nu)$ \quad \colorbox{blue!7}{if $ \nu_j=\nu_k$} and thus $\textcolor{magenta}{T_\ell^{(\kappa)}}(\vec \nu)=\ell_1$ \colorbox{blue!7}{or}\\&\hspace{1.5cm} $(\ell \,.\,\vec \nu)\textcolor{red}{ \,\to^{\rm ind}_{\cal P}\,} (\textcolor{blue}{\ell_2}\,.\,\vec \nu)$ \quad \colorbox{blue!7}{if $\nu_j\not = \nu_k$} and thus $\textcolor{magenta}{T_\ell^{(\kappa)}}(\vec \nu)=\ell_2$. 
\vspace{0.2cm}\\

$(6)$& For $\ell \in {\cal L}_{{\cal P},{\rm H}_1}$ \vspace{0.15cm}\\&\,\, and \quad \fbox{ $\ell \!: \, I_j:=1$} \qquad in ${\cal P}$, let\vspace{0.2cm}\\
&\hspace{1.5cm} $(\ell \,.\,\vec \nu) \textcolor{red}{ \,\to^{\rm ind}_{\cal P}\,} (\textcolor{blue}{\ell +1}\,.\, \underbrace {( \nu_1,\ldots,\nu_{j-1},\textcolor{blue}{1},\nu_{j+1}, \ldots,\nu_{\kappa})}_{ \textcolor{magenta}{H_\ell^{(\kappa)}}(\vec \nu)} )$. 
\vspace{0.2cm}\\

$(7)$& For $\ell \in {\cal L}_{{\cal P},{\rm H}_{+1}}$ \vspace{0.15cm}\\&\,\, and \quad \fbox{ $\ell \!: \, I_j:=I_j+1$} \quad in ${\cal P}$, let\vspace{0.2cm}\\
&\hspace{1.5cm} $(\ell \,.\,\vec \nu) \textcolor{red}{ \,\to^{\rm ind}_{\cal P}\,} (\textcolor{blue}{\ell +1}\,.\, \underbrace {( \nu_1,\ldots,\nu_{j-1},\textcolor{blue}{\nu_j+1},\nu_{j+1}, \ldots,\nu_{\kappa})}_{ \textcolor{magenta}{H_\ell^{(\kappa)}}(\vec \nu)} )$. 
\vspace{0.2cm}\\

$(8)$& For $\ell \in {\cal L}_{{\cal P},{\rm S}}$, let\vspace{0.2cm}\\
&\hspace{1.5cm} $(\ell \,.\,\vec \nu) \textcolor{red}{ \,\to^{\rm ind}_{\cal P}\,} (\ell\,.\,\vec \nu)$. \vspace{0.2cm}\\

$\!\!\!(11)$& For $\ell \in {\cal L}_{{\cal P},{\rm B}}$ \vspace{0.15cm}\\& \,\, and \fbox{\sf $\ell :$ \,goto \colorbox{blue!7}{$\ell _1$} or goto \colorbox{blue!7}{$\ell _2$}} \quad in ${\cal P}$, let\vspace{0.2cm}\\
&\hspace{1.5cm} $(\ell \,.\,\vec \nu) \textcolor{red}{ \,\to^{\rm ind}_{\cal P}\,} (\textcolor{blue}{\ell_1}\, .\,\vec \nu)$ \colorbox{blue!7}{and}\\
&\hspace{1.5cm} $(\ell \,.\,\vec \nu) \textcolor{red}{ \,\to^{\rm ind}_{\cal P}\,} (\textcolor{blue}{\ell_2}\, .\,\vec \nu)$. 
\vspace{0.1cm}\\
\end{tabular}
}}
\end{overview}
\newpage

\subsection*{Transition systems}\label{OverviewsTransSyst}
\addcontentsline{toc}{subsection}{Transition systems}
\markboth{SUMMARY: TRANSITION SYSTEMS}{SUMMARY: TRANSITION SYSTEMS}

\begin{overview}[Transition systems ${\cal S}_{\cal P}^{\rm lab}$ for generating ${\cal P}$-paths]\label{TransPPaths} 
\hfill

\nopagebreak 
\noindent \fbox{\parbox{11.8cm}{\fbox{${\cal S}_{\cal P}^{\rm lab}=({\sf S}_{\cal P}^{\rm lab}, \to_{\cal P}^{\rm lab})$} 
\hfill{\small Let ${\cal P}$ be a $\sigma$-program.}

\em 
\vspace{0.1cm}
\em 
$\begin{array}{lll}
{\sf S}_{\cal P}^{\rm lab} \hspace{0.17cm}&=\hspace{0.24cm} \{\ell\mid 1\leq \ell\leq \ell_{\cal P}\}
\end{array}$

\vspace{0.1cm}
$\begin{array}{lll}

\textcolor{red}{\to_{\cal P}^{\rm lab}} &= \hspace{0.23cm}\{(\ell, \textcolor{blue}{\ell +1})&\!\!\in ({\sf S}_{\cal P}^{\rm lab})^2\mid \ell \in{\cal L}_{\cal P}^{ \rm NB} \}\\
&\hspace{0.3cm}\cup \,\{(\ell, \textcolor{blue}{\beta_{\cal P}^+(\ell)})&\!\!\in ( {\sf S}_{\cal P}^{\rm lab})^2\mid \ell\in {\cal L}_{\cal P}^{ \rm B} \}\\
&\hspace{0.3cm}\cup \,\{(\ell, \textcolor{blue}{\beta_{\cal P}^-(\ell)})&\!\!\in ( {\sf S}_{\cal P}^{\rm lab})^2\mid \ell\in {\cal L}_{\cal P}^{ \rm B} \}.
 \end{array}$

\vspace{0.05cm}
${\cal F}_{\cal P}^{\rm lab}=\{ \beta_{\cal P},\beta_{\cal P}^+,\beta_{\cal P}^-\}$ \hfill {\small (with $\textcolor{blue}{\beta_{\cal P}(\ell)}=\ell+1$)}.
}}
\end{overview}

\vspace{0.05cm}
\begin{overview}[Transition systems ${\cal S}_{{\cal P},\kappa}^{\rm ind}$ for generating $({\cal P}\!,\kappa,n)$-paths]\label{TransPKappaPaths} 
\hfill

\nopagebreak 
\noindent \fbox{\parbox{11.8cm}{
\fbox{${\cal S}_{{\cal P},\kappa}^{\rm ind}=({\sf S}_{{\cal P},\kappa}^{\rm ind}, \to_{{\cal P},\kappa}^{\rm ind})$} 
\hfill{\small Let ${\cal P}$ be a $\sigma$-program and $\kappa\geq k_{\cal P}$.}

\vspace{0.1cm}
\em 
$\begin{array}{lll}
{\sf S}_{{\cal P},\kappa}^{\rm ind} \hspace{0.17cm}&=\hspace{0.24cm} \{ (\ell\,.\,\vec \nu)\mid 1\leq \ell\leq \ell_{\cal P} \,\,\&\,\, \vec \nu\in (\mathbb{N}_+)^{\kappa}\}.
\end{array}$

\vspace{0.1cm}
$\begin{array}{lll}
\textcolor{red}{\to_{{\cal P},\kappa}^{\rm ind}} &= \hspace{0.23cm}\{((\ell\,.\,\vec \nu), (\ell +1\,.\,\vec \nu))&\!\!\in ({\sf S}_{{\cal P},\kappa}^{\rm ind})^2\mid \ell \in{\cal L}_{{\cal P}, { \rm F} } \cup {\cal L}_{{\cal P}, { \rm F}_0 } \cup {\cal L}_{{\cal P}, { \rm C} } \}\\
 &\hspace{0.3cm}\cup \,\{((\ell\,.\,\vec \nu), (\ell +1\,.\,\textcolor{blue}{H_\ell^{(\kappa)}(\vec \nu)}))\!\!&\!\!\in ({\sf S}_{{\cal P},\kappa}^{\rm ind})^2\mid \ell\in {\cal L}_{{\cal P},{\rm H}_1}\cup \, {\cal L}_{{\cal P},{\rm H}_{+1}} \}\\
&\hspace{0.3cm}\cup \,\{((\ell\,.\,\vec \nu), (\textcolor{blue}{\beta_{\cal P}^+(\ell)}\,.\,\vec \nu))&\!\!\in ({\sf S}_{{\cal P},\kappa}^{\rm ind})^2\mid \ell\in {\cal L}_{{\cal P},{\rm T}}\cup {\cal L}_{{\cal P},{\rm B}}\}\\
&\hspace{0.3cm}\cup \,\{((\ell\,.\,\vec \nu), (\textcolor{blue}{\beta_{\cal P}^-(\ell)}\,.\,\vec \nu))&\!\!\in ({\sf S}_{{\cal P},\kappa}^{\rm ind})^2\mid \ell\in {\cal L}_{{\cal P},{\rm T}}\cup {\cal L}_{{\cal P},{\rm B}}\}\\
&\hspace{0.3cm}\cup \,\{((\ell\,.\,\vec \nu), (\textcolor{blue}{T_\ell^{(\kappa)}(\vec \nu)}\,.\,\vec \nu))&\!\!\in ({\sf S}_{{\cal P},\kappa}^{\rm ind})^2\mid \ell\in {\cal L}_{{\cal P},{\rm H}_{\rm T}} \}\\
&\hspace{0.3cm}\cup \, \{((\ell\,.\,\vec \nu), (\ell\,.\,\vec \nu))&\!\!\in ({\sf S}_{{\cal P},\kappa}^{\rm ind})^2 \mid \ell=\ell_{\cal P}\}.\end{array}$

\vspace{0.1cm}
${\cal F}_{{\cal P},\kappa}^{\rm ind}= \{H_\ell^{(\kappa)} \mid \ell\in {\cal L}_{{\cal P},{\rm H}_1}\cup {\cal L}_{{\cal P},{\rm H}_{+1}} \}\, \cup\,\{T_\ell^{(\kappa)}\mid \ell\in {\cal L}_{{\cal P},{\rm H}_{\rm T}} \}\,\cup \, {\cal F}_{\cal P}^{\rm lab}$.

\hfill {\small\em $H_\ell^{(\kappa)},T_\ell^{(\kappa)}$ depend on ${\cal P}$ (cf.\,\,Overview \ref{TransRelConfM}).} 

\hfill {\small\em If necessary, we denote them by $H_{{\cal P},\kappa,\ell},T_{{\cal P},\kappa,\ell}$.}}}
\end{overview}

\begin{overview}[The change of partial $({\cal P},\kappa)$-configurations]

\hfill

\nopagebreak 

\noindent \fbox{\parbox{11.8cm}{\em\begin{tabular}{l}

(5) ${\rm H}_{\rm T}$-instructions { \sf $\ell \!: \,$ if $I_j=I_k$ then goto $\ell _1$ else goto $\ell _2$}\\

\qquad $(\ell \,.\,\vec \nu)\, \textcolor{red}{\to_{{\cal P},\kappa}^{\rm ind}}\,(\underbrace{\ell_1}_{ T_\ell^{(\kappa)}(\vec \nu)} .\,\vec \nu)$ \qquad \qquad if $ \nu_j=\nu_k$\\

\qquad $(\ell \,.\,\vec \nu)\,\textcolor{red}{\to_{{\cal P},\kappa}^{\rm ind}}\,(\,\,\overbrace{\,\ell_2\,}^{}\,.\,\vec \nu)$ \qquad\qquad if $\nu_j\not = \nu_k$

\vspace{0.1cm}\\
(6) ${{\rm H}_1}$-instructions { $\ell \!: \, I_j:=1$}\\

\qquad $(\ell \,.\,\vec \nu)\,\textcolor{red}{\to_{{\cal P},\kappa}^{\rm ind}}\,(\ell +1\,.\, \underbrace {( \nu_1,\ldots,\nu_{j-1},1,\nu_{j+1}, \ldots,\nu_{\kappa})}_{ H_\ell^{(\kappa)}(\vec \nu)} )$\\

(7) ${{\rm H}_{+1}}$-instructions { $\ell \!: \, I_j:=I_j+1$}\\

\qquad $(\ell \,.\,\vec \nu)\, \textcolor{red}{\to_{{\cal P},\kappa}^{\rm ind}}\, (\ell +1\,.\, \underbrace {( \nu_1,\ldots,\nu_{j-1},\nu_j+1,\nu_{j+1}, \ldots,\nu_{\kappa})}_{ H_\ell^{(\kappa)}(\vec \nu)} )$ 
\end{tabular}

\hfill {\small\em (For more details, see Overview \ref{RestrTransf}.)}}}
\end{overview}

\newpage
\begin{overview}[Transition systems ${\cal S}_{\cal M}$ for generating $({\cal M},\vec x)$-paths]\label{TransRelConfM} 
\hfill

\nopagebreak 
\noindent \fbox{\parbox{11.8cm}{\fbox{${\cal S}_{\cal M}=({\sf S}_{\cal M}, \to_{\cal M})$} 
\hfill{\small Let ${\cal M}$ be a BSS RAM over ${\cal A}$.}

\vspace{0.1cm}
\em 
$\begin{array}{lll}
{\sf S}_{\cal M}\hspace{0.34cm}&=\hspace{0.24cm}
\{ (\ell\,.\,\vec \nu\,.\, \bar u)\mid 1\leq \ell\leq \ell_{{\cal P}_{\cal M}} \,\,\&\,\, \vec \nu\in (\mathbb{N}_+)^{k_{\cal M}} \,\,\&\,\, \bar u\in U_{\cal A} ^\omega\}.\\
\end{array}$

\vspace{0.2cm}
$\begin{array}{lll}
\, \,\textcolor{red}{\to_{\cal M}} \,&=\hspace{0.26cm}\{((\ell\,.\,\vec \nu\,.\, \bar u), (\ell +1\,.\,\vec \nu\,.\, \textcolor{blue}{F_{\ell }(\bar u)}))&\!\!\in {\sf S}_{\cal M}^2\mid \ell \in{\cal L}_{{\cal M}, { \rm F} } \cup \,{\cal L}_{{\cal M}, { \rm F} _0} \}\\
&\hspace{0.3cm}\cup \,\{((\ell\,.\,\vec \nu\,.\, \bar u), (\ell +1\,.\,\vec \nu\,.\, \textcolor{blue}{C_{\ell }(\vec \nu,\bar u)}))\!\!&\!\!\in {\sf S}_{\cal M}^2\mid \ell \in{\cal L}_{{\cal M}, { \rm C} } \}\\
 &\hspace{0.3cm}\cup \,\{((\ell\,.\,\vec \nu\,.\, \bar u), (\ell +1\,.\,\textcolor{blue}{H_{\ell }(\vec \nu)}\,.\, \bar u))\!&\!\!\in {\sf S}_{\cal M}^2\mid \ell\in {\cal L}_{{\cal M},{\rm H}_1}\cup \, {\cal L}_{{\cal M},{\rm H}_{+1}} \}\\
&\hspace{0.3cm}\cup \,\{((\ell\,.\,\vec \nu\,.\, \bar u), (\textcolor{blue}{T_{\ell }(\bar u)}\,.\,\vec \nu\,.\,\bar u))&\!\!\in {\sf S}_{\cal M}^2\mid \ell\in {\cal L}_{{\cal M},{\rm T}}\}\\
&\hspace{0.3cm}\cup \,\{((\ell\,.\,\vec \nu\,.\, \bar u), (\textcolor{blue}{\beta_{{\cal P}_{\cal M}}^+(\ell)}\,.\,\vec \nu\,.\,\bar u))&\!\!\in {\sf S}_{\cal M}^2\mid \ell\in {\cal L}_{{\cal M},{\rm B}} \}\\
&\hspace{0.3cm}\cup \,\{((\ell\,.\,\vec \nu\,.\, \bar u), (\textcolor{blue}{\beta_{{\cal P}_{\cal M}}^-(\ell)}\,.\,\vec \nu\,.\,\bar u))&\!\!\in {\sf S}_{\cal M}^2\mid \ell\in {\cal L}_{{\cal M},{\rm B}} \}\\
&\hspace{0.3cm}\cup \,\{((\ell\,.\,\vec \nu\,.\, \bar u), (\textcolor{blue}{T_{\ell }(\vec \nu)}\,.\,\vec \nu\,.\,\bar u))&\!\!\in {\sf S}_{\cal M}^2\mid \ell\in {\cal L}_{{\cal M},{\rm H}_{\rm T}} \}\\
&\hspace{0.3cm}\cup \, \{((\ell\,.\,\vec \nu\,.\, \bar u), (\ell\,.\,\vec \nu\,.\, \bar u))&\!\!\in {\sf S}_{\cal M}^2 \mid \ell=\ell_{{\cal P}_{\cal M}}\}.\end{array}$

\vspace{0.2cm}
$\begin{array}{ll} {\cal F}_{\cal M}\hspace{0.26cm}&=\{F_\ell \mid \ell\in {\cal L}_{{\cal M},{\rm F}}\cup {\cal L}_{{\cal M},{\rm F}_0} \} 
\,\cup\,\{C_\ell \mid \ell\in {\cal L}_{{\cal M},{\rm C}}\}\\
 &\hspace{0.4cm} \cup\, \{H_\ell \mid \ell\in {\cal L}_{{\cal M},{\rm H}_1}\cup {\cal L}_{{\cal M},{\rm H}_{+1}} \} \,\cup\,\{T_\ell \mid \ell\in {\cal L}_{{\cal M},{\rm H}_{\rm T}} \cup {\cal L}_{{\cal M},{\rm T}} \}\\
&\hspace{0.4cm} \cup \,\, {\cal F}_{{\cal P}_{\cal M}}^{\rm lab}.\end{array}$

\hfill {\small\em $F_\ell, C_\ell, H_\ell,T_\ell$ depend an ${\cal M}$ (cf.\,\,\cite{GASS25A}).}

\hfill {\small\em If necessary, we denote them by $F_{{\cal M},\ell}, C_{{\cal M},\ell}, H_{{\cal M},\ell},T_{{\cal M},\ell}$.}

}} \end{overview}

\begin{overview}
[The change of ${\cal M}$-configurations]

\hfill

\nopagebreak 

\noindent \fbox{\parbox{11.8cm}{ \em\begin{tabular}{l}

(1) ${\rm F}$-instructions { $\ell \!: \, Z_j:= f_i^{m_i}(Z_{j_1},\ldots, Z_{j_{m_i}}) $} \hfill \vspace{0.1cm}\\

\qquad $(\ell \,.\,\vec \nu\,.\,\bar u) \textcolor{red}{ \,\to_{\cal M}\,} (\ell +1 \,.\,\vec \nu\,.\, \underbrace {( u_1,\ldots,u_{j-1},f_i(u_{j_1},\ldots, u_{j_{m_i}}),u_{j+1}, \ldots)}_{ F_{\ell }(\bar u)}) $ 

\vspace{0.3cm}\\
(2) ${\rm F}_0 $-instructions { $\ell \!: \ Z_j:= c_i^0 $} \hfill \vspace{0.1cm}\\

\qquad $(\ell \,.\,\vec \nu\,.\,\bar u)\textcolor{red}{ \,\to_{\cal M}\,} (\ell +1 \,.\,\vec \nu\,.\, \underbrace {( u_1,\ldots,u_{j-1},c_{\alpha_i},u_{j+1}, \ldots) }_{ F_{\ell }(\bar u)})$

\vspace{0.3cm}\\
(3) ${\rm C}$-instructions { $\ell \!: \,Z_{I_j}:=Z_{I_k}$} \hfill \vspace{0.1cm}\\

\qquad $(\ell \,.\,\vec \nu\,.\,\bar u)\textcolor{red}{ \,\to_{\cal M}\,} (\ell +1 \,.\,\vec \nu\,.\,\underbrace {(u_1,\ldots,u_{\nu_j-1},u_{\nu_k},u_{\nu_j+1}, \ldots)}_{\ C_{\ell }(\vec \nu,\bar u)} )$

\vspace{0.3cm}\\
(4) ${\rm T}$-instructions 

{ \sf $\ell \!: \,$ if $r_i^{k_i}(Z_{j_1},\ldots, Z_{j_{k_i}})$ then goto $\ell _1$ else goto 
 $\ell _2$} \vspace{0.1cm}\\
\qquad $(\ell \,.\,\vec \nu\,.\,\bar u)\textcolor{red}{ \,\to_{\cal M}\,} (\,\underbrace{\ell_1}_{ T_{\ell }(\bar u)} .\,\vec \nu\,.\,\bar u)$ \qquad\qquad if $ (u_{j_1},\ldots, u_{j_{k_i}})\in r_i$\\
\qquad $(\ell \,.\,\vec \nu\,.\,\bar u) \textcolor{red}{\, \to_{\cal M}\,} (\,\overbrace{\ell_2}^{}.\,\vec \nu\,.\,\bar u)$ \qquad\qquad if $ (u_{j_1},\ldots, u_{j_{k_i}})\not \in r_i$
\vspace{0.3cm}\\

\end{tabular}

\hfill {\small\em (For more details, see Overview \ref{TransfConf}.)}

}}
\end{overview}

\newpage
\addcontentsline{toc}{subsection}{Outlook: Transition systems for term configurations}
\markboth{OUTLOOK: TRANSITION SYSTEMS}{OUTLOOK: TRANSITION SYSTEMS}

\subsection*{Outlook: Transition systems for term configurations}

\vspace{0.1cm}
\begin{overview}[Systems ${\cal S}_{{\cal P},\kappa} $ for generating $({\cal P},\kappa,n)$-term-paths] \label{OutlookTermConf1}
\hfill

\nopagebreak 
\noindent \fbox{\parbox{11.8cm}{\fbox{${\cal S}_{{\cal P},\kappa}=({\sf S}_{{\cal P},\kappa}, \to_{{\cal P},\kappa}) $} \hfill{\small Let ${\cal P}$ be a $\sigma$-program and $\kappa\geq k_{\cal P}$.}

\em

\vspace{0.1cm}
$\begin{array}{lll}{\sf S}_{{\cal P},\kappa} \hspace{0.22cm}&=\hspace{0.24cm} \{(\ell\,.\,\vec \nu\,.\,\bar\tau)\mid 1\leq \ell\leq \ell_{\cal P}\, \,\,\&\,\,\, \vec \nu\in (\mathbb{N}_+)^{\kappa}\,\,\,\&\,\,\,\bar\tau \in ({\rm Term}^{\sigma})^\omega\}.\vspace{0.1cm}\\
\end{array}$

$\begin{array}{lll}
\,\textcolor{red}{\to_{{\cal P},\kappa}}&=\hspace{0.2cm}\{((\ell\,.\,\vec \nu\,.\, \bar \tau), (\ell +1\,.\,\vec \nu\,.\, \textcolor{blue}{F_{\ell }^{\sigma}(\bar \tau)}))&\!\!\in {\sf S}_{{\cal P},\kappa}^2\mid \ell \in{\cal L}_{{\cal P}, { \rm F} } 
\cup \,{\cal L}_{{\cal P}, { \rm F} _0} \}\\
&\hspace{0.3cm}\cup \,\{((\ell\,.\,\vec \nu\,.\, \bar \tau), (\ell +1\,.\,\vec \nu\,.\, \textcolor{blue}{C_{\ell }^{\kappa,\sigma}(\vec \nu,\bar \tau)}))\!\!\!&\!\!\in {\sf S}_{{\cal P},\kappa}^2\mid \ell \in{\cal L}_{{\cal P}, { \rm C} } \}\\
 &\hspace{0.3cm}\cup \,\{((\ell\,.\,\vec \nu\,.\, \bar \tau), (\ell +1\,.\,\textcolor{blue}{H_\ell^{(\kappa)}(\vec \nu)}\,.\, \bar \tau))\!&\!\!\in {\sf S}_{{\cal P},\kappa}^2\mid \ell\in {\cal L}_{{\cal P},{\rm H}_1}\cup \, {\cal L}_{{\cal P},{\rm H}_{+1}} \}\\
&\hspace{0.3cm}\cup \,\{((\ell\,.\,\vec \nu\,.\, \bar \tau), (\textcolor{blue}{\beta_{\cal P}^+(\ell)}\,.\,\vec \nu\,.\,\bar \tau))&\!\!\in {\sf S}_{{\cal P},\kappa}^2\mid \ell\in {\cal L}_{{\cal P},{\rm T}}\cup {\cal L}_{{\cal P},{\rm B}} \}\\
&\hspace{0.3cm}\cup \,\{((\ell\,.\,\vec \nu\,.\, \bar \tau), (\textcolor{blue}{\beta_{\cal P}^-(\ell)}\,.\,\vec \nu\,.\,\bar \tau))&\!\!\in {\sf S}_{{\cal P},\kappa}^2\mid \ell\in {\cal L}_{{\cal P},{\rm T}}\cup {\cal L}_{{\cal P},{\rm B}} \}\\
&\hspace{0.3cm}\cup \,\{((\ell\,.\,\vec \nu\,.\, \bar \tau), (\textcolor{blue}{T_\ell^{(\kappa)}(\vec \nu)}\,.\,\vec \nu\,.\,\bar \tau))&\!\!\in {\sf S}_{{\cal P},\kappa}^2\mid \ell\in {\cal L}_{{\cal P},{\rm H}_{\rm T}} \}\\
&\hspace{0.3cm}\cup \, \{((\ell\,.\,\vec \nu\,.\, \bar \tau), (\ell\,.\,\vec \nu\,.\, \bar \tau))&\!\!\in {\sf S}_{{\cal P},\kappa}^2 \mid \ell=\ell_{\cal P}\}.\end{array}$

\vspace{0.1cm}
$\begin{array}{ll} {\cal F}_{{\cal P},\kappa}=\hspace{0.24cm}
\{F_\ell ^{\sigma}\mid \ell\in {\cal L}_{{\cal P},{\rm F}}\cup {\cal L}_{{\cal P},{\rm F}_0} \} 
\,\cup\,\{C_\ell ^{\kappa,\sigma}\mid \ell\in {\cal L}_{{\cal P},{\rm C}}\} \,\cup\, {\cal F}_{{\cal P},\kappa}^{\rm ind} . \end{array}$
\vspace{0.1cm}

\hfill {\small\em ${\rm Term}^{\sigma}$ is the set of all $\sigma$-terms. $F_\ell ^{\sigma},C_\ell ^{\kappa,\sigma}, H_\ell^{(\kappa)},T_\ell^{(\kappa)}$ depend on ${\cal P}$.}

\hfill {\small\em If necessary, we denote them by $F_{{\cal P},\ell}^{\sigma}, C_{{\cal P},\ell} ^{\kappa,\sigma},H_{{\cal P},\kappa,\ell},T_{{\cal P},\kappa,\ell}$.}
}}\end{overview}

\vspace{0.1cm}
\begin{overview}[The change of $({\cal P},\kappa)$-term-configurations]\label{OutlookTermConf2}

\hfill

\nopagebreak 

\noindent \fbox{\parbox{11.8cm}{

\hfill {\small Let $\tau_1,\tau_2,\ldots $ be terms of signature $\sigma$.}

\vspace{0.3cm}
\em\begin{tabular}{l}
(1) ${\rm F}$-instructions { $\ell \!: \, Z_j:= f_i^{m_i}(Z_{j_1},\ldots, Z_{j_{m_i}}) $} \hfill \vspace{0.3cm}\\

\qquad $(\ell \,.\,\vec \nu\,.\,\bar \tau) \textcolor{red}{ \,\,\,\to_{{\cal P},\kappa}\,\,\,} (\ell +1 \,.\,\vec \nu\,.\, \underbrace {( \tau_1,\ldots,\tau_{j-1},f_i^{m_i}(\tau_{j_1},\ldots, \tau_{j_{m_i}}),\tau_{j+1}, \ldots)}_{ F_{\ell }^{\sigma}(\bar \tau)}) $ 

\vspace{0.3cm}\\
(2) ${\rm F}_0 $-instructions { $\ell \!: \ Z_j:= c_i^0 $} \hfill \vspace{0.3cm}\\

\qquad $(\ell \,.\,\vec \nu\,.\,\bar \tau)\textcolor{red}{ \,\,\,\to_{{\cal P},\kappa}\,\,\,} (\ell +1 \,.\,\vec \nu\,.\, \underbrace {( \tau_1,\ldots,\tau_{j-1},c_i^0,\tau_{j+1}, \ldots) }_{ F_{\ell }^{\sigma}(\bar \tau)})$

\vspace{0.3cm}\\
(3) ${\rm C}$-instructions { $\ell \!: \,Z_{I_j}:=Z_{I_k}$} \hfill \vspace{0.3cm}\\

\qquad $(\ell \,.\,\vec \nu\,.\,\bar \tau)\textcolor{red}{ \,\,\,\to_{{\cal P},\kappa}\,\,\,}(\ell +1 \,.\,\vec \nu\,.\,\underbrace {(\tau_1,\ldots,\tau_{\nu_j-1},\tau_{\nu_k},\tau_{\nu_j+1}, \ldots)}_{\ C_{\ell }^{\kappa,\sigma}(\vec \nu,\bar \tau)} )$

\vspace{0.4cm}\\
\end{tabular}

\hfill {\small\em (For more details, see {\sf Extras III}.)}
}}
\end{overview}

\vspace{0.3cm}

$\textcolor{red}{\to_{{\cal P},\kappa}^{\rm ind}}$ can be derived from $\textcolor{red}{\to_{\cal P}^{\rm lab}}$ using ${\cal A}_{\bbbn}$.

$\textcolor{red}{\to_{{\cal P},\kappa}}$ can be derived from $\textcolor{red}{\to_{{\cal P},\kappa}^{\rm ind}}$ using ${\sigma}$-terms.

$\textcolor{red}{\to_{\cal M}}\,\,$ could also be derived from $\textcolor{red}{\to_{{\cal P},\kappa}}$ using a ${\sigma}$-structure.

\newpage
\section*{Acknowledgment}
\markboth{ACKNOWLEDGMENT}{ACKNOWLEDGMENT}
{\small
I would like to thank the participants of my lectures and, in particular, {\sf Paul} {\sc Grieger} for useful questions and the discussions. My thanks go also to {\sf Michael} {\sc Rathjen} and {\sf Vincenzo} {\sc Mantova} and {\sf Peter} {\sc Schuster}, {\sf Gabriele} {\sc Buriola}, and {\sf Giulio} {\sc Fellin} for discussions and the opportunity to give talks on Abstract computation over first-order structures in Leeds and Verona. Moreover, I thank the staff of the International Office of the University of Greifswald for their support.

\vspace{0.1cm}
My research was also supported by {\sf Michael} {\sc Schürmann}, {\sf Rainer} \text{\sc Schimming}, and {\sf Volkmar} {\sc Liebscher}. I thank them for giving me the opportunity to present and discuss my results on the complexity of decision problems over algebraic structures in Greifswald. This paper addresses some of the more profound questions that were touched upon earlier and are now discussed in detail. In particular, I would like to thank  {\sf Adrian} {\sc Rezu\c{s}} for his work as the editor of the first volume of {\sf Landscapes in Logic} that also contains an introduction to BSS RAMs. 

\vspace{0.1cm}
Moreover, I would also like to take this opportunity to remember {\sf Günter} {\sc Asser} (Februar 26, 1926 -- March 23, 2015) and his 100th birthday this year. My research began with the study of mathematical logic in his research group at the University of Greifswald.}

{\small

\noindent For this article, we also used translators such as those of Google and DeepL and, for questions about English grammar and for the translation and appropriate use of technical terms, we moreover used Microsoft Copilot, Google, and Google AI Overview.}

\newpage

\section*{Attachment: The third path and a tree}\label{Attachment}
\addcontentsline{toc}{section}{\bf Attachment: The third path and a tree}
\markboth{ATTACHMENT: THE THIRD PATH AND A TREE}{ATTACHMENT: THE THIRD PATH AND A TREE}

For ${\cal P}$ given in Fig.\,\,\ref{BerWurzeln1}, we apply the algorithms summarized in Overviews \ref{ExamEnumLab1} and \ref{ExamEnumLab2} to $(1,1,1)\in U_{{\cal A}_{{\cal L}_{\cal P}}}^3$ and compute some values by using the $\hat\beta_{\cal P}^\circ$-functions.

\vspace{0.1cm}

{\scriptsize
\noindent \begin{tabular}{r|cccccccccccccccccc}
\hline
$c(Z_{j-1})$&0&1&2&3&4&5&6&7&8\\
\textcolor{gray}{(or $c(Z_{1,k-1})$)}&&$\in  {\cal L}_{\cal P}^{\rm B}$& $\in  {\cal L}_{\cal P}^{\rm NB}$& $\in  {\cal L}_{\cal P}^{\rm NB}$& $\in  {\cal L}_{\cal P}^{\rm NB}$& $\in  {\cal L}_{\cal P}^{\rm NB}$& $\in  {\cal L}_{\cal P}^{\rm B}$ &$\in  {\cal L}_{\cal P}^{\rm NB}$\\\hline
$\hat\beta_{\cal P}(c(Z_{j-1}))$&0&0&3&4&5&6&0&8&0\\
$\hat\beta_{\cal P}^+(c(Z_{j-1}))$&0& 2&0&0&0&0&7&0&0\\
$\hat\beta_{\cal P}^-(c(Z_{j-1}))$&0&1&0&0&0&0&5&0&0\\\hline
\end{tabular}}

 \subsection*{Computing the third path $enum_{\cal P}^{\sf (O)}(1,\!1,\!1)$ over ${\cal A}_{{\cal L}_{\cal P}}$}\label{ExampleFirstAlgo}

\markboth{ATTACHMENT:  A THIRD PATH}{ATTACHMENT:  A THIRD PATH}

{\scriptsize
\noindent \begin{tabular}{c|cccc|cccccccccccccc}\hline
$\ell\!$&$s\!$&$i\!$&$s_0\!$&$k\!$&$\!\!Z_1\!$&$\!\!Z_2\!$&$\!\!Z_3\!$&$\!\!Z_4\!$&$\!\!Z_5\!$&$\!\!Z_6\!$&$\!\!Z_7\!$&$\!\!Z_8\!$&$\!\!Z_9\!$&$\!\!Z_{10}\!$&$\!\!Z_{11}\!$&$k=s$?\\\hline
&&&&&\textcolor{blue}{1}&\textcolor{blue}{1}&\textcolor{blue}{1}&\\
$\cdots$&\textcolor{blue}{1}&\textcolor{blue}{1}&\textcolor{blue}{0}&&\textcolor{blue}{1}&\textcolor{gray}{$\cdot\cdot$}\\
$l_1$&\textcolor{blue}{2}&\textcolor{gray}{1}&\textcolor{gray}{0}&\textcolor{blue}{2}&\textcolor{gray}{1}&\\
$l_2$&\textcolor{gray}{2}&\textcolor{gray}{1}&\textcolor{gray}{0}&\textcolor{gray}{2}&\textcolor{gray}{1}&&&&&&&&&&&$2=2$\\
$l_3$&\textcolor{gray}{2}&\textcolor{gray}{1}&\textcolor{blue}{0}&\textcolor{gray}{2}&\textcolor{gray}{1}&\textcolor{gray}{?}\\\hline
$l_1$&\textcolor{blue}{3}&\textcolor{gray}{1}&\textcolor{gray}{0}&\textcolor{blue}{2}&\textcolor{gray}{1}&\\
$l_2$&\textcolor{gray}{3}&\textcolor{gray}{1}&\textcolor{gray}{0}&\textcolor{blue}{3}&\textcolor{gray}{1}&\textcolor{blue}{1}\\
$l_2$&\textcolor{gray}{3}&\textcolor{gray}{1}&\textcolor{gray}{0}&\textcolor{gray}{3}&\textcolor{gray}{1}&\textcolor{gray}{1}&&&&&&&&&&$3=3$\\
$l_3$&\textcolor{gray}{3}&\textcolor{gray}{1}&\textcolor{blue}{2}&\textcolor{gray}{3}&\textcolor{gray}{1}&\textcolor{gray}{1}&\textcolor{gray}{?}\\
$l_1$&\textcolor{gray}{3}&\textcolor{gray}{1}&\textcolor{gray}{2}&\textcolor{blue}{3}&\textcolor{gray}{1}&\textcolor{blue}{2}\\
$l_2$&\textcolor{gray}{3}&\textcolor{gray}{1}&\textcolor{gray}{2}&\textcolor{gray}{3}&\textcolor{gray}{1}&\textcolor{gray}{2}&&&&&&&&&&$3=3$\\
$l_3$&\textcolor{gray}{3}&\textcolor{gray}{1}&\textcolor{blue}{0}&\textcolor{gray}{3}&\textcolor{gray}{1}&\textcolor{gray}{2}&\textcolor{gray}{?}\\\hline
$l_1$&\textcolor{blue}{4}&\textcolor{gray}{1}&\textcolor{gray}{0}&\textcolor{blue}{2}&\textcolor{gray}{1}&\textcolor{gray}{$\cdot\cdot$}\\
$l_2$&\textcolor{gray}{4}&\textcolor{gray}{1}&\textcolor{gray}{0}&\textcolor{blue}{3}&\textcolor{gray}{1}&\textcolor{blue}{1}\\
$l_2$&\textcolor{gray}{4}&\textcolor{gray}{1}&\textcolor{gray}{0}&\textcolor{blue}{4}&\textcolor{gray}{1}&\textcolor{gray}{1}&\textcolor{blue}{1}\\
$l_2$&\textcolor{gray}{4}&\textcolor{gray}{1}&\textcolor{gray}{0}&\textcolor{gray}{4}&\textcolor{gray}{1}&\textcolor{gray}{1}&\textcolor{gray}{1}&&&&&&&&&$4=4$\\
$l_3$&\textcolor{gray}{4}&\textcolor{gray}{1}&\textcolor{blue}{3}&\textcolor{gray}{4}&\textcolor{gray}{1}&\textcolor{gray}{1}&\textcolor{gray}{1}&\textcolor{gray}{?}\\
$l_1$&\textcolor{gray}{4}&\textcolor{gray}{1}&\textcolor{gray}{3}&\textcolor{blue}{4}&\textcolor{gray}{1}&\textcolor{gray}{1}&\textcolor{blue}{2}\\
$l_2$&\textcolor{gray}{4}&\textcolor{gray}{1}&\textcolor{gray}{3}&\textcolor{gray}{4}&\textcolor{gray}{1}&\textcolor{gray}{1}&\textcolor{gray}{2}&&&&&&&&&$4=4$\\
$l_3$&\textcolor{gray}{4}&\textcolor{gray}{1}&\textcolor{blue}{2}&\textcolor{gray}{4}&\textcolor{gray}{1}&\textcolor{gray}{1}&\textcolor{gray}{2}&\textcolor{gray}{?}\\
$l_1$&\textcolor{gray}{4}&\textcolor{gray}{1}&\textcolor{gray}{2}&\textcolor{blue}{3}&\textcolor{gray}{1}&\textcolor{blue}{2}\\
$l_2$&\textcolor{gray}{4}&\textcolor{gray}{1}&\textcolor{gray}{2}&\textcolor{blue}{4}&\textcolor{gray}{1}&\textcolor{gray}{2}&\textcolor{blue}{3}\\
$l_2$&\textcolor{gray}{4}&\textcolor{gray}{1}&\textcolor{gray}{2}&\textcolor{gray}{4}&\textcolor{gray}{1}&\textcolor{gray}{2}&\textcolor{gray}{3}&&&&&&&&&$4=4$\\
$l_3$&\textcolor{gray}{4}&\textcolor{gray}{1}&\textcolor{blue}{0}&\textcolor{gray}{4}&\textcolor{gray}{1}&\textcolor{gray}{2}&\textcolor{gray}{3}&\textcolor{gray}{?}\\\hline
$l_1$&\textcolor{blue}{5}&\textcolor{gray}{1}&\textcolor{gray}{0}&\textcolor{blue}{2}&\textcolor{gray}{1}&\textcolor{gray}{$\cdot\cdot$}\\
$l_2$&\textcolor{gray}{5}&\textcolor{gray}{1}&\textcolor{gray}{0}&\textcolor{blue}{3}&\textcolor{gray}{1}&\textcolor{blue}{1}\\
$l_2$&\textcolor{gray}{5}&\textcolor{gray}{1}&\textcolor{gray}{0}&\textcolor{blue}{4}&\textcolor{gray}{1}&\textcolor{gray}{1}&\textcolor{blue}{1}\\
$l_2$&\textcolor{gray}{5}&\textcolor{gray}{1}&\textcolor{gray}{0}&\textcolor{blue}{5}&\textcolor{gray}{1}&\textcolor{gray}{1}&\textcolor{gray}{1}&\textcolor{blue}{1}\\
$l_2$&\textcolor{gray}{5}&\textcolor{gray}{1}&\textcolor{gray}{0}&\textcolor{gray}{5}&\textcolor{gray}{1}&\textcolor{gray}{1}&\textcolor{gray}{1}&\textcolor{gray}{1}&&&&&&&&$5=5$\\
$l_3$&\textcolor{gray}{5}&\textcolor{gray}{1}&\textcolor{blue}{4}&\textcolor{gray}{5}&\textcolor{gray}{1}&\textcolor{gray}{1}&\textcolor{gray}{1}&\textcolor{gray}{1}&\textcolor{gray}{?}\\
$l_1$&\textcolor{gray}{5}&\textcolor{gray}{1}&\textcolor{gray}{4}&\textcolor{blue}{5}&\textcolor{gray}{1}&\textcolor{gray}{1}&\textcolor{gray}{1}&\textcolor{blue}{2}\\
$l_2$&\textcolor{gray}{5}&\textcolor{gray}{1}&\textcolor{gray}{4}&\textcolor{gray}{5}&\textcolor{gray}{1}&\textcolor{gray}{1}&\textcolor{gray}{1}&\textcolor{gray}{2}&&&&&&&&$5=5$\\
$l_3$&\textcolor{gray}{5}&\textcolor{gray}{1}&\textcolor{blue}{3}&\textcolor{gray}{5}&\textcolor{gray}{1}&\textcolor{gray}{1}&\textcolor{gray}{1}&\textcolor{gray}{2}&\textcolor{gray}{?}\\
$\cdots$&&&&&&&\\
$l_2$&\textcolor{gray}{8}&\textcolor{gray}{1}&\textcolor{gray}{2}&\textcolor{blue}{7}&\textcolor{gray}{1}&\textcolor{gray}{2}&\textcolor{gray}{3}&\textcolor{gray}{4}&\textcolor{gray}{5}&\textcolor{blue}{6}&\\
$l_2$&\textcolor{gray}{8}&\textcolor{gray}{1}&\textcolor{gray}{2}&\textcolor{blue}{8}&\textcolor{gray}{1}&\textcolor{gray}{2}&\textcolor{gray}{3}&\textcolor{gray}{4}&\textcolor{gray}{5}&\textcolor{gray}{6}&\textcolor{blue}{5}&\\
$l_2$&\textcolor{gray}{8}&\textcolor{gray}{1}&\textcolor{gray}{2}&\textcolor{gray}{8}&\textcolor{gray}{1}&\textcolor{gray}{2}&\textcolor{gray}{3}&\textcolor{gray}{4}&\textcolor{gray}{5}&\textcolor{gray}{6}&\textcolor{gray}{5}&&&&&$8=8$\\
$l_3$&\textcolor{gray}{8}&\textcolor{gray}{1}&\textcolor{blue}{7}&\textcolor{gray}{8}&\textcolor{gray}{1}&\textcolor{gray}{2}&\textcolor{gray}{3}&\textcolor{gray}{4}&\textcolor{gray}{5}&\textcolor{gray}{6}&\textcolor{gray}{5}&\textcolor{gray}{?}\\
$l_1$&\textcolor{gray}{8}&\textcolor{gray}{1}&\textcolor{gray}{7}&\textcolor{blue}{8}&\textcolor{gray}{1}&\textcolor{gray}{2}&\textcolor{gray}{3}&\textcolor{gray}{4}&\textcolor{gray}{5}&\textcolor{gray}{6}&\textcolor{blue}{7}\\
\end{tabular}

\noindent \begin{tabular}{c|cccc|cccccccccccccc}\hline
$\ell\!$&$s\!$&$i\!$&$s_0\!$&$k\!$&$\!\!Z_1\!$&$\!\!Z_2\!$&$\!\!Z_3\!$&$\!\!Z_4\!$&$\!\!Z_5\!$&$\!\!Z_6\!$&$\!\!Z_7\!$&$\!\!Z_8\!$&$\!\!Z_9\!$&$\!\!Z_{10}\!$&$\!\!Z_{11}\!$&$k=s$?\\\hline

$l_2$&\textcolor{gray}{8}&\textcolor{gray}{1}&\textcolor{gray}{7}&\textcolor{gray}{8}&\textcolor{gray}{1}&\textcolor{gray}{2}&\textcolor{gray}{3}&\textcolor{gray}{4}&\textcolor{gray}{5}&\textcolor{gray}{6}&\textcolor{gray}{7}&&&&&$8=8$\\

$l_3$&\textcolor{gray}{8}&\textcolor{blue}{2}&\textcolor{blue}{5}&\textcolor{gray}{8}&\textcolor{gray}{1}&\textcolor{gray}{2}&\textcolor{gray}{3}&\textcolor{gray}{4}&\textcolor{gray}{5}&\textcolor{gray}{6}&\textcolor{gray}{7}&\textcolor{gray}{?}\\

$l_1$&\textcolor{gray}{8}&\textcolor{gray}{2}&\textcolor{gray}{5}&\textcolor{blue}{6}&\textcolor{gray}{1}&\textcolor{gray}{2}&\textcolor{gray}{3}&\textcolor{gray}{4}&\textcolor{blue}{7}\\

$l_2$&\textcolor{gray}{8}&\textcolor{gray}{2}&\textcolor{gray}{5}&\textcolor{gray}{6}&\textcolor{gray}{1}&\textcolor{gray}{2}&\textcolor{gray}{3}&\textcolor{gray}{4}&\textcolor{gray}{7}&$\!\!$\textcolor{gray}{(?)}$\!\!$&\\

$l_3$&\textcolor{gray}{8}&\textcolor{gray}{2}&\textcolor{blue}{0}&\textcolor{gray}{6}&\textcolor{gray}{1}&\textcolor{gray}{2}&\textcolor{gray}{3}&\textcolor{gray}{4}&\textcolor{gray}{7}&$\!\!$\textcolor{gray}{(?)}$\!\!$&&&&&&\textcolor{gray}{$6\not=8$}\\

$\cdots$&&&&&&&\\\hline

$l_1$&\textcolor{blue}{9}&\textcolor{gray}{2}&\textcolor{gray}{0}&\textcolor{blue}{2}&\textcolor{gray}{1}&$\cdot\cdot$\\

$\cdots$&&&&&&&\\

$l_3$&\textcolor{gray}{10}&\textcolor{gray}{3}&\textcolor{gray}{9}&\textcolor{gray}{10}&\textcolor{black}{1}&\textcolor{black}{1}&\textcolor{black}{1}&\textcolor{black}{2}&\textcolor{black}{3}&\textcolor{black}{4}&\textcolor{black}{5}&\textcolor{black}{6}&\textcolor{black}{7}&\textcolor{blue}{8}\\
\end{tabular}

\vspace{0.4cm}

\noindent \quad $l_4:$ \quad {\sf Output of the third path $(1,1,1,2,3,4,5,6,7,8)$} 

\vspace{0.1cm}
\noindent
This path was determined in accordance with $\leq_{\cal P}^{\rm fin}$.}

\vspace{0.1cm}

\subsection*{Computing the third path $enum_{\cal P}^{\sf (\leq,T)}(1,\!1,\!1)$ over ${\cal A}_{{\cal L}_{\cal P}}$}\label{ExampleSecondAlgo}

\markboth{ATTACHMENT:  A  THIRD PATH}{ATTACHMENT:  A  THIRD PATH}

\vspace{0.05cm}
{\scriptsize
\noindent \begin{tabular}{c|cccc|ccccccccccccc}\hline
$\!\!\ell\!\!$&$\!\!s\!\!$&$\!\!i\!\!$&$\!\!s_0\!\!$&$\!\!k\!\!$&$\!\!Z_{1,1}\!\!$&$\!\!Z_{1,2}\!\!$&$\!\!Z_{1,3}\!\!$&$\!\!Z_{1,4}\!\!$&$\!\!Z_{1,5}\!\!$&$\!\!Z_{1,6}\!\!$&$\!\!Z_{1,7}\!\!$&$\!\!Z_{1,8}\!\!$&$\!\!Z_{1,9}\!\!$&$\!\!Z_{1,{10}}\!\!$&(tape 1)$\!\!$\\
&&&&&$\!\!Z_{2,1}\!\!$&$\!\!Z_{2,2}\!\!$&$\!\!Z_{2,3}\!\!$&$\!\!Z_{2,4}\!\!$&$\!\!Z_{2,5}\!\!$&$\!\!Z_{2,6}\!\!$&$\!\!Z_{2,7}\!\!$&$\!\!Z_{2,8}\!\!$&$\!\!Z_{2,9}\!\!$&$\!\!Z_{2,{10}}\!\!$&(tape 2)$\!\!$\\
\hline

&&&&&\textcolor{blue}{1}&\textcolor{blue}{1}&\textcolor{blue}{1}&\textcolor{gray}{1}&\textcolor{gray}{1}&\textcolor{gray}{1}&\textcolor{gray}{1}&\textcolor{gray}{1}&\textcolor{gray}{1}&\textcolor{gray}{1}&\textcolor{gray}{(tape 1)}$\!\!$\\
&&&&&\textcolor{gray}{1}&\textcolor{gray}{1}&\textcolor{gray}{1}&\textcolor{gray}{1}&\textcolor{gray}{1}&\textcolor{gray}{1}&\textcolor{gray}{1}&\textcolor{gray}{1}&\textcolor{gray}{1}&\textcolor{gray}{1}&\textcolor{gray}{(tape 2)}$\!\!$\\
&&&&\\

&&&&&\textcolor{blue}{0}&\textcolor{gray}{1}&\textcolor{gray}{1}&\textcolor{gray}{1}&\textcolor{gray}{1}&\textcolor{gray}{1}&\textcolor{gray}{1}&\textcolor{gray}{1}&\textcolor{gray}{1}&\textcolor{gray}{1}&\textcolor{gray}{$\cdots$}\\
&&&&&\textcolor{gray}{1}&\textcolor{gray}{1}&\textcolor{gray}{1}&\textcolor{gray}{1}&\textcolor{gray}{1}&\textcolor{gray}{1}&\textcolor{gray}{1}&\textcolor{gray}{1}&\textcolor{gray}{1}&\textcolor{gray}{1}\\
&&&&\\

&\textcolor{blue}{1}&\textcolor{blue}{1}&&&\textcolor{blue}{$1$}&\textcolor{gray}{1}&\textcolor{gray}{1}&\textcolor{gray}{1}&\textcolor{gray}{1}&\textcolor{gray}{1}&\textcolor{gray}{1}&\textcolor{gray}{1}&\textcolor{gray}{1}&\textcolor{gray}{1}\\
&&&&&\textcolor{gray}{1}&\textcolor{gray}{1}&\textcolor{gray}{1}&\textcolor{gray}{1}&\textcolor{gray}{1}&\textcolor{gray}{1}&\textcolor{gray}{1}&\textcolor{gray}{1}&\textcolor{gray}{1}&\textcolor{gray}{1}\\
&&&&\\

$l_1$&\textcolor{blue}{2}&\textcolor{gray}{1}&\textcolor{blue}{0}&\textcolor{blue}{2}&\textcolor{gray}{1}&\textcolor{gray}{1}&\textcolor{gray}{1}&\textcolor{gray}{1}&\textcolor{gray}{1}&\textcolor{gray}{1}&\textcolor{gray}{1}&\textcolor{gray}{1}&\textcolor{gray}{1}&\textcolor{gray}{1}\\
&&&&&\textcolor{gray}{1}&\textcolor{gray}{1}&\textcolor{gray}{1}&\textcolor{gray}{1}&\textcolor{gray}{1}&\textcolor{gray}{1}&\textcolor{gray}{1}&\textcolor{gray}{1}&\textcolor{gray}{1}&\textcolor{gray}{1}\\
&&&&\\

$l_2$&\textcolor{gray}{2}&\textcolor{gray}{1}&\textcolor{gray}{0}&\textcolor{gray}{2}&\textcolor{gray}{1}&\textcolor{gray}{$\cdot\cdot$}\\
&&&&&\textcolor{gray}{1}&\textcolor{gray}{1}&\textcolor{gray}{1}&\textcolor{gray}{1}&\textcolor{gray}{1}&\textcolor{gray}{1}&\textcolor{gray}{1}&\textcolor{gray}{1}&\textcolor{gray}{1}&\textcolor{gray}{1}\\
&&&&\\

$l_3$&\textcolor{gray}{2}&\textcolor{gray}{1}&\textcolor{gray}{0}&\textcolor{gray}{2}&\textcolor{gray}{1}&\textcolor{gray}{?}\\
&&&&&\textcolor{gray}{1}&\textcolor{gray}{1}&\textcolor{gray}{1}&\textcolor{gray}{1}&\textcolor{gray}{1}&\textcolor{gray}{1}&\textcolor{gray}{1}&\textcolor{gray}{1}&\textcolor{gray}{1}&\textcolor{gray}{1}\\
&&&&\\

$l_1$&\textcolor{blue}{3}&\textcolor{gray}{1}&\textcolor{blue}{0}&\textcolor{blue}{2}&\textcolor{gray}{1}&\textcolor{gray}{$\cdot\cdot$}\\
&&&&&\textcolor{gray}{1}&\textcolor{gray}{1}&\textcolor{gray}{1}&\textcolor{gray}{1}&\textcolor{gray}{1}&\textcolor{gray}{1}&\textcolor{gray}{1}&\textcolor{gray}{1}&\textcolor{gray}{1}&\textcolor{gray}{1}\\
&&&&\\

$l_2$&\textcolor{gray}{3}&\textcolor{gray}{1}&\textcolor{gray}{0}&\textcolor{blue}{3}&\textcolor{gray}{1}&\textcolor{blue}{2}&\textcolor{gray}{$\cdot\cdot$}\\
&&&&&\textcolor{gray}{1}&\textcolor{blue}{0}&\textcolor{gray}{1}&\textcolor{gray}{1}&\textcolor{gray}{1}&\textcolor{gray}{1}&\textcolor{gray}{1}&\textcolor{gray}{1}&\textcolor{gray}{1}&\textcolor{gray}{1}\\
&&&&\\

$l_2$&\textcolor{gray}{3}&\textcolor{gray}{1}&\textcolor{gray}{0}&\textcolor{gray}{3}&\textcolor{gray}{1}&\textcolor{gray}{2}&\textcolor{gray}{$\cdot\cdot$}\\
&&&&&\textcolor{gray}{1}&\textcolor{gray}{0}&\textcolor{gray}{1}&\textcolor{gray}{1}&\textcolor{gray}{1}&\textcolor{gray}{1}&\textcolor{gray}{1}&\textcolor{gray}{1}&\textcolor{gray}{1}&\textcolor{gray}{1}\\
&&&&\\

$l_3$&\textcolor{gray}{3}&\textcolor{gray}{1}&\textcolor{gray}{0}&\textcolor{gray}{3}&\textcolor{gray}{1}&\textcolor{gray}{2}&\textcolor{gray}{$?$}\\
&&&&&\textcolor{gray}{1}&\textcolor{gray}{0}&\textcolor{gray}{1}&\textcolor{gray}{1}&\textcolor{gray}{1}&\textcolor{gray}{1}&\textcolor{gray}{1}&\textcolor{gray}{1}&\textcolor{gray}{1}&\textcolor{gray}{1}\\
&&&&\\

$l_1$&\textcolor{gray}{3}&\textcolor{gray}{1}&\textcolor{blue}{2}&\textcolor{blue}{2}&\textcolor{gray}{1}&\textcolor{gray}{2}&\textcolor{gray}{$\cdot\cdot$}\\
&&&&&\textcolor{gray}{1}&\textcolor{gray}{0}&\textcolor{gray}{1}&\textcolor{gray}{1}&\textcolor{gray}{1}&\textcolor{gray}{1}&\textcolor{gray}{1}&\textcolor{gray}{1}&\textcolor{gray}{1}&\textcolor{gray}{1}\\
&&&&\\

$l_2$&\textcolor{gray}{3}&\textcolor{gray}{1}&\textcolor{gray}{2}&\textcolor{blue}{3}&\textcolor{gray}{1}&\textcolor{blue}{1}&\textcolor{gray}{$\cdot\cdot$}\\
&&&&&\textcolor{gray}{1}&\textcolor{blue}{1}&\textcolor{gray}{1}&\textcolor{gray}{1}&\textcolor{gray}{1}&\textcolor{gray}{1}&\textcolor{gray}{1}&\textcolor{gray}{1}&\textcolor{gray}{1}&\textcolor{gray}{1}\\
\end{tabular}

\noindent \begin{tabular}{c|cccc|cccccccccccccc}\hline
$\!\!\ell\!\!$&$\!\!s\!\!$&$\!\!i\!\!$&$\!\!s_0\!\!$&$\!\!k\!\!$&$\!\!Z_{1,1}\!\!$&$\!\!Z_{1,2}\!\!$&$\!\!Z_{1,3}\!\!$&$\!\!Z_{1,4}\!\!$&$\!\!Z_{1,5}\!\!$&$\!\!Z_{1,6}\!\!$&$\!\!Z_{1,7}\!\!$&$\!\!Z_{1,8}\!\!$&$\!\!Z_{1,9}\!\!$&$\!\!Z_{1,{10}}\!\!$&(tape 1)$\!\!$\\
&&&&&$\!\!Z_{2,1}\!\!$&$\!\!Z_{2,2}\!\!$&$\!\!Z_{2,3}\!\!$&$\!\!Z_{2,4}\!\!$&$\!\!Z_{2,5}\!\!$&$\!\!Z_{2,6}\!\!$&$\!\!Z_{2,7}\!\!$&$\!\!Z_{2,8}\!\!$&$\!\!Z_{2,9}\!\!$&$\!\!Z_{2,{10}}\!\!$&(tape 2)$\!\!$\\
\hline

$l_2$&\textcolor{gray}{3}&\textcolor{gray}{1}&\textcolor{gray}{2}&\textcolor{gray}{3}&\textcolor{gray}{1}&\textcolor{gray}{1}&\textcolor{gray}{$\cdot\cdot$}\\
&&&&&\textcolor{gray}{1}&\textcolor{gray}{1}&\textcolor{gray}{1}&\textcolor{gray}{1}&\textcolor{gray}{1}&\textcolor{gray}{1}&\textcolor{gray}{1}&\textcolor{gray}{1}&\textcolor{gray}{1}&\textcolor{gray}{1}\\
&&&&&\\

$l_3$&\textcolor{gray}{3}&\textcolor{gray}{1}&\textcolor{gray}{2}&\textcolor{gray}{3}&\textcolor{gray}{1}&\textcolor{gray}{1}&\textcolor{gray}{$?$}\\
&&&&&\textcolor{gray}{1}&\textcolor{gray}{1}&\textcolor{gray}{1}&\textcolor{gray}{1}&\textcolor{gray}{1}&\textcolor{gray}{1}&\textcolor{gray}{1}&\textcolor{gray}{1}&\textcolor{gray}{1}&\textcolor{gray}{1}\\
&&&&\\

$l_1$&\textcolor{blue}{4}&\textcolor{gray}{1}&\textcolor{blue}{0}&\textcolor{blue}{2}&\textcolor{gray}{1}&\textcolor{gray}{$\cdot\cdot$}\\
&&&&&\textcolor{gray}{1}&\textcolor{gray}{1}&\textcolor{gray}{1}&\textcolor{gray}{1}&\textcolor{gray}{1}&\textcolor{gray}{1}&\textcolor{gray}{1}&\textcolor{gray}{1}&\textcolor{gray}{1}&\textcolor{gray}{1}\\
&&&&&\\
$\!\!\!\cdots\!\!\!$&&&&&\\

$l_1$&\textcolor{blue}{8}&\textcolor{gray}{1}&\textcolor{blue}{0}&\textcolor{blue}{2}&\textcolor{gray}{1}&\textcolor{gray}{$\cdot\cdot$}\\
&&&&&\textcolor{gray}{1}&\textcolor{gray}{1}&\textcolor{gray}{1}&\textcolor{gray}{1}&\textcolor{gray}{1}&\textcolor{gray}{1}&\textcolor{gray}{1}&\textcolor{gray}{1}&\textcolor{gray}{1}&\textcolor{gray}{1}\\

&&&&&\\
$\!\!\!\cdots\!\!\!$&&&&&\\

$l_2$&\textcolor{gray}{8}&\textcolor{gray}{1}&\textcolor{gray}{2}&\textcolor{blue}{8}&\textcolor{gray}{1}&\textcolor{gray}{2}&\textcolor{gray}{3}&\textcolor{gray}{4}&\textcolor{gray}{5}&\textcolor{gray}{6}&\textcolor{blue}{7}&\textcolor{gray}{$\cdot\cdot$}\\
&&&&&\textcolor{gray}{1}&\textcolor{gray}{0}&\textcolor{gray}{1}&\textcolor{gray}{1}&\textcolor{gray}{1}&\textcolor{gray}{1}&\textcolor{blue}{0}&\textcolor{gray}{1}&\textcolor{gray}{1}&\textcolor{gray}{1}\\
&&&&\\

$l_3$&\textcolor{gray}{8}&\textcolor{blue}{2}&\textcolor{gray}{2}&\textcolor{gray}{8}&\textcolor{gray}{1}&\textcolor{gray}{2}&\textcolor{gray}{3}&\textcolor{gray}{4}&\textcolor{gray}{5}&\textcolor{gray}{6}&\textcolor{gray}{7}&\textcolor{gray}{$?$}\\
&&&&&\textcolor{gray}{1}&\textcolor{gray}{0}&\textcolor{gray}{1}&\textcolor{gray}{1}&\textcolor{gray}{1}&\textcolor{gray}{1}&\textcolor{gray}{0}&\textcolor{gray}{1}&\textcolor{gray}{1}&\textcolor{gray}{1}\\
&&&&\\

$l_1$&\textcolor{gray}{8}&\textcolor{gray}{2}&\textcolor{blue}{7}&\textcolor{blue}{7}&\textcolor{gray}{1}&\textcolor{gray}{2}&\textcolor{gray}{3}&\textcolor{gray}{4}&\textcolor{gray}{5}&\textcolor{gray}{6}&\textcolor{gray}{7}&\textcolor{gray}{$\cdot\cdot$}\\
&&&&&\textcolor{gray}{1}&\textcolor{gray}{0}&\textcolor{gray}{1}&\textcolor{gray}{1}&\textcolor{gray}{1}&\textcolor{gray}{1}&\textcolor{gray}{0}&\textcolor{gray}{1}&\textcolor{gray}{1}&\textcolor{gray}{1}\\
&&&&\\

$l_2$&\textcolor{gray}{8}&\textcolor{gray}{2}&\textcolor{gray}{7}&\textcolor{blue}{8}&\textcolor{gray}{1}&\textcolor{gray}{2}&\textcolor{gray}{3}&\textcolor{gray}{4}&\textcolor{gray}{5}&\textcolor{gray}{6}&\textcolor{blue}{5}&\textcolor{gray}{$\cdot\cdot$}\\
&&&&&\textcolor{gray}{1}&\textcolor{gray}{0}&\textcolor{gray}{1}&\textcolor{gray}{1}&\textcolor{gray}{1}&\textcolor{gray}{1}&\textcolor{blue}{1}&\textcolor{gray}{1}&\textcolor{gray}{1}&\textcolor{gray}{1}\\
&&&&\\

$l_2$&\textcolor{gray}{8}&\textcolor{gray}{2}&\textcolor{gray}{7}&\textcolor{gray}{8}&\textcolor{gray}{1}&\textcolor{gray}{2}&\textcolor{gray}{3}&\textcolor{gray}{4}&\textcolor{gray}{5}&\textcolor{gray}{6}&\textcolor{gray}{5}&\textcolor{gray}{$\cdot\cdot$}\\
&&&&&\textcolor{gray}{1}&\textcolor{gray}{0}&\textcolor{gray}{1}&\textcolor{gray}{1}&\textcolor{gray}{1}&\textcolor{gray}{1}&\textcolor{gray}{1}&\textcolor{gray}{1}&\textcolor{gray}{1}&\textcolor{gray}{1}\\
&&&&\\

$l_3$&\textcolor{gray}{8}&\textcolor{gray}{2}&\textcolor{gray}{7}&\textcolor{gray}{8}&\textcolor{gray}{1}&\textcolor{gray}{2}&\textcolor{gray}{3}&\textcolor{gray}{4}&\textcolor{gray}{5}&\textcolor{gray}{6}&\textcolor{gray}{5}&\textcolor{gray}{?}\\
&&&&&\textcolor{gray}{1}&\textcolor{gray}{0}&\textcolor{gray}{1}&\textcolor{gray}{1}&\textcolor{gray}{1}&\textcolor{gray}{1}&\textcolor{gray}{1}&\textcolor{gray}{1}&\textcolor{gray}{1}&\textcolor{gray}{1}\\
&&&&\\

$l_1$&\textcolor{gray}{8}&\textcolor{gray}{2}&\textcolor{blue}{2}&\textcolor{blue}{2}&\textcolor{gray}{1}&\textcolor{gray}{2}&\textcolor{gray}{$\cdot\cdot$}\\
&&&&&\textcolor{gray}{1}&\textcolor{gray}{0}&\textcolor{gray}{1}&\textcolor{gray}{1}&\textcolor{gray}{1}&\textcolor{gray}{1}&\textcolor{gray}{1}&\textcolor{gray}{1}&\textcolor{gray}{1}&\textcolor{gray}{1}\\
&&&&\\

$l_2$&\textcolor{gray}{8}&\textcolor{gray}{2}&\textcolor{gray}{2}&\textcolor{blue}{3}&\textcolor{gray}{1}&\textcolor{blue}{1}&\textcolor{gray}{$\cdot\cdot$}\\
&&&&&\textcolor{gray}{1}&\textcolor{blue}{1}&\textcolor{gray}{1}&\textcolor{gray}{1}&\textcolor{gray}{1}&\textcolor{gray}{1}&\textcolor{gray}{1}&\textcolor{gray}{1}&\textcolor{gray}{1}&\textcolor{gray}{1}\\
&&&&\\

$\!\!\!\cdots\!\!\!$&&&&&\\

$l_2$&\textcolor{gray}{10}$\!\!\!$&\textcolor{gray}{3}&\textcolor{gray}{9}&\textcolor{blue}{10}&\textcolor{gray}{1}&\textcolor{gray}{2}&\textcolor{gray}{3}&\textcolor{gray}{4}&\textcolor{gray}{5}&\textcolor{gray}{6}&\textcolor{gray}{5}&\textcolor{gray}{6}&\textcolor{blue}{7}&\textcolor{gray}{$\cdot\cdot$}\\
&&&&&\textcolor{gray}{1}&\textcolor{gray}{0}&\textcolor{gray}{1}&\textcolor{gray}{1}&\textcolor{gray}{1}&\textcolor{gray}{1}&\textcolor{gray}{1}&\textcolor{gray}{1}&\textcolor{blue}{0}&\textcolor{gray}{1}\\
&&&&\\

$l_2$&\textcolor{gray}{10}$\!\!\!$&\textcolor{gray}{3}&\textcolor{gray}{9}&\textcolor{gray}{10}&\textcolor{gray}{1}&\textcolor{gray}{2}&\textcolor{gray}{3}&\textcolor{gray}{4}&\textcolor{gray}{5}&\textcolor{gray}{6}&\textcolor{gray}{5}&\textcolor{gray}{6}&\textcolor{gray}{7}&\textcolor{gray}{$\cdot\cdot$}\\
&&&&&\textcolor{gray}{1}&\textcolor{gray}{0}&\textcolor{gray}{1}&\textcolor{gray}{1}&\textcolor{gray}{1}&\textcolor{gray}{1}&\textcolor{gray}{1}&\textcolor{gray}{1}&\textcolor{gray}{0}&\textcolor{gray}{1}\\
&&&&\\

$l_3$&\textcolor{gray}{10}$\!\!\!$&\textcolor{gray}{3}&\textcolor{gray}{9}&\textcolor{gray}{10}&\textcolor{black}{1}&\textcolor{black}{2}&\textcolor{black}{3}&\textcolor{black}{4}&\textcolor{black}{5}&\textcolor{black}{6}&\textcolor{black}{5}&\textcolor{black}{6}&\textcolor{black}{7}&\textcolor{blue}{8}\\
&&&&&\textcolor{gray}{1}&\textcolor{gray}{0}&\textcolor{gray}{1}&\textcolor{gray}{1}&\textcolor{gray}{1}&\textcolor{gray}{1}&\textcolor{gray}{1}&\textcolor{gray}{1}&\textcolor{gray}{0}&\textcolor{gray}{1}\\

\end{tabular}

\vspace{0.4cm}
\noindent \quad $l_4:$ \quad {\sf Output of the third path $(1,2,3,4,5,6,5,6,7,8)$}

\vspace{0.2cm}
\noindent
This path was enumerated in accordance with the order

$\bigcup_{s\geq 1}(\leq_{\cal P} \cap ({\cal L}_{\cal P}^s)^2 )  \cup  \bigcup_{s\geq 1}\bigcup_{r> s}(({\cal L}_{\cal P}^s \cap {\sf Lab}_{\cal P}^{\rm fin})\times ({\cal L}_{\cal P}^r\cap {\sf Lab}_{\cal P}^{\rm fin}))$}.

\newpage
\subsection*{A decision tree for representing further paths}

\markboth{ATTACHMENT: FURTHER PATHS}{ATTACHMENT: FURTHER PATHS}

\begin{figure}[th]\centering
\includegraphics[height=115mm]{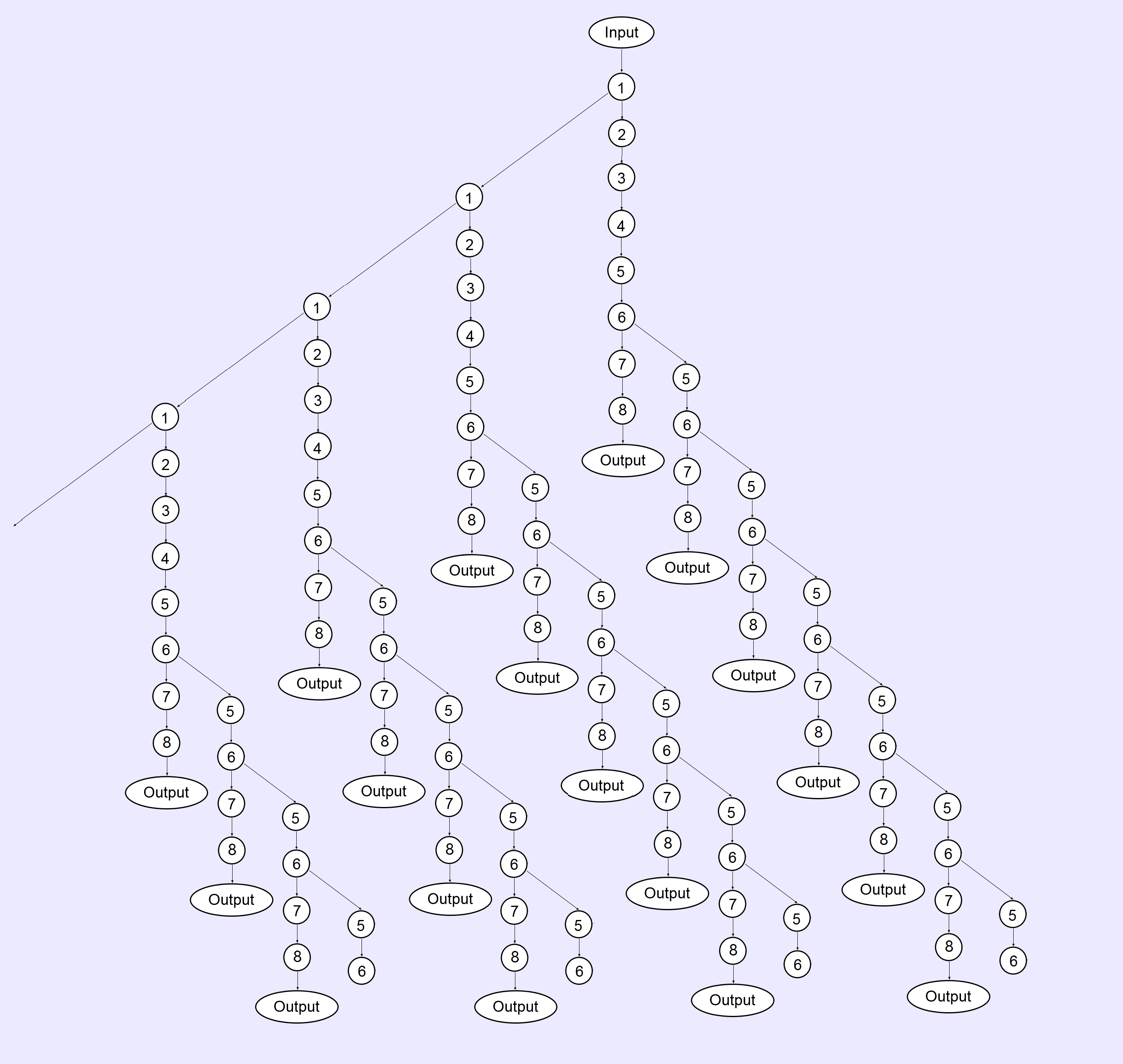}
\caption{A tree extended by an {\sf Input} node and {\sf Output} nodes}\label{BerWurzelnProgrammpfade}
\end{figure}

In Figure \ref{BerWurzelnProgrammpfade}, 
 the tree is determined by ${\cal P}$ and $\mathfrak{Fl\,}_{\cal P}^{\sf label}$ considered in Figure \ref{BerWurzeln1}. The walks originating from the root (number 1) represent  ${\cal P}$-paths in ${\sf Lab}_{\cal P}^{\rm fin}$ and initial segments of ${\cal P}$-paths  discussed also in Examples \ref{ExBerWurzeln1} and \ref{ExBerWurzeln2}. The leaves are not arranged corresponding to one of the orders $\leq_{\cal P}$ and $\leq_{\cal P}^{\rm fin}$. The sequences of nodes labeled by $1,\ldots, 1,2,3,\ldots$ on the left side of the picture belong to walks representing ${\cal P}$-paths in ${\sf Lab}_{\cal P}^{\rm Co}$.

\vfill

\hfill {\scriptsize\sf C $\cdot $ H $\cdot$ T}
\end{document}